\documentclass[12pt]{article}

\usepackage{epsfig}

\usepackage{caption}
\usepackage{amsfonts}

\usepackage{amssymb}
\usepackage{mathrsfs}
\usepackage{amsmath} 

\usepackage{graphicx} 
\usepackage{float}
\usepackage{pict2e}
\usepackage{enumerate}
\usepackage{enumitem}
\usepackage{authblk} 

\usepackage{graphics} 
\usepackage{tikz} 
\usepackage{ulem} 
\usepackage{cases}
\usetikzlibrary{calc}

\usepackage[f]{esvect}

\usepackage{tikz-3dplot}
\usetikzlibrary{hobby}

\usepackage{subcaption} 
\newcounter{FC}

\usetikzlibrary{shapes.geometric}
\usetikzlibrary{decorations.pathmorphing,calligraphy}

\numberwithin{equation}{section}

\usepackage[pdfauthor={derajan},pdftitle={How to do this},pdfstartview=XYZ,bookmarks=true,
colorlinks=true,linkcolor=blue,urlcolor=blue,citecolor=blue,pdftex,bookmarks=true,linktocpage=true,hyperindex=true]{hyperref}

\usepackage{subdepth}

\usepackage{color}

\definecolor{darkgreen}{RGB}{0,154,23}
\definecolor{darkbrown}{RGB}{183,60,18}

\def \re #1{#1}
\def \bl #1{#1}

\newcommand{\F}{\mathcal{F}}

\usepackage[colorinlistoftodos,prependcaption,textsize=scriptsize, textwidth=24mm,]{todonotes}

\usetikzlibrary{patterns}

\makeatother

\newtheorem{theorem}{Theorem}
\newtheorem{observation}[theorem]{Observation}

\newtheorem{definition}[theorem]{Definition}

\newtheorem{question}[theorem]{Question}
\newtheorem{lemma}[theorem]{Lemma}

\newtheorem{remark}{Remark}

\newtheorem{claim}{Claim}

\makeatletter
\newcommand\figcaption{\def\@captype{figure}\caption}
\newcommand\tabcaption{\def\@captype{table}\caption}
\makeatother

\newtheorem{conjecture}[theorem]{Conjecture}

\newenvironment{proof}{\noindent {\bf
		Proof.}}{\rule{2.5mm}{2.5mm}\par\medskip}

\let\svthefootnote\thefootnote
\newcommand\freefootnote[1]{%
	\let\thefootnote\relax%
	\footnotetext{#1}%
	\let\thefootnote\svthefootnote%
}

\begin{document} 
	
	\title{\large{\bf The strong fractional choice number of triangle-free planar graphs}}
	
	\author{
		Xiaolan Hu\thanks{School of Mathematics and Statistics, and Hubei Key Laboratory of Mathematical Sciences, Central China Normal University, Wuhan 430079, China. Email: \texttt{xlhu@ccnu.edu.cn}.} 
		~and Rongxing Xu\thanks{School of Mathematical Sciences, Zhejiang Normal University, Jinhua, Zhejiang, 321000, China.  Email: \texttt{xurongxing@zjnu.edu.cn}.}
	}
	
	\maketitle
	
	\begin{abstract}  
		Let $a,b$ be positive integers with $a\ge b$.
		A graph $G$ is $(a,b)$-choosable if, for every assignment of lists 
		$L(v)$ of size $a$ to the vertices of $G$, there exists a choice of 
		subsets $C(v)\subseteq L(v)$ with $|C(v)|=b$ for each $v$ such that 
		$C(u)\cap C(v)=\emptyset$ whenever $uv\in E(G)$. We show that every triangle-free planar graph is $(15m,4m)$-choosable for any positive integer $m$. As an immediate consequence, the strong fractional choice number of triangle-free planar graphs is at most $15/4$. This appears to be the first non‑trivial upper bound on this parameter for this class of graphs. In particular, the case $m=1$ answers affirmatively a question posed by Jiang and Zhu in [J.~Combin.\ Theory Ser.~B, 2019].  
	\end{abstract}
	
	\section{Introduction}  
	All graphs considered in this paper are simple and loopless. Throughout the paper, we use $\mathbb{N}$ to denote the set of nonnegative integers, and $\mathbb{N}^G$ denote the set of mappings from $V(G)$ to $\mathbb{N}$. For a positive integer $k$, let $[k]=\{1,\ldots, k\}$.
	
	Suppose $G$ is a graph,  $f, g \in \mathbb{N}^G$ are two functions, with $f(v) \geq g(v)$ for each $v \in V(G)$. An \emph{$f$-list assignment} of $G$ is a mapping $L$ which assigns to each vertex $v$ of $G$ a set $L(v)$ of $f(v)$ integers as {\em permissible colors}. A \emph{$g$-fold coloring} of $G$ is a mapping $C$ which assigns to each vertex $v$ of $G$ a set $C(v)$ of $g(v)$ colors such that $C(v) \cap C(u)= \emptyset$ for any two adjacent vertices $u$ and $v$. If $g \equiv b$ (i.e., $g(v)=b$ for any $v\in V(G)$), then $(L, g)$-colorable is denoted by $(L, b)$-colorable. 
	If $L(v)=[a]$ for each $v \in V(G)$, then $(L, b)$-colorable is called  $(a,b)$-colorable.  The \emph{$b$-fold chromatic number} $\chi_b(G)$ of $G$ is the least $k$ such that $G$ is $(k,b)$-colorable. When $b=1$, $\chi_1(G)$ is the usual chromatic number, denoted by $\chi(G)$. The {\em fractional chromatic number}   of   $G$ is  defined as $	\chi_f(G)  = \inf \{\frac ab: \text{$G$ is $(a,b)$-colorable}\}$.

	Similarly, we say $G$ is {\em $(f,g)$-choosable} if for every $f$-list assignment $L$, $G$ is $(L, g)$-colorable. Some special cases of $(f,g)$-choosability have alternate names:
	\begin{itemize}
		\item If $g \equiv b$, then $(f,g)$-choosable is called $(f, b)$-choosable.
		\item If $f \equiv k$, then $(f,b)$-choosable is called $(k, b)$-choosable.
		\item If $b=1$, then $(f,1)$-choosable is called $f$-choosable.
		\item If $f \equiv k$, then $f$-choosable is called $k$-choosable.
	\end{itemize}
	The {\em choice number} $ch(G)$ of $G$ is the minimum $k$ such that
	$G$ is $k$-choosable. The {\em $b$-fold choice number}  $ch_b(G)$ of $G$
	is the minimum $k$ such that $G$ is $(k,b)$-choosable. The {\em fractional choice number}   of   $G$ is defined as $ch_f(G)   = \inf \{\frac ab: \text{$G$ is $(a,b)$-choosable}\}$. 
	The concept of list coloring of graphs was introduced in the 1970s by Vizing \cite{Vizing1976} and independently by Erd\H{o}s, Rubin and Taylor \cite{ERT1979}, and has been studied extensively in the literature. The area offers numerous challenging
	problems and has attracted increasing attention since the 1990s. 
	
	From the definitions we have,  for any graph $G$, $$\chi_f(G) \leq   \chi(G) \le ch(G) ~\text{and}~ \chi_f(G)  \le ch_f(G).$$ It is known that there exist bipartite graphs with arbitrary large choice number. However, Alon, Tuza and Voigt \cite{ATV1997} proved that $ch_f(G) = \chi_f(G)$ for every infinite graph $G$. Hence $ch_f(G)$ is not a genuinely new parameter. This motivates the concept of \emph{strong fractional choice number} of a graph, which was introduced in \cite{Zhu2017} and defined as 
	$$ch_f^s(G) = \inf\{r: G \text{ is $(a,b)$-choosable for all integers $a$ and $b$ with } a/b \geq r\}.$$ 
	The strong fractional choice number of a family $\mathcal{G}$ is the supremum of $ch_f^s(G)$ over all $G\in \mathcal{G}$. 
	It follows from the definition that $ch_f^s(G) \ge ch(G)-1$. This lower bound can be attained:  for example, Xu and Zhu \cite{XZ2023-2} showed that every bipartite $3$-choice critical graph $G$ satisfies  $ch_f^s(G)=2$, while its choice number is 3. For the upper bound, it remains open and asked by Xu and Zhu \cite{XZ2023} that whether $ch^s_f(G) \leq ch(G)$ for any graph $G$. 
	
	Let $a \geq b$ be two positive integers. It is straightforward that every $(a,b)$-colorable graph is also $(am,bm)$-colorable for every positive integer $m$. Erd\H{o}s, Rubin and Taylor \cite{ERT1979} asked  whether every graph that is $(a,b)$-choosable must also be $(am,bm)$-choosable. If the answer is positive, then it would imply that  $ch^s_f(G) \leq ch(G)$ for any graph $G$.  Tuza and Voigt \cite{TV1996-2m} proved this holds for $a=2$ and $b=1$. However,  for more general settings, Dvo\v{r}\'{a}k, Hu and Sereni \cite{DHS2019} proved that for any integer $k \geq 4$, there is a $k$-choosable graph which is not $(2k, 2)$-choosable. Nevertheless, it is still possible that $ch^s_f(G) \leq ch(G)$ for any graph $G$.
	
	It is well known that every planar graph is $4$-colorable by the Four-Color Theorem,  yet there exists planar graphs that are not $4$-choosable \cite{Voigt1993}. Zhu \cite{Zhu2017} strengthened this  by showing that for every positive integer $m$, there exists a planar graph which is not $(4m + \lfloor \frac{2m-1}{9}\rfloor, m)$-choosable. The value $4m + \lfloor \frac{2m-1}{9}\rfloor$ was later improved to $4m+  \lfloor \frac{m-1}{3}\rfloor$ by Xu and Zhu \cite{XZ2023}, and then to $4m + \lfloor \frac{2m-1}{5}\rfloor$ by Chappell \cite{Chappell2022}. On the other hand, Thomassen \cite{Thomassen1994} proved that every planar graph is $5$-choosable. Voigt \cite{Voigt1998} extended this by showing that every planar graph is $(5m,m)$-choosable for every positive integer $m$. Consequently, for the family of planar graphs $\mathcal{P}$, we have $$4+\frac{2}{5} \leq ch^s_f(\mathcal{P}) \leq 5.$$ It remains unknown if $ch^s_f(\mathcal{P}) < 5$,  this would follow if the following conjecture holds:
	\begin{conjecture}[\cite{Zhu2017}]
		\label{conj-planar}
		There exists a constant integer $m$ such that every planar graph is $(5m-1,m)$-choosable.
	\end{conjecture}
	
	By the Four-Color Theorem, every planar graph is $(4m,m)$-colorable for any positive integer $m$. Without using the Four-Color Theorem, Hilton, Rado and Scott \cite{HRS1973} proved that every planar graph is $(5m-1,m)$-colorable with $m=|V(G)|+1$. Cranston and Rabern \cite{CR2018} proved that every planar graph is $(9,2)$-colorable. However, the technique of contractions of nonadjacent vertices used in the proof \cite{CR2018} can not be applied to the study of list coloring, and Conjecture~\ref{conj-planar} remains largely open. 
	
	In this paper, we  focus on the strong fractional choice number of the family of  triangle-free planar graphs. The famous Gr\"{o}stzch Theorem \cite{Grotzsch} asserts that every triangle-free planar graph is $3$-colorable. Dvo\v{r}\'{a}k, Sereni and Volec \cite{DSV2015} proved that for every $n$-vertex triangle-free planar graph $G$, $\chi_f(G) \leq 3-\frac{1}{3n+1}$. Later, Dvo\v{r}\'{a}k and Hu \cite{DH2020} improved this bound to $3-\frac{1}{2n+1}$. However,  Voigt \cite{Voigt1995} showed that there exists a triangle-free planar graph that is not $3$-choosable. Jiang and Zhu \cite{JZ2019} further strengthened  this result by showing that for every positive integer $m$, there exists a triangle-free planar graph that is not $(3m+\lceil \frac{m}{17}\rceil, m)$-choosable. On the other hand, every triangle-free planar graph is $3$-degenerate, hence it is $(4m,m)$-choosable for any positive integer $m$. Consequently, for the family of triangle-free planar graphs $\mathcal{P}_3$, $$3+\frac{1}{17} \leq ch^s_f(\mathcal{P}_3) \leq 4.$$ It is unknown whether the upper bound is tight. Jiang and Zhu \cite{JZ2019} posed the following question, whose positive answer would imply $ch^s_f(\mathcal{P}_3) < 4$.
	\begin{question}[\cite{JZ2019}]
		\label{main-question}
		Does there exist a positive integer $k$ such that every triangle-free planar graph is $(4k-1,k)$-choosable ?
	\end{question}
	
	Note that the smallest possible value $k$ in the question above is $k=2$. The special case $m=1$ of our main result below answers this question positively with $k=4$. 
	
	\begin{theorem}
		\label{main-thm}
		Every triangle-free planar graph is $(15m,4m)$-choosable for any positive integer $m$. As a corollary, $3+\frac{1}{17} \leq  ch^s_f(\mathcal{P}_3) \leq 3 + \frac{3}{4}$.
	\end{theorem}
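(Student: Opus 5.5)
We argue by contradiction with a minimal counterexample and discharging, in the style of Thomassen-type choosability proofs but adapted to multiple colorings. Fix $m$ and suppose $G$ is a triangle-free planar graph with a $15m$-list assignment $L$ such that $G$ is not $(L,4m)$-colorable, and choose $G$ with $|V(G)|+|E(G)|$ minimum. The first step is to record the basic reducible configurations in the fractional setting. A vertex $v$ of degree at most $2$ is reducible: color $G-v$ by minimality; its neighbors use at most $8m$ colors, leaving at least $7m\ge 4m$ for $v$. A vertex of degree $3$ is \emph{not} directly reducible, since its neighbors may use $12m>11m$ colors. So the minimum degree is $3$, and the real work is to handle degree-$3$ vertices. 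The key tool is \emph{partial precoloring}: instead of reducing a vertex completely, we remove a configuration $H$ and color $G-V(H)$. Then, for each $v\in H$, we bound the residual list $L'(v)=L(v)\setminus\bigcup_{u\in N(v)\setminus V(H)}C(u)$, and show that $H$ is $(L',4m)$-colorable whenever $|L'(v)|\ge 15m-4m\cdot d_{G-V(H)}(v)$. For this we use Hall-type conditions for $b$-fold list coloring of small graphs (paths, stars, even cycles). In particular, we need the fact that $K_{1,t}$ or a path with residual list sizes $(3m,7m,3m)$ (or similar) is colorable. This in turn requires a Hall-type lemma for $(f,g)$-choosability of trees and cycles, proved by induction by choosing colors in the intersection of lists greedily.

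Next we identify the reducible configurations. (i) A $3$-vertex adjacent to another $3$-vertex: for an edge $uv$ with $d(u)=d(v)=3$, after coloring $G-\{u,v\}$ both have residual lists of size at least $15m-8m=7m$, and an edge with lists of size $7m\ge 8m$ fails. So this needs refinement. We choose $C(u)$ first to avoid as many of $L'(v)$ as possible, or we use a symmetric-difference argument: when $|L'(u)|,|L'(v)|\ge 7m$ and we need $4m$ each, it suffices that $|L'(u)\cup L'(v)|\ge 8m$. If instead the two residual lists nearly coincide, we recolor from a neighbor. I expect this to need more delicate handling, for instance by keeping extra freedom: remove $u,v$ together with a common structure, or reserve colors. (ii) A $4$-face incident with several $3$-vertices. (iii) $4$-vertices and $5$-vertices adjacent to many $3$-vertices. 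Each configuration is reduced by coloring the rest and then applying the small-graph Hall lemma to the residual lists. The crucial numerical point is that $15m/4m=3.75$, which leaves slack $3m$ beyond three neighbors' worth of colors.

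Then we discharge. Give each vertex charge $d(v)-4$ and each face charge $|f|-4$. Euler's formula gives total charge $-8$, and triangle-freeness makes all face charges nonnegative. Degree-$3$ vertices have charge $-1$ and must receive charge from faces of length at least $5$ and from high-degree neighbors. The reducibility lemmas should guarantee that a $3$-vertex has few $3$-neighbors, that $4$-faces carry few $3$-vertices, and that a $d$-vertex with $d\ge4$ has at most a bounded number of $3$-neighbors. With these, rules such as ``each $5^+$-face gives $\tfrac{|f|-4}{|f|}$ to each incident $3$-vertex'' and ``each $5^+$-vertex gives $\tfrac{d-4}{d}$ to each $3$-neighbor'' should make all final charges nonnegative, which is a contradiction. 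The corollary then follows since every $(15m,4m)$-choosable graph with $a/b\ge 15/4$ case follows by the same proof with lists of size $a$ and demand $b$. More precisely, the proof only uses $a\ge \tfrac{15}{4}b$ up to rounding, and we would state the argument for $(a,b)$ with $4a\ge15b$ to obtain $ch_f^s\le 15/4$.

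The main obstacle is step (i) and its relatives: proving that adjacent or clustered degree-$3$ vertices are reducible in the $b$-fold list setting, where residual lists can overlap adversarially. Here I would first try a strengthened induction hypothesis, à la Thomassen, that allows some precolored or reduced-list outer boundary vertices, so that precolored neighbors can be exploited.
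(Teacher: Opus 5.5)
Your outline has the right overall shape (minimal counterexample, reducible configurations, discharging), but it is missing both parts that carry the proof, and the specific choices you make would fail.

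\textbf{The reduction mechanism.} Colouring $G-V(H)$ and then colouring $H$ from residual lists breaks down exactly where you noticed. Two adjacent vertices of $H$, each with two neighbours outside $H$, may get the same residual $7m$-list, and then no $4m$-fold colouring exists. This is the central difficulty of the proof, not a local refinement, and your remedies do not address it. A single edge joining two $3$-vertices is never shown reducible; with $H$ that edge there is no vertex with at most one outside neighbour, so minimality gives nothing. The paper only excludes $(3,3,3)$-paths, $(3,3,4,3)$-paths, $(3,3,4,4,3)$-paths and similar, and pays $3$-vertices with one $3$-neighbour in the discharging. Your Thomassen-type strengthened hypothesis is unspecified and not what is needed.

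The missing idea is the paper's Key Lemma. Apply minimality to $G-U_H$, where $U_H\neq\emptyset$ consists of the vertices of $H$ with at most one neighbour outside $H$. The vertices with exactly two outside neighbours, which must induce copies of $P_1,\dots,P_4$ or $K_{1,3}$, are then still coloured by the minimal colouring $\phi$. Since $\phi$ colours them properly from lists of size $7m$, any two adjacent such vertices have an $m$-set in each list difference. Precolouring these sets lowers lists \emph{and demands}, reducing the problem to $(f_H,g_H)$-choosability with pairs such as $(6m,3m)$, $(5m,3m)$, $(4m,2m)$, $(4m,m)$. Combined with the saving operations $\langle u\mid v,m\rangle$ and $\langle\{u_1,u_2\}\mid v,m^*\rangle$ (via the three-sets lemma) and degenerate deletions, this is what proves the roughly two dozen configurations reducible.

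Your Hall-type lemma is also ill-posed as stated. With demand $4m$ at every vertex, a vertex with residual list $3m$ can never be coloured, so reducing the demand is essential.

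\textbf{The discharging.} As sketched, it does not work. With charges $d(v)-4$ and $|f|-4$, all $4$-vertices and $4$-faces carry zero charge. Your rules pay a $3$-vertex only from incident $5^+$-faces and from $5^+$-neighbours at rate $(d-4)/d$. This fails in two cases:
\begin{itemize}
\item A $3$-vertex with no $3$-neighbour, whose three neighbours are $5$-vertices and which lies only on $4$-faces, receives $3\cdot\frac{1}{5}=\frac{3}{5}<1$.
\item A $3$-vertex whose three neighbours are $4$-vertices, lying on three $(3,4,5^+,4)$-faces, receives nothing.
\end{itemize}
Nothing in your list excludes these configurations, and the paper's reducible configurations do not exclude them either; the paper pays for them by charge. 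It uses $c(v)=3d(v)-10$ and $c(f)=2|f|-10$. Then $4$-vertices hold charge $2$ and pay their $3$-neighbours $\frac{1}{3}$ or $\frac{1}{2}$, as well as their incident $4$-faces. The $4$-faces (charge $-2$) are fed by $5^+$-vertices according to a fine classification (types $1$--$3$ and the families $\mathcal{F}_t$), and each case is justified by a specific reducible configuration.

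So the proposal identifies the framework but supplies neither a working reduction engine nor a workable discharging, which together are the whole content of the proof. Your remark that the bound on $ch^s_f$ needs the argument for general $(a,b)$ with $4a\ge 15b$ is reasonable, since for instance $(19,5)$ is not covered by any $(15m,4m)$. It is secondary to the gaps above.
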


	The paper is organized as follows. In Section~\ref{sec-pre}, we introduce a new language to describe how to extend a partial coloring of a graph to a complete coloring, and provide several useful lemmas. In Section~\ref{sec-RC}, we show that certain  subgraphs are reducible in a minimum counterexample. In Section~\ref{sec-discharging}, we apply the discharging method to obtain a contradiction, thereby proving Theorem~\ref{main-thm}.
	
	\section{Preliminaries}
	\label{sec-pre}
	For a graph $G$ and a vertex $u \in V(G)$, $N_G(u)$ is the set of neighbors of $u$, and $d_G(u)=|N_G(u)|$ is the degree of $u$. For brevity, we simply write $d(u)$ when the underlying graph is clear from the context. For a subset $S \subseteq V(G)$, $G-S$ is the graph obtained from $G$ by deleting the vertices in $S$ together with all edges incident to them, and $G[S]$ is the subgraph of $G$ induced by $S$. For a subgraph $H$ of $G$ and a vertex $u \in V(G)$ (not necessarily in $V(H)$), $N_H(u)=N_G(u)\cap V(H)$ and $d_H(u)=|N_H(u)|$.
	
	For $f \in \mathbb{N}^G$ and a subset $U \subseteq V(G)$,
	let $f|_U$ denote the restriction of $f$ to $U$.
	When no confusion arises, we simply write $f$ instead of $f|_U$.
	The same convention applies to the restriction $L|_U$ of a list assignment $L$.
	Let $\mathcal{T}$ denote the collection of triples $(G,L,g)$, where $G$ is a graph,
	$L$ is a list assignment of $G$, and $g \in \mathbb{N}^G$ specifies the number of
	colors required at each vertex. 
	
	Unlike ordinary $1$-fold coloring, where a vertex can be discarded once it is
	assigned a color, in multiple list coloring a vertex $v$ with $g(v)>0$ must
	remain under consideration until all its color demands are satisfied.
	Accordingly, colors may be assigned incrementally, while both the list
	assignment and the demand function are updated to reflect these partial
	assignments. To formalize this process, we describe such local updates as operations acting on
	triples $(G,L,g) \in \mathcal{T}$, which allows us to track changes in a
	systematic way and avoids repeating routine update arguments in later proofs.
	
	\subsection{Local reduction operations}
	
	We begin by formalizing the operation of assigning colors to a subset of vertices,
	while leaving the remaining demands to be satisfied later.
	
	\begin{definition}\label{def-parcol}
		Let $(G,L,g)\in\mathcal T$ and let $\phi$ be a partial $(L,g)$-coloring of $G$
		on $U\subseteq V(G)$.  (We interpret $\phi(v)=\emptyset$ for $v\notin U$.)
		The {\rm partial coloring operation}
		\[
		\mathsf{ParCol}_{\phi,U}:(G,L,g)\mapsto (G,L',g')
		\]
		is defined by, for each $v\in V(G)$,
		\[
		g'(v)=g(v)-|\phi(v)|,\quad \text{and} \quad
		L'(v)=L(v)\setminus\Bigl(\phi(v)\cup\bigcup_{u\in N_G(v)}\phi(u)\Bigr).
		\]
		We say $\mathsf{ParCol}_{\phi,U}$ is {\bf legal} if $|L'(v)|\ge g'(v)$ for all $v\in V(G)$.
	\end{definition} 
	
	\begin{remark}
		\label{rmk1}
		We mainly use the following special cases of the partial coloring operation
		$\mathsf{ParCol}_{\phi,U}$.
		\begin{enumerate}[label=(\arabic*)]
			\item \label{rmk1-1}
			If $U=\{u\}$ and $\phi(u)=A$, we write it as 
			$\mathsf{ParCol}(u,A)$.
			
			\item \label{rmk1-2}
			If $U=\{u\}$, $uv \in E(G)$, and $\phi$ satisfies
			$\phi(u)\subseteq L(u)\setminus L(v)$ and $|\phi(u)|=k$,
			we write it as $\mathsf{ParCol}(u\mid v,k)$.
			
			\item \label{rmk1-3}
			If $U=\{u_1,u_2\}$, $v$ is a common neighbor of $u_1$ and $u_2$, and $\phi$ satisfies
			$\phi(u_1)=S\cup R$ and $\phi(u_2)=T\cup R$ for some
			$S\subseteq L(u_1)\setminus L(v)$,
			$T\subseteq L(u_2)\setminus L(v)$,
			$R\subseteq L(u_1)\cap L(u_2)\cap L(v)$,
			and $|S|+|T|+|R|=k$,
			we write it as $\mathsf{ParCol}(\{u_1,u_2\}\mid v, k^*)$.
		\end{enumerate}
	\end{remark}

	Note that in Remark~\ref{rmk1}\ref{rmk1-2}, the partial coloring guarantees that the vertex $u$
	is assigned $k$ colors without reducing the available colors of its neighbor $v$,
	and hence saves $k$ colors for $v$.
	In Remark~\ref{rmk1}\ref{rmk1-3}, the partial coloring assigns a total of $k+|R|$ colors to $u_1$ and $u_2$, while using only $|R|$ colors from $L(v)$, and therefore saves $k$ colors
	with respect to $v$.
	
	In many situations, a vertex can be safely removed once its list is large
	enough compared with the remaining  color demands of its neighbors.
	This allows us to reduce the graph while preserving the possibility of
	extending a coloring.
	
\begin{definition}
		The {\rm degenerate deletion operation}
		$\mathsf{DegDel}(u):\mathcal T\to\mathcal T$
		is defined by
		\[
		\mathsf{DegDel}(u)(G,L,g)=(G',L',g'),
		\]
		where $G'=G-u$, $L'=L|_{V(G')}$, and $g'=g|_{V(G')}$.
		We say that $\mathsf{DegDel}(u)$ is {\bf legal} if
		$|L(u)| \ge g(u) + \sum_{v\in N_G(u)} g(v)$.
	\end{definition}

	\begin{definition}
		A {\bfseries reduction scheme} is an operator
		$\Omega=\mathsf{Red}(\theta_1,\theta_2, \ldots, \theta_k):\mathcal T\to\mathcal T$,
		where for each $i$, one of the following holds:
		$\theta_i=\langle u\rangle$, representing the degenerate deletion operation
		$\mathsf{DegDel}(u)$;
		$\theta_i=\langle U, \phi\rangle$, representing the partial coloring operation
		$\mathsf{ParCol}_{\phi, U}$.  
		The reduction scheme $\Omega$ is defined recursively by
		\[
		\mathsf{Red}(\langle u\rangle)(G,L,g)=\mathsf{DegDel}(u)(G,L,g),
		\quad
		\mathsf{Red}(\langle U, \phi\rangle)(G,L,g)=\mathsf{ParCol}_{\phi, U}(G,L,g),
		\] 
		and for $k\ge 2$,
		\[
		\mathsf{Red}(\theta_1,\theta_2, \ldots, \theta_k)(G,L,g)
		=
		\mathsf{Red}(\theta_k)\bigl(
		\mathsf{Red}(\theta_1,\theta_2, \ldots, \theta_{k-1})(G,L,g)
		\bigr).
		\]
		We say that the reduction scheme
		$\mathsf{Red}(\theta_1, \ldots, \theta_k)$ is {\bf legal}
		for $(G,L,g)\in\mathcal T$ if
		$\mathsf{Red}(\theta_1, \ldots, \theta_{k-1})$ is legal for $(G,L,g)$ and
		$\theta_k$ is legal for
		$\mathsf{Red}(\theta_1, \ldots, \theta_{k-1})(G,L,g)$.
	\end{definition}
	
	\begin{remark}
		In the construction of reduction schemes, we will use the following
		special forms of $\langle U,\phi\rangle$ corresponding to the partial coloring
		operations introduced in Remark~\ref{rmk1}:
		\[
		\theta = \langle u, A\rangle, \quad
		\theta = \langle u \mid v, k\rangle \quad\text{and}\quad
		\theta = \langle \{u_1,u_2\} \mid v, m^*\rangle,
		\]
		which represent $\mathsf{ParCol}(u,A)$, $\mathsf{ParCol}(u\mid v,k)$, and
		$\mathsf{ParCol}(\{u_1,u_2\}\mid v, m^*)$, respectively.
	\end{remark}
	
	\begin{observation}\label{obs-reduce}
		Let $(G,L,g)\in\mathcal T$ and let
		$\Omega=\mathsf{Red}(\theta_1,\theta_2, \ldots, \theta_k)$
		be a reduction scheme that is legal for $(G,L,g)$.
		If $\Omega$ removes all vertices of $G$, then $G$ admits an $(L,g)$-coloring.
	\end{observation}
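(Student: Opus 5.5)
We argue by induction on the number $k$ of operations in the scheme, proving the stronger statement: if $\Omega=\mathsf{Red}(\theta_1,\ldots,\theta_k)$ is legal for $(G,L,g)$ and $\Omega(G,L,g)=(G',L',g')$ with $G'$ admitting an $(L',g')$-coloring, then $G$ admits an $(L,g)$-coloring. The observation is the special case where $G'$ is empty, since the empty graph trivially has an $(L',g')$-coloring. Writing $\Omega$ as $\theta_1$ followed by $\mathsf{Red}(\theta_2,\ldots,\theta_k)$, which is legal for $\theta_1(G,L,g)$ by the recursive definition of legality, it suffices to prove the one-step claim: if a single operation $\theta$ is legal for $(G,L,g)$ and $\theta(G,L,g)$ is colorable, then $(G,L,g)$ is colorable.

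\emph{Partial coloring step.} Let $\theta=\langle U,\phi\rangle$, and let $C'$ be an $(L',g')$-coloring of $G$, where $L'$ and $g'$ are as in Definition~\ref{def-parcol}. Define $C(v)=\phi(v)\cup C'(v)$. We check three things.
\begin{itemize}
\item $C(v)\subseteq L(v)$, because $\phi(v)\subseteq L(v)$ and $L'(v)\subseteq L(v)$.
\item $|C(v)|=|\phi(v)|+g'(v)=g(v)$, since $C'(v)\subseteq L'(v)$ is disjoint from $\phi(v)$.
\item For an edge $uv$, the sets $C(u)$ and $C(v)$ are disjoint. Indeed, $\phi(u)\cap\phi(v)=\emptyset$ because $\phi$ is a partial $(L,g)$-coloring. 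Next, $C'(v)\cap\phi(u)=\emptyset$ because $L'(v)$ excludes $\bigcup_{w\in N(v)}\phi(w)$, and symmetrically $C'(u)\cap\phi(v)=\emptyset$. Finally, $C'(u)\cap C'(v)=\emptyset$ because $C'$ is a proper coloring.
\end{itemize}
Legality ($|L'(v)|\ge g'(v)$) is what guarantees that the reduced triple lies in $\mathcal T$, so the induction can continue.

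\emph{Degenerate deletion step.} Let $\theta=\langle u\rangle$, and let $C'$ be an $(L,g)$-coloring of $G-u$. The colors forbidden at $u$ are those in $\bigcup_{v\in N(u)}C'(v)$, and there are at most $\sum_{v\in N_G(u)}g(v)$ of them. Legality gives $|L(u)|\ge g(u)+\sum_{v\in N_G(u)} g(v)$, so at least $g(u)$ colors of $L(u)$ remain available, and we choose $C(u)$ among them. Neighbors of $u$ that were deleted earlier in the scheme do not matter here, because the condition is applied to the current graph.

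There is no real obstacle. The only subtle point is that the induction must run on the suffix of the scheme, so that each step uses only the colorability of the triple produced after it.
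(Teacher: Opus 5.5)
Your argument is correct, and it is the routine backward-extension argument the paper leaves implicit (the observation is stated without proof): undo the operations one at a time, restoring $\phi(v)$ after a partial coloring and using the legality inequality to color a degenerately deleted vertex. Two minor points: the paper's recursive definition peels off the last operation, so inducting on the prefix $\mathsf{Red}(\theta_1,\ldots,\theta_{k-1})$ matches it more directly than your suffix induction; and your one-step extension for $\mathsf{ParCol}$ never uses its legality, since that holds automatically once the reduced triple is colorable, whereas what it does use is that $\phi$ is proper on $U$ and $|\phi(v)|\le g(v)$.
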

	
	\subsection{Nice subgraphs and associated functions}
	
	In this section, we develop a framework for proving reducibility of configurations.
	We introduce a notion of \emph{nice} subgraphs and associate with each such subgraph
	a pair of functions $(f_H,g_H)$. We then prove a key lemma that will be used repeatedly
	in reducibility arguments.

	\begin{definition}
		For a subgraph $H$ of $G$, let
		\[
		U_H=\{u\in V(H)\mid d_H(u)\ge d_G(u)-1\},
		\]
		and let $F_H$ be the graph induced by $V(H)\setminus U_H$.
		We say that $H$ is \emph{nice} if $U_H\neq\emptyset$, $d_H(u)\ge d_G(u)-2$ for each $u \in V(H)$, and  each component of $F_H$ is isomorphic to a complete bipartite graph $K_{1,3}$ or an $\ell$-vertex path $P_\ell$ for some $\ell\in [4]$. 
	\end{definition}
	
	Note that if $H$ is nice, then $V(F_H)=\{u\in V(H)\mid d_G(u)-d_H(u)=2\}$. 
	
	\begin{definition}
		\label{def-(f_H,g_H)}
		For a nice subgraph $H$ of $G$, let $f_H,g_H \in \mathbb{N}^H$ be two functions
		defined as follows.
		\begin{enumerate}[label=(\arabic*)]
			\item \label{P1}
			If $u$ lies in a component of $F_H$ isomorphic to $P_1$, then
			$f_H(u)=7m$ and $g_H(u)=4m$.
			\item \label{P2}
			If $u$ lies in a component of $F_H$ isomorphic to $P_2$, then
			$f_H(u)=6m$ and $g_H(u)=3m$.
			\item \label{P3}
			If $u$ lies in a component of $F_H$ isomorphic to $P_3$, then
			$f_H(u)=5m$, and
			$g_H(u)=2m$ if $d_{F_H}(u)=2$, while
			$g_H(u)=3m$ if $d_{F_H}(u)=1$.
			\item \label{P4}
			If $u$ lies in a component of $F_H$ isomorphic to $P_4$, then
			$(f_H,g_H)(u)=(5m,3m)$ if $d_{F_H}(u)=1$, and $(f_H,g_H)(u)=(4m,2m)$ if $d_{F_H}(u)=2$.
			\item \label{K13}
			If $u$ lies in a component of $F_H$ isomorphic to $K_{1,3}$, then
			$f_H(u)=4m$, and
			$g_H(u)=m$ if $d_{F_H}(u)=3$, while
			$g_H(u)=3m$ if $d_{F_H}(u)=1$.
			\item \label{others}
			If $u\in V(H)\setminus V(F_H)$, then $g_H(u)=4m$, and
			\[
			f_H(u)=15m-4m(d_G(u)-d_{H}(u))
			-\sum_{v\in N_{F_H}(u)}(4m-g_H(v)).
			\]
		\end{enumerate}
	\end{definition}

	\begin{lemma}
		\label{key lemma}
		Let $G$ be a minimal triangle-free planar graph that is not $(15m,4m)$-choosable for some integer $m$. If $H$ is a nice subgraph of $G$, then $H$ is not $(f_H,g_H)$-choosable. Consequently, by Observation~\ref{obs-reduce}, 
		there is no legal reduction scheme that deletes all vertices of $H$. 
	\end{lemma}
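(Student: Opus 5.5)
We argue by contradiction: assume $H$ is a nice subgraph of $G$ and $L_H$ is an $f_H$-list assignment of $H$ for which $H$ admits an $(L_H,g_H)$-coloring $C_H$. The plan is to build a $(15m)$-list assignment $L$ of $G$, use the minimality of $G$ to $4m$-colour $G-V(H)$ from $L$, and then combine that colouring with $C_H$. Since $U_H\neq\emptyset$, the graph $G':=G-V(H)$ is a proper subgraph of $G$. It is triangle-free and planar, so by minimality it is $(15m,4m)$-choosable.

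Fix any $15m$-list assignment $L$ of $G$ and let $\varphi$ be an $(L,4m)$-coloring of $G'$. For each $u\in V(H)$, every neighbour of $u$ outside $H$ has used exactly $4m$ colours. Vertices in $U_H$ have at most one such neighbour, and vertices in $F_H$ have exactly two. Set
\[
L'(u)=L(u)\setminus\bigcup_{w\in N_G(u)\setminus V(H)}\varphi(w).
\]
Then $|L'(u)|\ge 15m-4m(d_G(u)-d_H(u))$, which gives $|L'(u)|\ge 7m$ for $u\in V(F_H)$.

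Vertices of $F_H$ need special treatment. Their lists are too small for full $4m$-colouring, which is exactly why $g_H(v)<4m$ is allowed for them. The idea is to colour each component $K$ of $F_H$ in two stages. In the first stage, before touching $U_H$, we colour part of the demand inside $K$, pre-assigning $4m-g_H(v)$ colours to each $v\in K$, chosen greedily inside $K$ so that within $K$ the colour sets remain disjoint. In the second stage, these pre-assigned colours are removed from the lists of neighbours of $K$ in $U_H$. That removal costs each $u\in U_H$ exactly $\sum_{v\in N_{F_H}(u)}(4m-g_H(v))$ colours, which is precisely the deduction in Definition~\ref{def-(f_H,g_H)}\ref{others}. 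It must also leave each $v\in K$ with at least $f_H(v)$ colours. The precise numbers in \ref{P1}--\ref{K13} should be exactly what makes this first stage feasible for each of the shapes $P_1,\dots,P_4,K_{1,3}$:
\begin{itemize}
\item For $P_1$: $7m$ colours are available, $3m$... no pre-colouring is needed beyond the $0$ extra, and $7m$ colours remain. Indeed $4m-g_H=0$, so $f_H=7m$ matches directly.
\item For $P_2$: each endpoint has $7m$ colours. We pick $m$ colours for each, disjoint from the other endpoint's, and then remove them from the neighbour, leaving $7m-m-m\ge 6m$... wait, $7m-2m=5m$.
\end{itemize}
So the $P_2$ count does not work with a naive choice. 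Instead we should choose the pre-assigned colours using the saving trick of $\mathsf{ParCol}(u\mid v,k)$: take colours outside the neighbour's list where possible, or common colours otherwise. Either choice costs the partner at most $m$ overall, which yields $6m$. The analogous savings, including $\mathsf{ParCol}(\{u_1,u_2\}\mid v,k^*)$ for the centre of $P_3$ and $K_{1,3}$, should produce the values $5m$ and $4m$.

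After the first stage we obtain lists $L''$ on $V(H)$ with $|L''(u)|\ge f_H(u)$. Truncating them to size $f_H$, the hypothesis gives an $(L'',g_H)$-coloring of $H$. Taking the union of this colouring with the pre-assigned colours and with $\varphi$ yields an $(L,4m)$-coloring of $G$. This holds for every $15m$-list assignment $L$, which contradicts the choice of $G$. The second statement of the lemma then follows from Observation~\ref{obs-reduce}.

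The main obstacle is the per-component precolouring inside $F_H$. For each of the five shapes we must verify that demands $4m-g_H(v)$ can be chosen consistently inside $K$ while each vertex keeps $f_H(v)$ colours. The delicate cases are the $P_4$ middles, which keep only $4m$ colours, and the $K_{1,3}$ centre, which has demand $3m$ pre-assigned while its three leaves each have $m$. For these I would split according to how much $L'$ of adjacent vertices overlap, using the savings operations from Remark~\ref{rmk1}.
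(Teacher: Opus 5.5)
\textbf{There is a genuine gap.} It is exactly where you ran into trouble with $P_2$.

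You apply minimality to $G-V(H)$ and take an arbitrary colouring $\varphi$. The residual lists on $F_H$ then carry no usable structure. For an edge $v_1v_2$ of $F_H$ it can happen that $L'(v_1)=L'(v_2)$ with $|L'(v_i)|=7m$. Then any disjoint $m$-sets given to $v_1$ and $v_2$ leave each with only $5m$ colours.

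Your proposed repair does not work. Adjacent vertices cannot share colours, so a colour that $v_1$ takes from $L'(v_1)\cap L'(v_2)$ is a colour lost by $v_2$. Worse, in this situation $F_H$ is not $(L',4m)$-colourable at all, since $8m$ colours would be needed from a $7m$-set. So $\varphi$ has no extension to $G$, and no precolouring scheme can rescue the argument. The same obstruction affects the $P_3$, $P_4$ and $K_{1,3}$ components.

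A telling sign: $G-V(H)$ is a proper subgraph regardless of whether $U_H\neq\emptyset$, so your appeal to $U_H\neq\emptyset$ does no work. That is the hint that the wrong subgraph was chosen.

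\textbf{The missing idea, used in the paper, is to apply minimality to $G-U_H$.} This graph is proper precisely because $U_H\neq\emptyset$. The resulting colouring $\phi$ also colours $F_H$.

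Trim each $L_0(v)$, $v\in V(F_H)$, to exactly $7m$ colours while keeping $\phi(v)$. Then every edge $v_iv_j$ of $F_H$ satisfies $|L_0(v_i)\cup L_0(v_j)|\ge|\phi(v_i)\cup\phi(v_j)|=8m$. Hence there are private $m$-sets in $L_0(v_i)\setminus L_0(v_j)$ and in $L_0(v_j)\setminus L_0(v_i)$.

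Now precolour as follows:
\begin{itemize}
\item each endpoint or leaf gets a private $m$-set avoiding its neighbour's list;
\item each internal vertex (the $P_3$ middle, the two $P_4$ middles, the $K_{1,3}$ centre) gets a $2m$- or $3m$-set containing its private sets towards each neighbour.
\end{itemize}
This yields exactly the pairs $(f_H,g_H)$ of Definition~\ref{def-(f_H,g_H)}.

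For $P_4$ one must additionally choose $B_2\subseteq\phi(v_2)\setminus L_0(v_1)$ and $A_3\subseteq\phi(v_3)\setminus L_0(v_4)$, which are disjoint because $\phi(v_2)\cap\phi(v_3)=\emptyset$. This again uses $\phi$ on $F_H$.

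One then discards $\phi$ on $V(H)$ and recolours all of $H$. The rest of your argument then goes through as you describe: the loss of $\sum_{v\in N_{F_H}(u)}(4m-g_H(v))$ colours at vertices $u\in U_H$, and the final contradiction.
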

	
	\begin{proof} 
		If $f_H(v) < g_H(v)$ for some $v \in V(H)$, then $H$ is not $(f_H,g_H)$-choosable obviously. Thus we assume that $f_H(v) \geq g_H(v)$ for every vertex $v \in V(H)$.
		
		Let $L$ be a $15m$-list assignment of $G$. By the minimality of $G$, the graph $G-U_H$ has an $(L,4m)$-coloring $\phi$ since $U_H\neq\emptyset$. Define a list  assignment $L_0$ of $H$ by $$L_0(u) = L(u) \setminus \{\phi(v) \mid v \in N_{G - V(H)}(u)\}, ~~u\in V(H).$$ Thus by deleting extra colors not in $\phi(u)$, we assume that
		\begin{itemize}
			\item $|L_0(u)| =  7m$ for each $u \in V(F_H)$, and
			\item $|L_0(v)| = 15m-4m|N_{G-U_H}(v)| = 15m-4m(d_G(v)-d_H(v))$ for each $v \in V(H)\setminus V(F_H)$.
		\end{itemize}    
		Moreover, since $F_H$ is a subgraph of $G-U_H$ and $\phi$ is an $(L,4m)$-coloring of $G-U_H$, it follows that $F_H$ has an $(L_0,4m)$-coloring. If $H$ had an $(L_0,4m)$-coloring $\phi'$, then $\phi\cup\phi'$ would be an $(L,4m)$-coloring of $G$, a contradiction. Hence $H$ is not $(L_0,4m)$-colorable. 
		
		If $F_H$ is an independent set, then by Definition \ref{def-(f_H,g_H)}\ref{P1} and~\ref{others}, $L_0$ is an $f_H$-list assignment and $g_H(u)=4m$ for each $u \in V(H)$. Consequently,  $H$ is not $(f_H,g_H)$-colorable, as desired.

		Now assume that some component of $F_H$ is isomorphic to $K_{1,3}$ or to $P_\ell$ for some $\ell\in\{2,3,4\}$, and let $F_H^*$ be the union of all such components. In the remainder of the proof, we construct a partial coloring $\psi$ of $F_H^*$ such that the operation $\mathsf{ParCol}_{\psi,V(F_H^*)}$ is legal and $|L_1(v)| \ge f_H(v)$ for each $v \in V(H)$, with $g_1=g_H$, where
		\[(H,L_1,g_1)=\mathsf{ParCol}_{\psi,V(F_H^*)}(H,L_0,4m).\]
		This implies that $H$ is not $(L_1, g_1)$-colorable, and hence is not $(f_H,g_H)$-choosable.
		Therefore, it suffices to define $\psi$ on each component of $F_H^*$.
		
		Let $F$ be such a component. We define $\psi$ on $F$ by considering four cases.

		\medskip
		\noindent 
		{\bf Case 1.} $F\cong P_2$, say $F=v_1v_2$.
		\medskip
		
		Since $F$ admits an $(L_0,4m)$-coloring, we have $|L_0(v_1)\cup L_0(v_2)|\ge 8m$. As $|L_0(v_1)|=|L_0(v_2)|=7m$, there exists an $m$-set $A\subseteq L_0(v_1)\setminus L_0(v_2)$ and an $m$-set $B\subseteq L_0(v_2)\setminus L_0(v_1)$. Let $\psi(v_1)=A$ and $\psi(v_2)=B$. It is easy to see that $|L_1(v_i)|\ge 6m=f_H(v_i)$ and $g_1(v_i)=3m$ for each $i\in\{1,2\}$, satisfying Definition~\ref{def-(f_H,g_H)}\ref{P2} and~\ref{others}.

		\medskip
		\noindent 
		{\bf Case 2.} $F\cong P_3$, say $F=v_1v_2v_3$.
		\medskip
		
		Since $F$ admits an $(L_0,4m)$-coloring, there exist an $m$-set
		$A\subseteq L_0(v_1)\setminus L_0(v_2)$ and an $m$-set
		$C\subseteq L_0(v_3)\setminus L_0(v_2)$.
		Moreover, there exist an $m$-set $B_1\subseteq L_0(v_2)\setminus L_0(v_1)$
		and an $m$-set $B_2\subseteq L_0(v_2)\setminus L_0(v_3)$.
		Choose a $2m$-set $B\subseteq L_0(v_2)$ such that $B_1\cup B_2\subseteq B$.
		Let $\psi(v_1)=A$, $\psi(v_2)=B$, and $\psi(v_3)=C$.
		It is clear that $|L_1(v_i)|\ge 5m=f_H(v_i)$ for each $i \in [3]$.
		Moreover, $g_1(v_1)=g_1(v_3)=3m$ and $g_1(v_2)=2m$, satisfying
		Definition~\ref{def-(f_H,g_H)}\ref{P3} and~\ref{others}.

		\medskip
		\noindent 
		{\bf Case 3.} $F\cong P_4$, say $F=v_1v_2v_3v_4$.
		\medskip
		
		Since $F$ admits an $(L_0,4m)$-coloring, we can choose, for each $i\in [3]$, an $m$-set $A_i\subseteq L_0(v_i)\setminus L_0(v_{i+1})$, and for each $j\in\{2,3,4\}$, an $m$-set
		$B_j\subseteq L_0(v_j)\setminus L_0(v_{j-1})$. Moreover, these sets can be chosen so that $A_3\cap B_2=\emptyset$. Indeed, fix an $(L_0,4m)$-coloring $\varphi$ of $F$. Since $|L_0(v_i)|=7m$, the disjointness of $\varphi(v_1)$ and $\varphi(v_2)$ implies $|\varphi(v_2)\setminus L_0(v_1)|\ge m$, and similarly $|\varphi(v_3)\setminus L_0(v_4)|\ge m$. Hence we may choose $B_2\subseteq \varphi(v_2)\setminus L_0(v_1)$ and $A_3\subseteq \varphi(v_3)\setminus L_0(v_4)$, which are disjoint because $\varphi(v_2)\cap \varphi(v_3)=\emptyset$. Note that the sets assigned to consecutive vertices are disjoint, and $A_2\cap B_3=\emptyset$. Now pick a $2m$-set $B\subseteq L_0(v_2)\setminus A_3$ containing $A_2\cup B_2$, and a $2m$-set
		$C\subseteq L_0(v_3)\setminus B$ containing $A_3\cup B_3$. Set $\psi(v_1)=A_1$, $\psi(v_2)=B$, $\psi(v_3)=C$ and $\psi(v_4)=B_4$. It is not hard to verify that $|L_1(v_i)| \geq 5m = f_H(v_i)$ and $g_1(v_i) = 3m$ for each $i \in \{1, 4\}$,  and $|L_1(v_j)| \geq 4m = f_H(v_j)$ and $g_1(v_j) = 2m$ for each $j \in \{2, 3\}$, satisfying  Definition~\ref{def-(f_H,g_H)}\ref{P4} and~\ref{others}.

		\medskip
		\noindent 
		{\bf Case 4.} $F\cong K_{1,3}$ with center $v_4$ and leaves $v_1, v_2, v_3$
		\medskip
		
		Since $F$ admits an $(L_0,4m)$-coloring, for each $i\in [3]$ there exist an $m$-set $A_i\subseteq L_0(v_i)\setminus L_0(v_4)$ and an $m$-set $B_i\subseteq L_0(v_4)\setminus L_0(v_i)$. Choose a $3m$-set $B\subseteq L_0(v_4)$ such that $B_1\cup B_2\cup B_3\subseteq B$. Let $\psi(v_i)=A_i$ for each $i\in[3]$ and $\psi(v_4)=B$. It is straightforward to verify that $|L_1(v_i)|\ge 4m=f_H(v_i)$ for each $i \in [4]$; moreover, $g_1(v_1)=g_1(v_2)=g_1(v_3)=3m$ and $g_1(v_4)=m$, satisfying Definition~\ref{def-(f_H,g_H)}\ref{K13} and~\ref{others}.
		
		Finally, for every vertex $u \in U_H$, we have $g_1(u) = 4m =g_H(u)$, and
		\[
		|L_1(u)| \geq |L_0(u)|-|\bigcup_{v \in N_{F^*_H}(u)}\psi(v)| \geq
		15m-4m(d_G(u)-d_{H}(u)) 
		-\sum_{v\in N_{F_H}(u)}(4m-g_H(v))= f_H(u).
		\]  
		
		Thus $|L_1(v)| \geq f_H(v)$ and $g_1(v) = g_H(v)$ for any $v \in V(H)$. For each $v\in V(H)$, choose an $f_H(v)$-subset $L_2(v)$ of $L_1(v)$. Since $H$ is not $(L_1,g_H)$-colorable, it is not $(L_2,g_H)$-colorable. Hence $H$ is not $(f_H,g_H)$-choosable, as desired.
	\end{proof}
	
	\subsection{Two key techniques for constructing reduction schemes}
	
	\begin{lemma}
		\label{lem-three sets}
		Suppose $m$ is a positive integer, $A,B,C$ are three sets. If $|A|+|B| \geq |C|+ m$, then there exist sets $S \subseteq A\setminus C$, $T \subseteq B\setminus C$ and $R \subseteq A \cap B \cap C$ such that $|S|+|T|+|R|=m$.
	\end{lemma}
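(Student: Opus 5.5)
Write $A\cap B\cap C = X$. We want sets $S\subseteq A\setminus C$, $T\subseteq B\setminus C$ and $R\subseteq X$ with $|S|+|T|+|R|=m$. Because we may shrink sets freely, it is enough to show that the largest possible total $|A\setminus C|+|B\setminus C|+|A\cap B\cap C|$ is at least $m$. Then we fill $S$ from $A\setminus C$ first, then $T$ from $B\setminus C$, then $R$ from $A\cap B\cap C$, stopping once the total reaches exactly $m$.

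To bound this total, I split $A$ and $B$ according to $C$:
\[
|A| = |A\setminus C| + |A\cap C|, \qquad |B| = |B\setminus C| + |B\cap C|.
\]
By inclusion--exclusion inside $C$,
\[
|A\cap C| + |B\cap C| = |(A\cup B)\cap C| + |A\cap B\cap C| \le |C| + |A\cap B\cap C|.
\]
Adding the two splittings gives
\[
|A|+|B| \le |A\setminus C| + |B\setminus C| + |C| + |A\cap B\cap C|.
\]
Combining this with the hypothesis $|A|+|B|\ge |C|+m$ yields
\[
|A\setminus C|+|B\setminus C|+|A\cap B\cap C|\ge m,
\]
which is exactly the bound needed.

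The greedy truncation then produces $S$, $T$, $R$ with the required containments and total size exactly $m$.

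The only step needing care is the inclusion--exclusion estimate inside $C$; everything else is routine. Implicitly the sets are finite, which is the setting in which the lemma is applied to lists.
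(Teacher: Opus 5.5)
Your proof is correct and is essentially the paper's argument. Both derive $|A\setminus C|+|B\setminus C|+|A\cap B\cap C|\ge m$ by inclusion--exclusion together with the bound $|(A\cup B)\cap C|\le |C|$, and then choose subsets of the required sizes; the only difference is that you split $A$ and $B$ along $C$ first, whereas the paper starts from $|A|+|B|=|A\cup B|+|A\cap B|$.
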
  
	\begin{proof}
		By the assumption $|A|+|B|\ge |C|+m$ and the inclusion-exclusion principle,
		we have
		\begin{align*} 
			m	&\le |A|+|B|-|C|  = |A\cup B| + |A\cap B| - |C| \\
			&= |(A\cup B)\cap C| + |(A\cup B)\setminus C|
			+ |A\cap B| - |C| \\
			&\le |(A\cup B)\setminus C| + |A\cap B| \\
			&= |(A\setminus C)\cup (B\setminus C)| + |A\cap B| \\
			&= |A\setminus C| + |B\setminus C|
			- |(A\cap B)\setminus C| + |A\cap B| \\
			&= |A\setminus C| + |B\setminus C| + |A\cap B \cap C|.
		\end{align*}
		The above inequality guarantees the existence of subsets
		$S\subseteq A\setminus C$, $T\subseteq B\setminus C$ and
		$R\subseteq A\cap B \cap C$ satisfying $|S|+|T|+|R|=m$. 
	\end{proof}
	
	Suppose that we aim to prove that a graph $H$ is $(L_H,g_H)$-colorable, and that
	there exists a vertex $v\in V(H)$ satisfying
	\begin{equation}
		\label{eq-almost-deg}
		|L_H(v)| + m \ge g_H(v) + \sum_{u\in N_H(v)} g_H(u). 
	\end{equation} 
	for some integer $m \ge 1$. Note that $\langle v\rangle$ maybe not legal for $(H,L_H,g_H)$.
	
	If there exists a neighbor $u$ of $v$ such that
	\begin{equation}
		\label{eq-u mid v}
		|L_H(u)\setminus L_H(v)| \ge m, 
	\end{equation} 
	then we may first apply  $\langle u \mid v, m\rangle$, and then apply  $\langle v\rangle$. It is easy to see that if $\langle u \mid v, m\rangle$ is legal for $(H,L_H,g_H)$, then $\langle v\rangle$  is  legal for $\mathsf{Red}(\langle u \mid v, m\rangle)(H,L_H,g_H)$.
	
	Similarly, if there exist two neighbors $u_1,u_2$ of $v$ such that
	\begin{equation}
		\label{eq-two sum}
		|L_H(u_1)|+|L_H(u_2)| \ge |L_H(v)| + m,
	\end{equation}  
	then by Lemma~\ref{lem-three sets}, we may first apply  
	$\langle \{u_1,u_2\}\mid v, m^*\rangle$, and then apply  
	$\langle v\rangle$.  
	We now show that if $\langle \{u_1,u_2\}\mid v, m^*\rangle$ is legal for $(H,L_H,g_H)$, then $\langle v\rangle$  is  legal  for $\mathsf{Red}(\langle \{u_1,u_2\}\mid v, m^*\rangle)(H,L_H,g_H)$. 
	Indeed, assume that $(H',L', g') = \mathsf{ParCol}(\{u_1,u_2\}\mid v, m^*)(H,L_H, g_H)$, and the corresponding coloring is $\phi$, with $\phi(u_1)= S\cup R$ and $\phi(u_2) = T \cup R$, where $S\subseteq L(u_1)\setminus L(v)$, $T \subseteq L(u_2)\setminus L(v)$, and $R \subseteq L(u_1)\cap L(u_2) \cap L(v)$, and $|S|+|R|+|T|=m$. Since $S \cap R =\emptyset$ and $T \cap R =\emptyset$, we have $g'(u_1)=g_H(u_1)-|S|-|R|$, $g'(u_2)=g_H(u_2)-|T|-|R|$  and that $\langle v\rangle$  is legal by the following:
	\begin{align*}
		|L'(v)| & = |L_H(v)|-|R| \geq  g_H(v) + g_H(u_1) + g_H(u_2)-m -|R| + \sum_{w\in N_H(v)\setminus \{u_1,u_2\}} g_H(w)  \\
		& =   g_H(v) + g_H(u_1)+g_H(u_2)- |S|- |T| - 2|R| + \sum_{w\in N_H(v)\setminus \{u_1,u_2\}} g_H(w) \\
		& = \big(g_H(u_1) - |S|-|R|\big) + \big(g_H(u_2) - |T|-|R|\big) + g_H(v) +  \sum_{w \in N_H(v)\setminus \{u_1,u_2\}} g_H(w)   \\
		& = g'(v) + g'(u_1) + g'(u_2) + \sum_{w\in N_H(v)\setminus \{u_1,u_2\}} g'(w).
	\end{align*} 
	
	The arguments above illustrate two standard ways of applying the saving operations
	$\langle u\mid v, m\rangle$ and $\langle \{u_1,u_2\}\mid v, m^*\rangle$,
	which are often used in combination with $\langle v\rangle$.
	In the remainder of the paper, whenever one of these reduction patterns is used,
	we will not repeat the verification, even though $\langle v\rangle$ does not always immediately follow.
	Instead, it will be clear from the context that the corresponding inequalities
	\eqref{eq-almost-deg}, \eqref{eq-u mid v}, and \eqref{eq-two sum} are satisfied.

	\section{Reducible configurations}
	\label{sec-RC}
	
	For a positive integer $k$, a vertex $v$ is called a \emph{$k$-vertex} if $d(v)=k$,
	a \emph{$k^+$-vertex} if $d(v)\ge k$, and a \emph{$k^-$-vertex} if $d(v)\le k$.
	For a vertex $v$, a neighbor $u$ of $v$ is called a \emph{$k$-neighbor} if $u$ is a
	$k$-vertex. Similarly, we define $k^+$-neighbors and $k^-$-neighbors.
	Moreover, for integers $k$ and $t$, a vertex $v$ is called a \emph{$k_t$-vertex}
	if $v$ is a $k$-vertex and has exactly $t$ neighbors of degree $3$, and a
	\emph{$k_{\ge t}$-vertex} if $v$ is a $k$-vertex and has at least $t$ neighbors of
	degree $3$. 
	
	Let $d_1,d_2, \ldots, d_k$ be $k$ integers (not necessarily distinct).
	A \emph{$(d_1,d_2, \ldots, d_k)$-path} is a path $v_1v_2\cdots v_k$ such that
	$d(v_i)=d_i$ for all $i\in [k]$.
	A \emph{$(d_1,d_2, \ldots, d_k)$-cycle} is a cycle $v_1v_2\cdots v_kv_1$ such that
	$d(v_i)=d_i$ for all $i\in [k]$.
	Here, each $d_i$ may be replaced by any of the degree notations defined above to
	specify a degree condition.
	For example, a $(4_1,5_{\ge 2},3)$-path is a path $v_1v_2v_3$ such that $v_1$ is a
	$4_1$-vertex, $v_2$ is a $5_{\ge 2}$-vertex, and $v_3$ is a $3$-vertex.
	
	Let $G$ be a minimal counterexample to Theorem~\ref{main-thm} with respect to the number of vertices,
	and let $L$ be a fixed $15m$-list assignment of $G$.
	Then $G$ is not $(L,4m)$-colorable, whereas every proper subgraph
	$G'\subsetneq G$ is $(L|_{V(G')},4m)$-colorable.
	
	We now proceed to describe a family of reducible configurations.  
	In most of the proofs below, we first define a subgraph $H$ of $G$.
	It is straightforward to verify that $H$ is nice, and we omit this routine verification.
	By Definition~\ref{def-(f_H,g_H)}, we then specify the functions $(f_H,g_H)$ on $H$.
	Throughout these proofs, $L_H$ denotes an arbitrary $f_H$-list assignment of $H$.

	\begin{claim}
		\label{FC-min-degree}
		$G$ has minimum degree at least $3$. 
	\end{claim}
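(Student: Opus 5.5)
We argue by contradiction. Suppose $G$ has a vertex $u$ with $d(u)\le 2$; the aim is to produce an $(L,4m)$-coloring of $G$. The most direct route bypasses the nice-subgraph machinery. By the minimality of $G$, the proper subgraph $G-u$ has an $(L|_{V(G-u)},4m)$-coloring $\phi$. The neighbors of $u$ use at most $4m\cdot d(u)\le 8m$ colors in total under $\phi$. Hence
\[
\Bigl|L(u)\setminus \bigcup_{v\in N_G(u)}\phi(v)\Bigr| \ge 15m-8m=7m\ge 4m,
\]
and we may choose any $4m$ of these remaining colors as $\phi(u)$. This extends $\phi$ to an $(L,4m)$-coloring of $G$, contradicting the choice of $G$.

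The argument can also be phrased in the paper's language. Take $H=G[\{u\}]$. Then $d_H(u)=0\ge d_G(u)-2$, so either $u\in U_H$ (if $d(u)\le 1$) or $u\in V(F_H)$ with $F_H\cong P_1$ (if $d(u)=2$). In the second case $U_H=\emptyset$, so $H$ is not nice, and Lemma~\ref{key lemma} would not apply as stated; this is why I prefer the direct extension argument above. In the case $d(u)\le 1$, $H$ is nice with
\[
f_H(u)=15m-4m\,d_G(u)\ge 11m\ge 4m=g_H(u),
\]
so $\langle u\rangle$ is a legal reduction scheme deleting all of $H$. This contradicts Lemma~\ref{key lemma}.

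There is no genuine obstacle here. The only point needing care is that $G-u$ is a proper subgraph and therefore colorable by minimality. The case $G=\{u\}$ is covered as well, since then $G-u$ is empty and $u$ trivially receives $4m$ colors from its $15m$-list.
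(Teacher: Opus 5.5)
Your proof is correct and matches the paper's: color $G-u$ by minimality, note that at most $8m$ colors of $L(u)$ are blocked by the neighbors, and extend with $4m$ of the remaining at least $7m$ colors. Your side remark is also right. For $d(u)=2$ the subgraph $G[\{u\}]$ has $U_H=\emptyset$, so it is not nice and Lemma~\ref{key lemma} does not apply. That is why the direct extension argument is the natural route, and it is the one the paper uses.
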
 
	\begin{proof}
		Suppose for a contradiction that there exists a vertex $v\in V(G)$ such that $d(v)\leq 2$.
		By the minimality of $G$, the graph $G-v$ has an $(L, 4m)$-coloring $\phi$. In $\phi$,
		each neighbor of $v$ has chosen $4m$ distinct colors, there remain at least  $15m-2\times 4m=7m$ colors in $L(v)$ that are not used by any neighbors of $v$ under $\phi$. Hence we can choose a set of $4m$ colors from those available colors and extend $\phi$ to an $(L, 4m)$-coloring of $G$, a contradiction.
	\end{proof}
	
	\begin{claim}
		\label{FC-star}
		For each $k\in\{3,4,5,6\}$, every $k$-vertex of $G$ has at most $k-2$ neighbors of degree $3$. 
		In particular, $G$ contains no $(3,3,3)$-path. 
	\end{claim}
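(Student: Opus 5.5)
The plan is to show that a $k$-vertex with many $3$-neighbours gives a nice subgraph $H$ that is in fact $(f_H,g_H)$-choosable, which contradicts Lemma~\ref{key lemma}. Suppose, for a contradiction, that a $k$-vertex $v$ with $k\in\{3,4,5,6\}$ has at least $k-1$ neighbours of degree $3$. Fix $k-1$ of them, $u_1,\dots,u_{k-1}$, and let $H$ be the star with centre $v$ and leaves $u_1,\dots,u_{k-1}$.

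First I check that $H$ is nice and compute $(f_H,g_H)$.
\begin{itemize}
\item We have $d_H(v)=k-1=d_G(v)-1$, so $v\in U_H\neq\emptyset$.
\item Each $u_i$ has $d_G(u_i)-d_H(u_i)=3-1=2$, so $u_i\in V(F_H)$.
\item The $u_i$ are pairwise non-adjacent, since two adjacent ones together with $v$ would form a triangle. Hence $F_H$ is an independent set and every component is a $P_1$.
\end{itemize}
By Definition~\ref{def-(f_H,g_H)}\ref{P1} and \ref{others}, $(f_H,g_H)(u_i)=(7m,4m)$ and $(f_H,g_H)(v)=(15m-4m,4m)=(11m,4m)$. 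By Lemma~\ref{key lemma} it suffices to show that this star is $(f_H,g_H)$-choosable.

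Let $L_H$ be an $f_H$-list assignment and write $S_i=L_H(v)\cap L_H(u_i)$. The whole task is to choose a $4m$-set $X\subseteq L_H(v)$ with $|X\cap S_i|\le 3m$ for every $i$. Once $X$ is fixed, colour $v$ with $X$; each $u_i$ then keeps at least $7m-3m=4m$ available colours, and since the leaves are pairwise non-adjacent we can finish with $\langle u_i\rangle$ for all $i$. The condition on $X$ is equivalent to $|X\setminus S_i|\ge m$, and each set $L_H(v)\setminus S_i$ has size at least $11m-7m=4m$.

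Before choosing $X$, I would simplify using the saving operation. If some leaf $u_i$ satisfies $|L_H(u_i)\setminus L_H(v)|\ge 4m$, colour it first inside $L_H(u_i)\setminus L_H(v)$ (that is, apply $\langle u_i\mid v,4m\rangle$) and delete it; this costs $v$ nothing. So we may assume $|S_i|>3m$ for every remaining leaf.

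The main obstacle is the existence of $X$ when $k-1=5$ leaves all have large $S_i$; for $k\le 4$ a naive greedy choice already works. The first thing I would try is a greedy or averaging argument. Build $X$ one colour at a time. At each step, among the leaves $i$ not yet satisfied (meaning $|X\setminus S_i|<m$), pick a colour of $L_H(v)\setminus X$ that lies outside as many of their sets $S_i$ as possible. This is controlled by the following counts.
\begin{itemize}
\item When $|X|=t$, each unsatisfied $i$ still has at least $4m-t$ colours of $L_H(v)\setminus X$ outside $S_i$.
\item These colours lie among only $11m-t$ candidates.
\item So some colour serves at least a $(4m-t)/(11m-t)$ fraction of the unsatisfied leaves.
\end{itemize}
Tracking the total deficiency $\sum_i \max\{0,\,m-|X\setminus S_i|\}$, which starts at $(k-1)m\le 5m$, should show that it reaches $0$ before $|X|=4m$.

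If the counting turns out too tight, my fallback is to first apply $\langle \{u_i,u_j\}\mid v,m^*\rangle$ via Lemma~\ref{lem-three sets} to pairs of leaves. Since $|L_H(u_i)|+|L_H(u_j)|=14m\ge 11m+m$, each such pair saves $m$ colours for $v$. This lowers the demands of the leaves so that the remaining constraint on $X$ becomes weaker, and eventually $\langle v\rangle$ becomes legal.

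Finally, the ``in particular'' statement follows at once: in a $(3,3,3)$-path the middle vertex is a $3$-vertex with two neighbours of degree $3$, which exceeds the bound $3-2=1$.
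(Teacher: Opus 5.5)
Your set-up is correct. $H$ is nice, with $(f_H,g_H)=(11m,4m)$ at the centre and $(7m,4m)$ at the leaves. For a star, $(L_H,g_H)$-colourability is exactly equivalent to finding a $4m$-set $X\subseteq L_H(v)$ with $|X\cap C_i|\ge m$ for all $i$, where $C_i=L_H(v)\setminus S_i$ has at least $4m$ elements.

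There is a gap: the only nontrivial case, $k=6$, is not proved. ``Should show that it reaches $0$ before $|X|=4m$'' is the entire content of the claim there. Your fallback also fails as described. With five leaves present, $\langle v\rangle$ requires $|L(v)|\ge 4m+5\cdot 4m=24m$, a deficit of $13m$ against $|L_H(v)|=11m$. Each application of $\langle\{u_i,u_j\}\mid v,m^*\rangle$ removes only $m$ of that deficit, so pair savings can close the gap only after most leaves are gone. The paper's scheme, in your notation, does exactly that:
\begin{itemize}
\item apply a single pair saving $\langle\{u_1,u_2\}\mid v,m^*\rangle$;
\item for $i=3,4,5$, let the centre save for the leaf, $\langle v\mid u_i,m\rangle$, which makes $\langle u_i\rangle$ legal;
\item finish with $\langle v\rangle,\langle u_1\rangle,\langle u_2\rangle$.
\end{itemize}
At the step $\langle v\rangle$ the pair saving covers precisely the remaining deficit of $m$.

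Your greedy route can be completed, but it needs a sharper count than $(4m-t)/(11m-t)$. Call $i$ unsatisfied if $|X\cap C_i|<m$, and let $D=\sum_i\max\{0,m-|X\cap C_i|\}$.
\begin{itemize}
\item For unsatisfied $i$, $|C_i\setminus X|\ge 4m-(m-1)=3m+1$, independently of $t$.
\item While at least four leaves are unsatisfied, $4(3m+1)>11m\ge |L_H(v)\setminus X|$, so some colour lies in two unsatisfied $C_i$. Adding it lowers $D$ by at least $2$.
\item Otherwise some colour still lowers $D$ by at least $1$.
\item Let $t_1$ be the number of steps taken while four or more leaves are unsatisfied. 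Afterwards $D\le\min\{3m,\,5m-2t_1\}$, so the total number of steps is at most $t_1+\min\{3m,5m-2t_1\}\le 4m$. Use the bound $3m$ if $t_1\le m$ and $5m-2t_1$ otherwise.
\item Pad $X$ arbitrarily to size $4m$; padding preserves $|X\cap C_i|\ge m$.
\end{itemize}
For $k\le 5$ the initial deficiency is $(k-1)m\le 4m$, and the trivial decrease of $1$ per step suffices. Your text only asserts this for $k\le 4$, leaving $k=5$ unaddressed.

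With this filled in, your argument is a valid alternative. It replaces the paper's fixed reduction scheme by an exact set-system characterisation of the star plus a short counting lemma. The paper's version is shorter because it reuses the saving operations and Lemma~\ref{lem-three sets} already set up for later claims.
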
 
	\begin{proof}
		Suppose to the contrary,  $u$ is a $k$-vertex of $G$ and $v_1, v_2, \ldots, v_{k-1}$ are $3$-neighbors of $u$. Let $H=G[u, v_1,v_2, \ldots, v_{k-1}]$. By Definition~\ref{def-(f_H,g_H)},  $(f_H,g_H)(u)=(11m,4m)$, and
		$(f_H,g_H)(v_i)=(7m,4m)$ for each $i\in\{1,2, \ldots, k-1\}$.  
		If $k \in \{3,4,5\}$, then let \[\Omega=\mathsf{Red}\bigl(
		\langle u\mid v_1, m\rangle,\ \langle v_1\rangle,\ \langle u\mid v_2,m\rangle,\
		\langle v_2\rangle,\ \ldots,\  \langle u\mid v_{k-2},m\rangle,\ \langle u\rangle,\ \langle v_{k-1} \rangle 
		\bigr).\]
		If $k=6$, let 
		\[
		\Omega=\mathsf{Red}\bigl(
		\langle \{v_1, v_2\} \mid u, m^*\rangle,\   
		\langle u\mid v_3,m\rangle,\  \langle v_3\rangle,\ \langle u\mid v_4,m\rangle,\
		\langle v_4\rangle,\  \langle u\mid v_5, m\rangle,\ \langle v_5\rangle,\ \langle u\rangle,\ \langle v_1\rangle,\ \langle v_2\rangle
		\bigr).\] In each case, it is easy to verify that $\Omega$ is legal for $(H,L_H,g_H)$
		and deletes all vertices of $H$, which contradicts Lemma~\ref{key lemma}. 
	\end{proof} 
	
	\begin{claim}
		\label{FC-k_{k-2}-3_1}
		Suppose that $k\in\{4,5,6\}$. 
		If a $k$-vertex of $G$ has $k-2$ neighbors of degree $3$, 
		then none of these $k-2$ vertices has a $3$-neighbor. 
		In particular, $G$ contains no $(3,3,4,3)$-path. 
	\end{claim}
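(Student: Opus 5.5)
The plan is to argue by contradiction via Lemma~\ref{key lemma}. Let $u$ be a $k$-vertex with $3$-neighbors $v_1,\ldots,v_{k-2}$, and suppose $v_1$ has a $3$-neighbor $w\neq u$. Since $G$ is triangle-free, $w$ is not adjacent to $u$, and $v_1$ is adjacent to no other $v_j$.

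\textbf{Generic case: $w$ has no neighbor among $v_2,\ldots,v_{k-2}$.} Take $H=G[\{u,v_1,\ldots,v_{k-2},w\}]$ and read off its structure.
\begin{itemize}
\item $d_G(u)-d_H(u)=2$, each $v_j$ with $j\ge 2$ has $d_H(v_j)=1$, and $d_H(w)=1$. So all of these lie in $F_H$.
\item $d_H(v_1)=2=d_G(v_1)-1$, so $U_H=\{v_1\}\neq\emptyset$.
\item $F_H$ consists of the star centered at $u$ with leaves $v_2,\ldots,v_{k-2}$, together with the isolated vertex $w$. This star is $P_2$ for $k=4$, $P_3$ (center $u$) for $k=5$, and $K_{1,3}$ for $k=6$. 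Hence $H$ is nice.
\end{itemize}
By Definition~\ref{def-(f_H,g_H)}, $(f_H,g_H)(w)=(7m,4m)$ and $g_H(v_1)=4m$, and the remaining values are as follows.
\begin{itemize}
\item $k=4$: $(f_H,g_H)=(6m,3m)$ on $u$ and $v_2$, and $f_H(v_1)=15m-4m-m=10m$.
\item $k=5$: $(5m,3m)$ on $v_2,v_3$, $(5m,2m)$ on $u$, and $f_H(v_1)=9m$.
\item $k=6$: $(4m,3m)$ on the leaves, $(4m,m)$ on $u$, and $f_H(v_1)=8m$.
\end{itemize}

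The reduction scheme is the same in all three cases:
\[
\Omega=\mathsf{Red}\bigl(\langle v_2\rangle,\ldots,\langle v_{k-2}\rangle,\ \langle v_1\mid u,m\rangle,\ \langle u\rangle,\ \langle v_1\rangle,\ \langle w\rangle\bigr).
\]
Legality is checked step by step.
\begin{itemize}
\item Each leaf $v_j$ ($j\ge2$) satisfies $f_H(v_j)\ge g_H(v_j)+g_H(u)$ in every case ($6m\ge 6m$, $5m\ge 5m$, $4m\ge 4m$), so the $\langle v_j\rangle$ are legal.
\item After these deletions, $u$ has only the neighbor $v_1$, and $|L_H(u)|+m= g_H(u)+g_H(v_1)$. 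So $u$ is exactly $m$ short, which is the situation of \eqref{eq-almost-deg}.
\item Since $f_H(v_1)-f_H(u)\ge 4m$, we have $|L_H(v_1)\setminus L_H(u)|\ge m$, so \eqref{eq-u mid v} holds. Thus $\langle v_1\mid u,m\rangle$ followed by $\langle u\rangle$ is legal.
\item Afterwards $v_1$ has $f_H(v_1)-g_H(u)$ colors left, which is at least $8m,7m,7m$ respectively. This is at least its residual demand $3m$ plus $g_H(w)=4m$, so $\langle v_1\rangle$ is legal.
\item Finally $w$ is isolated with $7m\ge 4m$, so $\langle w\rangle$ is legal.
\end{itemize}
This legal scheme deletes all of $H$, contradicting Lemma~\ref{key lemma}.

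\textbf{Degenerate case: $w$ is adjacent to some $v_j$ with $j\ge 2$.} This creates a $4$-cycle $uv_1wv_j$. Now $v_j$ and $w$ have $d_H=2$, so they move into $U_H$, and $F_H$ becomes a smaller star at $u$ (possibly $P_1$). I would handle this with the same template: delete the remaining leaves of $u$ first, then save $m$ at $u$ from $v_1$, delete $u$, and finish the $4$-cycle remnant greedily. The extra vertices in $U_H$ have lists near $15m-4m$, which gives plenty of room.

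The main obstacle is this degenerate case, together with re-checking niceness and the exact $f_H$ values when $F_H$ changes shape. The generic case is a routine count.
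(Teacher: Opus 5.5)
Your generic case is correct and is essentially the paper's proof. The paper uses the same induced subgraph $H=G[u,v_1,\dots,v_{k-2},w]$ with the same values $(f_H,g_H)(v_1)=((14-k)m,4m)$, $(f_H,g_H)(u)=((10-k)m,(7-k)m)$, $(f_H,g_H)(v_j)=((10-k)m,3m)$ and $(f_H,g_H)(w)=(7m,4m)$. The only difference is where the saving of $m$ is spent. After deleting the leaves, the paper applies $\langle v_1\mid w,m\rangle$, $\langle w\rangle$, $\langle v_1\rangle$, $\langle u\rangle$, whereas you apply $\langle v_1\mid u,m\rangle$, $\langle u\rangle$, $\langle v_1\rangle$, $\langle w\rangle$; both orders are legal. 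Two bookkeeping slips are harmless. First, after $\langle u\rangle$ the list of $v_1$ still has $f_H(v_1)-m=(13-k)m$ colors, because a degenerate deletion removes nothing from the neighbors' lists. Second, $w$ is only guaranteed $6m$ colors at the end, since the $m$ colors given to $v_1$ may lie in $L_H(w)$. Both bounds still suffice.

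The one gap is the degenerate case. You only sketch it and call it the main obstacle, but it never occurs. If $wv_j\in E(G)$ for some $j\ge 2$, then the $3$-vertex $w$ has the two $3$-neighbors $v_1$ and $v_j$. So $v_1wv_j$ is a $(3,3,3)$-path, which Claim~\ref{FC-star} forbids. This one-line citation is exactly how the paper disposes of the case, and with it your generic case is the whole proof.

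You should not rely on the sketched template in any event. For $k=4$ the hypothetical $H$ would be the $4$-cycle $uv_1wv_2$, with $(f_H,g_H)(u)=(7m,4m)$ and two neighbors of demand $4m$. Saving only $m$ at $u$ then cannot make $\langle u\rangle$ legal, since $7m<4m+3m+4m$.

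Finally, state the ``in particular'' part explicitly. A $(3,3,4,3)$-path $xv_1uv_2$ gives a $4$-vertex $u$ with two $3$-neighbors, one of which has a $3$-neighbor. This is the case $k=4$.
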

	\begin{proof}
		Assume to the contrary that there exists a $k$-vertex $u$ having $k-2$ neighbors
		$v_1,v_2,\ldots,v_{k-2}$ of degree $3$, and that one of these vertices, say $v_1$, has a
		$3$-neighbor $u_1$. By Claim~\ref{FC-star}, $u_1v_i \notin E(G)$ for each $2 \leq i \leq k-2$. Let $H=G[u,u_1,v_1, v_2, \ldots, v_{k-2}]$.    
		By Definition~\ref{def-(f_H,g_H)}, according to the values of $k$, we have that
		$(f_H,g_H)(u_1)=(7m,4m)$, $(f_H,g_H)(v_1)=\big((14-k)m,4m\big)$,
		$(f_H,g_H)(u)=((10-k)m,(7-k)m)$, and
		$(f_H,g_H)(v_i)=\big((10-k)m,3m\big)$ for each $i\in\{2,\ldots,k-2\}$.  
		Then 
		$$\mathsf{Red}\bigl(
		\langle v_2\rangle,\ \langle v_3\rangle,\  \ldots,\ \langle v_{k-2}\rangle,\  
		\langle v_1 \mid u_1,m\rangle,\ \langle u_1\rangle,\ \langle v_1\rangle,\ \langle u\rangle 
		\bigr)$$
		is legal for $(H,L_H, g_H)$ and deletes all vertices of $H$, which contradicts Lemma~\ref{key lemma}. 
	\end{proof}

	\begin{claim}
		\label{FC-(4-,4-,4-,3-)}
		$G$ contains no $(4^-,4^-,4^-,3)$-cycle. 
	\end{claim}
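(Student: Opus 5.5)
We take a $(4^-,4^-,4^-,3)$-cycle $C=v_1v_2v_3v_4v_1$ with $d(v_4)=3$ and $d(v_i)\in\{3,4\}$ for $i\in[3]$, and argue, as in the previous claims, that $H=G[\{v_1,v_2,v_3,v_4\}]$ is nice and has a legal reduction scheme deleting all its vertices. This contradicts Lemma~\ref{key lemma}. Since $G$ is triangle-free, $C$ has no chords, so $H=C$ and $d_H(v_i)=2$ for every $i$.

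We first check that $H$ is nice. The $3$-vertices of $C$ satisfy $d_G-d_H=1$, so they lie in $U_H$; in particular $v_4\in U_H\neq\emptyset$. The $4$-vertices satisfy $d_G-d_H=2$, so $F_H$ is the subgraph induced by the $4$-vertices among $v_1,v_2,v_3$. Up to the symmetry $v_1\leftrightarrow v_3$, the possibilities are:
\begin{enumerate}[label=(\roman*)]
\item $F_H$ is empty;
\item $F_H$ is a single $P_1$;
\item $F_H$ is two $P_1$'s, namely $\{v_1\},\{v_3\}$;
\item $F_H$ is a $P_2$, namely $v_1v_2$ or $v_2v_3$;
\item $F_H$ is the $P_3$ given by $v_1v_2v_3$.
\end{enumerate}
All of these are allowed components, so $H$ is nice. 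We read off $(f_H,g_H)$ from Definition~\ref{def-(f_H,g_H)} in each case.

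The tightest case should be (v), where $d(v_1)=d(v_2)=d(v_3)=4$. Here
\[
(f_H,g_H)(v_1)=(f_H,g_H)(v_3)=(5m,3m),\quad (f_H,g_H)(v_2)=(5m,2m),\quad (f_H,g_H)(v_4)=(9m,4m),
\]
since $f_H(v_4)=15m-4m-m-m=9m$. Every vertex fails the degenerate deletion bound, so a saving step is needed first. The vertex $v_4$ misses it by exactly $m$, because $9m+m=4m+3m+3m$. Moreover $|L_H(v_1)|+|L_H(v_3)|=10m\ge |L_H(v_4)|+m$, so \eqref{eq-two sum} holds and we apply $\langle\{v_1,v_3\}\mid v_4,m^*\rangle$ followed by $\langle v_4\rangle$.

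After this, $v_1$ and $v_3$ have demand at most $3m$ and lists still of size $5m$, because their neighbours $v_2,v_4$ were not yet coloured when the partial colouring was made. The only remaining neighbour of $v_1$ is $v_2$, with demand $2m$, so $\langle v_1\rangle$ is legal since $5m\ge 3m+2m$; similarly $\langle v_3\rangle$ is legal. Finally $v_2$ is isolated. Its list has size at least $5m-m=4m$, since it lost at most $|S|+|T|+|R|=m$ colours, and its demand is $2m$, so $\langle v_2\rangle$ is legal. The full scheme is
\[
\mathsf{Red}\bigl(\langle\{v_1,v_3\}\mid v_4,m^*\rangle,\ \langle v_4\rangle,\ \langle v_1\rangle,\ \langle v_3\rangle,\ \langle v_2\rangle\bigr).
\]

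The remaining cases have larger slack, and we would handle them with the same or a simpler scheme.
\begin{itemize}
\item In case (i), every vertex has $(f_H,g_H)=(11m,4m)$, so $\langle v_4\rangle$ is legal immediately ($11m\ge 12m-m$ via one $\langle u\mid v_4,m\rangle$ if needed), and the resulting path is then deleted greedily.
\item In cases (ii) and (iii), each $P_1$-vertex has $(7m,4m)$. The $U_H$-vertices have $f_H=11m$, or $7m$ when they are adjacent to a $P_1$-vertex. In particular, in case (iii) both $v_2$ and $v_4$ have $(7m,4m)$. Here we use $\langle v_1\mid v_2,m\rangle$ or the pair version on $v_4$ as in the displayed inequalities, and then delete greedily.
\item In case (iv), $(f_H,g_H)=(6m,3m)$ on the $P_2$ and $f_H=10m$ on its $U_H$-neighbours.
\end{itemize}
We expect the main work to be the routine legality bookkeeping in cases (iii) and (iv), where two vertices sit at the threshold simultaneously. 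In each of these cases we would first look for a single saving of size $m$ at one threshold vertex that makes the whole cycle greedily deletable.
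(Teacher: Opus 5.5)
Your framework is the paper's: take $H=C$ (chordless, since $G$ is triangle-free), check that it is nice, exhibit a legal reduction scheme, and contradict Lemma~\ref{key lemma}. Your case (v) scheme $\mathsf{Red}(\langle\{v_1,v_3\}\mid v_4,m^*\rangle,\langle v_4\rangle,\langle v_1\rangle,\langle v_3\rangle,\langle v_2\rangle)$ is exactly the paper's first case after relabeling. One small slip there: after the partial colouring the list of $v_i$ ($i\in\{1,3\}$) has size $5m-|\phi(v_i)|$, not $5m$. The demand drops by the same amount, so legality is unaffected.

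The gap is in the other genuine cases. You do not carry them out, and what you do say about them is wrong.

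\textbf{Case (iii): wrong values.} By Definition~\ref{def-(f_H,g_H)}\ref{others}, a $P_1$-neighbour $v$ contributes $4m-g_H(v)=0$. So the two $3$-vertices $v_2,v_4$ have $(f_H,g_H)=(11m,4m)$, not $(7m,4m)$; the same error appears in your description of case (ii). With your values this case could not be closed at all, since a $4$-cycle with identical $7m$-lists and demand $4m$ is not colourable (each edge needs $8m$ distinct colours).

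\textbf{Case (iii): one saving is not enough.} Even with the correct values, your stated plan of a single $m$-saving followed by greedy deletion fails. After $\langle\{v_1,v_3\}\mid v_4,m^*\rangle,\langle v_4\rangle$, the vertices $v_1$ and $v_3$ are each still short by exactly $m$: their list and demand drop together, while $v_2$ still demands $4m$. In general $v_2$ is short too. For example, if $R=\emptyset$ and $S\cup T\subseteq L_H(v_2)$, then $|L'(v_2)|=11m-|S\cup T|<11m=g'(v_2)+g'(v_1)+g'(v_3)$.

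A second saving is needed. The operation $\langle v_2\mid v_1,m\rangle$ is available, because $|L'(v_2)|\ge 10m>7m\ge |L'(v_1)|$. After it, $\langle v_1\rangle,\langle v_2\rangle,\langle v_3\rangle$ are legal in that order. This is the paper's scheme for its third case, written in your labeling.

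\textbf{Case (iv): not written out.} Take $v_1v_2$ as the $P_2$, so $(f_H,g_H)=(6m,3m)$ on $v_1,v_2$ and $(10m,4m)$ on $v_3,v_4$. Then the same scheme as in your case (v) is legal; one checks each step with equality or slack.

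\textbf{Cases (i) and (ii): no reduction needed.} Three $3$-vertices on a $4$-cycle are consecutive and form a $(3,3,3)$-path, which Claim~\ref{FC-star} excludes; this is how the paper disposes of them. Your sketches there are also not sound. In (i), $\langle v_4\rangle$ is not legal immediately. The saving $\langle u\mid v_4,m\rangle$ needs $|L_H(u)\setminus L_H(v_4)|\ge m$, which can fail for two equal $11m$-lists. Likewise, $\langle v_1\mid v_2,m\rangle$ in (ii) is not guaranteed when $|L_H(v_1)|=7m<11m=|L_H(v_2)|$. The pair version would be required.
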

	\begin{proof}
		Assume to the contrary that $H=v_1v_2v_3v_4v_1$ is a cycle of $G$ with 
		$d(v_1)=3$ and $d(v_i)\in\{3,4\}$ for each $i\in\{2,3,4\}$. 
		By Claim~\ref{FC-star}, we cannot have $d(v_2)=d(v_4)=3$. 
		Without loss of generality, assume $d(v_2)=4$. 
		Again, we cannot have $d(v_3)=d(v_4)=3$. 
		We now consider the following three cases to determine $(f_H,g_H)$ by Definition~\ref{def-(f_H,g_H)}. 
		\begin{itemize}
			\item If $d(v_3)=4$ and $d(v_4)=4$, then 
			$(f_H,g_H)(v_1)=(9m,4m)$,
			$(f_H,g_H)(v_i)=(5m,3m)$ for $i\in\{2,4\}$, and
			$(f_H,g_H)(v_3)=(5m,2m)$. 
			\item If $d(v_3)=4$ and $d(v_4)=3$, then
			$(f_H,g_H)(v_i)=(10m,4m)$ for $i\in\{1,4\}$ and
			$(f_H,g_H)(v_j)=(6m,3m)$ for $j\in\{2,3\}$. 
			\item If $d(v_3)=3$ and $d(v_4)=4$, then
			$(f_H,g_H)(v_i)=(11m,4m)$ for $i\in\{1,3\}$ and
			$(f_H,g_H)(v_j)=(7m,4m)$ for $j\in\{2,4\}$.
		\end{itemize} 
		
		For the first two cases, let $\Omega= \mathsf{Red}\bigl(\langle \{v_2, v_4\} \mid v_1, m^*\rangle,\
		\langle v_1 \rangle,\ \langle v_2\rangle,\ \langle v_4\rangle,\ \langle v_3\rangle\bigr)$. 
		For the third case, let $\Omega= \mathsf{Red}\bigl(\langle \{v_2, v_4\} \mid v_1, m^*\rangle,\
		\langle v_1 \rangle,\ \langle v_3 \mid v_2, m\rangle,\ \langle v_2\rangle,\ \langle v_3\rangle,\ \langle v_4\rangle \bigr)$. In each case, $\Omega$ is legal for $(H,L_H, g_H)$ and deletes all vertices of $H$, which contradicts Lemma~\ref{key lemma}.  
	\end{proof} 
	
	\begin{claim}
		\label{FC-5_3/4_24_1}
		No $4_{\ge 1}$-vertex is adjacent to a $4_2$- or $5_3$-vertex. 
	\end{claim}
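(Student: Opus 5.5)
We argue by contradiction and apply Lemma~\ref{key lemma}. Suppose $u$ is a $4$-vertex with a $3$-neighbor $w$, and $u$ is adjacent to a vertex $v$ that is either a $4_2$-vertex or a $5_3$-vertex. Let $x_1,\dots,x_t$ be the $3$-neighbors of $v$, where $t=2$ if $d(v)=4$ and $t=3$ if $d(v)=5$.

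\textbf{Step 1: the shape of $H$.} Since $G$ is triangle-free, $w\ne x_i$, $wv\notin E(G)$ and the $x_i$ are pairwise nonadjacent. If $w$ were adjacent to some $x_i$, then $wuvx_iw$ would be a $(3,4,4^-,3)$-cycle when $d(v)=4$, contradicting Claim~\ref{FC-(4-,4-,4-,3-)}. When $d(v)=5$ this cycle is not directly excluded, so I would treat that subcase separately. Possibly Claim~\ref{FC-k_{k-2}-3_1} applies to it (taking $k=5$ at $v$, with $x_i$ having the $3$-neighbor $w$); otherwise I would enlarge $H$ to contain the $4$-cycle.

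\textbf{Step 2: choose $H$ and compute $(f_H,g_H)$.} In the generic case take $H=G[\{w,u,v,x_1,\dots,x_t\}]$, which is a tree. Here $v$ is the only vertex of $U_H$. The forest $F_H$ consists of the edge $uw$ (a $P_2$) together with the isolated vertices $x_1,\dots,x_t$. By Definition~\ref{def-(f_H,g_H)}:
\begin{itemize}
\item $(f_H,g_H)(u)=(f_H,g_H)(w)=(6m,3m)$;
\item $(f_H,g_H)(x_i)=(7m,4m)$ for each $i$;
\item $(f_H,g_H)(v)=(10m,4m)$ if $d(v)=4$, and $(6m,4m)$ if $d(v)=5$.
\end{itemize}
We then build a legal reduction scheme deleting all of $H$, which contradicts Lemma~\ref{key lemma}. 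The vertex $w$ satisfies $|L_H(w)|=6m=g_H(w)+g_H(u)$, so $\langle w\rangle$ is legal at any stage and can be taken first. It remains to handle the star centered at $v$ with leaves $u,x_1,\dots,x_t$.

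\textbf{Step 3 (main obstacle): the reduction scheme.} For $d(v)=4$, the vertex $v$ has demand $4m+3m+4m+4m=15m$ against a list of size $10m$, so a saving of $5m$ at $v$ is needed. The operation $\langle\{x_1,x_2\}\mid v,4m^*\rangle$ is available by Lemma~\ref{lem-three sets}, since $14m\ge 10m+4m$, but it only saves $4m$. The missing $m$ has to come from $u$.

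I would first try to save at $u$ through $w$ rather than through $v$. Since $w$ is removed last, $u$ can instead be deleted degenerately once $v$ is fully colored. The plan is:
\begin{enumerate}
\item Apply $\langle\{x_1,x_2\}\mid v,4m^*\rangle$.
\item Color $v$ greedily with $4m$ colors, choosing them to avoid $L_H(u)$ as much as possible.
\item Delete $u$, then $w$.
\end{enumerate}
Failing this, I would do a case split according to whether $|L_H(u)\setminus L_H(v)|\ge m$, in which case $\langle u\mid v,m\rangle$ supplies the extra saving, or not. In the latter case $L_H(u)$ is almost contained in $L_H(v)$, and I would use $\langle\{u,x_1\}\mid v,k^*\rangle$ with $k=3m$ combined with a saving from $x_2$. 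The case $d(v)=5$ is analogous but tighter, with a list of size $6m$ at $v$. There I expect to use two pair-savings among $\{x_1,x_2,x_3,u\}$ in the spirit of the $k=6$ case of Claim~\ref{FC-star}. Verifying legality in these subcases is the step I expect to be hardest.
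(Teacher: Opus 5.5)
Your Steps 1--2 are correct for $d(v)=4$, and starting with $\langle w\rangle$ matches the paper. The gap is Step~3, which is the whole content of the claim: you never produce a scheme, and the direction you commit to fails with the operations you propose. That direction is to build up a $5m$ saving at $v$ so that $v$ becomes degenerate while $u,x_1,\dots,x_t$ are all still present. Here is an instance where it breaks. Take $d(v)=4$, $L_H(v)=R_0\cup P$ with $|R_0|=4m$ and $|P|=6m$, $L_H(u)=P$, and $L_H(x_i)=R_0\cup X_i$, where $P=X_1\cup X_2$ is a partition into two $3m$-sets.

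\begin{itemize}
\item Since $L_H(x_i)\setminus L_H(v)=\emptyset$, the operation $\langle\{x_1,x_2\}\mid v,4m^*\rangle$ is forced to use $S=T=\emptyset$ and $R=R_0$. Afterwards $v$ and $u$ both have list $P$ of size $6m$ but still need $4m+3m=7m$ colors. The instance is now uncolorable, however $v$ is colored next.
\item Here $L_H(u)\subseteq L_H(v)$, so $\langle u\mid v,m\rangle$ is unavailable.
\item Your fallback $\langle\{u,x_1\}\mid v,3m^*\rangle$ is forced to take $R=X_1$. Then $v$ and $x_2$ have the same list $R_0\cup X_2$ of size $7m$ and each needs $4m$ colors, so this branch is also dead.
\end{itemize}

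Yet this $H$ is $(L_H,g_H)$-colorable, as the scheme below shows.

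There is a second error. Your value $f_H(v)=6m$ for $d(v)=5$ is wrong: $N_H(v)=\{u,x_1,x_2,x_3\}$, so $d_G(v)-d_H(v)=1$ and $f_H(v)=15m-4m-(4m-3m)=10m$. With $6m$ the target would actually be false: if $L_H(x_i)\supseteq L_H(v)$, each $x_i$ keeps only $3m$ colors once $v$ is colored, so no reduction could exist. Also, Claim~\ref{FC-k_{k-2}-3_1} applies directly with $k=4$ and $k=5$, giving $wx_i\notin E(G)$ in both cases, so no separate subcase is needed.

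The missing idea is to spend $v$'s surplus in the other direction: make each $x_i$ deletable rather than $v$. After $\langle w\rangle$, apply $\langle v\mid x_i,m\rangle,\ \langle x_i\rangle$ for $i=1,\dots,t$, then $\langle v\rangle,\ \langle u\rangle$.
\begin{itemize}
\item Before round $i$, $v$ has a list of size $(11-i)m$, so $|L(v)\setminus L(x_i)|\ge(4-i)m\ge m$.
\item The new colors avoid $x_i$'s current list, so $x_i$ has lost at most the $(i-1)m$ colors given to $v$ earlier. It keeps at least $(8-i)m=4m+(4-i)m$ colors, which is exactly what $\langle x_i\rangle$ needs now that $v$'s demand is $(4-i)m$.
\item At the end $v$ has list $(10-t)m\ge(4-t)m+3m$ with only $u$ left, and $u$ keeps at least $(6-t)m\ge 3m$ colors.
\end{itemize}
This is the paper's scheme, and it handles $d(v)\in\{4,5\}$ uniformly.
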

	\begin{proof}
		Assume to the contrary, $v_1$ is a $4_{\geq 1}$-vertex and $v_2$ is a $k_{k-2}$-vertex, where $k \in \{4,5\}$. Let $v_0$ be a $3$-neighbor of $v_1$, and let $v_3, \ldots, v_k$ be the $3$-neighbors of $v_2$. By Claim~\ref{FC-k_{k-2}-3_1}, $v_0v_i \notin E(G)$ for each $i \in \{3, \ldots, k\}$. Let $H=G[v_0, v_1,\ldots, v_k]$. By Definition~\ref{def-(f_H,g_H)}, $(f_H,g_H)(v_i)=(6m,3m)$ for $i\in\{0, 1\}$, $(f_H,g_H)(v_2)=(10m,4m)$, and $(f_H,g_H)(v_j)=(7m,4m)$ for $3 \leq j \leq k$.  
		Then
		$$\mathsf{Red}\bigl(
		\langle v_0\rangle,\   
		\langle v_2 \mid v_3,m\rangle,\ \langle v_3\rangle,\ \langle v_2 \mid v_4,m\rangle,\ \langle v_4\rangle,\ \ldots,\  \langle v_2 \mid v_k, m\rangle,\ \langle v_k\rangle,\  \langle v_2\rangle,\ \langle v_1\rangle
		\bigr)
		$$
		is legal for $(H,L_H, g_H)$ and deletes all vertices of $H$, which contradicts Lemma~\ref{key lemma}. 
	\end{proof}

	\begin{claim}
		\label{FC-5_24_2}
		No $5_{\geq 2}$-vertex has a $4_2$-neighbor. 
	\end{claim}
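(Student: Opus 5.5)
The plan is to build a nice subgraph $H$ around the forbidden adjacency, compute $(f_H,g_H)$ from Definition~\ref{def-(f_H,g_H)}, and give a legal reduction scheme that deletes all of $H$. That contradicts Lemma~\ref{key lemma}.

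\textbf{Setting up $H$.} Suppose to the contrary that $v_1$ is a $5_{\ge 2}$-vertex adjacent to a $4_2$-vertex $v_2$. Let $v_3,v_4$ be two $3$-neighbors of $v_1$, and let $v_5,v_6$ be the two $3$-neighbors of $v_2$. These four vertices are distinct and different from $v_1,v_2$, since a common neighbor of $v_1$ and $v_2$ would create a triangle.

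Consider edges among $v_3,\dots,v_6$. The edges $v_3v_4$ and $v_5v_6$ would create triangles. An edge $v_iv_j$ with $i\in\{3,4\}$ and $j\in\{5,6\}$ would give $v_j$ (a $3$-neighbor of the $4_2$-vertex $v_2$) a $3$-neighbor, contradicting Claim~\ref{FC-k_{k-2}-3_1}. Hence $H=G[v_1,\dots,v_6]$ is the tree with edges $v_1v_2,v_1v_3,v_1v_4,v_2v_5,v_2v_6$.

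Then $d_G-d_H$ equals $2$ at $v_1,v_3,v_4,v_5,v_6$ and equals $1$ at $v_2$. So $U_H=\{v_2\}$, and $F_H$ consists of the path $v_3v_1v_4\cong P_3$ together with the isolated vertices $v_5,v_6$. Therefore $H$ is nice, and Definition~\ref{def-(f_H,g_H)} gives:
\begin{itemize}
\item $(f_H,g_H)(v_1)=(5m,2m)$;
\item $(f_H,g_H)(v_3)=(f_H,g_H)(v_4)=(5m,3m)$;
\item $(f_H,g_H)(v_5)=(f_H,g_H)(v_6)=(7m,4m)$;
\item $(f_H,g_H)(v_2)=(15m-4m-2m,4m)=(9m,4m)$.
\end{itemize}

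\textbf{Reduction scheme.} I would use
\[
\mathsf{Red}\bigl(\langle v_3\rangle,\ \langle v_4\rangle,\ \langle v_2\mid v_5,m\rangle,\ \langle v_5\rangle,\ \langle v_2\mid v_6,m\rangle,\ \langle v_6\rangle,\ \langle v_2\rangle,\ \langle v_1\rangle\bigr).
\]
The legality checks, step by step, are as follows.
\begin{itemize}
\item $\langle v_3\rangle$ and $\langle v_4\rangle$ are legal since $5m\ge 3m+2m$.
\item For $\langle v_2\mid v_5,m\rangle$, note $|L(v_2)\setminus L(v_5)|\ge 9m-7m$, so the required $m$ colors exist. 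This leaves $v_2$ with demand $3m$. Then $\langle v_5\rangle$ is legal because $7m\ge 4m+3m$.
\item After the first step, $v_2$ has at least $8m$ colors while $v_6$ has at most $7m$, so $\langle v_2\mid v_6,m\rangle$ is possible. Even if $v_6$ lost $m$ colors in the first step, it keeps $6m\ge 4m+2m$, so $\langle v_6\rangle$ is legal.
\item Now $v_2$ has at least $7m$ colors, demand $2m$, and only neighbor $v_1$ with demand $2m$. So $\langle v_2\rangle$ is legal.
\item Finally $v_1$, whose list dropped to at least $3m$ from the $2m$ colors placed on $v_2$, is isolated with demand $2m$. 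So $\langle v_1\rangle$ is legal.
\end{itemize}
This deletes all of $H$, contradicting Lemma~\ref{key lemma}.

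The main obstacle is the ordering at the end. Deleting $v_1$ before $v_2$ fails, since $v_1$'s list $5m$ cannot cover its demand $2m$ plus $v_2$'s remaining demand $2m$ after the colors on $v_2$ are removed. So the savings must all be charged to $v_2$, and $v_1$ must be deleted last.
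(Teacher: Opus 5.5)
Your proposal is correct and follows the paper's proof exactly. You use the same subgraph $H=G[v_1,\dots,v_6]$ (with Claim~\ref{FC-k_{k-2}-3_1} ruling out edges between $\{v_3,v_4\}$ and $\{v_5,v_6\}$), the same values of $(f_H,g_H)$, and the identical reduction scheme, and you add the routine distinctness, niceness and legality checks that the paper leaves to the reader.
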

	\begin{proof}
		Assume to the contrary that $v_1$ is a $5_{\geq 2}$-vertex and $v_2$ is a $4_2$-vertex, with $v_1v_2 \in E(G)$. Let $v_3$ and $v_4$ be two $3$-neighbors of $v_1$, and let $v_5$ and $v_6$ be two $3$-neighbors of $v_2$. By Claim \ref{FC-k_{k-2}-3_1}, there are no edges between $\{v_3,v_4\}$ and $\{v_5,v_6\}$. Let $H=G[v_1,v_2, \ldots, v_6]$.  
		By Definition~\ref{def-(f_H,g_H)}, $(f_H,g_H)(v_1) = (5m,2m)$, $(f_H,g_H)(v_2)=(9m,4m)$,  $(f_H,g_H)(v_i) = (5m,3m)$ for  $i \in\{3,4\}$, and $(f_H,g_H)(v_j)=(7m,4m)$  for $j \in\{5,6\}$. 
		Then  \[\mathsf{Red}\bigl(
		\langle v_3\rangle,\ \langle v_4\rangle,\
		\langle v_2 \mid v_5, m\rangle,\ \langle v_5\rangle,\   
		\langle v_2 \mid v_6, m\rangle,\ \langle v_6\rangle,\
		\langle v_2\rangle,\ \langle v_1\rangle
		\bigr)\]  
		is legal for $(H,L_H, g_H)$ and deletes all vertices of $H$, which contradicts Lemma~\ref{key lemma}. 
	\end{proof} 
	
	\begin{claim}
		\label{FC-(5_2/6_3,3,3,4)}
		$G$ contains no $(5_{\ge 2},3,3,4)$- or $(6_{\ge 3},3,3,4)$-cycle. 
	\end{claim}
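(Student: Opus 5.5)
The plan is to follow the template of the preceding claims. I will choose a nice subgraph $H$ consisting of the $4$-cycle together with the extra $3$-neighbours of the high-degree vertex. Then I read off $(f_H,g_H)$ from Definition~\ref{def-(f_H,g_H)} and exhibit a legal reduction scheme deleting all of $H$, which contradicts Lemma~\ref{key lemma}.

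\textbf{Setup and non-adjacencies.} Let $v_1v_2v_3v_4v_1$ be the cycle, with $d(v_2)=d(v_3)=3$ and $d(v_4)=4$.
\begin{itemize}
\item If $v_1$ is a $5_{\ge 2}$-vertex, then since $v_4$ has degree $4$, $v_1$ has a further $3$-neighbour $w\ne v_2$. Put $H=G[v_1,v_2,v_3,v_4,w]$.
\item If $v_1$ is a $6_{\ge 3}$-vertex, pick two further $3$-neighbours $w_1,w_2$ and put $H=G[v_1,v_2,v_3,v_4,w_1,w_2]$.
\end{itemize}
The only edges of $H$ should be the cycle edges and the edges $v_1w$ (resp. $v_1w_i$). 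I rule out the others as follows.
\begin{itemize}
\item $w_i v_2$, $w_iv_4$ and $w_1w_2$ are excluded by triangle-freeness.
\item $w_iv_3$ is excluded by Claim~\ref{FC-star}, since $w_i v_3 v_2$ would be a $(3,3,3)$-path.
\end{itemize}
Hence $d_G-d_H$ equals $1$ on $v_2,v_3$ and $2$ on $v_1,v_4$ and the $w$'s.

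\textbf{Case $5_{\ge2}$.} Here $F_H$ is the path $wv_1v_4\cong P_3$ with middle vertex $v_1$, so:
\begin{itemize}
\item $(f_H,g_H)(v_1)=(5m,2m)$;
\item $(f_H,g_H)(w)=(f_H,g_H)(v_4)=(5m,3m)$;
\item $(f_H,g_H)(v_2)=(15m-4m-2m,4m)=(9m,4m)$;
\item $(f_H,g_H)(v_3)=(15m-4m-m,4m)=(10m,4m)$.
\end{itemize}
I take
\[\Omega=\mathsf{Red}\bigl(\langle w\rangle,\ \langle v_3\mid v_2,m\rangle,\ \langle v_2\rangle,\ \langle v_3\rangle,\ \langle v_1\rangle,\ \langle v_4\rangle\bigr).\]
The checks are as follows.
\begin{itemize}
\item $w$ has the single neighbour $v_1$, and $5m\ge 3m+2m$.
\item Since $|L_H(v_3)|-|L_H(v_2)|=m$, the saving step $\langle v_3\mid v_2,m\rangle$ exists.
\item $v_2$ then satisfies $9m\ge 4m+2m+3m$.
\item $v_3$ is left with at least $9m$ colours and demand $3m$, against $3m$ from $v_4$.
\item Finally $v_1$ (with $5m\ge 2m+3m$) and $v_4$ are deleted.
\end{itemize}

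\textbf{Case $6_{\ge3}$.} Now $F_H\cong K_{1,3}$ with centre $v_1$ and leaves $w_1,w_2,v_4$, so:
\begin{itemize}
\item $(f_H,g_H)(v_1)=(4m,m)$;
\item the leaves get $(4m,3m)$;
\item $(f_H,g_H)(v_2)=(15m-4m-3m,4m)=(8m,4m)$;
\item $(f_H,g_H)(v_3)=(10m,4m)$.
\end{itemize}
I take
\[\Omega=\mathsf{Red}\bigl(\langle w_1\rangle,\ \langle w_2\rangle,\ \langle v_3\mid v_2,m\rangle,\ \langle v_2\rangle,\ \langle v_3\rangle,\ \langle v_1\rangle,\ \langle v_4\rangle\bigr).\]
The checks are as follows.
\begin{itemize}
\item Each $w_i$ satisfies $4m\ge 3m+m$.
\item $|L_H(v_3)\setminus L_H(v_2)|\ge 2m\ge m$, so the saving step exists.
\item $v_2$ then satisfies $8m\ge 4m+m+3m$.
\item $v_3$ has at least $9m$ colours against demand $3m+3m$.
\item $v_1$ satisfies $4m\ge m+3m$.
\item $v_4$ is then isolated.
\end{itemize}

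The main obstacle is bookkeeping rather than ideas. One part is confirming that $H$ is nice, i.e.\ that $F_H$ really is $P_3$ or $K_{1,3}$; this rests on the non-adjacency checks above, where Claim~\ref{FC-star} is essential. The other part is finding an order in which the one-colour deficit at both $v_2$ and $v_3$ is absorbed. The observation that makes it work is that $v_3$, being adjacent to the lightly-demanding $F_H$-vertex $v_4$, has a strictly larger list than $v_2$. So a single saving step $\langle v_3\mid v_2,m\rangle$ fixes $v_2$, and that same step lowers $v_3$'s demand enough to delete it afterwards.
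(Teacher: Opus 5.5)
Your proof is correct and matches the paper's argument. You use the same induced subgraph $H$ (the $4$-cycle plus the extra $3$-neighbours of $v_1$), the same non-adjacency checks via triangle-freeness and Claim~\ref{FC-star}, and the same values of $(f_H,g_H)$, and you conclude with the same contradiction to Lemma~\ref{key lemma}. The only difference is the reduction scheme: the paper uses the combined saving $\langle\{v_2,v_4\}\mid v_3,m^*\rangle$ and deletes $v_3$ before $v_2$, whereas you use the single saving $\langle v_3\mid v_2,m\rangle$ and delete $v_2$ first, and both schemes are legal.
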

	\begin{proof}
		Suppose for a contradiction that $v_1v_2v_3v_4v_1$ is a cycle with $v_1$ a $5_{\ge 2}$- or $6_{\ge 3}$-vertex, $d(v_2)=d(v_3)=3$, and $d(v_4)=4$. Let $k=d(v_1)$, and let $v_5,\ldots,v_k$ be the remaining $3$-neighbors of $v_1$ distinct from $v_2$. By Claim~\ref{FC-star}, we have $v_3v_i\notin E(G)$ for each $i\in\{5,\ldots,k\}$. Let $H=G[v_1, v_2, \ldots, v_k]$. By Definition~\ref{def-(f_H,g_H)}, according to the values of $k \in \{5,6\}$, $(f_H,g_H)(v_1)=\big((10-k)m, (7-k)m\big)$, $(f_H,g_H)(v_2)=\big((14-k)m, 4m\big)$, $(f_H,g_H)(v_3)=(10m,4m)$, and $(f_H,g_H)(v_i)=\big((10-k)m,3m\big)$ for $i \in \{4, \ldots, k\}$.     
		Then 
		\[
		\mathsf{Red} \big(\langle v_5\rangle,\  \ldots,\ \langle v_k\rangle,\ \langle \{v_2, v_4\} \mid v_3, m^*\rangle,\ \langle v_3\rangle,\ \langle v_2\rangle,\ \langle v_4\rangle,\ \langle v_1\rangle \big)
		\] 
		is legal for $(H,L_H, g_H)$ and deletes all vertices of $H$, which contradicts Lemma~\ref{key lemma}. 
	\end{proof}
	
	\begin{claim}
		\label{FC-(5_2,3,4,4)}
		$G$ contains no $(5_{\ge 2},3,4,4)$-cycle. 
	\end{claim}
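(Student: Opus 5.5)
We take the cycle $v_1v_2v_3v_4v_1$ with $v_1$ a $5_{\ge 2}$-vertex, $d(v_2)=3$ and $d(v_3)=d(v_4)=4$, and we let $v_5$ be a second $3$-neighbor of $v_1$. Since $v_4$ is a $4$-vertex, $v_5\ne v_4$. Triangle-freeness gives $v_5v_2, v_5v_4\notin E(G)$. The plan is to take $H=G[v_1,\dots,v_5]$, compute $(f_H,g_H)$ by Definition~\ref{def-(f_H,g_H)}, and exhibit a legal reduction scheme deleting all of $H$, contradicting Lemma~\ref{key lemma}.

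\textbf{Main case: $v_3v_5\notin E(G)$.} In $H$ we have $d_H(v_2)=2=d_G(v_2)-1$, so $U_H=\{v_2\}$. Each of $v_1,v_3,v_4,v_5$ loses exactly two neighbours, so $F_H$ is the path $v_5v_1v_4v_3\cong P_4$. Definition~\ref{def-(f_H,g_H)}\ref{P4} and \ref{others} then give:
\begin{itemize}
\item $(f_H,g_H)(v_5)=(f_H,g_H)(v_3)=(5m,3m)$;
\item $(f_H,g_H)(v_1)=(f_H,g_H)(v_4)=(4m,2m)$;
\item $f_H(v_2)=15m-4m-(4m-2m)-(4m-3m)=8m$ and $g_H(v_2)=4m$.
\end{itemize}
I would use the scheme
\[
\mathsf{Red}\bigl(\langle v_5\rangle,\ \langle \{v_1,v_3\}\mid v_2, m^*\rangle,\ \langle v_2\rangle,\ \langle v_1\rangle,\ \langle v_3\rangle,\ \langle v_4\rangle\bigr).
\]
The legality checks are as follows.
\begin{itemize}
\item $\langle v_5\rangle$ is legal since $5m\ge 3m+2m$.
\item The saving step for $v_2$ is the pattern \eqref{eq-two sum}, since $|L_H(v_1)|+|L_H(v_3)|=9m\ge 8m+m$. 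As $8m+m\ge 4m+2m+3m$, the deletion $\langle v_2\rangle$ then follows.
\item Let $a=|\phi(v_1)|$ and $c=|\phi(v_3)|$. Note $v_1v_3\notin E(G)$, so $v_1$ only loses its own colors. Then $\langle v_1\rangle$ is legal because $4m-a\ge (2m-a)+2m$.
\item $\langle v_3\rangle$ is legal because $5m-c\ge (3m-c)+2m$.
\item Finally $v_4$ has at least $4m-a-c\ge 2m$ colors left, using $a+c=m+|R|\le 2m$.
\end{itemize}

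\textbf{Remaining case: $v_3v_5\in E(G)$.} Here $v_3$ and $v_5$ each have only one neighbour outside $H$, so $U_H=\{v_2,v_3,v_5\}$ and $F_H=v_1v_4\cong P_2$. This gives $(f_H,g_H)(v_1)=(f_H,g_H)(v_4)=(6m,3m)$, and $(10m,4m)$ on each of $v_2,v_3,v_5$. This is a $4$-cycle plus a $3$-vertex $v_5$ attached at $v_1$ and $v_3$.

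I expect this case to be the main obstacle, because the naive orders come out exactly $m$ short at two vertices rather than one. For example, $v_2$ and $v_5$ each need $4m+3m+4m=11m$ from a $10m$ list. My first attempt would be the two savings
\[
\langle \{v_1,v_3\}\mid v_2,m^*\rangle \quad\text{and}\quad \langle \{v_1,v_3\}\mid v_5,m^*\rangle,
\]
chosen jointly from the $16m$ combined colors of $v_1$ and $v_3$. This would be followed by $\langle v_2\rangle$, $\langle v_5\rangle$, and then $v_1,v_4,v_3$ in a degeneracy order.

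If the joint choice is awkward, I would fall back on a different nice subgraph. Here $G[v_1,v_2,v_3,v_5]$ is a $(5,3,4,3)$-cycle with $v_4$ dropped. Alternatively, I would argue via Claim~\ref{FC-k_{k-2}-3_1}-type reasoning that this configuration forces a contradiction directly.
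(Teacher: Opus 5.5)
Your Case~1 ($v_3v_5\notin E(G)$) is exactly the paper's argument: the same $H$, the same $(f_H,g_H)$, the same reduction scheme, and correct legality checks. The gap is Case~2, which you leave as a hedged, unverified plan, even though checking a scheme there is the whole content of that case. Your first attempt does use the right ingredients; the paper's scheme is $\mathsf{Red}(\langle\{v_1,v_3\}\mid v_2,m^*\rangle,\langle v_2\rangle,\langle\{v_1,v_3\}\mid v_5,m^*\rangle,\langle v_5\rangle,\langle v_1\rangle,\langle v_3\rangle,\langle v_4\rangle)$. What is missing is the accounting showing that the two savings cannot interfere, so no ``joint'' choice is needed. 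A saving $\langle\{v_1,v_3\}\mid w,m^*\rangle$ with sets $S,T,R$ does three things: it lowers $g(v_1)+g(v_3)$ by $m+|R|$, it lowers $|L(w)|$ by only $|R|$, and it removes at most $|S\cup T\cup R|\le m$ colors from any other list. Hence after the first saving:
$\langle v_2\rangle$ is legal, because $10m-|R|=4m+(7m-m-|R|)$;
$v_5$ keeps at least $9m$ colors while it needs $10m-|R|$, so it is at most $m$ short;
and $|L'(v_1)|+|L'(v_3)|=15m-|R|\ge 11m\ge|L'(v_5)|+m$, so the three-sets lemma provides the second saving, after which $\langle v_5\rangle$ is legal.
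The same accounting shows that your order (both savings first, then $\langle v_2\rangle,\langle v_5\rangle$) also works.

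On the remaining path $v_1v_4v_3$, $v_1$ has exactly $g(v_1)+3m$ colors and $v_3$ has $g(v_3)+6m$. Meanwhile $v_4$ keeps at least $6m-2m=4m\ge 3m$ colors, since at most $2m$ distinct colors were placed on $v_1,v_3$. So $v_4$ must be deleted last, as in the paper's order $v_1,v_3,v_4$. Your literal order $v_1,v_4,v_3$ can fail: when $v_4$ is deleted, $g(v_4)+g(v_3)$ may be as large as $7m$, while $v_4$ is only guaranteed $4m$ colors. Finally, your fallbacks would not have rescued the case. $G[v_1,v_2,v_3,v_5]$ is not nice, because $v_1$ loses three neighbours there. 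The configuration is also not excluded by the other claims: the $(5,3,4,3)$-cycle $v_1v_2v_3v_5v_1$ is only forbidden when $v_1$ is a $5_3$-vertex, and here $v_1$ may be a $5_2$-vertex. So an explicit reduction scheme on $H$ really is required.
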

	\begin{proof}
		Assume to the contrary, $v_1v_2v_3v_4v_1$ is a $(5,3,4,4)$-cycle with $d(v_1)=5$, $d(v_2)=3$, $d(v_3)=d(v_4)=4$, and $v_1$ has an additional $3$-neighbor $v_5$. Let $H=G[v_1,v_2,\ldots,v_5]$. We determine $f_H$ and $g_H$ in two cases according to whether $v_3v_5 \in E(G)$.  
		\begin{itemize}
			\item If $v_3v_5 \notin E(G)$, then $(f_H,g_H)(v_i)=(4m,2m)$ for $i \in \{1,4\}$, $(f_H,g_H)(v_j)=(5m,3m)$ for $j \in \{3,5\}$, and 
			$(f_H,g_H)(v_2)=(8m,4m)$.
			\item If $v_3v_5 \in E(G)$, then $(f_H,g_H)(v_i)=(6m,3m)$ for $i \in \{1,4\}$,
			$(f_H,g_H)(v_i)=(10m,4m)$ for $i\in\{2,3,5\}$.
		\end{itemize}  
		
		For the first case, let $\Omega=\mathsf{Red}\big(\langle v_5\rangle,\ \langle \{v_1, v_3\} \mid v_2, m^*\rangle,\ \langle v_2\rangle,\ \langle v_1\rangle,\ \langle v_3\rangle,\ \langle v_4\rangle \big)$.    
		For the second case,  let 
		$\Omega=\mathsf{Red}\big(\langle \{v_1, v_3\} \mid v_2, m^*\rangle,\ \langle v_2\rangle,\ \langle \{v_1, v_3\} \mid v_5, m^*\rangle,\ \langle v_5\rangle,\ \langle v_1\rangle,\ \langle v_3\rangle,\ \langle v_4\rangle \big)$. 
		In  both cases, $\Omega$ is legal and deletes all vertices of $H$, which contradicts Lemma~\ref{key lemma}. 
	\end{proof}
	
	\begin{claim}
		\label{FC-(4,4,4,4_1/5_2)}
		$G$ contains no $(4,4,4,4_{\ge 1})$-cycle.
	\end{claim}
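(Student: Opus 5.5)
We split according to whether the $3$-neighbor of the $4_{\ge1}$-vertex lies on the cycle's opposite side, build a nice subgraph $H$ in each case, and derive a contradiction with Lemma~\ref{key lemma}. Suppose for a contradiction that $v_1v_2v_3v_4v_1$ is a cycle with $d(v_i)=4$ for all $i$, and that $v_4$ has a $3$-neighbor $v_5$. Since $G$ is triangle-free, $v_5$ is adjacent to neither $v_1$ nor $v_3$. Also $v_1v_3,v_2v_4\notin E(G)$. The only remaining freedom is whether $v_2v_5\in E(G)$, which gives two cases. In both we take $H=G[v_1,\dots,v_5]$, compute $(f_H,g_H)$ from Definition~\ref{def-(f_H,g_H)}, and exhibit a legal reduction scheme deleting all of $V(H)$.

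\emph{Case $v_2v_5\notin E(G)$.} Here $v_1,v_2,v_3,v_5$ each have deficiency $d_G-d_H=2$, while $v_4\in U_H$. Thus $F_H$ consists of the path $v_1v_2v_3\cong P_3$ and the isolated vertex $v_5\cong P_1$, so $H$ is nice. This gives
\begin{itemize}
\item $(f_H,g_H)(v_1)=(f_H,g_H)(v_3)=(5m,3m)$,
\item $(f_H,g_H)(v_2)=(5m,2m)$,
\item $(f_H,g_H)(v_5)=(7m,4m)$,
\item $(f_H,g_H)(v_4)=(15m-4m-2m,4m)=(9m,4m)$.
\end{itemize}
The natural first step is $\langle\{v_1,v_3\}\mid v_2,k^*\rangle$ with $k$ as large as \eqref{eq-two sum} permits, since $|L_H(v_1)|+|L_H(v_3)|=10m$ far exceeds $|L_H(v_2)|$. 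This lets $\langle v_2\rangle$ follow. Afterwards $v_4$ is the center of a star with leaves $v_1,v_3,v_5$. I would then try $\langle\{v_1,v_3\}\mid v_4,m^*\rangle$ or $\langle v_5\mid v_4,m\rangle$ and delete the leaves before $v_4$.

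The main obstacle is balancing the two savings made by $v_1$ and $v_3$. Colors placed on them in the first step may lie in $L_H(v_4)$ and reduce it, so $k$ must be chosen to leave enough for $v_4$. If this bookkeeping fails, I would first apply $\langle v_5\rangle$ after saving $m$ for $v_5$ through $v_4$, and enlarge $H$ only as a last resort.

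\emph{Case $v_2v_5\in E(G)$.} Now $v_2$ and $v_5$ have deficiency $1$, so $U_H=\{v_2,v_4,v_5\}$. Also $F_H=\{v_1,v_3\}$ is independent, i.e.\ two copies of $P_1$. Hence
\begin{itemize}
\item $(f_H,g_H)(v_1)=(f_H,g_H)(v_3)=(7m,4m)$,
\item $(f_H,g_H)(v_2)=(f_H,g_H)(v_4)=(11m,4m)$,
\item $(f_H,g_H)(v_5)=(11m,4m)$.
\end{itemize}
This case is loose. Vertex $v_1$ has $7m+m\ge 4m+4m+4m-3m$, so we apply $\langle\{v_2,v_4\}\mid v_1,m^*\rangle$ (valid as $22m\ge 7m+m$), then $\langle v_1\rangle$, and similarly $\langle\{v_2,v_4\}\mid v_3,m^*\rangle,\langle v_3\rangle$. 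The remaining path $v_2v_5v_4$ has lists of size at least about $9m$ against demands at most $4m$, so $\langle v_2\rangle,\langle v_4\rangle,\langle v_5\rangle$ are legal. This contradicts Lemma~\ref{key lemma}.
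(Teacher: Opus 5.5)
\textbf{There is a genuine gap in the main case.} Your setup for the case $v_2v_5\notin E(G)$ is correct and matches the paper up to relabeling (the paper calls the $4_{\ge1}$-vertex $v_1$): $F_H$ consists of $v_1v_2v_3$ and the isolated vertex $v_5$, and $(f_H,g_H)$ is $(5m,3m),(5m,2m),(5m,3m),(9m,4m),(7m,4m)$ on $v_1,\dots,v_5$. But this case is the whole content of the claim, and you never produce a legal reduction scheme for it. Moreover, the route you sketch fails as stated.

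Take $L_H(v_1)=L_H(v_2)=L_H(v_3)=P$ with $|P|=5m$, $L_H(v_4)=P\cup Q$ with $|Q|=4m$, and $L_H(v_5)=Q\cup P'$ with $P'\subseteq P$, $|P'|=3m$.
\begin{enumerate}
\item Since $L_H(v_1)\setminus L_H(v_2)=L_H(v_3)\setminus L_H(v_2)=\emptyset$, the step $\langle\{v_1,v_3\}\mid v_2,k^*\rangle$ must use $S=T=\emptyset$ and some $R\subseteq P$.
\item $|R|$ is forced to be exactly $3m$: at least $3m$ so that $\langle v_2\rangle$ becomes legal, and at most $g_H(v_1)=3m$. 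So $k$ has no freedom at all.
\item Then $v_1,v_3$ are fully colored, so $\langle\{v_1,v_3\}\mid v_4,m^*\rangle$ is unavailable, and $v_4$ is left with the $6m$-list $Q\cup(P\setminus R)$.
\item If $R\supseteq P\setminus P'$, this list lies inside $L_H(v_5)$. The edge $v_4v_5$ then has only $7m<8m$ colors in the union of its lists, so no continuation can finish, in particular not $\langle v_5\mid v_4,m\rangle$.
\end{enumerate}
The instance itself is colorable (choosing $R=P'$ works). So the failure lies in the plan: the colors spent on $v_1,v_3$ must be coordinated with $L_H(v_5)$, and tuning $k$ cannot achieve that.

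\textbf{The missing idea is to reverse the priorities.} First secure the $9m$-vertex, then repair the middle vertex by trimming lists to exact sizes. In your labels:
\begin{enumerate}
\item Since $5m+5m\ge 9m+m$, Lemma~\ref{lem-three sets} lets you color $v_1$ with $S\cup R$ and $v_3$ with $T\cup R$, where $|S|+|T|+|R|=m$. This saves $m$ at $v_4$.
\item Now $v_2$ still has at least $4m$ colors; trim its list to exactly $4m$. This guarantees $|L(v_1)\setminus L(v_2)|\ge |T|$ and $|L(v_3)\setminus L(v_2)|\ge |S|$.
\item Apply $\langle v_1\mid v_2,|T|\rangle$ and $\langle v_3\mid v_2,|S|\rangle$. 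These bring $v_1,v_3$ to demand exactly $2m$ at no cost to $v_2$.
\item Now $|L(v_4)|\ge 8m$, so $\langle v_4\mid v_5,m\rangle,\langle v_5\rangle,\langle v_4\rangle$ are legal.
\item Trim $L(v_1)$ to $3m$ and finish with $\langle v_2\mid v_1,m\rangle,\langle v_1\rangle,\langle v_2\rangle,\langle v_3\rangle$.
\end{enumerate}

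\textbf{Your second case is vacuous, and the scheme you give there is not legal.} If $v_2v_5\in E(G)$, then $v_5v_2v_1v_4v_5$ is a $(3,4,4,4)$-cycle, which Claim~\ref{FC-(4-,4-,4-,3-)} excludes; the paper uses exactly this to keep $v_5$ nonadjacent to the opposite vertex. The point matters because after $\langle\{v_2,v_4\}\mid v_1,m^*\rangle$, deleting $v_1$ requires $7m-|R|\ge 11m-|R|$, which fails. Your displayed inequality $7m+m\ge 4m+4m+4m-3m$ also reads $8m\ge 9m$, which is false. You would need a saving of $5m$ at $v_1$, and likewise at $v_3$.
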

	
	\begin{proof}
		Assume to the contrary that $v_1v_2v_3v_4v_1$ is a $(4,4,4,4_{\geq1})$-cycle, where $v_1$ has a $3$-neighbor $v_5$. By Claim~\ref{FC-(4-,4-,4-,3-)}, $v_5v_3\notin E(G)$. Let $H=G[v_1,v_2,\ldots,v_5]$. By Definition~\ref{def-(f_H,g_H)}, $(f_H,g_H)(v_1)=(9m,4m)$, $(f_H,g_H)(v_3)=(5m,2m)$, $(f_H,g_H)(v_5)=(7m,4m)$, and $(f_H,g_H)(v_i)=(5m,3m)$ for $i\in\{2,4\}$.

		Let $L_H$ be an arbitrary $f_H$-list assignment of $H$. Whenever extra colors are deleted from a current list below, it suffices to color the resulting restricted list assignment. By Lemma~\ref{lem-three sets}, there exist $S\subseteq L_H(v_2)\setminus L_H(v_1)$, $T\subseteq L_H(v_4)\setminus L_H(v_1)$, and $R\subseteq L_H(v_2)\cap L_H(v_4)\cap L_H(v_1)$ such that $|S|+|T|+|R|=m$. Apply $\langle v_2,S\cup R\rangle$ and $\langle v_4,T\cup R\rangle$, and denote the resulting lists and demands by $L'$ and $g'$. By deleting extra colors, we may assume that $|L'(v_3)|=4m$. Next apply
		\[
		\mathsf{Red}\bigl(
		\langle v_2\mid v_3,|T|\rangle,\ \langle v_4\mid v_3,|S|\rangle,
		\langle v_1\mid v_5,m\rangle,\ \langle v_5\rangle,\ \langle v_1\rangle
		\bigr),
		\]
		and denote the resulting lists and demands by $L''$ and $g''$. Again by deleting extra colors, we may assume that $|L''(v_2)|=3m$. Then $\mathsf{Red}\bigl(\langle v_3\mid v_2,m\rangle,
		\langle v_2\rangle,\ \langle v_3\rangle,\ \langle v_4\rangle\bigr)$
		is legal for the remaining graph and deletes all its vertices, which contradicts Lemma~\ref{key lemma}.
	\end{proof}

	\begin{claim}
		\label{FC-(5_3/6_4,3,4,3)}
		$G$ contains no $(5_3,3,4,3)$- or $(6_4,3,4,3)$-cycle. 
	\end{claim}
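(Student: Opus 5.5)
We argue exactly as in the proofs of Claims~\ref{FC-(5_2/6_3,3,3,4)} and~\ref{FC-(5_2,3,4,4)}: take the configuration as a nice subgraph $H$, compute $(f_H,g_H)$ from Definition~\ref{def-(f_H,g_H)}, and exhibit a legal reduction scheme deleting all of $V(H)$, contradicting Lemma~\ref{key lemma}. Suppose $v_1v_2v_3v_4v_1$ is such a cycle with $k=d(v_1)\in\{5,6\}$, $v_1$ a $k_{k-2}$-vertex, $d(v_2)=d(v_4)=3$, $d(v_3)=4$. Let $v_5,\ldots,v_k$ be the remaining $3$-neighbors of $v_1$, and set $H=G[v_1,\ldots,v_k]$. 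Since $G$ is triangle-free, no $v_i$ with $i\ge 5$ is adjacent to $v_2$ or $v_4$. However, $v_3$ may be adjacent to some $v_i$ with $i\ge5$, so we split into cases according to $|N(v_3)\cap\{v_5,\ldots,v_k\}|$.

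\emph{Main case: $v_3$ has no neighbor among $v_5,\ldots,v_k$.} Here $v_2,v_4\in U_H$, while $v_1,v_3,v_5,\ldots,v_k$ each lose two neighbors. Thus $F_H$ consists of the isolated vertex $v_3$ (a $P_1$) together with the star on $v_1$ and $v_5,\ldots,v_k$, which is a $P_2$ for $k=5$ and a $P_3$ with middle vertex $v_1$ for $k=6$. Definition~\ref{def-(f_H,g_H)} then gives:
\begin{itemize}
\item for $k=5$: $(f_H,g_H)(v_1)=(f_H,g_H)(v_5)=(6m,3m)$, $(f_H,g_H)(v_3)=(7m,4m)$, and $(f_H,g_H)(v_2)=(f_H,g_H)(v_4)=(10m,4m)$;
\item for $k=6$: $(f_H,g_H)(v_1)=(5m,2m)$, $(f_H,g_H)(v_5)=(f_H,g_H)(v_6)=(5m,3m)$, $(f_H,g_H)(v_3)=(7m,4m)$, and $(f_H,g_H)(v_2)=(f_H,g_H)(v_4)=(9m,4m)$.
\end{itemize}
In both subcases $v_2$ falls exactly $m$ short of being degenerately deletable. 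On the other hand, $|L_H(v_1)|+|L_H(v_3)|\ge |L_H(v_2)|+m$, so \eqref{eq-two sum} holds. We therefore propose
\[
\mathsf{Red}\bigl(\langle v_6\rangle,\ \langle\{v_1,v_3\}\mid v_2,m^*\rangle,\ \langle v_2\rangle,\ \langle v_4\rangle,\ \langle v_3\rangle,\ \langle v_5\rangle,\ \langle v_1\rangle\bigr),
\]
omitting $\langle v_6\rangle$ when $k=5$. When $k=6$ we additionally delete $v_5$ at the start rather than after $v_3$; both $v_5$ and $v_6$ are deletable first because $5m\ge 3m+2m$. After the $m^*$-saving step, $v_2$ is deletable by the computation following \eqref{eq-two sum}. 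The vertex $v_4$ is then deletable because $g'(v_1)+g'(v_3)\le g_H(v_1)+g_H(v_3)-m$. The remaining vertices form a forest of small demands and are deleted leaf-first.

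\emph{Remaining case: $v_3v_i\in E(G)$ for some $i\ge5$.} Now $v_3$ and $v_i$ lose only one neighbor each, so they move into $U_H$ with $g_H=4m$ and large lists. Meanwhile $F_H$ shrinks: only $v_1$, plus $v_j$ if $k=6$ and $v_j$ is not adjacent to $v_3$. We recompute $(f_H,g_H)$ in each subcase, namely $k=5$, and $k=6$ with one or both of $v_5,v_6$ adjacent to $v_3$. In each subcase we expect that repeated use of $\langle\{v_1,v_3\}\mid w,m^*\rangle$ for each $3$-vertex $w\in\{v_2,v_4,v_i\}$, followed by $\langle w\rangle$, works, mirroring the second case of Claim~\ref{FC-(5_2,3,4,4)}. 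After these steps we finish by deleting $v_1$ and $v_3$.

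The main obstacle is this last case. Several $m^*$-savings share the same pair $\{v_1,v_3\}$, so we must check that the demands of $v_1$ and $v_3$ do not become negative and that later inequalities of type \eqref{eq-two sum} still hold after the lists have been reduced. This is worst for $k=6$ with $v_3$ adjacent to both $v_5$ and $v_6$, where $v_1$ has the small list. If the $m^*$-scheme fails there, I would fall back to single savings $\langle v_3\mid w,m\rangle$, as in the proof of Claim~\ref{FC-(4,4,4,4_1/5_2)}.
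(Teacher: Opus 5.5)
\textbf{There is a genuine gap, in the main case.} Your $H$, the structure of $F_H$, the values of $(f_H,g_H)$, and the opening $\langle\{v_1,v_3\}\mid v_2,m^*\rangle,\ \langle v_2\rangle$ all agree with the paper. Your ``remaining case'' cannot occur: $v_3$ is a $4$-vertex already adjacent to the two $3$-vertices $v_2,v_4$, so Claim~\ref{FC-star} gives $v_3v_i\notin E(G)$ for all $i\ge 5$. Cite this instead of leaving an unverified case open; the obstacle you anticipated does not exist.

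The failing step is $\langle v_4\rangle$ immediately after $\langle v_2\rangle$. Your justification counts only the drop in demand, $g'(v_1)+g'(v_3)=g_H(v_1)+g_H(v_3)-m-|R|$. It ignores that $v_4$ is adjacent to both $v_1$ and $v_3$. The colors $S\cup T\cup R$ are chosen outside $L_H(v_2)$ only, so they may all lie in $L_H(v_4)$, and $|L'(v_4)|$ can drop to $|L_H(v_4)|-m$.

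Since $v_4$ starts exactly $m$ short ($10m$ versus $11m$ when $k=5$, and $9m$ versus $10m$ when $k=6$), its net gain is only $|R|$. So $\langle v_4\rangle$ is illegal whenever $|R|<m$. A concrete instance: $L_H(v_1)\cap L_H(v_3)=\emptyset$, which forces $R=\emptyset$, and $L_H(v_4)\supseteq (L_H(v_1)\cup L_H(v_3))\setminus L_H(v_2)$.

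Reordering does not help. After $\langle v_2\rangle$ (with the $v_j$, $j\ge 5$, deleted first), $v_1$ and $v_3$ are each still exactly $m$ short. So a second saving is required.

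The paper supplies it with $\langle v_4\mid v_3,m\rangle$. This is available because
\[
|L'(v_4)\setminus L'(v_3)|\ \ge\ |L'(v_4)|-|L'(v_3)|\ \ge\ 8m-7m\ =\ m .
\]
After this step $v_3$ is exactly deletable, and then $v_4$ and $v_1$ follow. The full legal scheme is
\[
\mathsf{Red}\bigl(\langle v_5\rangle,\ldots,\langle v_k\rangle,\ \langle\{v_1,v_3\}\mid v_2,m^*\rangle,\ \langle v_2\rangle,\ \langle v_4\mid v_3,m\rangle,\ \langle v_3\rangle,\ \langle v_4\rangle,\ \langle v_1\rangle\bigr).
\]
Here each $\langle v_j\rangle$ with $j\ge5$ is legal at the start because $|L_H(v_j)|=g_H(v_j)+g_H(v_1)$.
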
 
	\begin{proof}
		Assume to the contrary, $v_1v_2v_3v_4v_1$ is a cycle such that $v_1$ is either a $5_3$-vertex or a $6_4$-vertex, $d(v_2) = d(v_4)=3$ and $d(v_3)=4$. Let $k=d(v_1)$ and $v_5, \ldots, v_k$ be $3$-neighbors of $v_1$ other than $v_2$ and $v_4$. By Claim \ref{FC-star}, $v_3v_5,\ldots, v_3v_k\notin E(G)$. Let $H=G[v_1, v_2, \ldots, v_k]$.   
		By Definition~\ref{def-(f_H,g_H)}, according to the cases of $k \in \{5,6\}$, $(f_H,g_H)(v_1)=\big((11-k)m, (8-k)m\big)$, $(f_H,g_H)(v_i)=\big((15-k)m, 4m\big)$ for $i \in \{2, 4\}$, $(f_H,g_H)(v_3)=(7m,4m)$, and $(f_H,g_H)(v_j)=\big((11-k)m, 3m\big)$ for $j \in \{5, \ldots, k\}$.  
		Then
		\[
		\mathsf{Red}\bigl(
		\langle v_5\rangle,\ \langle v_6\rangle,\ \ldots,\ \langle v_k\rangle,\
		\langle \{v_1, v_3\} \mid v_2, m^*\rangle,\
		\langle v_2\rangle,\ \langle v_4\mid v_3,m\rangle,\
		\langle v_3\rangle,\ \langle v_4\rangle,\  \langle v_1\rangle \bigr) 
		\] is legal for $(H,L_H,g_H)$, and delete all the vertices of $H$, which contradicts Lemma~\ref{key lemma}. 
	\end{proof} 
	
	\begin{claim}
		\label{FC-(5_1,4,3,4)}
		$G$ contains no $(5_{\ge 1},4,3,4)$-cycle. 
	\end{claim}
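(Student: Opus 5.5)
The plan is to follow the same template as Claims~\ref{FC-(5_3/6_4,3,4,3)} and~\ref{FC-(4,4,4,4_1/5_2)}. Suppose $v_1v_2v_3v_4v_1$ is a cycle with $v_1$ a $5_{\ge1}$-vertex, $d(v_2)=d(v_4)=4$ and $d(v_3)=3$, and let $v_5$ be a $3$-neighbor of $v_1$. Since $G$ is triangle-free, $v_5\notin\{v_2,v_4\}$ and $v_5v_2,v_5v_4\notin E(G)$. The only possible extra edge is $v_3v_5$, so I split into two cases according to whether $v_3v_5\in E(G)$. In both cases I take $H=G[v_1,\dots,v_5]$, check that it is nice, read off $(f_H,g_H)$ from Definition~\ref{def-(f_H,g_H)}, and exhibit a legal reduction scheme deleting all of $H$. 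This contradicts Lemma~\ref{key lemma}.

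\emph{Case $v_3v_5\notin E(G)$.} Here $v_3\in U_H$, while $v_1,v_2,v_4,v_5$ each miss exactly two neighbors. So $F_H$ is the star $K_{1,3}$ with center $v_1$, and
\[(f_H,g_H)(v_1)=(4m,m),\quad (f_H,g_H)(v_j)=(4m,3m)\ (j\in\{2,4,5\}),\quad (f_H,g_H)(v_3)=(9m,4m).\]
First apply $\langle v_5\rangle$, which is legal since $4m\ge 3m+m$. What remains is the $4$-cycle $v_1v_2v_3v_4$, with $v_1$ of small demand $m$ and $v_3$ of large list $9m$ and demand $4m$. Now $v_3$ falls short of degeneracy by exactly $m$ ($9m+m=4m+3m+3m$), and $v_1$ falls short by $3m$. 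I would first use the list excess at $v_3$: if $|L_H(v_3)\setminus L_H(v_2)|\ge m$, apply $\langle v_3\mid v_2,m\rangle$. Otherwise $L_H(v_3)$ nearly contains $L_H(v_2)$ (and likewise $L_H(v_4)$), and I would color $v_2$ and $v_4$ with colors outside $L_H(v_3)$ or shared colors, as in the proof of Claim~\ref{FC-(4,4,4,4_1/5_2)}. The concrete sequence I would try is to apply $\langle\{v_2,v_4\}\mid v_1, k^*\rangle$ via Lemma~\ref{lem-three sets}, which allows $k$ up to $4m$ since $|L(v_2)|+|L(v_4)|=8m$. Then, after trimming lists as in Claim~\ref{FC-(4,4,4,4_1/5_2)}, finish with $\langle v_1\rangle$, a saving operation between $v_3$ and one of $v_2,v_4$, and then $\langle v_2\rangle,\langle v_4\rangle,\langle v_3\rangle$ in a suitable order.

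\emph{Case $v_3v_5\in E(G)$.} Now $v_3$ has all its neighbors in $H$, so $v_3,v_5\in U_H$. $F_H$ is the path $v_2v_1v_4$, giving $(f_H,g_H)(v_1)=(5m,2m)$, $(f_H,g_H)(v_2)=(f_H,g_H)(v_4)=(5m,3m)$, $(f_H,g_H)(v_5)=(9m,4m)$ and $(f_H,g_H)(v_3)=(13m,4m)$. This case looks easier. Apply $\langle\{v_2,v_4\}\mid v_3,m^*\rangle$ or $\langle v_3\mid v_5,m\rangle$ as needed, then $\langle v_3\rangle$. Next apply $\langle v_1\mid v_5,\cdot\rangle$ and $\langle v_5\rangle$, and finally $\langle v_1\rangle$ with $v_2,v_4$ handled by $\langle\{v_2,v_4\}\mid v_1,m^*\rangle$, since $10m\ge 5m+m$.

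The main obstacle is the first case. There $v_1$ has a large deficit relative to $g_H(v_2)+g_H(v_4)$, and no single saving operation from Section~\ref{sec-pre} clearly suffices. I expect to need a two-stage argument with trimming of lists, exactly in the style of Claim~\ref{FC-(4,4,4,4_1/5_2)}: first a shared-color step on $\{v_2,v_4\}$ relative to $v_1$, then recomputing the residual demands on the path $v_2v_3v_4$ and using the $5m$ excess of $v_3$ to close the argument.
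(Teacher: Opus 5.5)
Your $(f_H,g_H)$ values are correct in both cases. The trouble is that the case $v_3v_5\notin E(G)$ cannot be closed by any reduction scheme for the generic instance, and your opening step in the other case is not available.

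\textbf{Case $v_3v_5\notin E(G)$.} With $(f_H,g_H)(v_1)=(4m,m)$, $(4m,3m)$ at $v_2,v_4,v_5$, and $(9m,4m)$ at $v_3$, the graph $H$ is simply not $(f_H,g_H)$-choosable. Take $L_H(v_2)=P$ and $L_H(v_4)=Q$ with $P\cap Q=\emptyset$, $|P|=|Q|=4m$, and $L_H(v_3)\supseteq P\cup Q$. Then $v_2$ and $v_4$ always use $6m$ distinct colors of $L_H(v_3)$, and $v_3$ keeps only $3m<4m$. So Lemma~\ref{key lemma} gives no contradiction here, and no trimming or two-stage argument inside the $(f_H,g_H)$ framework can work. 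Two of your intermediate steps also fail on their own terms: $\langle v_3\mid v_2,m\rangle$ saves colors for $v_2$, not for $v_3$, and $8m<9m+m$ rules out pairing $v_2,v_4$ against $v_3$.

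The paper leaves the generic framework in this case. It fixes an $(L,4m)$-coloring of $G-v_3$ and extracts $A_i\subseteq L_0(v_i)\setminus L_0(v_1)$ and $B_i\subseteq L_0(v_1)\setminus L_0(v_i)$ from the star. It then applies Lemma~\ref{lem-three sets} to $X=L_0(v_2)\setminus(B_4\cup B_5)$, $Y=L_0(v_4)\setminus(B_2\cup B_5)$ and a $9m$-subset $Z$ of $L_0(v_3)\setminus(A_2\cup A_4)$, before giving $v_1$ the colors $B_2\cup B_4\cup B_5$.

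Even this does not seem to close the case. Let $|C|=|C'|=|E|=|E'|=m$ and $|\mathcal{A}|=|\mathcal{Y}|=5m$, and set $L_0(v_1)=C\cup C'\cup\mathcal{A}$, $L_0(v_2)=C'\cup\mathcal{A}\cup E$, $L_0(v_5)=C\cup\mathcal{A}\cup E'$, $L_0(v_4)=C\cup C'\cup\mathcal{Y}$, and $L_0(v_3)=E\cup\mathcal{A}\cup\mathcal{Y}$. These lists have exactly the sizes produced by a coloring of $G-v_3$, and the star at $v_1$ is colorable from them. Yet in every $(L_0,4m)$-coloring $\psi$ of $H$:
\begin{itemize}
\item $v_2$ and $v_5$ force $C\cup C'\subseteq\psi(v_1)$;
\item hence $\psi(v_2)=(\mathcal{A}\setminus\psi(v_1))\cup E$ and $\psi(v_4)\subseteq\mathcal{Y}$;
\item this leaves $v_3$ only $3m$ colors.
\end{itemize}
In the paper's scheme this shows up as follows: here $X\setminus Z=A_2$, $Y\setminus Z=A_4$ and $X\cap Y\cap Z=\emptyset$, so Lemma~\ref{lem-three sets} must return $S\subseteq A_2$ or $T\subseteq A_4$. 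These colors lie in $L_0(v_3)$, so the subsequent $\langle v_3\rangle$ is not legal. This case therefore needs an idea beyond recoloring $H$ with the coloring of $G-V(H)$ held fixed.

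\textbf{Case $v_3v_5\in E(G)$.} Your first step is unavailable. $\langle\{v_2,v_4\}\mid v_3,m^*\rangle$ would need $|L_H(v_2)|+|L_H(v_4)|=10m\ge |L_H(v_3)|+m=14m$, which fails. $\langle v_3\mid v_5,m\rangle$ leaves $v_3$'s deficit of $m$ unchanged. Pair $v_2$ with $v_5$ instead, since $5m+9m\ge 13m+m$. Then $\mathsf{Red}(\langle\{v_2,v_5\}\mid v_3,m^*\rangle,\langle v_3\rangle,\langle v_4\rangle,\langle v_5\rangle,\langle v_2\rangle,\langle v_1\rangle)$ is legal, because once $v_3$ is gone every remaining vertex is degenerate. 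This is exactly the paper's scheme, and no saving at $v_1$ is needed.
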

	\begin{proof}
		Assume to the contrary that $v_1v_2v_3v_4v_1$ is a $4$-cycle with 
		$d(v_1)=5$, $d(v_2)=d(v_4)=4$, and $d(v_3)=3$, 
		and that $v_1$ has a $3$-neighbor $v_5$. 
		Let $H=G[\{v_1,v_2,v_3,v_4,v_5\}]$. 
		We distinguish two cases according to whether $v_3v_5\in E(H)$.

		\medskip
		\noindent 
		{\bf Case 1.} $v_3v_5 \in E(H)$.
		\medskip
		
		By Definition~\ref{def-(f_H,g_H)}, $(f_H,g_H)(v_1)=(5m,2m)$, $(f_H,g_H)(v_3)=(13m,4m)$, $(f_H,g_H)(v_i)=(5m,3m)$ for $i \in \{2,4\}$, and $(f_H,g_H)(v_5)=(9m,4m)$.  
		Clearly,
		\[\mathsf{Red}\bigl(
		\langle \{v_2, v_5\} \mid v_3, m^* \rangle,\
		\langle v_3\rangle,\  \langle v_4\rangle,\ \langle v_5\rangle,\ \langle v_2\rangle,\ \langle v_1\rangle
		\bigr)\] is legal for $(H,L_H,g_H)$ and delete all the vertices of $H$, which contradicts Lemma~\ref{key lemma}.  
		
		\medskip
		\noindent 
		{\bf Case 2.} $v_3v_5 \notin E(H)$.
		\medskip
		
		By the minimality, $G-v_3$ has an $(L,4m)$-coloring $\phi$. Let $L_0$ be a list assignment of $V(H)$ defined as $L_0(u) = L(u) \setminus \{\phi(v) \mid v \in N_{G - V(H)}(u)\}$ for each $u \in V(H)$. By deleting extra colors, we assume that $|L_0(v_3)|=11m$. Since $\phi$ is an $(L,4m)$-coloring of $G-v_3$, for each $i\in\{2,4,5\}$, there exists an $m$-set $A_i \subseteq L_0(v_i)\setminus L_0(v_1)$, and an $m$-set $B_i \subseteq L_0(v_1)\setminus L_0(v_i)$. Let $X=L_0(v_2) \setminus (B_4 \cup B_5)$, $Y = L_0(v_4) \setminus (B_2 \cup B_5)$ and let $Z$ be a $9m$-subset of $L_0(v_3) \setminus (A_2 \cup A_4)$. It is clear that $|X| \geq 5m$, $|Y| \geq 5m$. Thus $|X|+|Y| \geq |Z|+m$, hence 
		by Lemma~\ref{lem-three sets}, there exists $S \subseteq X \setminus Z$,
		$T \subseteq Y \setminus Z$, and $R\subseteq X \cap Y \cap Z$ such that $|S|+|T|+|R|=m$.  
		Then  
		\[ 
		\mathsf{Red}\bigl(
		\langle v_2, S\cup R\rangle,\ \langle v_4, T \cup R \rangle,\
		\langle v_3\rangle,\  \langle v_1, B_2\cup B_4 \cup B_5\rangle,\ \langle v_2\rangle,\ \langle v_4\rangle,\ \langle v_5\rangle,\ \langle v_1\rangle
		\bigr)
		\] 
		is legal for $(H,L_0,4m)$ and delete all the vertices of $H$, which contradicts Lemma~\ref{key lemma}.  
	\end{proof}
	
	\begin{claim}
		\label{FC-(5_2,4,4_1,4)}
		$G$ contains no $(5_{\geq2},4,4_{\geq1},4)$-cycle.
	\end{claim}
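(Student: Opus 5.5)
We argue by contradiction, following the template of the earlier claims. Let $v_1v_2v_3v_4v_1$ be the cycle, where $v_1$ is a $5_{\ge2}$-vertex with two $3$-neighbors $v_5,v_6$, the vertices $v_2,v_4$ are $4$-vertices, and $v_3$ is a $4$-vertex with a $3$-neighbor $v_7$. Since $G$ is triangle-free, $v_5,v_6\notin\{v_2,v_4\}$ and $v_7\notin\{v_2,v_4\}$. However, $v_7$ may coincide with $v_5$ (or $v_6$), and there may be extra edges among $\{v_5,v_6,v_7\}$, such as $v_5v_7$. We take $H=G[v_1,\ldots,v_7]$ (with $v_7$ merged when it coincides) and split into cases according to these coincidences and extra edges. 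In each case we compute $(f_H,g_H)$ by Definition~\ref{def-(f_H,g_H)}, exhibit a legal reduction scheme deleting $V(H)$, and contradict Lemma~\ref{key lemma}. We also keep Claims~\ref{FC-star}, \ref{FC-5_3/4_24_1} and \ref{FC-5_24_2} at hand to exclude configurations (for instance $v_1$ cannot be $5_3$ if $v_2$ or $v_4$ is a $4_{\ge1}$-vertex, and $v_3$ has at most two $3$-neighbors).

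\emph{Generic case: $v_5,v_6,v_7$ distinct and no extra edges.} Here $U_H=\{v_1,v_3\}$, and $F_H=\{v_2,v_4,v_5,v_6,v_7\}$ is independent. Hence these five vertices get $(7m,4m)$, while $(f_H,g_H)(v_1)=(15m,4m)$ and $(f_H,g_H)(v_3)=(11m,4m)$. We use the scheme
\[
\mathsf{Red}\bigl(\langle v_1\mid v_5,m\rangle,\ \langle v_5\rangle,\ \langle v_1\mid v_6,m\rangle,\ \langle v_6\rangle,\ \langle v_3\mid v_7,m\rangle,\ \langle v_7\rangle,\ \langle v_2\rangle,\ \langle v_4\rangle,\ \langle v_1\rangle,\ \langle v_3\rangle\bigr).
\]
The legality checks run as follows.
\begin{itemize}
\item Each saving step leaves the $3$-vertex with a $7m$ list against a demand of $4m+3m$, so its deletion is legal.
\item Afterwards $v_1$ and $v_3$ have residual demands $2m$ and $3m$. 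Thus $v_2$ and $v_4$ need $4m+2m+3m\le 7m$ colors only if we account for the fact that the colors given to $v_1,v_3$ were removed from their lists. Precisely, $|L'(v_2)|\ge 7m-2m-m$ is not obviously enough, and this is where we must check carefully.
\end{itemize}
If the direct count fails, we replace the two single savings at $v_1$ by $\langle\{v_2,v_4\}\mid v_1,m^*\rangle$ or $\langle\{v_2,v_4\}\mid v_3,m^*\rangle$ (available by Lemma~\ref{lem-three sets}, since $7m+7m\ge 15m-\cdots+m$ after the earlier partial colorings). This mimics Claim~\ref{FC-(4,4,4,4_1/5_2)}, where the saved colors on $v_2,v_4$ are counted against the common neighbor, so that $v_2$ and $v_4$ become degenerate-deletable. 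The last two deletions are legal because $v_1$ and $v_3$ are nonadjacent and retain lists far exceeding their demands.

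\emph{Degenerate cases.}
\begin{itemize}
\item If $v_7=v_5$, then $v_5$ has deficit $1$ and lies in $U_H$ with $f_H=11m$. Now $F_H=\{v_2,v_4,v_6\}$, and we use $\langle\{v_2,v_5\}\mid v_1,m^*\rangle$-type savings, as in Claim~\ref{FC-(5_1,4,3,4)} Case~1.
\item If $v_5v_7\in E(G)$ (or $v_6v_7$), then $F_H$ has a $P_2$ component. Its vertices get $(6m,3m)$, and $v_1$ and $v_3$ lose $m$ in $f_H$ each. The same scheme with adjusted saving amounts should work.
\end{itemize}
The main obstacle is the bookkeeping at $v_2$ and $v_4$, which have only $7m$ (or fewer) colors but three demanding neighbors' worth of constraints. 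If a plain scheme does not close, the fallback is the hands-on argument of Claim~\ref{FC-(5_1,4,3,4)} Case~2. In that argument we color $G-v_3$, extract sets $A_i\subseteq L_0(v_i)\setminus L_0(v_1)$ and $B_i\subseteq L_0(v_1)\setminus L_0(v_i)$ from the existing coloring, and precolor $v_1$ with the union of the $B_i$ so that it costs nothing to $v_2,v_4,v_5,v_6$.
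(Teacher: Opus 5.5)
Your overall template (take $H=G[v_1,\dots,v_7]$ and contradict Lemma~\ref{key lemma}) is the paper's. However, no case is actually closed, and the one explicit scheme fails for a structural reason.

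\emph{Generic case.} First, $v_1$ has a fifth neighbor outside $H$. So $d_G(v_1)-d_H(v_1)=1$ and $(f_H,g_H)(v_1)=(11m,4m)$, not $(15m,4m)$. Second, your scheme deletes $v_2,v_4$ while $v_1,v_3$ are still present. After the savings toward $v_5,v_6,v_7$, the list of $v_2$ may be down to $4m$, whereas $\langle v_2\rangle$ needs $4m+2m+3m=9m$.

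The vertices $v_2,v_4\in F_H$ can only be removed last. The vertices that must be made degenerate are $v_1$ and $v_3$, and each is short by exactly $m$ with respect to $\{v_2,v_4\}$. The single savings toward $v_5,v_6,v_7$ are neutral here, since giving a vertex $k$ colors lowers its demand by $k$ and a neighbor's list by at most $k$.

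Your proposed repair, replacing the savings at $v_1$ by a double saving, cannot work. The vertex $v_5$ has list $7m$, demand $4m$ and sole neighbor $v_1$ of demand $4m$, so it needs $\langle v_1\mid v_5,m\rangle$. What is needed is both double savings on top of the single ones:
\begin{itemize}
\item $\langle\{v_2,v_4\}\mid v_3,m^*\rangle$, then $\langle\{v_2,v_4\}\mid v_1,m^*\rangle$, which is still available since then $|L(v_2)|+|L(v_4)|\ge 12m\ge |L(v_1)|+m$;
\item then $\langle v_3\mid v_7,m\rangle,\langle v_7\rangle,\langle v_3\rangle$;
\item then $\langle v_1\mid v_5,m\rangle,\langle v_5\rangle,\langle v_1\mid v_6,m\rangle,\langle v_6\rangle,\langle v_1\rangle$;
\item and finally $\langle v_2\rangle,\langle v_4\rangle$.
\end{itemize}
This is exactly the paper's scheme when $v_5v_7,v_6v_7\notin E(G)$.

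\emph{Degenerate cases.} These are also mishandled. The coincidence $v_7\in\{v_5,v_6\}$, and more generally $v_3v_5\in E(G)$ or $v_3v_6\in E(G)$, cannot occur by Claim~\ref{FC-(5_2,3,4,4)}, since $v_1v_5v_3v_4v_1$ would be a $(5_{\ge2},3,4,4)$-cycle. By Claim~\ref{FC-star}, at most one of $v_5v_7$, $v_6v_7$ is an edge.

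Suppose, say, $v_6v_7\in E(G)$. Then $v_6$ and $v_7$ each have only one neighbor outside $H$, so they lie in $U_H$ with $(f_H,g_H)=(11m,4m)$. Moreover $F_H=\{v_2,v_4,v_5\}$ is independent and $f_H(v_1)=f_H(v_3)=11m$. There is no $P_2$ component, and nothing drops by $m$.

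This is the tight case, and ``the same scheme with adjusted amounts'' does not settle it. Apply $\langle v_1\mid v_5,m\rangle,\langle v_5\rangle$, truncate $L(v_6)$ to $10m$, and apply $\langle v_7\mid v_6,m\rangle,\langle v_6\rangle,\langle v_7\rangle$. One is left with the $4$-cycle with lists $(10m,6m,10m,6m)$ and demands $(3m,4m,4m,4m)$, in which $v_3$ is short by $2m$ and $v_1$ by $m$. One must finish with double savings through $\{v_2,v_4\}$ at $v_1$ and then at $v_3$.

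The second of these double savings has to be of size $m+x-|R|$. Here $x\le m$ is the number of colors $v_3$ lost in the first saving, and $R$ is the set of colors the first saving gave to both $v_2$ and $v_4$. Lemma~\ref{lem-three sets} supplies a saving of this size because at that moment $|L(v_2)|+|L(v_4)|=g(v_2)+g(v_3)+g(v_4)$.

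None of this bookkeeping is in your proposal. The hand-coloring fallback of Claim~\ref{FC-(5_1,4,3,4)} is not needed.
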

	\begin{proof}
		Assume to the contrary that $v_1v_2v_3v_4v_1$ is such a cycle, where $d(v_1)=5$, $d(v_i)=4$ for $i\in\{2,3,4\}$, $v_1$ has two distinct $3$-neighbors $v_5,v_6$, and $v_3$ has a $3$-neighbor $v_7$. By Claim~\ref{FC-(5_2,3,4,4)}, $v_7\notin\{v_5,v_6\}$ and $v_3v_5,v_3v_6\notin E(G)$. By Claim~\ref{FC-star}, at most one of $v_5v_7$ and $v_6v_7$ lies in $E(G)$. Let $H=G[v_1,v_2,\ldots,v_7]$. Whenever extra colors are deleted from a current list below, it suffices to color the resulting restricted list assignment.

		By symmetry, if one of $v_5v_7$ and $v_6v_7$ is an edge, we may assume that $v_6v_7\in E(G)$. In this case, $(f_H,g_H)(v_i)=(7m,4m)$ for $i\in\{2,4,5\}$ and $(f_H,g_H)(v_j)=(11m,4m)$ for $j\in\{1,3,6,7\}$. Let $L_H$ be an arbitrary $f_H$-list assignment. First apply $\mathsf{Red}(\langle v_1\mid v_5,m\rangle,\langle v_5\rangle)$. By deleting extra colors from the current list of $v_6$, we may assume that it has size $10m$. Now apply $\mathsf{Red}(\langle v_7\mid v_6,m\rangle,\langle v_6\rangle,\langle v_7\rangle)$, and denote the resulting lists and demands on the remaining cycle by $L'$ and $g'$. By deleting extra colors, we may assume that
		\[
		(|L'(v_1)|,|L'(v_2)|,|L'(v_3)|,|L'(v_4)|)=(10m,6m,10m,6m)
		\]
		and $(g'(v_1),g'(v_2),g'(v_3),g'(v_4))=(3m,4m,4m,4m)$. Then we have
		\[\mathsf{Red}\bigl(
		\langle \{v_2, v_4\} \mid v_1, m^*\rangle,\ \langle v_1\rangle,\ \langle \{v_2, v_4\} \mid v_3, m^*\rangle,\ \langle v_3\rangle,\ \langle v_2\rangle,\ \langle v_4\rangle \bigr)
		\]
		is legal for $(H,L_H,g_H)$ and deletes the remaining vertices $v_1,v_2,v_3,v_4$.

		Now assume that $v_5v_7,v_6v_7\notin E(G)$. Then $(f_H,g_H)(v_i)=(7m,4m)$ for $i\in\{2,4,5,6,7\}$ and $(f_H,g_H)(v_j)=(11m,4m)$ for $j\in\{1,3\}$. Let $L_H$ be an arbitrary $f_H$-list assignment. By Lemma~\ref{lem-three sets}, we may apply
		\[
		\mathsf{Red}\Bigl(
		\begin{array}{l}
		\langle \{v_2,v_4\}\mid v_3, m^*\rangle,\ \langle \{v_2,v_4\}\mid v_1, m^*\rangle,
		\langle v_3\mid v_7,m\rangle,\ \langle v_7\rangle,\ \langle v_3\rangle,\\
		\langle v_1\mid v_5,m\rangle,\ \langle v_5\rangle,\ 
		\langle v_1\mid v_6,m\rangle,\ \langle v_6\rangle,\ \langle v_1\rangle,\ 
		\langle v_2\rangle,\ \langle v_4\rangle
		\end{array}
		\Bigr)
		\]
		is legal for $(H,L_H,g_H)$ and deletes all vertices of $H$. In both cases, we obtain a contradiction to Lemma~\ref{key lemma}.
	\end{proof}

	\begin{claim}
		\label{FC-6-(6_3,4,3,4)}
		$G$ contains no $(6_{\geq 3},4,3,4)$-cycle.
	\end{claim}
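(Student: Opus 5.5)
We argue as in the preceding claims: we take a maximal configuration around the cycle, check that it is nice, and exhibit a legal reduction scheme that contradicts Lemma~\ref{key lemma}.

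\textbf{Setup.} Assume to the contrary that $v_1v_2v_3v_4v_1$ is a cycle with $d(v_1)=6$, $d(v_2)=d(v_4)=4$ and $d(v_3)=3$. Since $v_2,v_4$ are $4$-vertices, $v_1$ has three further $3$-neighbors $v_5,v_6,v_7$. Triangle-freeness gives:
\begin{itemize}
\item $v_1v_3,\ v_2v_4\notin E(G)$;
\item no $v_i$ with $i\in\{5,6,7\}$ is adjacent to $v_2$, $v_4$, or to another of $v_5,v_6,v_7$.
\end{itemize}
The only possible extra edges are between $v_3$ and $\{v_5,v_6,v_7\}$. Since $d(v_3)=3$ and $v_3$ already has the neighbors $v_2,v_4$, at most one such edge exists; by symmetry it is $v_3v_5$. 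We set $H=G[v_1,\dots,v_7]$, which is nice, and split into two cases.

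\textbf{Case 1: $v_3$ has no neighbor in $\{v_5,v_6,v_7\}$.} Here $F_H=\{v_2,v_4,v_5,v_6,v_7\}$ is independent. Definition~\ref{def-(f_H,g_H)} gives
\begin{itemize}
\item $(f_H,g_H)(v_i)=(7m,4m)$ for $i\in\{2,4,5,6,7\}$;
\item $(f_H,g_H)(v_1)=(f_H,g_H)(v_3)=(11m,4m)$.
\end{itemize}
The scheme I would use, copying Claim~\ref{FC-(5_2,4,4_1,4)}, is
\[
\mathsf{Red}\bigl(\langle\{v_2,v_4\}\mid v_3,m^*\rangle,\ \langle\{v_2,v_4\}\mid v_1,m^*\rangle,\ \langle v_3\rangle,\ \langle v_1\mid v_5,m\rangle,\ \langle v_5\rangle,\ \langle v_1\mid v_6,m\rangle,\ \langle v_6\rangle,\ \langle v_1\mid v_7,m\rangle,\ \langle v_7\rangle,\ \langle v_1\rangle,\ \langle v_2\rangle,\ \langle v_4\rangle\bigr).
\]
The bookkeeping runs as follows.
\begin{itemize}
\item The first two operations are available by Lemma~\ref{lem-three sets}. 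Before the first, $7m+7m\ge 11m+m$. Before the second, the lists of $v_2,v_4$ still have size at least $6m-|R|$, which is comfortably at least $(11m+m)/2$.
\item Together these two operations give $v_3$ and $v_1$ one saving of $m$ each. This makes $\langle v_3\rangle$ legal.
\item Each $3$-neighbor $v_i$ ($i=5,6,7$) gets its own $m$-saving from $v_1$ and is then deleted, since $7m\ge 4m+3m$.
\item At the end, $v_1$ has collected the four savings it needs, so $\langle v_1\rangle$ is legal. Then $v_2$ and $v_4$ have only $v_1$- and $v_3$-free demands left and are degenerate.
\end{itemize}
The main obstacle is the count at $v_1$. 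Its list shrinks by every color used on $v_2$ and $v_4$ in both saving steps, as well as by the $3m$ colors spent on $v_5,v_6,v_7$. Its required total is $4m+g'(v_2)+g'(v_4)+g'(v_5)+g'(v_6)+g'(v_7)$. I will track $|R|$ and $|R'|$ from the two $m^*$ operations explicitly, exactly as in the displayed computation following \eqref{eq-two sum}, and check that each $R$-color lowers both sides equally. If the margin fails, my first fallback is to delete extra colors and reorder, handling $v_5,v_6,v_7$ before the $v_1$-saving from $\{v_2,v_4\}$, as done in Claim~\ref{FC-(4,4,4,4_1/5_2)}.

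\textbf{Case 2: $v_3v_5\in E(G)$.} Now $v_3$ has all three neighbors in $H$ and $v_5$ has two, so both lie in $U_H$. Definition~\ref{def-(f_H,g_H)} gives
\begin{itemize}
\item $(f_H,g_H)(v_3)=(15m,4m)$;
\item $(f_H,g_H)(v_5)=(11m,4m)$;
\item $(f_H,g_H)(v_1)=(11m,4m)$;
\item $(f_H,g_H)(v_i)=(7m,4m)$ for $i\in\{2,4,6,7\}$.
\end{itemize}
I would use the scheme
\[
\mathsf{Red}\bigl(\langle\{v_2,v_4\}\mid v_1,m^*\rangle,\ \langle v_1\mid v_6,m\rangle,\ \langle v_6\rangle,\ \langle v_1\mid v_7,m\rangle,\ \langle v_7\rangle,\ \langle v_1\mid v_5,m\rangle,\ \langle v_1\rangle,\ \langle v_5\rangle,\ \langle v_3\rangle,\ \langle v_2\rangle,\ \langle v_4\rangle\bigr).
\]
The steps are:
\begin{itemize}
\item $v_1$ collects four savings of $m$ and is deleted.
\item $v_5$ then has list at least $11m-4m$ against demand $4m+4m-m$.
\item $v_3$ then has $15m\ge 4m\cdot 3$, so it and the remaining $4$-vertices are degenerate.
\end{itemize}
Verifying legality is again a routine count of the same kind as in Case~1.

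In both cases the scheme deletes all vertices of $H$, contradicting Lemma~\ref{key lemma}.
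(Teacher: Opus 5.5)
\textbf{The gap is in your Case 2: that scheme is not legal.} A move $\langle v_1\mid v_i,m\rangle$ saves colors for $v_i$, not for $v_1$. It lowers $|L(v_1)|$ and $g(v_1)$ by the same amount, so it is neutral for $v_1$. Hence, when you apply $\langle v_1\rangle$, the only saving $v_1$ has received is the $m^*$ step, while $v_5$ is still present with $g(v_5)=4m$. At that moment $|L(v_1)|=11m-|R|-3m=8m-|R|$. Legality would need $g(v_1)+g(v_2)+g(v_4)+g(v_5)=m+(7m-|R|)+4m=12m-|R|$, so you are short by $4m$.

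The preceding move $\langle v_1\mid v_5,m\rangle$ need not even be available. Just before it, $|L(v_1)|=9m-|R|$, while $L(v_5)$ may still contain all $11m$ of its colors, so $L(v_1)\subseteq L(v_5)$ is possible. A repair would have to delete $v_5$ before $v_1$, with its saving drawn from $v_3$ (e.g.\ $\langle v_3\mid v_5,m\rangle$).

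But this case needs no reduction at all. If $v_3v_5\in E(G)$, then $v_1v_5v_3v_2v_1$ is a $(6_{\ge 3},3,3,4)$-cycle, which Claim~\ref{FC-(5_2/6_3,3,3,4)} already forbids. This is exactly how the paper excludes every edge $v_3v_i$ with $i\in\{5,6,7\}$.

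With Case 2 removed, your Case 1 is essentially the paper's proof. You use the same $H$, the same $(f_H,g_H)$ and the same operations. The only difference is that you apply $\langle\{v_2,v_4\}\mid v_1,m^*\rangle$ before $\langle v_3\rangle$ rather than after it. That reordering is harmless: coloring neighbors of $v_3$ never decreases $|L(v_3)|-g(v_3)-\sum_{w\in N(v_3)}g(w)$.

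Two details of your bookkeeping should still be corrected.
\begin{enumerate}
\item After the first $m^*$ step, each of $v_2$ and $v_4$ has at least $6m$ colors, not merely $6m-|R|$. This gives $|L(v_2)|+|L(v_4)|=13m-|R|\ge 12m$, which is what the second $m^*$ step needs.
\item The final legality of $\langle v_1\rangle$ comes from the single $m^*$ saving, not from ``four savings''. Let $a\le m$ be the number of colors $v_1$ lost in the first $m^*$ step. Then $|L(v_1)|=8m-a-|R'|\ge m+(6m-|R|-|R'|)=g(v_1)+g(v_2)+g(v_4)$.
\end{enumerate}
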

	\begin{proof}
		Assume to the contrary, $v_1v_2v_3v_4v_1$ is such a cycle where $v_1$ is the $6_{\geq 3}$-vertex, and let $v_5,v_6,v_7$ be three $3$-neighbors of $v_1$. By Claim~\ref{FC-(5_2/6_3,3,3,4)}, we have $v_3v_i \notin E(G)$  for $i \in \{5,6,7\}$. Let $H=G[v_1, v_2, \ldots, v_7]$.  
		By Definition~\ref{def-(f_H,g_H)}, $(f_H,g_H)(v_j)=(11m,4m)$ for $j\in\{1,3\}$ and $(f_H,g_H)(v_i)=(7m,4m)$ for $i\in\{2,4,5,6,7\}$. Then
		\[\mathsf{Red}\Bigl(
		\begin{array}{l} 
			\langle \{v_2,v_4\} \mid v_3, m^*\rangle,\ \langle v_3\rangle,\ \langle \{v_2,v_4\} \mid v_1, m^*\rangle,\ \langle v_1 \mid v_5, m \rangle,\\ \langle v_5\rangle,\ \langle v_1 \mid v_6, m \rangle,\ \langle v_6 \rangle,\ \langle v_1 \mid v_7, m \rangle,\ \langle v_7\rangle,\ \langle v_1\rangle,\  \langle v_2\rangle,\ \langle v_4\rangle 
		\end{array}  
		\Bigr)
		\] is legal for $(H,L_H,g_H)$ and deletes all vertices of $H$, which contradicts Lemma~\ref{key lemma}. 
	\end{proof}
	
	\begin{claim}
		\label{FC-(5,3,3,4)-4_1}
		If a $5$-vertex lies on a $(5,3,3,4)$-cycle, 
		then it has no $4_{\ge 1}$-neighbors other than those on the cycle. 
	\end{claim}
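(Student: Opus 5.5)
We argue by contradiction and follow the pattern of the earlier claims: find a nice subgraph $H$, read off $(f_H,g_H)$ from Definition~\ref{def-(f_H,g_H)}, and exhibit a legal reduction scheme deleting all of $V(H)$, contradicting Lemma~\ref{key lemma}.

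\textbf{Setup.} Let $v_1v_2v_3v_4v_1$ be a cycle with $d(v_1)=5$, $d(v_2)=d(v_3)=3$ and $d(v_4)=4$. Suppose $v_1$ has a further neighbor $v_5\notin\{v_2,v_4\}$ which is a $4_{\ge1}$-vertex, and let $v_6$ be a $3$-neighbor of $v_5$. We take $H=G[v_1,\ldots,v_6]$.

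\textbf{Adjacency bookkeeping.} The aim is to show that $H$ has exactly the edges $v_1v_2,v_2v_3,v_3v_4,v_4v_1,v_1v_5,v_5v_6$.
\begin{itemize}
\item Triangle-freeness gives $v_5v_2,v_5v_4\notin E(G)$.
\item If $v_5v_3\in E(G)$, then $v_1v_4v_3v_5v_1$ is a $(5,4,3,4)$-cycle. Since $v_1$ has the $3$-neighbor $v_2$, this contradicts Claim~\ref{FC-(5_1,4,3,4)}. In particular $v_6\ne v_3$.
\item $v_6\ne v_2$, since $v_2v_5\notin E(G)$.
\item If $v_6$ were adjacent to $v_2$ or $v_3$, we would get a $(3,3,3)$-path, excluded by Claim~\ref{FC-star}.
\item If $v_6v_4\in E(G)$, then $v_1v_5v_6v_4v_1$ is a $(5_{\ge1},4,3,4)$-cycle, again excluded by Claim~\ref{FC-(5_1,4,3,4)}.
\end{itemize}

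\textbf{The functions $(f_H,g_H)$.} With this edge set, $d_G(v)-d_H(v)=2$ for $v\in\{v_1,v_4,v_5,v_6\}$ and $=1$ for $v_2,v_3$. So $U_H=\{v_2,v_3\}$, and $F_H$ is the path $v_4v_1v_5v_6\cong P_4$, hence $H$ is nice. Definition~\ref{def-(f_H,g_H)} gives:
\begin{itemize}
\item $(f_H,g_H)(v_4)=(f_H,g_H)(v_6)=(5m,3m)$ and $(f_H,g_H)(v_1)=(f_H,g_H)(v_5)=(4m,2m)$;
\item $(f_H,g_H)(v_2)=(15m-4m-2m,4m)=(9m,4m)$, since $v_2$'s $F_H$-neighbor $v_1$ saves $2m$;
\item $(f_H,g_H)(v_3)=(15m-4m-m,4m)=(10m,4m)$, since $v_4$ saves $m$.
\end{itemize}

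\textbf{The reduction scheme.} We use
\[
\mathsf{Red}\bigl(\langle v_6\rangle,\ \langle v_5\rangle,\ \langle\{v_2,v_4\}\mid v_3,m^*\rangle,\ \langle v_3\rangle,\ \langle v_4\rangle,\ \langle v_2\rangle,\ \langle v_1\rangle\bigr).
\]
The legality checks are as follows.
\begin{itemize}
\item $\langle v_6\rangle$ needs $5m\ge 3m+2m$.
\item $\langle v_5\rangle$ then needs $4m\ge 2m+2m$.
\item The $m^*$-step is available by Lemma~\ref{lem-three sets}, because $|L(v_2)|+|L(v_4)|=14m\ge 10m+m$. It is legal: $v_2,v_4$ keep at least their demands, and $v_1$ loses at most $|S|+|T|+2|R|\le 2m$ colors, leaving at least $2m=g(v_1)$.
\item $\langle v_3\rangle$ is the standard pattern~\eqref{eq-two sum}, since $10m+m\ge 4m+4m+3m$.
\item For $\langle v_4\rangle$: $v_4$ is not adjacent to $v_2$, so its list is still at least $5m-|T|-|R|$. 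Its requirement is $g'(v_4)+g(v_1)=(3m-|T|-|R|)+2m$, which is met.
\item For $\langle v_2\rangle$: its list is at least $9m-|S|-|R|$. Its requirement is $(4m-|S|-|R|)+2m$, since its only remaining neighbor is $v_1$.
\item Finally $v_1$ is isolated with at least $2m$ colors and demand $2m$.
\end{itemize}

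The main point of care is the adjacency bookkeeping, specifically ruling out $v_5v_3$, $v_6v_4$ and $v_6\in\{v_2,v_3\}$ via Claims~\ref{FC-star} and~\ref{FC-(5_1,4,3,4)}, so that $F_H$ really is a $P_4$. The other delicate point is choosing the order $v_4$ before $v_2$ before $v_1$ after the saving step, so that $v_1$'s small list is used last.
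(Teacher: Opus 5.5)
Your proof is correct and is essentially the paper's. You use the same nice subgraph $H=G[v_1,\ldots,v_6]$ with $F_H=v_4v_1v_5v_6\cong P_4$, the same $(f_H,g_H)$, and the same reduction scheme up to swapping the independent deletions $\langle v_2\rangle$ and $\langle v_4\rangle$. The one difference is that the paper treats the case $v_3v_5\in E(G)$ (with $v_3$ the unique $3$-neighbor of $v_5$) by a separate reduction on $G[v_1,\ldots,v_5]$. You legitimately exclude that case via the earlier Claim~\ref{FC-(5_1,4,3,4)}, since $v_1v_4v_3v_5v_1$ would then be a $(5_{\ge 1},4,3,4)$-cycle.
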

	\begin{proof}
		Assume to the contrary, $v_1v_2v_3v_4v_1$ is a $(5,3,3,4)$-cycle with $d(v_1)=5$, $d(v_2)=d(v_3)=3$ and $d(v_4)=4$ and $v_1$ has a $4$-neighbor $v_5$ distinct from $v_4$. Moreover, let $v_6$ be a $3$-neighbor of $v_5$, chosen so that 
		$v_6 \neq v_3$ unless $v_3$ is the unique $3$-neighbor of $v_5$. If $v_6\neq v_3$, then by Claim~\ref{FC-star}, $v_2v_6, v_3v_6 \notin E(G)$, and by Claim~\ref{FC-k_{k-2}-3_1}, $v_5v_3,v_6v_4 \notin E(G)$. We consider two cases to determine $(f_H,g_H)$ by Definition~\ref{def-(f_H,g_H)}.
		\begin{itemize}
			\item If $v_6 \neq v_3$, then let $H=G[v_1,v_2,\ldots,v_6]$. Then $(f_H,g_H)(v_2)=(9m,4m)$, $(f_H,g_H)(v_i)=(4m,2m)$ for $i\in\{1,5\}$,
			$(f_H,g_H)(v_3)=(10m,4m)$, and $(f_H,g_H)(v_j)=(5m,3m)$ for $j\in\{4,6\}$.
			\item If $v_6 = v_3$, then let $H=G[v_1,v_2,\ldots,v_5]$. Then  $(f_H,g_H)(v_1)=(5m,2m)$, 
			$(f_H,g_H)(v_2)=(9m,4m)$,  $(f_H,g_H)(v_3)=(13m,4m)$, and $(f_H,g_H)(v_i)=(5m,3m)$ for $i\in\{4,5\}$.
		\end{itemize}
		
		For the former case, let $\Omega=\mathsf{Red}\big(\langle v_6\rangle,\ \langle v_5\rangle,\ \langle \{v_2, v_4\} \mid v_3, m^*\rangle,\ \langle v_3\rangle,\ \langle v_2\rangle,\ \langle v_4\rangle,\ \langle v_1\rangle \big)$.   
		For the latter case, let $\Omega=\mathsf{Red}(\langle \{v_2, v_4\} \mid v_3, m^*\rangle,\ \langle v_3\rangle,\ \langle v_5\rangle, ~\langle v_2\rangle,\ \langle v_4\rangle,\ \langle v_1\rangle)$. 
		In either case, $\Omega$ is legal for $(H,L_H,g_H)$ and delete all the vertices of $H$, which contradicts Lemma~\ref{key lemma}. 
	\end{proof}

	\begin{claim}
		\label{FC-334~43}
		$G$ contains no $(3,3,4,4,3)$- or $(3,3,4,4,4,3)$-path. 
	\end{claim}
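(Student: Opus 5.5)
We argue by contradiction in both cases. We take $H$ to be the subgraph induced by the vertices of the path, use the earlier claims to pin down which extra edges $H$ can have, read off $(f_H,g_H)$ from Definition~\ref{def-(f_H,g_H)}, and then look for a legal reduction scheme that deletes all of $H$, contradicting Lemma~\ref{key lemma}.

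\emph{The $(3,3,4,4,3)$-path.} Let $P=v_1v_2v_3v_4v_5$ with degrees $3,3,4,4,3$, and let $H=G[V(P)]$. The possible chords are excluded as follows.
\begin{itemize}
\item $v_1v_3$, $v_2v_4$ and $v_3v_5$ would each create a triangle.
\item $v_1v_4$ would create a $(3,3,4,4)$-cycle, and $v_2v_5$ a $(3,4,4,3)$-cycle; both are forbidden by Claim~\ref{FC-(4-,4-,4-,3-)}.
\item $v_1v_5$ would create the $(3,3,3)$-path $v_5v_1v_2$, forbidden by Claim~\ref{FC-star}.
\end{itemize}
So $H=P$. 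Then $U_H=\{v_2\}$, and $F_H$ consists of the isolated vertex $v_1$ together with the path $v_3v_4v_5$. Definition~\ref{def-(f_H,g_H)} gives
\[
(f_H,g_H)(v_1)=(7m,4m),\quad (f_H,g_H)(v_2)=(10m,4m),\quad (f_H,g_H)(v_3)=(f_H,g_H)(v_5)=(5m,3m),\quad (f_H,g_H)(v_4)=(5m,2m).
\]
A legal scheme is
\[
\mathsf{Red}\bigl(\langle v_5\rangle,\ \langle v_4\rangle,\ \langle v_2\mid v_1,m\rangle,\ \langle v_1\rangle,\ \langle v_2\rangle,\ \langle v_3\rangle\bigr).
\]
The checks are short.
\begin{itemize}
\item $\langle v_5\rangle$: $5m\ge 3m+2m$. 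After deleting $v_5$, $\langle v_4\rangle$: $5m\ge 2m+3m$.
\item $\langle v_2\mid v_1,m\rangle$ is available since $|L_H(v_2)\setminus L_H(v_1)|\ge 3m$. It lowers $|L(v_3)|$ by at most $m$, leaving $v_3$ with list at least $4m$ and demand $3m$.
\item $\langle v_1\rangle$: $7m\ge 4m+3m$.
\item $\langle v_2\rangle$: $v_2$ has at least $9m$ colors left against $3m+3m$.
\item Finally $v_3$ is isolated with enough colors.
\end{itemize}

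\emph{The $(3,3,4,4,4,3)$-path.} Let $P=v_1\cdots v_6$ with degrees $3,3,4,4,4,3$. The same arguments exclude every chord except $v_1v_5$:
\begin{itemize}
\item $v_1v_3$, $v_2v_4$, $v_3v_5$ and $v_4v_6$ would create triangles.
\item $v_1v_4$, $v_2v_5$ and $v_3v_6$ would create cycles forbidden by Claim~\ref{FC-(4-,4-,4-,3-)}.
\item $v_1v_6$ and $v_2v_6$ would create $(3,3,3)$-paths, forbidden by Claim~\ref{FC-star}.
\end{itemize}
Hence there are two cases.

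\emph{Case 1: $v_1v_5\notin E(G)$.} Then $U_H=\{v_2\}$, and $F_H$ consists of the isolated vertex $v_1$ together with the path $P_4=v_3v_4v_5v_6$. We get
\[
(f_H,g_H)(v_1)=(7m,4m),\quad (f_H,g_H)(v_2)=(10m,4m),\quad (f_H,g_H)(v_3)=(f_H,g_H)(v_6)=(5m,3m),\quad (f_H,g_H)(v_4)=(f_H,g_H)(v_5)=(4m,2m).
\]

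\emph{Case 2: $v_1v_5\in E(G)$.} Now $v_1$ and $v_5$ join $U_H$, and $F_H$ becomes $P_2=v_3v_4$ plus the isolated vertex $v_6$. This gives much larger lists on $v_1$ and $v_5$, and I expect a straightforward greedy scheme to work here, for example
\[
\mathsf{Red}\bigl(\langle v_6\rangle,\ \ldots\bigr),
\]
using $\langle\{\cdot,\cdot\}\mid\cdot, m^*\rangle$ around the $5$-cycle $v_1v_2v_3v_4v_5$ as in Claim~\ref{FC-(4,4,4,4_1/5_2)}.

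\emph{Main obstacle (Case 1).} Treating $v_1,v_2$ first as above leaves the path $v_3v_4v_5v_6$ with lists $(4m,4m,4m,5m)$ against demands $(3m,2m,2m,3m)$. This is exactly tight at $v_4$: after peeling $v_6$ and $v_5$, the vertex $v_4$ has list $4m$ but needs $2m+3m$. So the savings must be spent elsewhere, and the order of the single-vertex savings is where the work lies.

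What I would try first is to use $v_2$'s large list for saving toward $v_3$ rather than toward $v_1$. Concretely, I would start with $\langle\{v_2,v_4\}\mid v_3,m^*\rangle$, which Lemma~\ref{lem-three sets} allows because $10m+4m\ge 5m+m$. This lowers both $g(v_3)$ and $g(v_4)$ at a cost of only $|R|$ colors from $L(v_3)$. After that I would peel $v_6$ and $v_5$, then $v_4$ and $v_3$. The only remaining danger is $v_1$, which needs $4m$ from a list of $7m$ while $v_2$ still needs its demand. I would handle this with $\langle v_2\mid v_1,m\rangle$ before deleting $v_1$, and then delete $v_2$ last.

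If the bookkeeping at $v_4$ still fails, the fallback is a hand-built argument in the style of Claim~\ref{FC-(5_1,4,3,4)}, Case~2. That is, we color $G-\{v_2\}$ (or $G-\{v_3\}$) by minimality, extract explicit $m$-sets $A_i\subseteq L_0(v_i)\setminus L_0(v_{i+1})$ along the $P_4$, and choose the partial colorings so that $v_4$ gains the missing $m$.
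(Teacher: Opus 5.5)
Your argument for the $(3,3,4,4,3)$-path is correct and is exactly the paper's: the same $(f_H,g_H)$ and the same reduction scheme. The $(3,3,4,4,4,3)$-path, however, is not proved.

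\textbf{Case~2 is impossible, not merely unfinished.} You leave Case~2 ($v_1v_5\in E(G)$) with only an expectation that some greedy scheme works. It never arises: if $v_1v_5\in E(G)$, then $v_2v_1v_5v_6$ is a $(3,3,4,3)$-path, which Claim~\ref{FC-k_{k-2}-3_1} forbids. This is precisely how the paper concludes that $H$ is an induced path.

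\textbf{The scheme you propose for Case~1 is not legal.}
\begin{itemize}
\item The operation $\langle\{v_2,v_4\}\mid v_3,m^*\rangle$ colors $v_2$ and $v_4$. It lowers $g(v_2)$ and $g(v_4)$, not $g(v_3)$; it only spares colors in $L(v_3)$, and $v_3$ is not the bottleneck.
\item After this operation and after peeling $v_6$ and $v_5$, you have $|L(v_4)|=4m-|T|-|R|$ and $g(v_4)=2m-|T|-|R|$, while still $g(v_3)=3m$.
\item So $\langle v_4\rangle$ requires $4m-|T|-|R|\ge 5m-|T|-|R|$, which fails by exactly $m$. Deleting $v_3$ first is worse, since $|L(v_3)|=5m-|R|$ must cover $g(v_2)+g(v_3)+g(v_4)=8m-|R|$.
\end{itemize}
So the tightness at $v_4$ that you correctly diagnosed is not removed by your plan.

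\textbf{The missing idea.} Use the saving $\langle v_3\mid v_4,m\rangle$. It is available because $|L_H(v_3)|=5m$ and $|L_H(v_4)|=4m$. It must be applied before $v_2$'s saving toward $v_1$ eats into $L(v_3)$; your ``treat $v_1,v_2$ first'' order is what destroyed this option. The paper's scheme is
\[
\mathsf{Red}\bigl(\langle v_6\rangle,\ \langle v_5\rangle,\ \langle v_3\mid v_4,m\rangle,\ \langle v_4\rangle,\ \langle v_2\mid v_1,m\rangle,\ \langle v_1\rangle,\ \langle v_2\rangle,\ \langle v_3\rangle\bigr).
\]
The checks are:
\begin{itemize}
\item $\langle v_6\rangle$ and $\langle v_5\rangle$ need $5m\ge 3m+2m$ and $4m\ge 2m+2m$.
\item After $\langle v_3\mid v_4,m\rangle$, $v_3$ has list $4m$ and demand $2m$, and $v_2$ keeps at least $9m$ colors. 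So $\langle v_4\rangle$ needs only $4m\ge 2m+2m$.
\item $|L(v_2)\setminus L(v_1)|\ge 9m-7m\ge m$, so $\langle v_2\mid v_1,m\rangle$ is available. Then $\langle v_1\rangle$ needs $7m\ge 4m+3m$.
\item $\langle v_2\rangle$ needs $8m\ge 3m+2m$.
\item Finally $v_3$ is isolated with at least $3m\ge 2m$ colors.
\end{itemize}
Your proposed fallback, coloring $G-v_2$ by hand, is therefore unnecessary.
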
 
	\begin{proof}
		Suppose to the contrary, $H=v_1v_2\dots v_k$ is such a path, where $k \in \{5,6\}$, $d(v_1)=d(v_2)=d(v_k)=3$, and $d(v_i)=4$ for $3\le i\le k-1$. By Claims \ref{FC-star} and \ref{FC-k_{k-2}-3_1}, $G$ contains neither a $(3,3,3)$-path nor a $(3,3,4,3)$-path. By Claim~\ref{FC-(4-,4-,4-,3-)},  $G$ contains no $(4,4,3,3)$-cycle. Consequently, $H$ is an induced path. By Definition~\ref{def-(f_H,g_H)}, $(f_H,g_H)(v_1)=(7m,4m)$, $(f_H,g_H)(v_2)=(10m,4m)$, 
		$(f_H,g_H)(v_i)=(5m,3m)$ for $i\in\{3,k\}$ and $(f_H,g_H)(v_j)=((10-k)m,2m)$ for $4\leq j\leq k-1$.  
		
		If $k=5$, then let  $\Omega=\mathsf{Red}\bigl(
		\langle v_5\rangle,\ \langle v_4\rangle,\
		\langle v_2 \mid v_1,m\rangle,\   \langle v_1\rangle,\     
		\langle v_2\rangle,\ 
		\langle v_3\rangle
		\bigr)$. 
		If $k=6$, then let $
		\Omega=\mathsf{Red}\bigl(
		\langle v_6\rangle,\  \langle v_5\rangle,\ \langle v_3 \mid v_4, m\rangle,\ \langle v_4\rangle,~\langle v_2 \mid v_1,m\rangle,\   \langle v_1\rangle,\ \langle v_2\rangle,\   
		\langle v_3\rangle
		\bigr).$ 
		In either case, $\Omega$ is legal for $(H, L_H, g_H)$ and deletes all vertices of $H$, which contradicts Lemma~\ref{key lemma}.   
	\end{proof}

	\begin{claim} 
		\label{FC-3434~43} 
		$G$ contains no $(3,4,3,4,3)$- or $(3,4,3,4,4,3)$-path. 
	\end{claim}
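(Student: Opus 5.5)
The plan follows the pattern of Claim~\ref{FC-334~43}. Take $H$ to be the path itself and check that it is induced, using earlier claims and triangle-freeness. Then read off $(f_H,g_H)$ from Definition~\ref{def-(f_H,g_H)}. Finally, exhibit a legal reduction scheme deleting all of $H$, which contradicts Lemma~\ref{key lemma}.

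\emph{The $(3,4,3,4,3)$-path.} Let $H=v_1v_2v_3v_4v_5$ with $d(v_1)=d(v_3)=d(v_5)=3$ and $d(v_2)=d(v_4)=4$. Chords at distance two are excluded since $G$ is triangle-free. The chords $v_1v_4$ and $v_2v_5$ would create a $(3,4,3,4)$-cycle, excluded by Claim~\ref{FC-(4-,4-,4-,3-)}. A chord $v_1v_5$ would create the path $v_5v_1v_2v_3$, which is a $(3,3,4,3)$-path, excluded by Claim~\ref{FC-k_{k-2}-3_1}. So $H$ is induced and nice. Here $U_H=\{v_3\}$, and $F_H$ consists of the two $P_2$ components $v_1v_2$ and $v_4v_5$. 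Hence $(f_H,g_H)=(6m,3m)$ on $v_1,v_2,v_4,v_5$, and $(f_H,g_H)(v_3)=(15m-4m-2m,4m)=(9m,4m)$. Use the scheme
\[\mathsf{Red}\bigl(\langle v_1\rangle,\ \langle v_5\rangle,\ \langle\{v_2,v_4\}\mid v_3,m^*\rangle,\ \langle v_3\rangle,\ \langle v_2\rangle,\ \langle v_4\rangle\bigr).\]
It is legal: $6m\ge 3m+3m$ for $v_1$ and $v_5$; $|L_H(v_2)|+|L_H(v_4)|=12m\ge 9m+m$, so inequality \eqref{eq-two sum} holds for $v_3$; and at the end $v_2$ and $v_4$ are isolated, with lists at least their demands because the partial coloring was legal.

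\emph{The $(3,4,3,4,4,3)$-path.} Let $H=v_1\cdots v_6$ with $d(v_1)=d(v_3)=d(v_6)=3$ and $d(v_2)=d(v_4)=d(v_5)=4$. The inducedness check goes as follows.
\begin{itemize}
\item Distance-two chords create triangles.
\item $v_1v_4$, $v_2v_5$ and $v_3v_6$ would each create a 4-cycle of type $(4^-,4^-,4^-,3)$, excluded by Claim~\ref{FC-(4-,4-,4-,3-)}.
\item $v_1v_6$ would give the $(3,3,4,3)$-path $v_6v_1v_2v_3$, excluded by Claim~\ref{FC-k_{k-2}-3_1}.
\item $v_2v_6$ would give $v_2$ three $3$-neighbors, excluded by Claim~\ref{FC-star}.
\item $v_1v_5$ would make $v_5$ a $4_2$-vertex adjacent to the $4_{\ge1}$-vertex $v_4$, excluded by Claim~\ref{FC-5_3/4_24_1}.
\end{itemize}
So $H$ is induced, with $U_H=\{v_3\}$. $F_H$ has a $P_2$ component $v_1v_2$, giving $(6m,3m)$ on each vertex, and a $P_3$ component $v_4v_5v_6$. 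On the latter, $(f_H,g_H)=(5m,3m)$ at the ends $v_4,v_6$ and $(5m,2m)$ at the middle vertex $v_5$. Also $(f_H,g_H)(v_3)=(9m,4m)$. Use
\[\mathsf{Red}\bigl(\langle v_1\rangle,\ \langle v_6\rangle,\ \langle v_5\rangle,\ \langle\{v_2,v_4\}\mid v_3,m^*\rangle,\ \langle v_3\rangle,\ \langle v_2\rangle,\ \langle v_4\rangle\bigr).\]
It is legal: $6m\ge 6m$ for $v_1$; $5m\ge 3m+2m$ for $v_6$; $5m\ge 2m+3m$ for $v_5$ once $v_6$ is gone; and $6m+5m\ge 9m+m$ for the saving at $v_3$.

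The only real obstacle is the inducedness verification. In particular, the chords $v_1v_5$ (in the 6-path) and $v_1v_5$ (in the 5-path) close $5$-cycles, which triangle-freeness alone does not exclude, so the right earlier claims must be found for them. I handle these via Claims~\ref{FC-5_3/4_24_1} and~\ref{FC-k_{k-2}-3_1}, as above. The legality checks are then routine arithmetic.
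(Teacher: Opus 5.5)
Your proposal is correct and follows the paper's proof: you show $H$ is induced using the earlier claims, obtain the same values of $(f_H,g_H)$, and give a legal reduction scheme. Your chord-by-chord check, including Claim~\ref{FC-star} for $v_2v_6$, is more explicit than the paper's one-line citation. The only difference is the saving step at $v_3$: the paper applies $\langle v_3\mid v_2,m\rangle,\langle v_2\rangle,\langle v_3\rangle,\langle v_4\rangle$ after deleting the end vertices, whereas you use the two-neighbor saving $\langle\{v_2,v_4\}\mid v_3,m^*\rangle$ before $\langle v_3\rangle$, and both are legal.
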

	
	\begin{proof}
		Assume to the contrary, $H=v_1v_2\cdots v_k$ is such a path,  
		where $k\in \{5,6\}$, $d(v_i)=3$ for $i \in \{1,3,k\}$ and $d(v_j)=4$ for $j \in \{2,4,\ldots,k-1\}$. By Claims~\ref{FC-k_{k-2}-3_1},~\ref{FC-(4-,4-,4-,3-)} and \ref{FC-5_3/4_24_1}, $H$ is an induced path. 
		By Definition~\ref{def-(f_H,g_H)},
		$(f_H,g_H)(v_i)=(6m,3m)$ for $i\in\{1,2\}$, $(f_H,g_H)(v_3)=(9m,4m)$. If $k=5$, then $(f_H,g_H)(v_j)=(6m,3m)$ for $j \in \{4,5\}$. If $k=6$, then $(f_H,g_H)(v_j)=(5m,3m)$ for $j \in \{4,6\}$ and $(f_H,g_H)(v_5)=(5m,2m)$. 
		Then
		$$\Omega=\mathsf{Red}\bigl(
		\langle v_1\rangle,\  \langle v_{k} \rangle,\
		\langle v_{k-1}\rangle,\ \dots,~ \langle v_5\rangle,\ \langle v_3\mid v_2, m \rangle,\ \langle v_2 \rangle,\  \langle v_3\rangle,\ \langle v_4\rangle \bigr)$$
		is legal for $(H,L_H, g_H)$ and deletes all vertices of $H$, which contradicts Lemma~\ref{key lemma}.  
	\end{proof}

	\begin{claim}
		\label{FC-4_14_14_1}
		$G$ contains no $(4_{\ge 1},4_{\ge 1},4_{\ge 1})$-path. 
	\end{claim}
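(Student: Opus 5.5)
The plan is to take a minimal violating path, show that together with one $3$-neighbor of each vertex it induces a tree, and then exhibit a legal reduction scheme deleting that tree, contradicting Lemma~\ref{key lemma}.

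Suppose to the contrary that $v_1v_2v_3$ is a path with $d(v_i)=4$ for each $i$, and let $u_i$ be a $3$-neighbor of $v_i$ for $i\in[3]$. Set $H=G[v_1,v_2,v_3,u_1,u_2,u_3]$. The first step is to show that the $u_i$ are distinct and that $H$ is exactly the tree consisting of the path $v_1v_2v_3$ with the pendant edges $v_iu_i$.

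\begin{itemize}
\item Since $G$ is triangle-free, $u_2\ne u_1,u_3$ and $u_2$ is adjacent to neither $v_1$ nor $v_3$. For the same reason $u_1$ is not adjacent to $v_2$, and $u_3$ is not adjacent to $v_2$.
\item If $u_1=u_3$, or if $u_1v_3\in E(G)$ or $u_3v_1\in E(G)$, then $G$ contains a $(4,4,4,3)$-cycle. This is excluded by Claim~\ref{FC-(4-,4-,4-,3-)}.
\item If $u_1u_2\in E(G)$, then $v_1v_2u_2u_1v_1$ is a $(4,4,3,3)$-cycle, also excluded by Claim~\ref{FC-(4-,4-,4-,3-)}. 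The case $u_2u_3\in E(G)$ is symmetric.
\item If $u_1u_3\in E(G)$, then $u_3u_1v_1v_2u_2$ is a $(3,3,4,4,3)$-path, excluded by Claim~\ref{FC-334~43}.
\end{itemize}

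Hence $H$ is this induced tree, and it is routine to check that $H$ is nice. Counting $d_G-d_H$ gives $U_H=\{v_2\}$, and $F_H$ has components $v_1u_1\cong P_2$, $v_3u_3\cong P_2$ and $u_2\cong P_1$. By Definition~\ref{def-(f_H,g_H)}:
\begin{itemize}
\item $(f_H,g_H)(x)=(6m,3m)$ for $x\in\{u_1,v_1,u_3,v_3\}$;
\item $(f_H,g_H)(u_2)=(7m,4m)$;
\item $(f_H,g_H)(v_2)=(15m-4m-m-m,\,4m)=(9m,4m)$.
\end{itemize}

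Next I would apply the reduction scheme
\[
\mathsf{Red}\bigl(\langle u_1\rangle,\ \langle u_3\rangle,\ \langle v_2\mid v_1,m\rangle,\ \langle v_1\rangle,\ \langle v_2\mid v_3,m\rangle,\ \langle v_3\rangle,\ \langle v_2\rangle,\ \langle u_2\rangle\bigr).
\]
The legality checks, in order, are as follows.
\begin{itemize}
\item $\langle u_1\rangle$ and $\langle u_3\rangle$ are legal because $6m\ge 3m+3m$.
\item $\langle v_2\mid v_1,m\rangle$ is possible because $|L(v_2)\setminus L(v_1)|\ge 9m-6m=3m$. After it, $v_1$ has list size $6m$, demand $3m$, and its neighbor $v_2$ has demand $3m$, so $\langle v_1\rangle$ is legal.
\item The first batch of colors given to $v_2$ may lie in $L(v_3)$, so $|L(v_3)|\ge 5m$ afterwards. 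The current list of $v_2$ has size $8m$, so $|L(v_2)\setminus L(v_3)|\ge 3m$ and $\langle v_2\mid v_3,m\rangle$ is possible.
\item After it, $v_3$ has list size at least $5m$, demand $3m$, and $v_2$ has demand $2m$, so $\langle v_3\rangle$ is legal.
\item Now $v_2$ has list size $7m$, demand $2m$, and its only remaining neighbor $u_2$ has demand $4m$. Since $7m\ge 6m$, $\langle v_2\rangle$ is legal.
\item Finally $u_2$ is isolated with list size at least its demand, so $\langle u_2\rangle$ is legal.
\end{itemize}
This scheme deletes all vertices of $H$, contradicting Lemma~\ref{key lemma}.

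The main obstacle is the first step, ruling out every coincidence or extra edge among the $u_i$ and $v_j$; the case $u_1u_3\in E(G)$ is the one that needs the earlier path claim (Claim~\ref{FC-334~43}). The other point needing care is the order of the reduction. Naively coloring $v_2$ with colors outside $L(u_2)$ fails, because those colors shrink the lists of $v_1$ and $v_3$. Instead, the savings must be made toward $v_1$ and $v_3$, and $u_2$ handled last.
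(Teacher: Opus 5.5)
Your proof is correct and is essentially the paper's argument. You use the same subgraph $H$, the same exclusions (triangle-freeness, Claims~\ref{FC-(4-,4-,4-,3-)} and~\ref{FC-334~43}) and the same $(f_H,g_H)$; the only difference is the reduction order, where the paper makes the $m$-savings of $v_2$ toward $u_2$ and then $v_3$ and deletes $v_1$ last, while you save toward $v_1$ and $v_3$ and delete $u_2$ last, and both orders are legal. Two small inaccuracies: after the first saving step you only get $|L(v_2)\setminus L(v_3)|\ge 8m-6m=2m$ rather than $3m$ (which still suffices), and your closing remark that saving toward $u_2$ cannot work is contradicted by the paper's scheme.
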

	\begin{proof}
		Assume to the contrary, $v_1v_2v_3$ is such a path, where
		$v_1$, $v_2$, and $v_3$ are $4_{\ge 1}$-vertices.
		Let $v_4$ be a $3$-neighbor of $v_1$, $v_5$ be a $3$-neighbor of $v_3$, and
		$v_6$ be a $3$-neighbor of $v_2$. Since $G$ is triangle-free, and by Claim~\ref{FC-(4-,4-,4-,3-)}, $v_4,v_5,v_6$ are all distinct, moreover, none of $v_4v_6, v_5v_6, v_3v_4, v_1v_5$ is in $E(G)$. By Claim \ref{FC-334~43}, $v_4v_5\notin E(G)$. Let $H = G[v_1, v_2, \ldots, v_6]$, and by Definition~\ref{def-(f_H,g_H)}, 
		$(f_H,g_H)(v_2)=(9m,4m)$, $(f_H,g_H)(v_6)=(7m,4m)$, and
		$(f_H,g_H)(v_i)=(6m,3m)$ for $i\in\{1,3,4,5\}$.  
		Then
		$$
		\mathsf{Red}\bigl(
		\langle v_4\rangle,\ 	\langle v_5\rangle,\ 
		\langle v_2\mid v_6, m\rangle,\ \langle v_6\rangle,\ 
		\langle v_2\mid v_3, m\rangle,\ \langle v_3\rangle,\    
		\langle v_2\rangle,\ \langle v_1\rangle
		\bigr)
		$$
		is legal for $(H,L_H, g_H)$ and deletes all vertices of $H$, which contradicts Lemma~\ref{key lemma}.  
	\end{proof} 
	
	\begin{claim}
		\label{FC-3443443}
		$G$ contains no $(3,4,4,3,4,4,3)$-path. 
	\end{claim}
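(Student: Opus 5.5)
The plan is to take $H$ to be the path itself and apply Lemma~\ref{key lemma}: show that $H$ is nice, compute $(f_H,g_H)$, and exhibit a legal reduction scheme that deletes every vertex of $H$. Let $H=v_1v_2\cdots v_7$ with $d(v_1)=d(v_4)=d(v_7)=3$ and $d(v_i)=4$ for $i\in\{2,3,5,6\}$.

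\textbf{Step 1: $H$ is an induced path.} I would rule out each possible chord using earlier claims.
\begin{itemize}
\item $v_1v_3$, $v_3v_5$, $v_2v_4$, and chords of the same type create triangles, so they are excluded because $G$ is triangle-free.
\item $v_1v_4$ (and symmetrically $v_4v_7$) would create a $(3,4,4,3)$-cycle, and $v_2v_5$, $v_3v_6$ would create $(4,4,3,4)$-cycles; all are excluded by Claim~\ref{FC-(4-,4-,4-,3-)}.
\item $v_1v_7$ would create the path $v_7v_1v_2v_3v_4$, which is a $(3,3,4,4,3)$-path and is excluded by Claim~\ref{FC-334~43}.
\item $v_2v_6$ would create the path $v_3v_2v_6$, which is a $(4_{\ge1},4_{\ge1},4_{\ge1})$-path (with $3$-neighbors $v_4,v_1,v_7$ respectively), excluded by Claim~\ref{FC-4_14_14_1}.
\item $v_1v_5$ would make $v_5$ a $4_2$-vertex (its $3$-neighbors would be $v_4,v_1$) adjacent to the $4_{\ge1}$-vertex $v_6$, which is excluded by Claim~\ref{FC-5_3/4_24_1}.
\item The remaining chords $v_1v_6$, $v_3v_7$, $v_2v_7$ are handled by the same arguments by symmetry.
\end{itemize}
I expect this chord bookkeeping to be the main obstacle. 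The only real work is checking that every pair is covered by one of the cited claims; the coloring part below is routine.

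\textbf{Step 2: niceness and the functions $(f_H,g_H)$.} Since $H$ is induced, $d_G(v)-d_H(v)=2$ for $v\in\{v_1,v_2,v_3,v_5,v_6,v_7\}$ and $d_G(v_4)-d_H(v_4)=1$. Hence $U_H=\{v_4\}$, and $F_H$ consists of two components $v_1v_2v_3$ and $v_5v_6v_7$, each isomorphic to $P_3$. By Definition~\ref{def-(f_H,g_H)}:
\begin{itemize}
\item $(f_H,g_H)(v_i)=(5m,3m)$ for $i\in\{1,3,5,7\}$;
\item $(f_H,g_H)(v_j)=(5m,2m)$ for $j\in\{2,6\}$;
\item $f_H(v_4)=15m-4m-m-m=9m$ and $g_H(v_4)=4m$.
\end{itemize}

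\textbf{Step 3: the reduction scheme.} At $v_4$ the demand is $g(v_4)+g(v_3)+g(v_5)=10m$, while $|L_H(v_4)|=9m$. Thus \eqref{eq-almost-deg} holds with deficit exactly $m$. Since $|L_H(v_3)|+|L_H(v_5)|=10m\ge |L_H(v_4)|+m$, condition \eqref{eq-two sum} holds, so I would apply
\[
\Omega=\mathsf{Red}\bigl(\langle\{v_3,v_5\}\mid v_4,m^*\rangle,\ \langle v_4\rangle,\ \langle v_1\rangle,\ \langle v_2\rangle,\ \langle v_3\rangle,\ \langle v_7\rangle,\ \langle v_6\rangle,\ \langle v_5\rangle\bigr).
\]
The partial coloring at $v_3$ and $v_5$ lowers each of their lists and demands by the same amount. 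Afterwards the remaining deletions are legal in the order given:
\begin{itemize}
\item $\langle v_1\rangle$: $5m\ge 3m+2m$;
\item $\langle v_2\rangle$: $5m\ge 2m+g'(v_3)$, since $g'(v_3)\le 3m$;
\item $\langle v_3\rangle$: $v_3$ is now isolated, so this is immediate;
\item $\langle v_7\rangle,\langle v_6\rangle,\langle v_5\rangle$: the same checks hold by symmetry.
\end{itemize}
So $\Omega$ is legal for $(H,L_H,g_H)$ and deletes all vertices of $H$, contradicting Lemma~\ref{key lemma}.
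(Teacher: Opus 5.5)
Your proof is correct and uses the paper's framework: the same induced path $H$, the same values of $(f_H,g_H)$, and Lemma~\ref{key lemma}.

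Your chord analysis is, if anything, more careful than the paper's. The chords $v_1v_5$, $v_3v_7$, $v_1v_6$, $v_2v_7$ each create a $4_2$-vertex adjacent to a $4_{\ge 1}$-vertex. Claim~\ref{FC-5_3/4_24_1} is what excludes them, and it is not among the claims the paper cites for inducedness.

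The only real difference is the reduction scheme. The paper first applies $\langle v_1\rangle,\langle v_2\rangle,\langle v_7\rangle,\langle v_6\rangle$, all legal with untouched lists. It then finishes with $\langle v_4\mid v_3,2m\rangle,\langle v_3\rangle,\langle v_4\rangle,\langle v_5\rangle$, a one-sided saving that works because $|L_H(v_4)\setminus L_H(v_3)|\ge 4m$. You instead start with the two-sided saving $\langle\{v_3,v_5\}\mid v_4,m^*\rangle$, which is valid since $v_3v_5\notin E(G)$.

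One check in your scheme is misstated. After the partial coloring, the list of $v_2$ is $L_H(v_2)\setminus(S\cup R)$, which may have only $5m-|S|-|R|$ colors, not $5m$. The reason you give, $g'(v_3)\le 3m$, would therefore only yield $|L'(v_2)|\ge 4m$, while the requirement can be as large as $5m$. The correct check uses the exact value $g'(v_3)=3m-|S|-|R|$. Then $|L'(v_2)|\ge 5m-|S|-|R|=g'(v_2)+g'(v_3)$, which holds, with equality in the worst case. The same correction applies to $\langle v_6\rangle$ with $T$ in place of $S$.

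With that fix the scheme is legal. Alternatively, you can place the four deletions of $v_1,v_2,v_7,v_6$ before the paired saving, as the paper orders its steps, and the issue disappears.
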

	\begin{proof}
		Assume to the contrary, $H=v_1v_2\cdots v_7$ is such a path,  
		where $d(v_j)=3$ for $j \in \{1,4,7\}$ and $d(v_i)=4$ for $i \in \{2,3,5,6\}$. By Claims~\ref{FC-k_{k-2}-3_1},~\ref{FC-(4-,4-,4-,3-)},~\ref{FC-334~43} and \ref{FC-4_14_14_1}, $H$ is induced. By Definition~\ref{def-(f_H,g_H)}, $(f_H,g_H)(v_i)=(5m,3m)$ for $i\in\{1,3,5,7\}$, $(f_H,g_H)(v_j)=(5m,2m)$ for $j\in\{2,6\}$, and $(f_H,g_H)(v_4)=(9m,4m)$. 
		Then  $$\mathsf{Red}\bigl(
		\langle v_1\rangle,\ \langle v_2\rangle,\
		\langle v_7\rangle,\ \langle v_6\rangle,\
		\langle v_4\mid v_3, 2m\rangle,\ \langle v_3\rangle,\  
		\langle v_4\rangle,\ \langle v_5\rangle
		\bigr)$$
		is legal for $(H,L_H, g_H)$ and deletes all vertices of $H$, which contradicts Lemma~\ref{key lemma}.   
	\end{proof}

	\begin{claim}
		\label{FC-4_2-4/5_1-4_1} 
		$G$ contains no $(4_2,4,4_{\ge 1})$- or $(4_2,5_{\ge 1},4_{\ge 1})$-path. 
	\end{claim}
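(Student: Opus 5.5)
The approach is the same as for the previous claims: show that the configuration contains a nice subgraph $H$ that admits a legal reduction scheme deleting all its vertices, which contradicts Lemma~\ref{key lemma}. Let $v_1v_2v_3$ be the path, with $v_1$ a $4_2$-vertex whose $3$-neighbors are $a,b$, with $d(v_2)\in\{4,5\}$, and with $v_3$ a $4_{\ge1}$-vertex having a $3$-neighbor $c$.

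First I would use earlier claims to pin down the structure.
\begin{itemize}
\item If $d(v_2)=4$, then $v_2$ has no $3$-neighbor, since otherwise it would be a $4_{\ge 1}$-vertex adjacent to the $4_2$-vertex $v_1$, contrary to Claim~\ref{FC-5_3/4_24_1}.
\item If $d(v_2)=5$, then $v_2$ is exactly a $5_1$-vertex by Claim~\ref{FC-5_24_2}; let $d$ denote its unique $3$-neighbor.
\item $c\notin\{a,b\}$. Otherwise $v_1v_2v_3c$ would be a $(4,4,4,3)$-cycle, forbidden by Claim~\ref{FC-(4-,4-,4-,3-)}, or a $(5_{\ge1},4,3,4)$-cycle, forbidden by Claim~\ref{FC-(5_1,4,3,4)}.
\item $ca,cb\notin E(G)$ by Claim~\ref{FC-k_{k-2}-3_1}, since the $3$-neighbors of a $4_2$-vertex have no $3$-neighbors.
\item $v_1v_3\notin E(G)$ because $G$ is triangle-free, and $v_3a,v_3b\notin E(G)$ for the same cycle reasons as above.
\item In the degree-$5$ case, the possible coincidences or adjacencies involving $d$ are excluded by Claims~\ref{FC-star}, \ref{FC-k_{k-2}-3_1}, \ref{FC-5_3/4_24_1} and~\ref{FC-(5_1,4,3,4)}. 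Any coincidence that survives, most likely $d=c$, is handled as a separate subcase, as was done in Claim~\ref{FC-(5_1,4,3,4)}.
\end{itemize}
With these facts, $H=G[v_1,v_2,v_3,a,b,c]$ (adding $d$ when $d(v_2)=5$) is nice. I take it to be an induced tree.

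In the case $d(v_2)=4$, the graph $F_H$ consists of two isolated vertices $a,b$ and the path $v_2v_3c$. Definition~\ref{def-(f_H,g_H)} then gives
\begin{itemize}
\item $(f_H,g_H)(a)=(f_H,g_H)(b)=(7m,4m)$;
\item $(f_H,g_H)(v_2)=(f_H,g_H)(c)=(5m,3m)$ and $(f_H,g_H)(v_3)=(5m,2m)$;
\item $(f_H,g_H)(v_1)=(11m-m,4m)=(10m,4m)$, the correction $m$ coming from the neighbor $v_2\in F_H$.
\end{itemize}
The scheme I would use is
\[
\mathsf{Red}\bigl(\langle c\rangle,\ \langle v_3\rangle,\ \langle v_1\mid a,m\rangle,\ \langle a\rangle,\ \langle v_1\mid b,m\rangle,\ \langle b\rangle,\ \langle v_1\rangle,\ \langle v_2\rangle\bigr).
\]
Legality is checked step by step.
\begin{itemize}
\item $\langle c\rangle$ is legal since $5m\ge 3m+2m$.
\item $\langle v_3\rangle$ is legal since $5m\ge 2m+3m$.
\item $\langle v_1\mid a,m\rangle$ is available because $|L(v_1)|=10m>7m=|L(a)|$, so $L(v_1)\setminus L(a)$ has at least $m$ colors. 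After it, $a$ still has $7m\ge 4m+3m$ colors, so $\langle a\rangle$ is legal.
\item The same argument applies to $b$.
\item $v_1$ is then left with $8m$ colors, demand $2m$, and only $v_2$ (demand $3m$) as a remaining neighbor.
\item Finally $v_2$ is isolated, with $5m\ge 3m$.
\end{itemize}

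In the case $d(v_2)=5$, the vertex $v_2$ now has $d_H(v_2)=3$, so it lies in $U_H$. The set $F_H$ then contains $a,b,d$ as isolated vertices and $v_3c$ as a $P_2$. The values of $f_H$ are recomputed the same way; I expect $v_1\in U_H$ to get $11m$ and $v_2$ to get $11m-m$. The scheme again starts by deleting the $P_2$-part $c,v_3$. Next it uses savings $\langle v_2\mid d,m\rangle$ and $\langle v_1\mid a,m\rangle,\langle v_1\mid b,m\rangle$, followed by degenerate deletions of $d,a,b$. The final step clears the edge $v_1v_2$, possibly using one further saving $\langle v_2\mid v_1,\cdot\rangle$ if the count is short.

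I expect the main obstacle to be this degree-$5$ case. Its arithmetic is tighter, and the sub-case $d=c$ changes $F_H$: $c$ then has two $H$-neighbors, so the saving pattern has to be rearranged, probably by using $\langle\{v_2,v_3\}\mid c,m^*\rangle$ as in Claim~\ref{FC-(5_1,4,3,4)}. In every case the resulting contradiction to Lemma~\ref{key lemma} proves the claim.
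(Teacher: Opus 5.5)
Your degree-$4$ case is correct and is exactly the paper's argument: the same $H$, the same values of $(f_H,g_H)$, and the same reduction scheme. There is one small slip. At the last step $v_2$ is left with at least $3m$ colors, not $5m$, because both savings at $v_1$ may remove colors from $L(v_2)$. That is still enough for its demand of $3m$.

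The degree-$5$ case has a genuine error, so the $(4_2,5_{\ge1},4_{\ge1})$ half of the claim is not proved.

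\textbf{Wrong classification of $v_2$.} Here $d_G(v_2)=5$ and $d_H(v_2)=3$, so $d_G(v_2)-d_H(v_2)=2$ and $v_2\notin U_H$; it lies in $F_H$. Hence the components of $F_H$ are $\{a\}$, $\{b\}$ and the path $d\,v_2\,v_3\,c\cong P_4$. Definition~\ref{def-(f_H,g_H)} then gives:
\begin{itemize}
\item $(9m,4m)$ at $v_1$;
\item $(4m,2m)$ at $v_2$ and $v_3$;
\item $(5m,3m)$ at $c$ and $d$;
\item $(7m,4m)$ at $a$ and $b$.
\end{itemize}
Your expected values $f_H(v_1)=11m$ and $f_H(v_2)=10m$, together with the $P_2$ component $v_3c$, are not what the definition produces. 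In particular, $11m$ at $v_1$ overstates what Lemma~\ref{key lemma} provides, so a scheme checked against those numbers yields no contradiction. The scheme itself is also never specified or checked.

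\textbf{Wrong subcase analysis.} The coincidence $d=c$ that you anticipate cannot occur: since $v_2v_3\in E(G)$, it would create a triangle. The case that does need an argument is $dc\in E(G)$. That edge would close the $P_4$ into a $4$-cycle, and $H$ would no longer be nice. None of the claims you list excludes it. It is ruled out by Claim~\ref{FC-(5,3,3,4)-4_1}: $v_2dcv_3v_2$ would be a $(5,3,3,4)$-cycle, and $v_1$ is a $4_{\ge1}$-neighbor of $v_2$ not on that cycle. The non-edges $da$ and $db$ follow from Claim~\ref{FC-k_{k-2}-3_1}.

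\textbf{The correct scheme.} With the correct values, no extra saving and no $m^*$-step is needed; the degree-$4$ pattern extends directly. The scheme
\[
\mathsf{Red}\bigl(\langle d\rangle,\langle c\rangle,\langle v_3\rangle,\langle v_1\mid a,m\rangle,\langle a\rangle,\langle v_1\mid b,m\rangle,\langle b\rangle,\langle v_1\rangle,\langle v_2\rangle\bigr)
\]
is legal, step by step:
\begin{itemize}
\item $\langle d\rangle$: $5m\ge 3m+2m$.
\item $\langle c\rangle$: $5m\ge 3m+2m$.
\item $\langle v_3\rangle$: $4m\ge 2m+2m$.
\item $\langle v_1\mid a,m\rangle$ is available since $9m-7m\ge m$.
\item $\langle a\rangle$: $7m\ge 4m+3m$.
\item $\langle v_1\mid b,m\rangle$ is available since $8m-7m\ge m$.
\item $\langle b\rangle$: $6m\ge 4m+2m$.
\item $\langle v_1\rangle$: $7m\ge 2m+2m$.
\item $\langle v_2\rangle$: $v_2$ keeps at least $4m-2m=2m$ colors for demand $2m$.
\end{itemize}
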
 
	\begin{proof}
		Assume to the contrary, $v_1v_4v_5$ is such a path, where $v_1$ is a $4_2$-vertex with two distinct $3$-neighbors $v_2$ and $v_3$, $v_5$ is a $4_{\geq 1}$-vertex with a $3$-neighbor $v_6$, and $v_4$ is either a $4$-vertex or a $5_{\geq 1}$-vertex.  
		Let $v_7$ be a $3$-neighbor of $v_4$ if $d(v_4)=5$. Since $G$ contains no triangle, $v_7 \neq v_i$ for $i \in \{2,3,6\}$. By Claims~\ref{FC-(4-,4-,4-,3-)} and \ref{FC-(5_1,4,3,4)}, $v_6 \neq v_2, v_6 \neq v_3$. Let $k = d(v_4) \in \{4,5\}$, and $H = G[v_1, v_2, \ldots, v_{k+2}]$. By Claim~\ref{FC-k_{k-2}-3_1}, $v_2v_6,v_3v_6 \notin E(H)$, and moreover, if $k=5$, then $v_7v_2, v_7v_3 \notin E(H)$. By Claim~\ref{FC-3434~43},  $v_5v_2, v_5v_3 \notin E(H)$. By Claim~\ref{FC-star}, $v_1v_6 \notin E(H)$. If $k=5$, then by Claim~\ref{FC-(5,3,3,4)-4_1}, $v_6v_7\notin E(H)$. Thus
		by Definition~\ref{def-(f_H,g_H)}, $(f_H,g_H)(v_i)=(7m,4m)$ for $i \in \{2,3\}$, $(f_H,g_H)(v_6)=(5m,3m)$. If $k=4$, then $(f_H,g_H)(v_1)=(10m,4m)$, $(f_H,g_H)(v_4)=(5m,3m)$, and  $(f_H,g_H)(v_5)=(5m,2m)$. If $k=5$, then $(f_H,g_H)(v_1)=(9m,4m)$, $(f_H,g_H)(v_i)=(4m,2m)$ for $i \in \{4,5\}$, and $(f_H,g_H)(v_7)=(5m,3m)$. Then 
		\[ 
		\mathsf{Red}\bigl(
		\langle v_{k+2}\rangle,\ \langle v_{k+1}\rangle,\ \ldots,\ \langle v_5\rangle,\   
		\langle v_1\mid v_2, m\rangle,\ \langle v_2\rangle,\  \langle v_1\mid v_3, m\rangle,\ \langle v_3\rangle,\ \langle v_1\rangle,\ \langle v_4\rangle
		\bigr)
		\]
		is legal for $(H,L_H, g_H)$ and deletes all vertices of $H$, a contradiction.  
	\end{proof}
	
	\begin{claim}
		\label{FC-3_15_24_1}
		$G$ contains no $(3_1,5_{\ge 2},4_{\ge 1})$-path. 
	\end{claim}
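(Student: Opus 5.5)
The plan is to build the usual nice subgraph around the path and find a legal reduction scheme that deletes all of it, contradicting Lemma~\ref{key lemma}. Let $v_1v_2v_3$ be a $(3_1,5_{\ge 2},4_{\ge 1})$-path. Let $w$ be the unique $3$-neighbor of $v_1$, let $v_4\neq v_1$ be a $3$-neighbor of $v_2$, and let $v_5$ be a $3$-neighbor of $v_3$. Whenever possible, choose $v_5\neq w$. Take $H=G[v_1,v_2,v_3,v_4,v_5,w]$.

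First I rule out coincidences and extra edges among these vertices.
\begin{itemize}
\item $w\ne v_4$ and $v_4\ne v_5$, since either equality creates a triangle.
\item $v_1v_4,v_1v_5\notin E(G)$ when $v_4,v_5\ne w$, because $v_1$ is a $3_1$-vertex.
\item $wv_4,wv_5\notin E(G)$ by Claim~\ref{FC-star}, since such an edge gives a $(3,3,3)$-path through $v_1$.
\item $v_4v_5\notin E(G)$ by Claim~\ref{FC-(5_2/6_3,3,3,4)}, since $v_2v_4v_5v_3$ would be a $(5_{\ge2},3,3,4)$-cycle.
\item $v_3v_4\notin E(G)$ since $G$ is triangle-free.
\item $w=v_5$ with $v_5$ adjacent to $v_1$ would also give a $(5_{\ge 2},3,3,4)$-cycle $v_2v_1wv_3$, unless we are in the situation below.
\end{itemize}
The only configuration not excluded by earlier claims is $wv_3\in E(G)$, and I treat it separately.

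\textbf{Main case} ($wv_3\notin E(G)$). $H$ is a tree: $v_4v_2v_3v_5$ plus the pendant path $v_2v_1w$. The vertex $v_1$ lies in $U_H$. The component $\{w\}$ of $F_H$ is a $P_1$, and $v_4v_2v_3v_5$ is a $P_4$. By Definition~\ref{def-(f_H,g_H)}:
\begin{itemize}
\item $(f_H,g_H)(w)=(7m,4m)$;
\item $(f_H,g_H)(v_4)=(f_H,g_H)(v_5)=(5m,3m)$;
\item $(f_H,g_H)(v_2)=(f_H,g_H)(v_3)=(4m,2m)$;
\item $(f_H,g_H)(v_1)=(15m-4m-2m,4m)=(9m,4m)$.
\end{itemize}
I would use the scheme
\[
\mathsf{Red}\bigl(\langle v_4\rangle,\ \langle v_5\rangle,\ \langle v_3\rangle,\ \langle \{w,v_2\}\mid v_1,m^*\rangle,\ \langle v_1\rangle,\ \langle w\rangle,\ \langle v_2\rangle\bigr).
\]
\begin{itemize}
\item $\langle v_4\rangle$ and $\langle v_5\rangle$ are legal since $5m\ge 3m+2m$.
\item $\langle v_3\rangle$ is then legal since $4m\ge 2m+2m$.
\item For $v_1$ we have $9m+m\ge 4m+4m+2m$, and $|L_H(w)|+|L_H(v_2)|=11m\ge 9m+m$. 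So pattern~\eqref{eq-two sum} with Lemma~\ref{lem-three sets} makes $\langle \{w,v_2\}\mid v_1,m^*\rangle$ followed by $\langle v_1\rangle$ legal.
\item $w$ and $v_2$ are then isolated with lists at least their demands.
\end{itemize}

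\textbf{Exceptional case} ($wv_3\in E(G)$), which I expect to be the main obstacle. Here $v_2v_1wv_3$ is a $(5,3,3,4)$-cycle, so Claim~\ref{FC-(5,3,3,4)-4_1} and Claim~\ref{FC-(5_2/6_3,3,3,4)} should apply. Indeed $v_2$ is $5_{\ge2}$, so Claim~\ref{FC-(5_2/6_3,3,3,4)} directly forbids the cycle $v_2v_1wv_3v_2$. That would settle this case immediately.

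If instead that cycle is not excluded because of how the degrees are labelled, I would fall back on $H=G[v_1,v_2,v_3,v_4,w]$. In that subgraph:
\begin{itemize}
\item $(f_H,g_H)(v_4)=(f_H,g_H)(v_3)=(5m,3m)$;
\item $(f_H,g_H)(v_2)=(5m,2m)$;
\item $(f_H,g_H)(v_1)=(9m,4m)$;
\item $(f_H,g_H)(w)=(10m,4m)$.
\end{itemize}
I would then first apply $\langle v_4\rangle$. Next I would apply a two-neighbor saving $\langle\{v_1,v_3\}\mid w,m^*\rangle$ followed by $\langle w\rangle$. Finally I would finish the path $v_1v_2v_3$ with a single-neighbor saving at $v_2$, checking each inequality as in Claim~\ref{FC-(5_1,4,3,4)}.
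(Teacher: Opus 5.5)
Your proof is correct and is essentially the paper's argument. You use the same subgraph $H$ (your $w,v_4,v_5$ are the paper's $v_4,v_5,v_6$) and the same $(f_H,g_H)$, you exclude the case $wv_3\in E(G)$ outright by Claim~\ref{FC-(5_2/6_3,3,3,4)} (so your fallback is unnecessary), and you reach a contradiction with Lemma~\ref{key lemma}. The only difference is cosmetic: you finish the path $wv_1v_2$ with the two-neighbor saving $\langle\{w,v_2\}\mid v_1,m^*\rangle$, whereas the paper uses the single saving $\langle v_1\mid w,m\rangle$ followed by $\langle w\rangle,\langle v_1\rangle,\langle v_2\rangle$, and both schemes are legal.
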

	\begin{proof} 
		Assume to the contrary, $v_1v_2v_3$ is such a path, where $v_1$ is a $3_1$-vertex,
		$v_2$ is a $5_{\ge 2}$-vertex, and $v_3$ is a $4_{\ge 1}$-vertex.
		Let $v_4$ be the $3$-neighbor of $v_1$, $v_5$ be a $3$-neighbor of $v_2$ distinct from $v_1$, and $v_6$ be a $3$-neighbor of $v_3$. By Claim~\ref{FC-(5_2/6_3,3,3,4)}, $v_6\neq v_4$ and $v_5v_6 \notin E(G)$.
		By Claim~\ref{FC-k_{k-2}-3_1}, $v_3v_4 \notin E(G)$. By Claim~\ref{FC-star}, $v_4v_5, v_4v_6, v_1v_6\notin E(G)$. Let $H=G[v_1, v_2, \ldots, v_6]$. By
		Definition~\ref{def-(f_H,g_H)}, $(f_H,g_H)(v_1)=(9m,4m)$, $(f_H,g_H)(v_i)=(5m,3m)$ for $i\in\{5,6\}$, $(f_H,g_H)(v_4)=(7m,4m)$, and $(f_H,g_H)(v_j)=(4m,2m)$ for $j\in\{2,3\}$. 
		Then
		$$
		\mathsf{Red}\bigl(
		\langle v_6\rangle,\ \langle v_3\rangle,\ \langle v_5\rangle,\ 
		\langle v_1 \mid v_4, m\rangle,\ \langle v_4\rangle,\ \langle v_1\rangle,\ \langle v_2\rangle 
		\bigr)
		$$
		is legal for $(H,L_H, g_H)$ and deletes all vertices of $H$, which contradicts Lemma~\ref{key lemma}.   
	\end{proof} 
	
	\begin{claim}
		\label{FC-(5,3,4,3)-4_1}
		If a $5$-vertex lies on a $(5,3,4,3)$-cycle, then it has no $4_{\ge 1}$-neighbors. 
	\end{claim}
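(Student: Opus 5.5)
The plan is to follow the template of the preceding claims: build a small nice subgraph $H$ around the configuration, read off $(f_H,g_H)$ from Definition~\ref{def-(f_H,g_H)}, and exhibit a legal reduction scheme deleting all of $V(H)$, contradicting Lemma~\ref{key lemma}.

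\emph{Setup.} Suppose to the contrary that $v_1v_2v_3v_4v_1$ is a cycle with $d(v_1)=5$, $d(v_2)=d(v_4)=3$ and $d(v_3)=4$. Suppose further that $v_1$ has a $4_{\ge 1}$-neighbor $v_5$, and let $v_6$ be a $3$-neighbor of $v_5$. Set $H=G[v_1,\ldots,v_6]$.

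\emph{Step 1: $H$ is the cycle plus the pendant path $v_1v_5v_6$, and nothing else.}
\begin{itemize}
\item $v_5\notin\{v_2,v_4\}$ by degree, and $v_5\neq v_3$ since $v_1v_3\notin E(G)$ (no triangles).
\item $v_6\notin\{v_2,v_4\}$, since otherwise $v_1v_5v_6$ is a triangle. Also $v_6\neq v_3$ by degree.
\item $v_5v_2,v_5v_4\notin E(G)$, since each would close a triangle with $v_1$.
\item $v_3v_5\notin E(G)$: $v_3$ is a $4_2$-vertex (its $3$-neighbors are $v_2,v_4$) and $v_5$ is a $4_{\ge 1}$-vertex, which Claim~\ref{FC-5_3/4_24_1} forbids.
\item $v_3v_6\notin E(G)$: otherwise the $4$-vertex $v_3$ would have three $3$-neighbors, contradicting Claim~\ref{FC-star}.
\item $v_6v_2,v_6v_4\notin E(G)$: $v_3$ is a $4_2$-vertex whose $3$-neighbors are $v_2,v_4$, and by Claim~\ref{FC-k_{k-2}-3_1} neither of them may have a $3$-neighbor.
\item $v_1v_6\notin E(G)$, since $v_1v_5v_6$ would be a triangle.
\end{itemize}
Hence $H$ consists exactly of the $4$-cycle together with the path $v_1v_5v_6$.

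\emph{Step 2: compute $F_H$ and $(f_H,g_H)$.} The vertices $v_2,v_4$ keep their full degree, so $U_H=\{v_2,v_4\}\neq\emptyset$. Each of $v_1,v_3,v_5,v_6$ loses exactly $2$ neighbors. The graph $F_H$ is therefore the path $v_1v_5v_6\cong P_3$ together with the isolated vertex $v_3\cong P_1$, so $H$ is nice. Definition~\ref{def-(f_H,g_H)} gives:
\begin{itemize}
\item $(f_H,g_H)(v_1)=(f_H,g_H)(v_6)=(5m,3m)$ and $(f_H,g_H)(v_5)=(5m,2m)$;
\item $(f_H,g_H)(v_3)=(7m,4m)$;
\item $(f_H,g_H)(v_2)=(f_H,g_H)(v_4)=(14m,4m)$. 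Here $15m$ is reduced by $4m-g_H(v_1)=m$, and $v_3$ contributes $0$.
\end{itemize}

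\emph{Step 3: the reduction scheme.} Apply
\[
\mathsf{Red}\bigl(\langle v_6\rangle,\ \langle v_5\rangle,\ \langle v_2\rangle,\ \langle v_4\rangle,\ \langle v_1\rangle,\ \langle v_3\rangle\bigr).
\]
Each deletion is checked against the vertices still present at that moment:
\begin{itemize}
\item $v_6$: $5m\ge 3m+2m$.
\item $v_5$: $5m\ge 2m+3m$.
\item $v_2$ and $v_4$: $14m\ge 4m+3m+4m$.
\item $v_1$: once $v_2,v_4,v_5$ are gone, $5m\ge 3m$.
\item $v_3$: last, $7m\ge 4m$.
\end{itemize}
The scheme is legal and removes every vertex of $H$, contradicting Lemma~\ref{key lemma}.

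The only delicate part is Step 1, i.e.\ certifying that no extra adjacencies occur, in particular ruling out $v_3v_5$ and $v_6\in N(v_2)\cup N(v_4)$. If some edge unexpectedly survived, it would change $F_H$ and hence the values of $(f_H,g_H)$. In that case I would redo Step 2 for that variant, still expecting the two $U_H$-vertices $v_2,v_4$ to carry enough slack for the deletion order to go through.
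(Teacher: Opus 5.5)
\textbf{There is a genuine gap in Step~2, and it makes your reduction scheme in Step~3 illegal.}

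Your subgraph $H$, the non-adjacency arguments in Step~1, the structure of $F_H$, and the values of $(f_H,g_H)$ at $v_1,v_3,v_5,v_6$ all agree with the paper. The error is that you miscompute $f_H(v_2)$ and $f_H(v_4)$.

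You say $v_2$ and $v_4$ keep their full degree, but they do not. Each is a $3$-vertex whose only neighbors in $H$ are $v_1$ and $v_3$. Its third neighbor is not $v_4$ or $v_5$ (triangle-freeness), and not $v_6$ (your own Step~1). So $d_G(v_2)-d_H(v_2)=1$, and Definition~\ref{def-(f_H,g_H)}\ref{others} gives
\[
f_H(v_2)=15m-4m-\bigl(4m-g_H(v_1)\bigr)-\bigl(4m-g_H(v_3)\bigr)=15m-4m-m-0=10m .
\]
Likewise $f_H(v_4)=10m$. You dropped the term $4m(d_G(u)-d_H(u))$.

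With the correct value, your step $\langle v_2\rangle$ would need $10m\ge 4m+3m+4m=11m$, which fails. In fact, after $\langle v_6\rangle,\langle v_5\rangle$, no vertex of the remaining $4$-cycle is degenerate. That cycle has $v_1:(5m,3m)$, $v_2,v_4:(10m,4m)$ and $v_3:(7m,4m)$. So no ordering of plain deletions can work: the slack you were counting on at $v_2$ and $v_4$ is short by exactly $m$.

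\textbf{The missing ingredient is a saving operation at $v_2$.}

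Since $|L_H(v_1)|+|L_H(v_3)|=12m\ge |L_H(v_2)|+m$, Lemma~\ref{lem-three sets} lets you apply $\langle\{v_1,v_3\}\mid v_2,m^*\rangle$. This is a valid partial coloring because $v_1v_3\notin E(G)$, so $v_1$ and $v_3$ may share the colors $R$. After it, $\langle v_2\rangle$ is legal with equality:
\[
10m-|R| = 4m+(3m-|S|-|R|)+(4m-|T|-|R|).
\]

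Next, $v_4$ still has at least $10m-m=9m$ colors, while $v_3$ has at most $7m$. So $\langle v_4\mid v_3,m\rangle$ is available. After it, $\langle v_3\rangle$ is legal with equality, and then $\langle v_4\rangle$ and $\langle v_1\rangle$ are legal.

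This is exactly the paper's scheme:
\[
\mathsf{Red}\bigl(\langle v_6\rangle,\ \langle v_5\rangle,\ \langle \{v_1,v_3\}\mid v_2,m^*\rangle,\ \langle v_2\rangle,\ \langle v_4\mid v_3,m\rangle,\ \langle v_3\rangle,\ \langle v_4\rangle,\ \langle v_1\rangle\bigr).
\]
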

	\begin{proof}
		Assume to the contrary, $v_1v_2v_3v_4v_1$ is a $(5,3,4,3)$-cycle, where $d(v_1)=5$,
		$d(v_2)=d(v_4)=3$, $d(v_3)=4$, and $v_1$ has a $4_{\ge 1}$-neighbor $v_5$. Let $v_6$ be a $3$-neighbor of $v_5$. By Claim~\ref{FC-5_3/4_24_1}, $v_5v_3 \notin E(G)$. By Claim~\ref{FC-star}, $v_6v_3 \notin E(G)$. By Claim~\ref{FC-k_{k-2}-3_1}, $v_6v_2, v_6v_4 \notin E(G)$. Let $H=G[v_1, v_2, \ldots, v_6]$.
		By Definition~\ref{def-(f_H,g_H)}, $(f_H,g_H)(v_3)=(7m,4m)$, $(f_H,g_H)(v_i)=(10m,4m)$ for $i\in\{2,4\}$, $(f_H,g_H)(v_5)=(5m,2m)$, and $(f_H,g_H)(v_j)=(5m,3m)$ for $j\in\{1,6\}$. 
		Then
		$$
		\mathsf{Red}\bigl(\langle v_6\rangle,\ \langle v_5\rangle,\
		\langle \{v_1, v_3\} \mid v_2, m^*\rangle,\
		\langle v_2\rangle,\ \langle v_4\mid v_3,m\rangle,\
		\langle v_3\rangle,\ \langle v_4\rangle,\ \langle v_1\rangle
		\bigr)$$ is legal for $(H,L_H, g_H)$ and deletes all vertices of $H$, which contradicts Lemma~\ref{key lemma}. 
	\end{proof}
	
	\begin{claim}
		\label{FC-(5_2,3_1,4,5_1)}
		If $v_1v_2v_3v_4v_1$ is a $(5,3,4,5_{\ge 1})$-cycle with 
		$d(v_1)=5$, $d(v_2)=3$, $d(v_3)=4$, and $v_4$ a $5_{\ge 1}$-vertex, 
		then $v_1$ has no $3$-neighbors other than $v_2$. 
	\end{claim}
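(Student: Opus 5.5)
We argue by contradiction, as in the earlier claims. Suppose $v_1v_2v_3v_4v_1$ is such a cycle and $v_1$ has a second $3$-neighbor $v_5\neq v_2$. Since $G$ is triangle-free, $v_5\notin\{v_3,v_4\}$ and $v_5v_2,v_5v_4\notin E(G)$. The only possible extra adjacency is $v_3v_5$, which would create a $(5,3,4,3)$-cycle $v_1v_2v_3v_5v_1$ and make $v_3$ a $4_2$-vertex adjacent to the $5_{\ge1}$-vertex $v_4$. I will first try to exclude this with the earlier claims (Claim~\ref{FC-(5,3,4,3)-4_1}, Claim~\ref{FC-5_24_2}, Claim~\ref{FC-(5_3/6_4,3,4,3)}). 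If it survives, I treat it as a separate case with $H=G[v_1,\dots,v_5]$. There $v_3$ lies in $U_H$, and the reduction should be a variant of the one in Claim~\ref{FC-(5_3/6_4,3,4,3)}: start with $\langle\{v_1,v_3\}\mid v_2,m^*\rangle$, then delete $v_2$ and $v_5$ degenerately, and finish on $v_4$, $v_3$, $v_1$.

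In the main case $v_3v_5\notin E(G)$, the natural subgraph $H=G[v_1,\dots,v_5]$ is not nice, because $d_H(v_4)=d_G(v_4)-3$. Adding a $3$-neighbor of $v_4$ makes $F_H$ a five-vertex tree that is neither a path nor $K_{1,3}$. So I will not use Lemma~\ref{key lemma} directly, but imitate Case~2 of Claim~\ref{FC-(5_1,4,3,4)}.

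By minimality, $G-v_2$ has an $(L,4m)$-coloring $\phi$. Let $H=G[v_1,\dots,v_5]$, and remove from each list of $H$ the colors $\phi$ uses on neighbors outside $H$. After deleting extra colors we may assume
\[
|L_0(v_2)|=11m,\qquad |L_0(v_1)|=|L_0(v_3)|=|L_0(v_4)|=|L_0(v_5)|=7m,
\]
and it suffices to show that $H$ is $(L_0,4m)$-colorable. Since $\phi$ restricted to $\{v_1,v_3,v_4,v_5\}$ is a valid coloring from these lists, each edge $xy$ among these vertices gives $m$-sets $A\subseteq L_0(x)\setminus L_0(y)$ and $B\subseteq L_0(y)\setminus L_0(x)$. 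The counting targets are:
\begin{itemize}[label={}]
\item $v_2$ is degenerate once $m$ colors are saved against it (it has $11m$ colors versus a total demand of $12m$ for $v_2,v_1,v_3$);
\item $v_5$ is degenerate once $m$ colors are saved against it ($7m$ versus $8m$);
\item the path $v_3v_4v_1$ must still finish, with $v_4$ having $7m$ colors and neighbors $v_1,v_3$.
\end{itemize}

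The planned scheme runs as follows.
\begin{itemize}[label={}]
\item Apply $\langle v_1\mid v_5,m\rangle$, using an $m$-set $B_5\subseteq L_0(v_1)\setminus L_0(v_5)$, and delete $v_5$.
\item Find $S\subseteq X\setminus Z$, $T\subseteq Y\setminus Z$ and $R\subseteq X\cap Y\cap Z$ with $|S|+|T|+|R|=m$, via Lemma~\ref{lem-three sets}. Here $X\subseteq L_0(v_1)$ and $Y\subseteq L_0(v_3)$ are chosen to avoid the sets reserved for later savings toward $v_4$, and $Z$ is a suitable subset of $L_0(v_2)$.
\item Color $v_1$ with $S\cup R$ and $v_3$ with $T\cup R$ (augmented by the $v_4$-savings where needed), then delete $v_2$ degenerately.
\item Finish with $\langle v_4\rangle$ (or $\langle v_1\mid v_4,m\rangle$ first, if needed), $\langle v_1\rangle$ and $\langle v_3\rangle$.
\end{itemize}

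The main obstacle is the bookkeeping at $v_1$. It has only $7m$ colors but must simultaneously spend colors outside $L_0(v_5)$, contribute to the saving for $v_2$, and avoid overloading $v_4$. I expect to need the $m$-sets $A_4\subseteq L_0(v_4)\setminus L_0(v_1)$ and $A_4'\subseteq L_0(v_4)\setminus L_0(v_3)$ coming from $\phi$ to give $v_4$ its slack. If the counts fail by $m$, my fallback is to pick the $v_1$-saving inside $\phi(v_1)$, as in Case~3 of Lemma~\ref{key lemma}. This makes it automatically disjoint from the colors reserved at $v_4$ and $v_5$, and I then verify each degenerate deletion inequality directly.
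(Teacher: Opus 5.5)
\textbf{The gap: $H=G[v_1,\dots,v_5]$ leaves $v_4$ with too few colors.} The vertex $v_4$ is a $5$-vertex whose only neighbors in $H$ are $v_1$ and $v_3$. So three of its neighbors lie outside $H$ and keep their $\phi$-colors. Counting gives only $|L_0(v_4)|\ge 15m-12m$, together with $\phi(v_4)\subseteq L_0(v_4)$. In fact $L_0(v_4)=\phi(v_4)$ can occur, since those three vertices may together use all $11m$ colors of $L(v_4)\setminus\phi(v_4)$. Your normalization $|L_0(v_4)|=7m$ is therefore false.

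This is not a bookkeeping problem that a cleverer scheme can fix. Suppose in addition that $L_0(v_1)$ and $L_0(v_3)$ each meet $\phi(v_4)$ in $3m$ colors. Then $v_1$ and $v_3$ are forced to take $\phi(v_1)$ and $\phi(v_3)$. If these two sets are disjoint and lie in $L_0(v_2)$, only $3m$ colors remain for $v_2$.

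The same defect breaks your $v_3v_5\in E(G)$ case. There $d_G(v_4)-d_H(v_4)=3$, so $H$ is not nice and Lemma~\ref{key lemma} does not apply. None of the claims you cite excludes this case either: they would need $v_4$ to be $5_{\ge 2}$ or a $4$-vertex, or $v_1$ to be $5_3$. Note also that your main case never uses the hypothesis that $v_4$ has a $3$-neighbor, which is a warning sign.

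\textbf{The missing idea: put a $3$-neighbor $v_6$ of $v_4$ into $H$.} This is exactly what the $5_{\ge1}$ hypothesis provides, and the paper works with $H=G[v_1,\dots,v_6]$. At most one of $v_3v_5$, $v_2v_6$, $v_5v_6$ can be an edge.
\begin{itemize}
\item If exactly one is an edge, $H$ is nice. Its $F_H$ is $v_1v_4v_6$, $v_5v_1v_4v_3$, or $v_1v_4v_3$ respectively, and explicit schemes with Lemma~\ref{key lemma} finish.
\item If none is an edge, $F_H$ is the five-vertex tree you noticed, and the paper argues directly from a coloring $\phi$ of $G-v_2$. Now $v_4$ has only two outside neighbors, so every list has size $7m$ except $|L_0(v_2)|=11m$. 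The paper reserves $m$-sets $A_4,A_5\subseteq L_0(v_1)$, $B_1,B_3,B_6\subseteq L_0(v_4)$ and $C_3\subseteq L_0(v_3)$ for the savings toward $v_4,v_5$, toward $v_1,v_3,v_6$, and toward $v_4$. It obtains the saving for $v_2$ from Lemma~\ref{lem-three sets}, then deletes $v_5,v_2,v_6,v_1,v_3,v_4$ in turn.
\end{itemize}

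When you write this out, check explicitly the overlap issue you anticipated. Legality of $\langle v_2\rangle$, after $v_1$ receives $A_4\cup A_5\cup R\cup S$ and $v_3$ receives $C_3\cup R\cup T$, uses that these pieces are pairwise disjoint. Applying Lemma~\ref{lem-three sets} to $L_0(v_1)\setminus(B_3\cup B_6)$, $L_0(v_3)\setminus(B_1\cup B_6)$ and an $8m$-subset of $L_0(v_2)\setminus(A_4\cup A_5\cup C_3)$ does not by itself prevent $S\subseteq A_4$. So that step needs its own argument.
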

	\begin{proof}
		Assume to the contrary that $v_5$ is a $3$-neighbor of $v_1$ distinct from $v_2$. Let $v_6$ be a $3$-neighbor of $v_4$. 	Let $H=G[v_1, v_2, \ldots, v_6]$. 
		
		\medskip
		\noindent 
		{\bf Case 1.} At least one of $v_3v_5$, $v_2v_6$, and $v_5v_6$ lies in $E(H)$. 
		\medskip
		
		By Claim~\ref{FC-k_{k-2}-3_1} and the fact that $G$ contains no triangle, 
		exactly one of $v_3v_5$, $v_2v_6$, and $v_5v_6$ lies in $E(H)$. We consider three cases to determine $(f_H, g_H)$ by   Definition~\ref{def-(f_H,g_H)}.
		\begin{itemize}
			\item If $v_3v_5 \in E(H)$, then $(f_H,g_H)(v_i)=(5m,3m)$ for $i\in\{1,6\}$, $(f_H,g_H)(v_i)=(10m,4m)$ for $i\in\{2,5\}$, $(f_H,g_H)(v_3)=(9m,4m)$, and $(f_H,g_H)(v_4)=(5m,2m)$.
			
			\item If $v_2v_6 \in E(H)$, then  $(f_H,g_H)(v_i)=(4m,2m)$ for $i\in\{1,4\}$, $(f_H,g_H)(v_j)=(5m,3m)$ for $j\in\{3,5\}$, $(f_H,g_H)(v_2)=(12m,4m)$, and $(f_H,g_H)(v_6)=(9m,4m)$. 
			
			\item If $v_5v_6 \in E(H)$, then $(f_H,g_H)(v_i)=(5m,3m)$ for $i\in\{1,3\}$, $(f_H,g_H)(v_j)=(9m,4m)$ for $j \in \{2,6\}$, $(f_H,g_H)(v_4)=(5m,2m)$, and $(f_H,g_H)(v_5)=(10m,4m)$. 
		\end{itemize}
		
		For the first case, let 
		\[
		\Omega =
		\mathsf{Red}\bigl(
		\langle v_6\rangle,\  
		\langle \{v_1, v_3\} \mid v_2, m^*\rangle,\ \langle v_2\rangle,\ \langle \{v_1, v_3\} \mid v_5, m^*\rangle,\ \langle v_5\rangle,\ \langle v_1\rangle,\ \langle v_3\rangle,\ \langle v_4\rangle \bigr).
		\] 
		
		For the second case, let $
		\Omega = \mathsf{Red}\bigl(
		\langle v_5\rangle,\
		\langle \{v_1, v_6\} \mid v_2, m^*\rangle,\ \langle v_2\rangle,\ \langle v_1\rangle,\ \langle v_3\rangle,\ \langle v_6\rangle,\ \langle v_4\rangle \bigr)
		$.
		
		For the third case, let 
		$$
		\Omega = \mathsf{Red}\bigl(
		\langle \{v_1, v_3\} \mid v_2, m^*\rangle,\
		\langle v_2\rangle,\ \langle v_3\rangle,\ \langle \{v_1, v_6\} \mid v_5, m^*\rangle,\ \langle v_5\rangle,\ \langle v_6\rangle,\ \langle v_1\rangle,\ \langle v_4\rangle
		\bigr).$$
		
		In each case, $\Omega$  is legal  and deletes all vertices of $H$,  which contradicts Lemma~\ref{key lemma}.  
		
		\medskip
		\noindent 
		{\bf Case 2.} None of $v_3v_5$, $v_2v_6$, and $v_5v_6$ lies in $E(H)$. 
		\medskip
		
		By minimality, $G-v_2$ has an $(L,4m)$-coloring $\phi$. Let $L_0$ be a list assignment of $V(H)$ defined as $L_0(u) = L(u) \setminus \{\phi(v) \mid v \in N_{G - V(H)}(u)\}$ for each $u \in V(H)$. By deleting extra colors not used by $\phi(v)$, we may assume that $|L_0(v_2)|=11m$ and $|L_0(v_i)|=7m$ for $i\in [6] \setminus \{2\}$. 
		Since $\phi$ is an $(L,4m)$-coloring of $G-v_2$, there exist $m$-sets
		$A_i\subseteq L_0(v_1)\setminus L_0(v_i)$ for $4\le i\le 5$,
		$m$-sets $B_j\subseteq L_0(v_4)\setminus L_0(v_j)$ for $j\in\{1,3,6\}$,
		and an $m$-set $C_3\subseteq L_0(v_3)\setminus L_0(v_4)$. 
		Let $L'_0(v_2)$ be an $8m$-subset of $L_0(v_2)\setminus(A_4\cup A_5\cup C_3)$, $L'_0(v_1)=L_0(v_1)\setminus(B_3\cup B_6)$ and $L'_0(v_3)=L_0(v_3)\setminus(B_1\cup B_6)$. Then $|L'_0(v_i)|\geq 5m$ for $i\in \{1,3\}$. 
		Since $|L'_0(v_1)|+|L'_0(v_3)| \geq |L'_0(v_2)|+m$, by Lemma~\ref{lem-three sets}, there exist $S\subseteq L'_0(v_1)\setminus L'_0(v_2)$, $T\subseteq L'_0(v_3)\setminus L'_0(v_2)$, and $R\subseteq L'_0(v_1)\cap L'_0(v_3) \cap L'_0(v_2)$ such that $|S|+|T|+|R|=m$.  
		Then
		\[
		\mathsf{Red}\bigl(
		\langle v_1, A_4\cup A_5\cup R\cup S\rangle,\ \langle v_5\rangle,\
		\langle v_3,C_3\cup R\cup T\rangle,\ 
		\langle v_2\rangle,\ \langle v_4,B_1\cup B_3\cup B_6\rangle,\ 
		\langle v_6\rangle,\
		\langle v_1\rangle,\   \langle v_3\rangle,\ \langle v_4\rangle
		\bigr)\] 
		is legal for $(H,L_0, 4m)$ and deletes all vertices of $H$, which contradicts Lemma~\ref{key lemma}.  
	\end{proof}

	\begin{claim}
		\label{FC-(5,4,3,4)-4_2}
		Suppose that $v_1v_2v_3v_4v_1$ is a $(5,4,3,4)$-cycle with $d(v_1)=5$. 
		Then $v_1$ has no $4_2$-neighbors. 
	\end{claim}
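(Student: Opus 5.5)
The plan is to argue by contradiction: exhibit a nice subgraph $H$ and a legal reduction scheme that deletes all of $V(H)$, contradicting Lemma~\ref{key lemma}. Before building $H$ I will use earlier claims to reduce to one main configuration. Suppose $v_1$ has a $4_2$-neighbor $v_5$. Since $v_3$ is a $3$-neighbor of both $v_2$ and $v_4$, both are $4_{\ge 1}$-vertices.

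\textbf{Step 1: preliminary reductions.}
\begin{itemize}
\item If $v_5\notin\{v_2,v_4\}$ and $v_1$ has a $3$-neighbor, then $v_5v_1v_4$ is a $(4_2,5_{\ge 1},4_{\ge 1})$-path. This is excluded by Claim~\ref{FC-4_2-4/5_1-4_1}. So in this case I may assume $v_1$ is a $5_0$-vertex.
\item If $v_5\in\{v_2,v_4\}$, say $v_5=v_2$, then $v_2$ has a second $3$-neighbor $x\neq v_3$. Here $v_1$ again has no $3$-neighbor by the same reasoning: $v_2v_1v_4$ would be a $(4_2,5_{\ge1},4_{\ge1})$-path. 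This case is the same configuration as above with $v_5=v_2$ and the roles of $v_6,v_7$ played by $v_3,x$. I will treat it by the same scheme on $H=G[v_1,\dots,v_4,x]$.
\end{itemize}

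\textbf{Step 2: the main configuration.} Let $v_5\notin\{v_2,v_4\}$ have $3$-neighbors $v_6,v_7$, and set $H=G[v_1,\dots,v_7]$. The adjacencies are controlled as follows:
\begin{itemize}
\item Triangle-freeness gives $v_6v_7\notin E(G)$.
\item Claims~\ref{FC-star}, \ref{FC-k_{k-2}-3_1}, \ref{FC-(4-,4-,4-,3-)}, \ref{FC-5_3/4_24_1} and \ref{FC-(5_1,4,3,4)} should exclude edges between $\{v_6,v_7\}$ and $\{v_2,v_3,v_4\}$. For example, $v_5$ is a $4_2$-vertex and cannot be adjacent to the $4_{\ge1}$-vertices $v_2,v_4$.
\item The one delicate possibility is $v_6=v_3$, that is, $v_5v_3\in E(G)$. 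I will handle it as a separate subcase with $H=G[v_1,\dots,v_5,v_7]$, in which $v_3$ has all three neighbors in $H$.
\end{itemize}

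\textbf{Step 3: the values of $(f_H,g_H)$ in the generic subcase.} By Definition~\ref{def-(f_H,g_H)}:
\begin{itemize}
\item $F_H$ consists of the $P_3$ $v_2v_1v_4$, centered at $v_1$, together with the isolated vertices $v_6,v_7$.
\item Hence $(f_H,g_H)(v_1)=(5m,2m)$, $(f_H,g_H)(v_2)=(f_H,g_H)(v_4)=(5m,3m)$, and $(f_H,g_H)(v_6)=(f_H,g_H)(v_7)=(7m,4m)$.
\item For the vertices of $U_H$: $(f_H,g_H)(v_3)=(15m-4m-m-m,4m)=(9m,4m)$ and $(f_H,g_H)(v_5)=(15m-4m-2m,4m)=(9m,4m)$.
\end{itemize}

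\textbf{Step 4: the reduction scheme.} I propose
\[
\mathsf{Red}\bigl(\langle v_5\mid v_6,m\rangle,\ \langle v_6\rangle,\ \langle v_5\mid v_7,m\rangle,\ \langle v_7\rangle,\ \langle v_5\rangle,\ \langle\{v_2,v_4\}\mid v_3,m^*\rangle,\ \langle v_3\rangle,\ \langle v_2\rangle,\ \langle v_4\rangle,\ \langle v_1\rangle\bigr).
\]
The legality checks are routine:
\begin{itemize}
\item Each $v_i$ with $i\in\{6,7\}$ needs a saving of $m$, since $7m+m=4m+4m$.
\item After these steps $v_5$ has list $7m$ and demand $2m$, against neighbor demand $2m$.
\item $v_3$ satisfies \eqref{eq-two sum}, because $5m+5m\ge 9m+m$.
\item Afterwards $v_2$ has list $5m-a$ and demand $3m-a$, against $g(v_1)=2m$. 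The same holds for $v_4$.
\item Finally $v_1$ is deleted last.
\end{itemize}

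\textbf{Step 5: the remaining subcases.} The subcase $v_6=v_3$ and the subcase $v_5=v_2$ are treated with analogous schemes. There the $3$-vertex with the larger list ($11m$ or $13m$) is deleted after a two-neighbor saving $\langle\{\cdot,\cdot\}\mid\cdot,m^*\rangle$, as in the proof of Claim~\ref{FC-(5_1,4,3,4)}.

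\textbf{Main obstacle.} I expect the main obstacle to be the exhaustive case analysis of the possible extra edges and coincidences among $v_6,v_7$ and the cycle vertices. In particular I am not yet sure that every such coincidence is excluded or handled by the earlier claims. Where a coincidence creates a component of $F_H$ not covered by the niceness definition, I would fall back on a direct precoloring argument in the style of Case~2 of Claim~\ref{FC-(5_1,4,3,4)}.
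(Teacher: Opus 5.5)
Your main case ($v_5\notin\{v_2,v_4\}$ and $v_3\notin\{v_6,v_7\}$) is the paper's Case~1. The subgraph $H=G[v_1,\dots,v_7]$ and the values of $(f_H,g_H)$ are the same. Your scheme is legal; it handles the $v_5$-side before the $v_3$-side, whereas the paper does the reverse.

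Two corrections in that part:
\begin{itemize}
\item The non-edges $v_6v_2$, $v_6v_4$, $v_7v_2$, $v_7v_4$ come from Claim~\ref{FC-3434~43}, which is not among the claims you list. For example, $v_7v_5v_6v_2v_3$ would be a $(3,4,3,4,3)$-path.
\item In the subcase $v_6=v_3$ (the paper's Case~2, where $(f_H,g_H)(v_3)=(13m,4m)$), the saving at $v_3$ must use a pair containing $v_5$. The pair $\{v_2,v_4\}$ fails because $|L_H(v_2)|+|L_H(v_4)|=10m<14m$. In your labelling the paper uses $\mathsf{Red}(\langle\{v_4,v_5\}\mid v_3,m^*\rangle,\langle v_3\rangle,\langle v_4\rangle,\langle v_2\rangle,\langle v_5\mid v_7,m\rangle,\langle v_7\rangle,\langle v_5\rangle,\langle v_1\rangle)$.
\end{itemize}

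The genuine gap is your subcase $v_5\in\{v_2,v_4\}$. Take $v_5=v_2$ and $H=G[v_1,v_2,v_3,v_4,x]$. Then $v_1$ has only $v_2$ and $v_4$ as neighbors in $H$, so $d_H(v_1)=2=d_G(v_1)-3$. Hence $H$ is not nice, and Lemma~\ref{key lemma} cannot be invoked. Your identification with the $v_6=v_3$ subcase breaks exactly here: that subcase needs $v_5$ as a third $H$-neighbor of $v_1$.

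The precoloring fallback in the style of Case~2 of Claim~\ref{FC-(5_1,4,3,4)} does not obviously help either. Once the three neighbors of $v_1$ outside $\{v_2,v_4\}$ are colored, $v_1$ may retain only the $4m$ colors it already uses, leaving nothing to recolor.

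The paper does not prove this case at all. Its $v_5$ is a new vertex off the cycle, and that is the only form in which the claim is used: in Lemma~\ref{lem-exist (5,4,3,4)-face}, for $4_2$-neighbors of $u$ not on the given $(5,4,3,4)$-face. In fact, a $5$-vertex on a $(5,4_2,3_0,4)$-face, i.e.\ with $v_2$ itself a $4_2$-vertex, is treated as a possible configuration. It appears among the type~3 faces in Rule~\ref{R3} and is handled explicitly in that lemma.

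So you should read the statement as excluding only $4_2$-neighbors other than $v_2,v_4$, and drop this subcase rather than claim a reduction for it.
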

	 
	\begin{proof}
		Assume to the contrary, $v_1$ has a $4_2$-neighbor $v_5$ with two $3$-neighbors $v_6$ and $v_7$.
		
		\medskip
		\noindent 
		{\bf Case 1.} $v_6\neq v_3$ and $v_7\neq v_3$.
		\medskip
		
		Let $H=G[v_1, v_2, \ldots, v_7]$. By Claim~\ref{FC-3434~43}, $v_6v_2, v_6v_4, v_7v_2,v_7v_4 \notin E(H)$. By Claim~\ref{FC-star}, $v_3v_5\notin E(H)$. By Claim~\ref{FC-k_{k-2}-3_1}, $v_3v_6, v_3v_7 \notin E(H)$. Thus by Definition~\ref{def-(f_H,g_H)}, $(f_H,g_H)(v_1)=(5m,2m)$, $(f_H,g_H)(v_i)=(5m,3m)$ for $i \in \{2,4\}$,  $(f_H,g_H)(v_j)=(9m,4m)$ for $j \in \{3,5\}$, and $(f_H,g_H)(v_k)=(7m,4m)$ for $k \in \{6,7\}$.   
		Then
		$$\mathsf{Red}\bigl(  
		\langle \{v_2, v_4\} \mid v_3, m^*\rangle,\
		\langle v_3\rangle,\ \langle v_2\rangle,\ \langle v_4 \rangle,\ 
		\langle v_5 \mid v_6, m \rangle,\ \langle v_6 \rangle,\
		\langle v_5 \mid v_7, m \rangle,\ 
		\langle v_7 \rangle,\ \langle v_5 \rangle,\ 
		\langle v_1\rangle
		\bigr)$$  is legal  for $(H, L_H, g_H)$ and deletes all vertices of $H$,  which contradicts Lemma~\ref{key lemma}.
		
		\medskip
		\noindent 
		{\bf Case 2.} $v_6=v_3$ or $v_7=v_3$, say $v_7=v_3$.
		\medskip
		
		Let $H=G[v_1, v_2, \ldots, v_6]$. By Claim~\ref{FC-(4-,4-,4-,3-)}, $v_6v_2, v_6v_4 \notin E(H)$. Thus by Definition~\ref{def-(f_H,g_H)}, $(f_H,g_H)(v_1)=(5m,2m)$, $(f_H,g_H)(v_i)=(5m,3m)$ for $i \in \{2,4\}$,  $(f_H,g_H)(v_3)=(13m,4m)$, $(f_H,g_H)(v_5)=(9m,4m)$, and $(f_H,g_H)(v_6)=(7m,4m)$.   
		Then
		$$\mathsf{Red}\bigl( 
		\langle \{v_4, v_5\} \mid v_3, m^*\rangle,\  
		\langle v_3\rangle,\ \langle v_4 \rangle,\ \langle v_2\rangle,\ 
		\langle v_5\mid v_6,m\rangle,\ \langle v_6 \rangle,\ 
		\langle v_5\rangle,\ \langle v_1 \rangle
		\bigr)$$  is legal for $(H, L_H, g_H)$ and deletes all vertices of $H$,  which contradicts Lemma~\ref{key lemma}.  
	\end{proof}
	
	\begin{claim}
		\label{FC-adjcent-(5,3,3,4)&(5,3,4,4)}
		If $v_1v_2v_3v_4v_1$ is a $(5,3,3,4)$-cycle with 
		$d(v_1)=5$, $d(v_2)=d(v_3)=3$, and $d(v_4)=4$, 
		then the edge $v_1v_2$ is not contained in any $(5,3,4,4)$-cycle. 
	\end{claim}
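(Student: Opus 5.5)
The plan is to argue by contradiction, in the same way as the earlier reducibility claims. Build a nice subgraph $H$ around the two $4$-cycles, compute $(f_H,g_H)$ from Definition~\ref{def-(f_H,g_H)}, and exhibit a legal reduction scheme that deletes all of $H$. This contradicts Lemma~\ref{key lemma}.

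\textbf{Fixing the configuration.} Suppose $v_1v_2$ lies on a $(5,3,4,4)$-cycle. Since $d(v_2)=3$, the vertex at distance $2$ from $v_1$ on this cycle is a $3$-vertex, so $v_2$ must be the $3$-vertex of that cycle. The neighbors of $v_2$ are $v_1$, $v_3$ and a third vertex $w$. Because $d(v_3)=3$, the cycle continues from $v_2$ through $w$. So the cycle is $v_1v_2wxv_1$ with $d(w)=d(x)=4$.

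We need $x\neq v_4$. If $x=v_4$, then $v_2v_3v_4wv_2$ is a $(3,3,4,4)$-cycle, contradicting Claim~\ref{FC-(4-,4-,4-,3-)}.

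Let $H=G[v_1,v_2,v_3,v_4,w,x]$. We check that $H$ has no extra edges.
\begin{itemize}
\item $v_1v_3$, $v_3w$ and $v_4x$ would each create a triangle.
\item $v_4w$ would again give a $(3,3,4,4)$-cycle $v_2v_3v_4w$.
\item $v_3x$ would make $x$ a $4_{\ge1}$-neighbor of $v_1$ outside the $(5,3,3,4)$-cycle $v_1v_2v_3v_4$, contradicting Claim~\ref{FC-(5,3,3,4)-4_1}.
\item $v_1w$ would give a triangle $v_1v_2w$.
\end{itemize}
Hence the edge set of $H$ is exactly $v_1v_2,v_2v_3,v_3v_4,v_4v_1,v_2w,wx,xv_1$.

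\textbf{Computing $(f_H,g_H)$.} The deficits $d_G-d_H$ are as follows: $v_2$ has deficit $0$, $v_3$ has deficit $1$, and $v_1,v_4,w,x$ each have deficit $2$. So $U_H=\{v_2,v_3\}$, and $F_H$ is the path $v_4v_1xw\cong P_4$; thus $H$ is nice. Definition~\ref{def-(f_H,g_H)} then gives
\begin{itemize}
\item $(f_H,g_H)(v_4)=(f_H,g_H)(w)=(5m,3m)$ and $(f_H,g_H)(v_1)=(f_H,g_H)(x)=(4m,2m)$;
\item $f_H(v_2)=15m-2m-m=12m$ with $g_H(v_2)=4m$;
\item $f_H(v_3)=15m-4m-m=10m$ with $g_H(v_3)=4m$.
\end{itemize}

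\textbf{The reduction scheme.} I would use
\[
\mathsf{Red}\bigl(\langle v_2\mid v_3,m\rangle,\ \langle v_3\rangle,\ \langle v_2\rangle,\ \langle v_4\rangle,\ \langle w\rangle,\ \langle v_1\rangle,\ \langle x\rangle\bigr).
\]
The checks, step by step:
\begin{enumerate}
\item The first step is possible because $|L_H(v_2)\setminus L_H(v_3)|\ge 12m-10m=2m\ge m$.
\item After that step, $v_3$ still has $10m$ colors and needs $4m$. Its neighbors $v_2$ and $v_4$ demand $3m+3m$, so $10m\ge 10m$ and $\langle v_3\rangle$ is legal.
\item $v_2$ now has at least $11m$ colors and demand $3m$. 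Its remaining neighbors $w$ and $v_1$ demand $3m+2m$, so $11m\ge 8m$ and $\langle v_2\rangle$ is legal.
\item What is left is the path $v_4v_1xw$ with lists and demands $(5m,3m),(4m,2m),(4m,2m),(5m,3m)$. The end vertices satisfy $5m\ge 3m+2m$, and afterwards $v_1$ and $x$ satisfy $4m\ge 2m$ plus neighbor demands.
\end{enumerate}

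The main obstacle is the first part: pinning down that the second cycle must be $v_1v_2wx$ with $x\neq v_4$, and ruling out the stray chord $v_3x$. I expect Claim~\ref{FC-(5,3,3,4)-4_1} to handle $v_3x$, and Claim~\ref{FC-(4-,4-,4-,3-)} to handle the other degenerate identifications. Once $H$ is known to be this induced $7$-edge graph, the legality checks are routine.
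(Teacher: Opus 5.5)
Your identification of the configuration (second cycle $v_1v_2wxv_1$, $x\neq v_4$, no chords) and your values of $(f_H,g_H)$ are correct and match the paper. Excluding $v_3x$ by Claim~\ref{FC-(5,3,3,4)-4_1} is fine; the paper instead notes that $v_2v_3xw$ would be a $(3,3,4,4)$-cycle, excluded by Claim~\ref{FC-(4-,4-,4-,3-)}.

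The gap is in the reduction scheme. In your last step you treat the path $v_4v_1xw$ as still having list sizes $5m,4m,4m,5m$. But $v_1$ and $w$ are neighbors of $v_2$, and $\langle v_2\mid v_3,m\rangle$ only guarantees that the $m$ colors given to $v_2$ avoid $L_H(v_3)$. Nothing prevents them from lying in $L_H(v_1)\cap L_H(w)$; for $w$ this is forced when $L_H(v_2)\setminus L_H(v_3)\subseteq L_H(w)$. Afterwards you only know $|L(v_1)|\ge 3m$ and $|L(w)|\ge 4m$. So $\langle w\rangle$, which needs $|L(w)|\ge g(w)+g(x)=5m$, is not legal, and neither is $\langle v_1\rangle$, which needs $4m$.

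Reordering cannot repair this. Once $v_4$ and $v_2$ are removed, the path $v_1xw$ has (list, demand) pairs $(3m,2m),(4m,2m),(4m,3m)$, and none of the three vertices is degenerately deletable. Worse, suppose $L_H(x)\subseteq L_H(w)$ and the colors given to $v_2$ happen to be $L_H(w)\setminus L_H(x)$. Then $w$ and $x$ would need $5m$ pairwise distinct colors from a $4m$-set, so no continuation exists.

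The missing idea is to create the saving at $v_2$ without partially coloring $v_2$, since its neighbors $v_1$ and $w$ have no slack. The paper does this in two steps:
\begin{enumerate}
\item Apply $\langle v_4\mid v_1,m\rangle$, which is possible since $5m>4m$. This lowers $g(v_4)$ to $2m$, while $v_3$ keeps at least $9m$ colors.
\item Apply $\langle\{v_1,v_3\}\mid v_2,m^*\rangle$, which is legitimate since $4m+9m\ge 12m+m$. This saves $m$ for $v_2$ by coloring two of its neighbors and leaves $L(w)$ untouched.
\end{enumerate}
After that, $\langle v_2\rangle,\langle w\rangle,\langle x\rangle,\langle v_1\rangle,\langle v_3\rangle,\langle v_4\rangle$ are legal in this order. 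Note that $v_2$ is now deleted before $v_3$, so $v_3$ no longer has to reserve room for $v_2$'s demand. Also, the at most $|S|+|R|$ colors that $x$ loses to $v_1$ are offset by the equal drop in $g(v_1)$.
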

	\begin{proof}
		Assume to the contrary, $v_1v_2v_5v_6v_1$ is a $(5,3,4,4)$-cycle in $G$, where $d(v_5)=d(v_6)=4$. By Claim~\ref{FC-(4-,4-,4-,3-)}, $v_3v_6, v_4v_5 \notin E(G)$. Let $H=G[v_1, v_2, \ldots, v_6]$.  By Definition~\ref{def-(f_H,g_H)}, $(f_H,g_H)(v_i)=(4m,2m)$ for $i \in \{1,6\}$, $(f_H,g_H)(v_j)=(5m,3m)$ for $j \in \{4,5\}$,  $(f_H,g_H)(v_2)=(12m,4m)$, $(f_H,g_H)(v_3)=(10m,4m)$.  
		Then
		\[\mathsf{Red}\bigl( 
		\langle v_4\mid v_1,m\rangle,\
		\langle \{v_1, v_3\} \mid v_2, m^*\rangle,\  
		\langle v_2\rangle,\ \langle v_5 \rangle,\ \langle v_6\rangle,\ 
		\langle v_1 \rangle,\ 
		\langle v_3\rangle,\ \langle v_4 \rangle
		\bigr)\]  is legal for $(H, L_H, g_H)$ and deletes all vertices of $H$,  which contradicts Lemma~\ref{key lemma}.  
	\end{proof} 
	
	\begin{claim}
		\label{FC-6-nonadjacent (6,3,3,4)}
		A $6$-vertex cannot be incident with two nonadjacent $(6,3,3,4)$-cycles. 
	\end{claim}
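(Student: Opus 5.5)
Let $v_1$ be a $6$-vertex and suppose, for a contradiction, that $v_1v_2v_3v_4v_1$ and $v_1v_5v_6v_7v_1$ are two $(6,3,3,4)$-cycles through $v_1$ that share no edge. Here $d(v_2)=d(v_3)=d(v_5)=d(v_6)=3$ and $d(v_4)=d(v_7)=4$. Triangle-freeness rules out $v_2=v_5$ and $v_4=v_7$. The first step is to show that $H=G[v_1,\ldots,v_7]$ consists exactly of the two $4$-cycles.
\begin{itemize}
\item $v_2v_5,v_2v_7,v_4v_5,v_4v_7$ are excluded because they would close a triangle with $v_1$.
\item $v_2v_6,v_3v_5,v_3v_6$ are excluded because each creates a $(3,3,3)$-path, contradicting Claim~\ref{FC-star}.
\item $v_4v_6$ and $v_3v_7$ are excluded because they create a $(3,3,4,3)$-path (e.g.\ $v_5v_6v_4v_3$), contradicting Claim~\ref{FC-k_{k-2}-3_1}.
\end{itemize}

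With $H$ induced, I would read off $(f_H,g_H)$ from Definition~\ref{def-(f_H,g_H)}. The vertices $v_1,v_4,v_7$ each lose exactly two neighbours outside $H$, so $F_H$ is the path $v_4v_1v_7\cong P_3$ centred at $v_1$. This gives $(f_H,g_H)(v_1)=(5m,2m)$ and $(f_H,g_H)(v_4)=(f_H,g_H)(v_7)=(5m,3m)$. The other values are $(f_H,g_H)(v_2)=(f_H,g_H)(v_5)=(9m,4m)$ and $(f_H,g_H)(v_3)=(f_H,g_H)(v_6)=(10m,4m)$. The goal is then a legal reduction scheme deleting all of $H$, contradicting Lemma~\ref{key lemma}.

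The natural scheme copies Claim~\ref{FC-(5_2/6_3,3,3,4)} on each cycle. Apply $\langle\{v_2,v_4\}\mid v_3,m^*\rangle$; this is possible since $9m+5m\ge 10m+m$. Then $\langle v_3\rangle$ is legal by \eqref{eq-two sum}, and $\langle v_2\rangle,\langle v_4\rangle$ are legal because their only remaining neighbour is $v_1$ with demand $2m$. Doing the same on the second cycle, the bookkeeping for $v_1$ is where I expect the main obstacle. The partial colorings on $v_2,v_4$ remove up to $|S|+|T|+2|R|\le 2m$ colours from $L(v_1)$, and the same happens on the other side. That leaves only $m$ colours for a demand of $2m$, so the naive double application fails.

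To overcome this, I would first try a sparser saving step: choose the sets on $v_2,v_4$ (and on $v_5,v_7$) outside $L_H(v_1)$ whenever possible. Alternatively, first apply $\langle v_1\mid v_4,m\rangle$-type savings from $v_1$ toward $v_4$ and $v_7$, which relieves their demand and lets $v_1$ be deleted before $v_5,v_7$. If neither bookkeeping closes, I would fall back on the approach of Claim~\ref{FC-(5_1,4,3,4)} Case 2. That is, colour $G-v_3$ (or $G-\{v_3,v_6\}$) by minimality and use the resulting $(L_0,4m)$-colouring of the remaining vertices of $H$ to extract $m$-sets $A_i\subseteq L_0(v_i)\setminus L_0(v_1)$ and $B_i\subseteq L_0(v_1)\setminus L_0(v_i)$ for $i\in\{2,4,5,7\}$. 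With these sets, $v_1$ can be precoloured with colours not appearing in its neighbours' lists, and $v_2,v_4,v_5,v_7$ with colours that avoid $v_1$. Then Lemma~\ref{lem-three sets} is applied separately at $v_3$ and at $v_6$, and $v_3,v_6,v_2,v_5,v_4,v_7,v_1$ are removed by degenerate deletions. Checking that $v_1$ keeps enough colours in this final step is the crux.
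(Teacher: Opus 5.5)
You have a genuine gap, but it is a bookkeeping error rather than a missing idea. The ``natural scheme'' you reject is exactly the paper's proof, and it works; your count of the colours lost by $v_1$ is wrong.

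The operation $\langle\{v_2,v_4\}\mid v_3,m^*\rangle$ assigns $S\cup R$ to $v_2$ and $T\cup R$ to $v_4$. So the colours removed from the list of $v_1$ form the set $S\cup T\cup R$, of size at most $|S|+|T|+|R|=m$. The shared part $R$ is deleted from $L_H(v_1)$ only once. The quantity $|S|+|T|+2|R|$ counts assignments with multiplicity: it is what $v_2$ and $v_4$ gain, not what $v_1$ loses. Hence $v_1$, whose demand stays $2m$ throughout, keeps at least $5m-m-m=3m$ colours after both cycles are treated. The scheme
\[
\mathsf{Red}\bigl(\langle\{v_2,v_4\}\mid v_3,m^*\rangle,\ \langle v_3\rangle,\ \langle v_2\rangle,\ \langle v_4\rangle,\ \langle\{v_5,v_7\}\mid v_6,m^*\rangle,\ \langle v_6\rangle,\ \langle v_5\rangle,\ \langle v_7\rangle,\ \langle v_1\rangle\bigr)
\]
is therefore legal:
\begin{itemize}
\item $\langle v_3\rangle$ holds with equality, $10m-|R|=4m+(4m-|S|-|R|)+(3m-|T|-|R|)$.
\item $\langle v_2\rangle$ reduces to $9m\ge 6m$.
\item $\langle v_4\rangle$ reduces to $5m\ge 3m+2m$.
\item The first partial colouring only touches $v_1,\dots,v_4$, so the second cycle is handled identically.
\item Finally $3m\ge 2m$ for $\langle v_1\rangle$.
\end{itemize}
Your structural analysis and your values of $(f_H,g_H)$ agree with the paper.

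As submitted, however, the proposal exhibits no legal scheme, and the fallbacks do not repair this. The step $\langle v_1\mid v_4,m\rangle$ is not available in general: $|L_H(v_1)|=|L_H(v_4)|=5m$ and the two lists may coincide, so $L_H(v_1)\setminus L_H(v_4)$ can be empty. The route through an $(L,4m)$-colouring of $G-v_3$ is only outlined, and you leave its decisive inequality at $v_1$ unchecked. A minor point: $v_2\neq v_5$ and $v_4\neq v_7$ follow because the two cycles share no edge at $v_1$, not from triangle-freeness.
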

	\begin{proof}
		Assume to the contrary, $v_1v_2v_3v_4v_1$ and $v_1v_5v_6v_7v_1$ are two   $4$-cycles of $G$,
		where $d(v_i)=3$ for $i \in \{2,3,5,6\}$, $d(v_1)=6$, and $d(v_4)=d(v_7)=4$.  By Claim~\ref{FC-star}, there are no edges between $\{v_2,v_3\}$ and $\{v_5,v_6\}$. By Claim~\ref{FC-k_{k-2}-3_1}, $v_4v_6,v_7v_3 \notin E(G)$.
		Let $H=G[v_1,v_2, \ldots, v_7]$.
		By Definition~\ref{def-(f_H,g_H)},
		$(f_H,g_H)(v_1)=(5m,2m)$,  $(f_H,g_H)(v_i)=(9m,4m)$ for $i\in\{2,5\}$,   $(f_H,g_H)(v_j)=(10m,4m)$ for $j\in\{3,6\}$ and  $(f_H,g_H)(v_k)=(5m,3m)$ for $k\in\{4,7\}$. 
		Then
		\[\mathsf{Red}\bigl(\langle \{v_2, v_4\} \mid v_3, m^*\rangle,\ \langle v_3\rangle,\  
		\langle v_2\rangle,\  \langle v_4\rangle,\ \langle \{v_5, v_7\} \mid v_6, m^*\rangle,\ \langle v_6\rangle,\  
		\langle v_5\rangle,\  \langle v_7\rangle,\ \langle v_1\rangle \bigr)\]   is legal for $(H, L_H, g_H)$ and deletes all vertices of $H$,  which contradicts Lemma~\ref{key lemma}.  
	\end{proof}

	\section{Discharging}
	\label{sec-discharging}
	
	In this section, we prove Theorem~\ref{main-thm} using the discharging method.
	Let $G$ be a minimal counterexample to the theorem, embedded in the plane. By the minimality of $G$, it is connected. We define the initial charge of a vertex $v \in V(G)$ as  $c(v) = 3d(v) - 10$, and the initial charge of a face $f\in F(G)$ as $c(f) = 2d(f) - 10$. 
	By Euler's Formula, $$\sum_{z \in V(G) \cup F(G)} c(z) = \sum_{v \in V(G)} \big(3d(v) - 10\big) + \sum_{f \in F(G)} \big(2d(f) - 10\big) = 10(|E(G)|-|V(G)|-|F(G)|)=-20.$$ 
	
	Before introducing the discharging rules, we establish some necessary notation. 
	
	Let $uu_1u_2u_3u$ be the cycle bounding a $4$-face $f$.  We write $\Lambda_u(f) = (d(u), k_1, k_2, k_3)$, where $d(u_i) = k_i$ for $i=1, 2, 3$, and we say $f$ is a $(d(u), k_1, k_2, k_3)$-face. Here, each $k_i$ may be replaced by the notation defined at the beginning of Section~\ref{sec-RC} to specify a degree property. For instance, $\Lambda_u(f)=(5, 4, 5^+, 4_2)$ means that $uu_1u_2u_3u$ bounds a 4-face face $f$ with $d(u_1)=4$, $d(u_2) \ge 5$, and $u_3$ is a $4_2$-vertex.
	
	A $4$-face $f$ incident with exactly one $5^+$-vertex is said to be of
	\emph{type 1} if it is incident with exactly two $3$-vertices and one $4$-vertex;
	of \emph{type 2} if it is incident with two nonadjacent $4$-vertices and one $3_1$-vertex;
	and of \emph{type 3} if it is a $(5^+,3_0,4_1,4_1)$-face, $(5^+,3_0,4_2,4_0)$-face, a $(5^+,3_1,4_1,4_0)$-face,
	or a $(5^+,4_2,3_0,4_1)$-face.
	For $k \in \{2,3\}$, a face is of \emph{type $k^-$} if it is of type $i$ for some
	$i \le k$; otherwise, it is not of type $k^-$.

	For each vertex $u \in V(G)$, let $T_u(z)$ denote the amount of charge 
	transferred from $u$ to a neighbor or an incident face $z$, and for 
	$z_1,\ldots,z_k$, define
	$T_u(z_1,\ldots,z_k)=\sum_{i=1}^{k} T_u(z_i)$.
	
	Since $G$ is a plane graph, the neighbors of any vertex $v$ are naturally ordered by their cyclic clockwise position. Two neighbors of $v$ are said to be \emph{consecutive} if they appear consecutively in this cyclic order.

	\medskip
	\noindent 
	{\bfseries Discharging Rules.}  
	\begin{enumerate}[label={\bfseries{R\arabic*}},  ref=R\arabic*, leftmargin=*]
		\item \label{R1}
		Let $u$ be a $3$-vertex. 
		If $u$ has exactly one $3$-neighbor, then it receives $1/2$ charge 
		from each $4^+$-neighbor. 
		If $u$ has no $3$-neighbors, then it receives $1/3$ charge 
		from each neighbor.
		
		\item \label{R2}
		Let $u$ be a $4$-vertex and let $f$ be a $4$-face incident with $u$.
		\begin{enumerate}[label=(\arabic*), ref=\ref{R2}(\arabic*), leftmargin=*]
			
			\item \label{R2a}
			If $u$ has no $3$-neighbors, then $T_u(f)=\frac{1}{2}$.
			
			\item \label{R2b}
			If $u$ has exactly one $3$-neighbor that is a $3_0$-vertex, then 
			$T_u(f)=\frac{1}{2}$ if that neighbor is incident with $f$, 
			and $T_u(f)=\frac{1}{3}$ otherwise.
			
			\item \label{R2c}
			If $u$ has exactly one $3$-neighbor $v$, where $v$ is a $3_1$-vertex 
			with $3$-neighbor $w$, then 
			$T_u(f)=\frac{1}{2}$ if $f$ is incident with both $v$ and $w$, 
			and $T_u(f)=\frac{1}{3}$ otherwise.
			
			\item \label{R2d}
			If $u$ has two non-consecutive $3$-neighbors, then 
			$T_u(f)=\frac{1}{3}$.
			
			\item \label{R2e}
			If $u$ has two consecutive $3$-neighbors $v$ and $w$, then 
			$T_u(f)=\frac{1}{2}$ if $f$ is incident with both $v$ and $w$, 
			$T_u(f)=\frac{1}{3}$ if $f$ is incident with exactly one of $v$ and $w$, 
			and $T_u(f)=\frac{1}{6}$ otherwise.
		\end{enumerate}
		
		\item \label{R3} Let $u$ be a $5^+$-vertex and $f$ be a $4$-face incident with $u$. If $f$ is of type $k \in [3]$, then $T_u(f) = (10-k)/6$ (i.e., $3/2$, $4/3$, $7/6$ for $k=1, 2, 3$, respectively).
		
		\item \label{R4} Let $u$ be a $6^+$-vertex. If $f$ is a $4$-face incident with $u$ that is not of type $3^-$, then $T_u(f) = 1$.

		\item \label{R5}
		Let $u$ be a $5$-vertex and $f$ a $4$-face incident with $u$ that is not of type $3^-$. 
		Then $T_u(f)=t$ if $\Lambda_u(f)\in\mathcal{F}_t$, where 
		$t\in\{1,\,5/6,\,3/4,\,2/3,\,7/12,\,1/2\}$; the families $\mathcal{F}_t$ are defined as follows. 
		\begin{align*}
			\mathcal{F}_1 &= \{ (5,3,3,5^+),(5,3_0,4_1,4_0), (5,3,5^+,4_2), (5,3,5^+,3), (5,4_1,3_0,4_1), (5_0,4,3,5) \}; \\
			\mathcal{F}_{5/6} &= \{ (5,3_0,4_2,5_{\ge 1}), (5,3_1,4_1,5_{\ge 1}), (5,3,5^+,4_1),   (5_{\ge 1},4_2,3_0,5), (5_{\ge 1},4_1,3_1,5), \\
			&\phantom{=\{ \;}  (5,4_2,5, 4_2), (5,4_0,4_1,4_1), (5,4_1,4_0,4_1), (5,4_0,4_2,4_0), (5,4_2,4_0,5), (5,4_0,4_0,4_2) \}; \\
			\mathcal{F}_{3/4} &= \{ (5,3,5,4_0), (5,3,5,5),  (5,3_0,4_1,5_{\ge 1}), (5,4_1,5, 4_2), (5_{\geq 1},4_1,3_0,5) \}; \\
			\mathcal{F}_{2/3} &= \{ (5,3_1,4_1,5_0), (5,3_0,4_2,5_0), (5,3,4,6^+), (5,4,3,6^+), (5,4_0,4_1,4_0), (5,4_0,4_0,4_1),\\
			&\phantom{=\{ \;}  (5,4_1,4,5),  (5,4_1,5, 4_1), (5,4_2,5, 4_0), (5,4_2,5,5),(5,4_2,6^+, 4_2) \}; \\
			\mathcal{F}_{7/12} &= \{(5,4_1,5,4_0), (5,4_1,5,5)\};
		\end{align*}
		and let $\mathcal{F}_{1/2}$ consist of all remaining possible values of $\Lambda_u(f)$.
	\end{enumerate}

	\begin{figure}[h]
		\begin{minipage}[t]{0.19\textwidth}
			\centering
			\begin{tikzpicture}[>=latex,	
				whitenode/.style={circle, draw = black, minimum size=1.2mm, inner sep=1pt},
				rednode/.style={circle, draw = black, minimum size=1.2mm, inner sep=1pt}] 
				\node [rednode] (A1) at (0,1){};
				\node [rednode] (A2) at (1,2){};
				\node [rednode] (A3) at (2,1){}; 
				\node [rednode] (A4) at (1,0){}; 
				\node [whitenode] (B) at (1,1){\scriptsize $4$};
				\draw (A1)--(B)--(A2);
				\draw (A4)--(B)--(A3);
				
				\node at (2.4, 1){$4^+$};
				\node at (-0.3, 1){$4^+$};
				\node at (1.1, 2.4){$4^+$};
				\node at (1.1, -0.4){$4^+$};
				
				\draw[->] (B)--(0.6,1.4);
				\node at (0.4, 1.6){\re{$\frac{1}{2}$}};
				\draw[->] (B)--(1.4,1.4);
				\node at (1.6, 1.6){\re{$\frac{1}{2}$}};
				\draw[->] (B)--(0.6,0.6);
				\node at (0.4, 0.4){\re{$\frac{1}{2}$}};
				\draw[->] (B)--(1.4,0.6);
				\node at (1.6, 0.4){\re{$\frac{1}{2}$}};
			\end{tikzpicture} 
		\end{minipage}  
		\begin{minipage}[t]{0.19\textwidth}
			\centering
			\begin{tikzpicture}[>=latex,	
				whitenode/.style={circle, draw = black, minimum size=1.2mm, inner sep=1pt},
				graynode/.style={circle, draw = gray, fill=gray, minimum size=1.2mm, inner sep=1pt},
				rednode/.style={circle, draw = black,   minimum size=1.2mm, inner sep=1pt}] 
				\node [rednode] (A1) at (0,1){};
				\node [rednode] (A2) at (1,2){};
				\node [rednode] (A3) at (2,1){}; 
				\node [rednode] (A4) at (1,0){}; 
				\node [whitenode] (B) at (1,1){\scriptsize $4$};
				\draw (A1)--(B)--(A2);
				\draw (A4)--(B)--(A3);
				
				\node [whitenode] (C1) at (0,2){};
				\node [whitenode] (C2) at (2,2){};  
				\draw [line width=1pt] (C1)--(A2)--(C2);
				
				\node at (2.4, 1){$4^+$};
				\node at (-0.3, 1){$4^+$};
				\node at (1.1, 2.3){$3_0$};
				\node at (1.1, -0.4){$4^+$};
				
				\node at (-0.3, 2){$4^+$};
				\node at (2.4, 2){$4^+$};
				
				\draw[->] (B)--(0.6,1.4);
				\node at (0.4, 1.6){\re{$\frac{1}{2}$}};
				\draw[->] (B)--(1.4,1.4);
				\node at (1.6, 1.6){\re{$\frac{1}{2}$}};
				\draw[->] (B)--(0.6,0.6);
				\node at (0.4, 0.4){\bl{$\frac{1}{3}$}};
				\draw[->] (B)--(1.4,0.6);
				\node at (1.6, 0.4){\bl{$\frac{1}{3}$}};
			\end{tikzpicture} 
		\end{minipage} 
		\begin{minipage}[t]{0.19\textwidth}
			\centering
			\begin{tikzpicture}[>=latex,	
				whitenode/.style={circle, draw = black, minimum size=1.2mm, inner sep=1pt},
				rednode/.style={circle, draw = black, minimum size=1.2mm, inner sep=1pt}] 
				\node [rednode] (A1) at (0,1){};
				\node [rednode] (A2) at (1,2){};
				\node [rednode] (A3) at (2,1){}; 
				\node [rednode] (A4) at (1,0){}; 
				\node [whitenode] (B) at (1,1){\scriptsize $4$};
				\draw (A1)--(B)--(A2);
				\draw (A4)--(B)--(A3);
				
				\node at (2.4, 1){$4^+$};
				\node at (-0.3, 1){$4^+$};
				\node at (1.1, 2.4){$3$};
				\node at (1.1, -0.4){$3$};
				
				\draw[->] (B)--(0.6,1.4);
				\node at (0.4, 1.6){\bl{$\frac{1}{3}$}};
				\draw[->] (B)--(1.4,1.4);
				\node at (1.6, 1.6){\bl{$\frac{1}{3}$}};
				\draw[->] (B)--(0.6,0.6);
				\node at (0.4, 0.4){\bl{$\frac{1}{3}$}};
				\draw[->] (B)--(1.4,0.6);
				\node at (1.6, 0.4){\bl{$\frac{1}{3}$}};
			\end{tikzpicture} 
		\end{minipage}
		\begin{minipage}[t]{0.19\textwidth}
			\centering
			\begin{tikzpicture}[>=latex,	
				whitenode/.style={circle, draw = black, minimum size=1.2mm, inner sep=1pt},
				graynode/.style={circle, draw = gray, fill=gray, minimum size=1.2mm, inner sep=1pt},
				rednode/.style={circle, draw = black, minimum size=1.2mm, inner sep=1pt}] 
				\node [rednode] (A1) at (0,1){};
				\node [rednode] (A2) at (1,2){};
				\node [rednode] (A3) at (2,1){}; 
				\node [rednode] (A4) at (1,0){}; 
				\node [whitenode] (B) at (1,1){\scriptsize $4$};
				\draw (A1)--(B)--(A2);
				\draw (A4)--(B)--(A3);
				
				\node [whitenode] (C1) at (0,2){};
				\node [whitenode] (C2) at (2,2){};  
				\draw [line width=1pt] (C1)--(A2)--(C2);
				
				\node at (2.4, 1){$4^+$};
				\node at (-0.3, 1){$4^+$};
				\node at (1, 2.3){$3_1$};
				\node at (1.1, -0.4){$4^+$};
				
				\node at (-0.3, 2){$3$};
				\node at (2.4, 2){$4^+$};
				
				\draw[->] (B)--(0.6,1.4);
				\node at (0.4, 1.6){\re{$\frac{1}{2}$}};
				\draw[->] (B)--(1.4,1.4);
				\node at (1.6, 1.6){\bl{$\frac{1}{3}$}};
				\draw[->] (B)--(0.6,0.6);
				\node at (0.4, 0.4){\bl{$\frac{1}{3}$}};
				\draw[->] (B)--(1.4,0.6);
				\node at (1.6, 0.4){\bl{$\frac{1}{3}$}};
			\end{tikzpicture} 
		\end{minipage}
		\begin{minipage}[t]{0.19\textwidth}
			\centering
			\begin{tikzpicture}[>=latex,	
				whitenode/.style={circle, draw = black, minimum size=1.2mm, inner sep=1pt},
				rednode/.style={circle, draw = black, minimum size=1.2mm, inner sep=1pt}] 
				\node [rednode] (A1) at (0,1){};
				\node [rednode] (A2) at (1,2){};
				\node [rednode] (A3) at (2,1){}; 
				\node [rednode] (A4) at (1,0){}; 
				\node [whitenode] (B) at (1,1){\scriptsize $4$};
				\draw (A1)--(B)--(A2);
				\draw (A4)--(B)--(A3);
				
				\node at (2.4, 1){$4^+$};
				\node at (-0.3, 1){$3$};
				\node at (1.1, 2.4){$3$};
				\node at (1.1, -0.4){$4^+$};
				
				\draw[->] (B)--(0.6,1.4);
				\node at (0.4, 1.6){\re{$\frac{1}{2}$}};
				\draw[->] (B)--(1.4,1.4);
				\node at (1.6, 1.6){\bl{$\frac{1}{3}$}};
				\draw[->] (B)--(0.6,0.6);
				\node at (0.4, 0.4){\bl{$\frac{1}{3}$}};
				\draw[->] (B)--(1.4,0.6);
				\node at (1.6, 0.4){$\frac{1}{6}$};
			\end{tikzpicture} 
		\end{minipage}
		\caption{Illustration of Rule~\ref{R2}.}
	\end{figure}
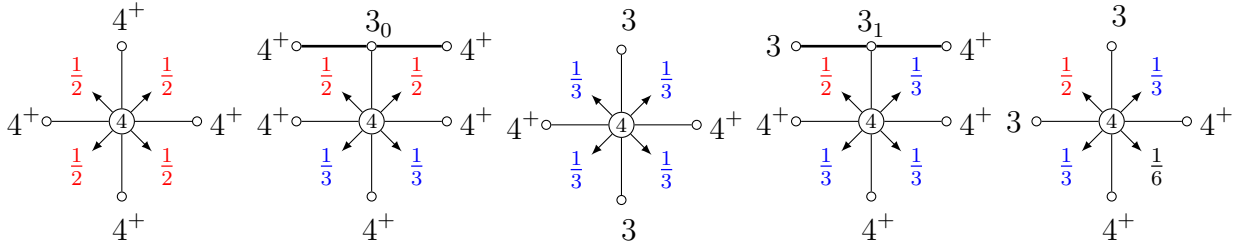

	For each $z \in V(G) \cup F(G)$, let $c^*(z)$ be the charge on $z$ after applying the discharging rules \ref{R1}--\ref{R5}, and in the next two sections, we show that $c^*(z) \geq 0$.

	\subsection{Final charges of the $4$-faces}
	
	Based on rules \ref{R1}--\ref{R5}, we make the following observations.
	
	\begin{observation} \label{transfer}
		Let $u$ be a vertex of $G$ and $f$ a $4$-face incident with $u$.
		\begin{enumerate}[label=(\arabic*)]
			\item \label{obs-1}
			If $d(u)\ge 4$, then $T_u(f)\ge \frac{1}{6}$.
			
			\item \label{obs-2}
			If $d(u)\ge 5$, then $T_u(f)\ge \frac{1}{2}$.
			
			\item \label{obs-3}
			If $d(u)\ge 6$, then $T_u(f)\ge 1$.
			
			\item \label{obs-4}
			If $d(u)\ge 5$ and there are two $3$-vertices on $f$, then $T_u(f)\ge 1$.
		\end{enumerate}
	\end{observation}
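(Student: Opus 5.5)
This is a direct case check against the rules \ref{R2}--\ref{R5}. Since $f$ is a $4$-face, only rules \ref{R2}--\ref{R5} send charge from a vertex to $f$; \ref{R1} sends charge only to $3$-vertices. So for each part I will list which rule governs $T_u(f)$ for the given degree of $u$ and read off the minimum value that rule can assign.

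For \ref{obs-1}, split by $d(u)$. If $d(u)=4$, then $u$ has at most two $3$-neighbors by Claim~\ref{FC-star}, so exactly one of the subcases \ref{R2a}--\ref{R2e} applies.
\begin{itemize}
\item In \ref{R2a}--\ref{R2d} every value is $\frac12$ or $\frac13$.
\item In \ref{R2e} the smallest value is $\frac16$.
\end{itemize}
One point needs care in \ref{R2b} and \ref{R2c}: a $4$-vertex with exactly one $3$-neighbor has that neighbor either a $3_0$- or a $3_1$-vertex. This is because a $3$-vertex has at most one $3$-neighbor, again by Claim~\ref{FC-star}. So these two subcases together cover all $4$-vertices with one $3$-neighbor. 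If $d(u)\ge5$, the claim follows from \ref{obs-2}.

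For \ref{obs-2} and \ref{obs-3}, first suppose $f$ is of type $k\in[3]$. Then \ref{R3} gives $T_u(f)=(10-k)/6\ge 7/6$. Otherwise $f$ is not of type $3^-$.
\begin{itemize}
\item If $d(u)\ge6$, rule \ref{R4} gives $T_u(f)=1$.
\item If $d(u)=5$, rule \ref{R5} gives $t\ge\frac12$. The family $\mathcal{F}_{1/2}$ absorbs every remaining value of $\Lambda_u(f)$, so every possible configuration receives some value $t\in\{1,\frac56,\frac34,\frac23,\frac7{12},\frac12\}$.
\end{itemize}
The one subtlety is that types $1$--$3$ require $f$ to have exactly one $5^+$-vertex, namely $u$. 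I will check that the rules are well defined, with \ref{R3} taking precedence and \ref{R4}/\ref{R5} applying otherwise. This holds because the type definitions are mutually exclusive.

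Part \ref{obs-4} is the main work. Let $d(u)\ge5$ and suppose $f$ contains two $3$-vertices. If $d(u)\ge6$, apply \ref{obs-3}. If $f$ is of type $3^-$, then $T_u(f)\ge 7/6$. So assume $d(u)=5$ and $f$ is not of type $3^-$. There are two cases.
\begin{itemize}
\item \emph{The two $3$-vertices are adjacent on $f$.} Then $\Lambda_u(f)$ is $(5,3,3,x)$ or $(5,x,3,3)$. There is no $(3,3,3)$-path, so $x\ge4$. Also $x\ne3$ and $(5,3,3,4)$ is type~1, and $x=4$ gives exactly type~1 when $u$ is the only $5^+$-vertex. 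Hence $x\ge5$, and $(5,3,3,5^+)\in\mathcal{F}_1$ gives $t=1$.
\item \emph{The two $3$-vertices are opposite on $f$.} Then $\Lambda_u(f)=(5,3,x,3)$. If $x=4$ with $u$ the unique $5^+$-vertex, this is type~1. Otherwise $x\ge5$, and $(5,3,5^+,3)\in\mathcal{F}_1$ gives $t=1$.
\end{itemize}

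The main obstacle I expect is the bookkeeping of orientation in $\Lambda_u(f)$: a face may be written starting in either direction around $u$, so I must confirm that the families are read up to reversal. I also need to use Claim~\ref{FC-(4-,4-,4-,3-)} and Claim~\ref{FC-star} to rule out degenerate degree patterns.
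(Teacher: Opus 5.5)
Your proposal is correct and takes the same approach as the paper, which states this observation without proof as an immediate consequence of rules \ref{R2}--\ref{R5}. Your case check is the intended verification: Claim~\ref{FC-star} bounds the number of $3$-neighbors and excludes $(3,3,3)$-paths, and the catch-all family $\mathcal{F}_{1/2}$ gives the lower bound in \ref{R5}. The one step to state explicitly is that in the opposite case $(5,3,x,3)$, the value $x=3$ is also excluded because there is no $(3,3,3)$-path.
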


	\begin{lemma}
		\label{lem-4-face}
		If $f = v_1v_2v_3v_4v_1$ is a $4$-face, then $c^*(f) \geq 0$.
	\end{lemma}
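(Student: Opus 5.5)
A $4$-face $f$ starts with charge $c(f)=2\cdot 4-10=-2$. By \ref{R1}--\ref{R5}, faces receive charge only from incident $4^+$-vertices, so we must show $\sum_{i=1}^4 T_{v_i}(f)\ge 2$. The plan is a case analysis on the number of $5^+$-vertices on $f$ and on the number and position of $3$-vertices. Claim~\ref{FC-min-degree} gives $d(v_i)\ge 3$, and Claims~\ref{FC-star} and \ref{FC-(4-,4-,4-,3-)} restrict where $3$-vertices can sit. By Claim~\ref{FC-(4-,4-,4-,3-)}, any $4$-face containing a $3$-vertex also contains a $5^+$-vertex. Observation~\ref{transfer} handles the bulk of the easy situations.

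\textbf{Two or more $5^+$-vertices.} If $f$ has at least two $5^+$-vertices, then:
\begin{itemize}
\item two $6^+$-vertices give $1+1\ge 2$ by Observation~\ref{transfer}\ref{obs-3};
\item two $3$-vertices with two $5^+$-vertices give $1+1$ by Observation~\ref{transfer}\ref{obs-4};
\item otherwise the $5^+$-vertices contribute at least $1/2$ each, and the remaining vertices are $4$-vertices (at least $1/6$ each) or one $3$-vertex.
\end{itemize}
In the last situation we compare against the lists $\mathcal F_t$ in \ref{R5}, which were evidently designed so that the sum reaches $2$. For a $(5,3,5,4)$-face, for example, we get $3/4+3/4+\ge 1/2$ or so. Each pattern $\Lambda_{v_i}(f)$ must be looked up, and the $4$-vertex contributions are determined from \ref{R2} via its number of $3$-neighbors and their position relative to $f$.

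\textbf{Exactly one $5^+$-vertex $u$ and the remaining vertices are $3$- and $4$-vertices.} If $f$ is of type $1$, $2$ or $3$, then $T_u(f)=(10-k)/6$ by \ref{R3}. We then check that the $4$-vertices supply the rest:
\begin{itemize}
\item type 1 needs $1/2$ from the single $4$-vertex, which holds by \ref{R2e}/\ref{R2c} since that $4$-vertex is adjacent to a $3$-vertex of $f$ lying in a $(3,3)$ pair;
\item type 2 needs $2/3$ from two $4$-vertices, i.e.\ at least $1/3$ each;
\item type 3 needs $5/6$, using the specific degree labels in the definition.
\end{itemize}
Otherwise $u$ gives $1$ if $d(u)\ge 6$ (\ref{R4}) or the $\mathcal F_t$ value if $d(u)=5$, and the $4$-vertices must make up the deficit. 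Reducible configurations exclude the bad patterns:
\begin{itemize}
\item Claims~\ref{FC-(5_2/6_3,3,3,4)}, \ref{FC-(5_2,3,4,4)}, \ref{FC-(5_3/6_4,3,4,3)}, \ref{FC-(5_1,4,3,4)} and \ref{FC-6-(6_3,4,3,4)} rule out $5$- or $6$-vertices with many $3$-neighbors on the low-charge configurations;
\item Claims~\ref{FC-(4,4,4,4_1/5_2)}, \ref{FC-5_3/4_24_1} and \ref{FC-4_14_14_1} control which $4_{\ge1}$- and $4_2$-vertices can be adjacent around $f$.
\end{itemize}

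\textbf{No $5^+$-vertex.} All four vertices are $4$-vertices. By Claim~\ref{FC-(4,4,4,4_1/5_2)}, no vertex has a $3$-neighbor, so each gives $1/2$ by \ref{R2a}, for a total of $2$.

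\textbf{Main obstacle.} The hardest part is the single-$5$-vertex case where $f$ is not of type $3^-$ and $T_u(f)$ is small ($1/2$ or $7/12$). Here a careful enumeration of the $4$-vertices' $3$-neighbor structure is needed to confirm they contribute at least $3/2$ or $17/12$. For instance, $(5,4_0,4_0,4_0)$ gives $1/2+3/2=2$, while $(5,4_1,4_1,4_1)$ type patterns are forbidden by Claim~\ref{FC-4_14_14_1}. I would organize this by the multiset of $3$-neighbor counts of the three $4$-vertices and by whether those $3$-neighbors lie on $f$. In each case I would either read off \ref{R2} values summing with $t$ to at least $2$, or cite the claim forbidding the configuration.
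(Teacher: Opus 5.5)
Your decomposition (by the number of $5^+$-vertices on $f$ and where the $3$-vertices sit) is essentially the paper's, and the all-$4$ case and the type-$2$/type-$3$ bookkeeping are fine. The gap is that the remaining verification is deferred with a justification that is false, and that verification is the whole content of the lemma. For faces with at least two $5^+$-vertices you claim the lists $\F_t$ alone bring the total to $2$, and you invoke no reducible configuration there. This fails when two adjacent $4^-$-vertices lie between two $5$-vertices. If $d(v_2)=d(v_3)=5$, $v_1$ is a $3_1$-vertex and $v_4$ a $4_2$-vertex, then $\Lambda_{v_2}(f)=(5,3_1,4_2,5)\in\F_{1/2}$, $T_{v_3}(f)\le 1$ and $T_{v_4}(f)=\frac13$, so $f$ receives at most $\frac{11}{6}$. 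If instead $v_1,v_4$ are a $4_2$- and a $4_1$-vertex, $f$ receives at most $\frac12+\frac23+\frac13+\frac13=\frac{11}{6}$. These faces must be excluded by Claims~\ref{FC-k_{k-2}-3_1} and~\ref{FC-5_3/4_24_1}, as in Case~2 of the paper's proof.

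In the single-$5^+$-vertex case, your first group of claims (Claims~\ref{FC-(5_2/6_3,3,3,4)}, \ref{FC-(5_2,3,4,4)}, \ref{FC-(5_3/6_4,3,4,3)}, \ref{FC-(5_1,4,3,4)}, \ref{FC-6-(6_3,4,3,4)}) constrains the $3$-neighbors of the $5^+$-vertex $u$. That is useless here: what $u$ sends to such a face depends only on its type or on $\Lambda_u(f)$, and neither records $u$'s own $3$-neighbors. The patterns that genuinely fall short are different. The patterns $(d(u),3_1,4,4_{\ge1})$ and $(d(u),4_2,3_0,4_2)$ receive at most $1+\frac13+\frac13$ even if $d(u)\ge 6$, and $(5,4_2,4_0,4_{\ge1})$ receives at most $\frac53$. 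They are excluded by Claims~\ref{FC-334~43}, \ref{FC-3434~43} and \ref{FC-4_2-4/5_1-4_1}, none of which appears in your plan. Your type-$1$ step for a $(5^+,3,3,4)$-face also needs Claim~\ref{FC-k_{k-2}-3_1}: the $4$-vertex must be a $4_1$-vertex for Rule~\ref{R2c} to give $\frac12$, since a $4_2$-vertex would give only $\frac13$ by Rule~\ref{R2d} or~\ref{R2e}. (Also, $\F_{7/12}$ never occurs with a single $5^+$-vertex.) So the method is right, but as written the plan stalls exactly at the configurations that make the lemma nontrivial. What is missing is identifying these deficient patterns and the specific reducible configurations that forbid them.
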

	\begin{proof}
		First, note that $c(f)=2\times4-10=-2$. 
		If $d(v_i)\ge5$ for each $i\in[4]$, then by Observation~\ref{transfer}\ref{obs-2}, 
		$T_{v_i}(f)\ge\frac{1}{2}$, and hence 
		$c^*(f)\ge -2+\frac{1}{2}\times4=0$. 
		On the other hand, by Claim~\ref{FC-(4-,4-,4-,3-)}, 
		if $d(v_i)\le4$ for all $i\in[4]$, then necessarily $d(v_i)=4$ for each $i$. 
		Moreover, Claim~\ref{FC-(4,4,4,4_1/5_2)} implies that each $v_i$ has no $3$-neighbors. 
		Thus by Rule~\ref{R2a}, $T_{v_i}(f)=\frac{1}{2}$, and again 
		$c^*(f)\ge -2+\frac{1}{2}\times4=0$. 
		Hence we may assume, without loss of generality, that one of the following four cases occurs.

		\medskip
		\noindent
		{\bf Case 1.} $d(v_4) \leq 4$, and $d(v_i) \geq 5$ for each $i \in [3]$.
		\medskip
		
		By Observation \ref{transfer}\ref{obs-2}, we have $T_{v_i}(f) \geq \frac{1}{2}$ for $i\in [3]$ in this case. If there exists some $j\in [3]$ such that $d(v_j)\geq 6$, then $T_{v_j}(f) \geq 1$ by Observation \ref{transfer}\ref{obs-3}, and hence $c^*(f) \geq -2 + \frac{1}{2} \times 2 + 1 = 0$. Therefore, we may assume that
		$d(v_i)=5$ for each $i \in [3]$.
		
		If $d(v_4)=3$, then  for each $i \in \{1,3\}$, $\Lambda_{v_i}(f)=(5,3,5,5) \in \F_{3/4}$ and by Rule~\ref{R5},  $T_{v_i}(f) \geq \frac{3}{4}$. Thus $c^*(f) \geq -2 + \frac{3}{4} \times 2 + \frac{1}{2} = 0$.
		
		Now assume $d(v_4)=4$.
		If $v_4$ has no $3$-neighbors, then by Rule~\ref{R2a} and Observation \ref{transfer}\ref{obs-2}, $T_{v_i}(f) \geq \frac{1}{2}$ for each $i \in [4]$, thus $c^*(f) \geq - 2 + \frac{1}{2} \times 4=0$.
		If $v_4$ has exactly one $3$-neighbor, then by Rule~\ref{R2b} or \ref{R2c}  we have $T_{v_4}(f)= \frac{1}{3}$. Moreover, for $i \in \{1,3\}$, $\Lambda_{v_i}(f)=(5,4_1,5,5) \in \F_{7/12}$, so by Rule~\ref{R5}, $T_{v_i}(f)=\frac{7}{12}$. Hence $c^*(f) =-2 + \frac{1}{3} + \frac{7}{12} \times 2 + \frac{1}{2}=0$.
		If $v_4$ has two $3$-neighbors, then by Rule~\ref{R2e} we have  $T_{v_4}(f) =\frac{1}{6}$. Note that for $i \in \{1,3\}$, $\Lambda_{v_i}(f)=(5,4_2,5,5) \in \F_{2/3}$, so by Rule~\ref{R5}, $T_{v_i}(f) = \frac{2}{3}$. Thus $c^*(f) \geq -2 + \frac{1}{6} + \frac{2}{3} \times 2 + \frac{1}{2} =0$.

		\bigskip
		\noindent 
		{\bf Case 2.} $d(v_i) \leq 4$ for $i \in \{1,4\}$, and $d(v_j) \geq 5$ for $j \in \{2,3\}$.
		\medskip
		
		If $d(v_4) = d(v_1) =3$,  or both $d(v_2)\geq 6$ and $d(v_3)\geq 6$,  then by Observation \ref{transfer}\ref{obs-3} and \ref{obs-4}, $T_{v_j}(f) \geq 1$ for each $j \in \{2,3\}$, so $c^*(f) \geq -2 + 1 \times 2 = 0$.  Thus without loss of generality, assume that $d(v_4)=4$, and $d(v_2)=5$ or $d(v_3)=5$.
		
		\medskip
		\noindent
		{\bf Subcase 2.1.} $d(v_1)=3$.
		\medskip

		First, suppose that $v_1$ is a $3_1$-vertex or $v_4$ is a $4_2$-vertex (these two cannot occur simultaneously by Claim \ref{FC-k_{k-2}-3_1}). Then by Rule~\ref{R2c}, $T_{v_4}(f) = \frac{1}{3}$.  
		\begin{itemize}
			\item If $d(v_2)\geq 6$, then $d(v_3)=5$, and thus $\Lambda_{v_3}(f)=(5,4,3,6^+) \in \F_{2/3}$ and by Rule~\ref{R5}, $T_{v_3}(f) = \frac{2}{3}$. By Rule~\ref{R4}, $T_{v_2}(f)=  1$.  
			\item If $d(v_3)\geq 6$, then $d(v_2)=5$, and thus $\Lambda_{v_2}(f)=(5,3,4,6^+) \in \F_{2/3}$ and by Rule~\ref{R5}, $T_{v_2}(f) = \frac{2}{3}$. By Rule~\ref{R4}, $T_{v_3}(f)=  1$. 
		\end{itemize}  
		In either case,  $f$ receives $2$ charge in total, hence $c^*(f) \geq 0$. So assume $d(v_2)=d(v_3)=5$. 
		\begin{itemize}
			\item If $v_3$ is a $5_{\geq 1}$-vertex, then $\Lambda_{v_2}(f)\in \{(5,3_1,4_1,5_{\geq 1}),(5,3_0,4_2,5_{\geq 1})\}\subset \F_{5/6}$, and $\Lambda_{v_3}(f)\in\{(5_{\geq 1},4_2,3_0,5),(5_{\geq 1},4_1,3_1,5)\} \subset \F_{5/6}$.  By Rule~\ref{R5}, $T_{v_i}(f) = \frac{5}{6}$ for $i\in\{2,3\}$.  
			\item If $v_3$ is a $5_0$-vertex, then $\Lambda_{v_2}(f)\in \{(5,3_1,4_1,5_0),(5,3_0,4_2,5_0)\}\subset \F_{2/3}$, and $\Lambda_{v_3}(f)=(5_0,4,3,5) \in \F_{1}$. By Rule~\ref{R5}, $T_{v_2}(f) =\frac{2}{3}$ and $T_{v_3}(f) =1$. 
		\end{itemize} 
		In either case,  $f$ receives $2$ charge in total again, hence $c^*(f) \geq 0$.
		
		Next, suppose $v_1$ has no $3$-neighbors and $v_4$ has exactly one $3$-neighbor, namely $v_1$. By Rule~\ref{R2b}, $T_{v_4}(f) = \frac{1}{2}$. If $d(v_2)\geq 6$ or $d(v_3)\geq 6$, then by Observation \ref{transfer}\ref{obs-2} and \ref{obs-3}, $T_{v_2}(f) + T_{v_3}(f) \geq \frac{3}{2}$. Hence $c^*(f) \geq -2 +\frac{1}{2} + \frac{3}{2}= 0$.
		Thus we may assume $d(v_2)=d(v_3)=5$.  
		\begin{itemize}
			\item If $v_3$ is a $5_{\geq 1}$-vertex, then $\Lambda_{v_2}(f)=(5,3_0,4_1,5_{\geq 1})\in \F_{3/4}$, and $\Lambda_{v_3}(f)=(5_{\geq 1},4_1,3_0,5) \in \F_{3/4}$.
			By Rule~\ref{R5}, $T_{v_2}(f) = T_{v_3}(f) =\frac{3}{4}$. 
			\item Otherwise, $v_3$ is a $5_0$-vertex and $\Lambda_{v_3}(f)=(5_0,4,3,5) \in \F_{1}$. By Rule~\ref{R5}, $T_{v_2}(f) \geq \frac{1}{2}$ and $T_{v_3}(f) =1$. 
		\end{itemize} 
		In both situations,  $f$ receives at least $2$ in total charge, so $c^*(f) \geq 0$.

		\medskip
		\noindent
		{\bf Subcase 2.2.} $d(v_1)=4$.
		\medskip

		If there exists $j\in \{2,3\}$ such that $d(v_j)\ge 6$, then by
		Observation~\ref{transfer}\ref{obs-2} and~\ref{obs-3},
		we have $T_{v_2}(f) + T_{v_3}(f) \ge \frac{3}{2}$.
		To prove that $c^*(f)\ge 0$, it suffices to show that
		$T_{v_1}(f)+T_{v_4}(f) \ge \frac{1}{2}$. 
		This holds by Rule~\ref{R2a} if one of $v_1$ and $v_4$ is a $4_0$-vertex.
		Hence we may assume that both $v_1$ and $v_4$ have $3$-neighbors.
		By Claim~\ref{FC-5_3/4_24_1}, each of $v_1$ and $v_4$ has exactly one $3$-neighbor.
		Thus $T_{v_i}(f)\ge \frac{1}{3}$ for each $i\in\{1,4\}$ by
		Rule~\ref{R2b} or~\ref{R2c}. 
		
		Therefore, we may assume that $d(v_2)=d(v_3)=5$.
		
		If both $v_1$ and $v_4$ have no $3$-neighbors, then by Rule~\ref{R2a},
		$T_{v_i}(f)=\frac{1}{2}$ for each $i\in\{1,4\}$, and hence
		$c^*(f) \ge -2 + 4 \times \frac{1}{2} = 0$.
		Therefore,  we may assume without loss of generality that $v_1$ has at least one $3$-neighbor.
		
		If $v_1$ has two $3$-neighbors, then $v_4$ has no $3$-neighbors by Claim \ref{FC-5_3/4_24_1}. 
		Then $\Lambda_{v_2}(f)=(5,4_2,4_0,5)\in \F_{5/6}$.
		By Rule~\ref{R2a}, $T_{v_4}(f) = \frac{1}{2}$ and by Rule~\ref{R2e}, $T_{v_1}(f) = \frac{1}{6}$. By Rule~\ref{R5}, we have $T_{v_2}(f)=\frac{5}{6}$ and $T_{v_3}(f) \geq \frac{1}{2}$. Thus $c^*(f) \geq -2+\frac{1}{2}\times 2+\frac{1}{6} + \frac{5}{6}  =  0$.
		
		Now assume $v_1$ has exactly one $3$-neighbor. By Claim \ref{FC-5_3/4_24_1}, $v_4$  has at most one $3$-neighbor. If $v_4$  has a $3$-neighbor, then by Rule~\ref{R2b} or~\ref{R2c}, $T_{v_4}(f) =T_{v_1}(f) = \frac{1}{3}$.
		Since for each $i\in \{2,3\}$, $\Lambda_{v_i}(f)=(5,4_1,4,5)\in \F_{2/3}$, by Rule~\ref{R5}, we have $T_{v_2}(f) =T_{v_3}(f) = \frac{2}{3}$. Thus $c^*(f) \geq -2+\frac{1}{3}\times 2+\frac{2}{3} \times 2  =  0$.
		If $v_4$  has no $3$-neighbors, by Rule~\ref{R2a}, we have $T_{v_4}(f) =\frac{1}{2}$ and by Rule~\ref{R2b} or \ref{R2c}, $T_{v_1}(f) = \frac{1}{3}$.
		Since $\Lambda_{v_2}(f)=(5,4_1,4,5)\in \F_{2/3}$, by Rule~\ref{R5}, we have $T_{v_2}(f)=\frac{2}{3}$ and $T_{v_3}(f) \geq \frac{1}{2}$. Thus $c^*(f) \geq -2+\frac{1}{2}\times 2+\frac{1}{3} + \frac{2}{3}  =  0$.
		
		\bigskip
		\noindent 
		{\bf Case 3.} $d(v_i) \leq 4$ for $i \in \{2,4\}$, and $d(v_j) \geq 5$ for $j \in \{1,3\}$.
		\medskip
		
		If $d(v_2) = d(v_4) =3$,  or both $d(v_1)\geq 6$ and $d(v_3)\geq 6$,  then by Observation \ref{transfer}\ref{obs-3} and \ref{obs-4}, $T_{v_j}(f) \geq 1$ for each $j \in \{1,3\}$, so $c^*(f) \geq -2 + 1 \times 2 = 0$.  Thus without loss of generality, assume that $d(v_4)=4$, and $d(v_1)=5$ or $d(v_3)=5$.

		\medskip
		\noindent
		{\bf Subcase 3.1.} $d(v_2)= 3$.
		\medskip
		
		If $v_4$ has two $3$-neighbors, then  for each $i \in \{1,3\}$, either $d(v_i)\geq 6$ or $\Lambda_{v_i}(f)=(5,3,5^+,4_2)\in \F_{1}$, so by Rule~\ref{R4} or \ref{R5}, we have $T_{v_i}(f) \geq 1$, and hence $c^*(f) \geq -2 +1 \times 2 = 0$. If $v_4$ has exactly one $3$-neighbor, then  by Rule~\ref{R2b} or \ref{R2c}, $T_{v_4}(f) = \frac{1}{3}$. For each $i \in \{1,3\}$, either $d(v_i)\geq 6$ or $\Lambda_{v_i}(f)=(5,3,5^+,4_1)\in \F_{5/6}$, so by \ref{R4} or \ref{R5}, we have $T_{v_i}(f)\geq \frac{5}{6}$.
		Hence $c^*(f) \geq -2 + \frac{1}{3} + \frac{5}{6}\times 2  = 0$. If $v_4$ has no $3$-neighbors, then  by Rule~\ref{R2} $T_{v_4}(f) = \frac{1}{2}$.  If $d(v_1)\geq 6$ or $d(v_3)\geq 6$, then by
		Observation~\ref{transfer}\ref{obs-2} and~\ref{obs-3},
		$T_{v_1}(f) + T_{v_3}(f) \ge \frac{3}{2}$, so $c^*(f) \geq -2 + \frac{1}{2} \times 2 +1= 0$. Hence we may assume that  $d(v_1)=d(v_3)=5$. In this case, $\Lambda_{v_i}(f)=(5,3,5,4_0)\in \F_{3/4}$ for $i\in \{1,3\}$.  Therefore $c^*(f) \geq -2 + \frac{1}{2} + \frac{3}{4} \times 2= 0$.

		\medskip
		\noindent
		{\bf Subcase 3.2.} $d(v_2)= 4$.
		\medskip 
		
		Let $t_i$ denote the number of $3$-neighbors of $v_i$ for $i \in \{2,4\}$, and suppose that $t_4 \geq t_2$. If  $t_2 = t_4=0$, then by Rule~\ref{R2a}, $T_{v_i}(f) = \frac{1}{2}$ for each $i \in \{2,4\}$, so $c^*(f) \geq -2+\frac{1}{2} \times 4 =0$. If $t_2+t_4 \leq 3$ and $d(v_1)\geq 6$ or $d(v_3)\geq 6$, then by  Rule~\ref{R2}, we have $T_{v_2}(f) + T_{v_4}(f) \ge \frac{1}{3} + \frac{1}{6} = \frac{1}{2}$, and
		Observation~\ref{transfer}\ref{obs-2} and~\ref{obs-3},
		$T_{v_1}(f) + T_{v_3}(f) \ge \frac{3}{2}$, so  $c^*(f) \geq -2 +  \frac{1}{2} + \frac{3}{2}= 0$. Therefore, we assume that $t_4 \geq 1$, and once $t_2+t_4 \leq 3$ we have $d(v_1)=d(v_3)=5$.		 
		
		First assume that $t_4=1$ (hence $d(v_1)=d(v_3)=5$). By Rule~\ref{R2b} or \ref{R2c}, $T_{v_4}(f)=\frac{1}{3}$.
		\begin{itemize}
			\item If $t_2=1$, then for each $i \in \{1,3\}$, $\Lambda_{v_i}(f)=(5,4_1,5,4_1)\in \F_{2/3}$, and by Rule~\ref{R5}, $T_{v_1}(f)=T_{v_3}(f)=\frac{2}{3}$.
			By Rule~\ref{R2b} or \ref{R2c}, $T_{v_2}(f)=\frac{1}{3}$. Therefore $c^*(f) \geq -2+\frac{1}{3}\times 2 + \frac{2}{3}\times 2  =  0$. 
			\item If $t_2=0$, then by Rule~\ref{R2a}, $T_{v_2}(f)=\frac{1}{2}$. Moreover, for each $i \in \{1,3\}$,
			$\Lambda_{v_i}(f)=(5,4_1,5,4_0)\in \F_{7/12}$,
			so by Rule~\ref{R5},  $T_{v_i}(f)=\frac{7}{12}$. Thus $c^*(f) \geq -2+\frac{1}{3}+\frac{1}{2} + \frac{7}{12}\times 2  =  0$.
		\end{itemize}
		
		Then, assume that $t_2\leq 1$ and $t_4=2$, hence we have that $d(v_1)=d(v_3)=5$. 
		By Rule~\ref{R2e}, $T_{v_4}(f)=\frac{1}{6}$.  
		\begin{itemize}
			\item If $t_2=1$, then for each $i\in \{1,3\}$,   $\Lambda_{v_i}(f)=(5,4_1,5,4_2)\in \F_{3/4}$, so by Rule~\ref{R5},  $T_{v_1}(f)= T_{v_3}(f)=\frac{3}{4}$. By Rule~\ref{R2b} or \ref{R2c}, $T_{v_2}(f)=\frac{1}{3}$. Thus $c^*(f) \geq -2+\frac{1}{3}+\frac{1}{6}+ \frac{3}{4}\times 2 =  0$. 
			\item If $t_2=0$, then by Rule~\ref{R2a},  $T_{v_2}(f) = \frac{1}{2}$. For each $i\in \{1,3\}$,  $\Lambda_{v_i}(f)=(5,4_2,5,4_0)\in \F_{2/3}$, so by Rule~\ref{R5}, $T_{v_1}(f)= T_{v_3}(f)=\frac{2}{3}$.  Consequently, $c^*(f) \geq -2+\frac{1}{6}+\frac{1}{2} + \frac{2}{3}\times 2  =  0$.
		\end{itemize}
		
		Last we assume that $t_2=t_4=2$. By Rule~\ref{R2e}, $T_{v_2}(f) = T_{v_4}(f) = \frac{1}{6}$.
		\begin{itemize}
			\item If $d(v_1)=d(v_3)=5$, then for each $i\in \{1,3\}$, we have $\Lambda_{v_i}(f)=(5,4_2,5,4_2)\in \F_{5/6}$, so by Rule~\ref{R5}, $T_{v_1}(f)= T_{v_3}(f)=\frac{5}{6}$.  Thus $c^*(f) \geq -2+\frac{1}{6}\times 2+ \frac{5}{6}\times 2 =  0$.
			\item Otherwise, without loss of generality assume that $d(v_1)\geq 6$ and $d(v_3)=5$. Then $\Lambda_{v_3}(f)=(5,4_2,6^+,4_2)\in \F_{2/3}$. By Rule~\ref{R4}, $T_{v_1}(f)=1$ and by Rule~\ref{R5}, $T_{v_3}(f)=\frac{2}{3}$. Thus $c^*(f) \geq -2+\frac{1}{6}\times 2+ 1+\frac{2}{3} =  0$.
		\end{itemize}
		
		\medskip
		\noindent 
		{\bf Case 4.} $d(v_i) \leq 4$ for $i \in [3]$, and $d(v_4) \geq 5$.
		\medskip
		
		By Claim~\ref{FC-star},  it is impossible that $d(v_i) = 3$ for each $i \in [3]$.  If exactly two of $v_1, v_2, v_3$ have degree $3$, then  $f$ is of  type $1$. Thus by Rule~\ref{R3}, $T_{v_4}(f)=\frac{3}{2}$. Let $v_\ell$ be the unique $4$-vertex with $\ell \in [3]$, then by Rule~\ref{R2b} or \ref{R2c}, $T_{v_{\ell}}(f) = \frac{1}{2}$. Thus $c^*(f) \geq -2+\frac{1}{2} + \frac{3}{2} = 0$.
		So we consider the following two cases.
		
		\medskip
		\noindent 
		{\bf Subcase 4.1.}  Exactly one of $v_1, v_2, v_3$ has degree $3$.
		\medskip
		
		By the assumption, at least one of $d(v_1)=4$ and $d(v_3)=4$ hold, without loss of generality, assume that $d(v_3)=4$.  Also, either $d(v_1)=4$ and $d(v_2) = 3$, or $d(v_1) = 3$ and $d(v_2) = 4$. 
		
		First suppose that  $d(v_1)=4$ and $d(v_2)=3$. 
		\begin{itemize}
			\item If $v_2$ has a $3$-neighbor, then by Claim~\ref{FC-k_{k-2}-3_1}, neither $v_1$ nor $v_3$ has a $3$-neighbor. Hence by Rule~\ref{R2c}, $T_{v_1}(f) = T_{v_3}(f) = \frac{1}{3}$. Since $f$ is of type $2$, by Rule~\ref{R3}, $T_{v_4}(f) = \frac{4}{3}$. Thus $c^*(f) \geq -2 + \frac{1}{3} \times 2 + \frac{4}{3} = 0$. 
			
			\item Now assume $v_2$ has no $3$-neighbors. If neither $v_1$ nor $v_3$ has a $3$-neighbor distinct from $v_2$, then by Rule~\ref{R2b}, $T_{v_1}(f) = T_{v_3}(f) = \frac{1}{2}$. Note that either $d(v_4)\geq 6$ or $\Lambda_{v_4}(f)=(5,4_1,3_0,4_1)\in \F_{1}$,  in both cases, $T_{v_4}(f)\geq 1$ by Rule~\ref{R4} or \ref{R5}.
			Hence $c^*(f) \geq -2 + \frac{1}{2} \times 2 + 1 = 0$. Otherwise, at least one of $v_1$ and $v_3$ has a $3$-neighbor distinct from $v_2$. Nevertheless, by Claim~\ref{FC-3434~43}, at most one of $v_1$ and $v_3$ can have such a $3$-neighbor.
			Thus, by Rules~\ref{R2b} and \ref{R2c}, we have $T_{v_1}(f)+T_{v_3}(f) \geq \frac{1}{2} + \frac{1}{3}=\frac{5}{6}$. 
			Since $f$ is a $(d(v_4),4_2,3_0,4_1)$-face, it is a type 3 face incident with $v_4$, by Rule~\ref{R3}, $T_{v_4}(f) = \frac{7}{6}$. Consequently, $c^*(f) \geq -2 + \frac{5}{6} + \frac{7}{6}=0$.
		\end{itemize}
		
		Then assume that $d(v_1)=3$ and $d(v_2)=4$.  
		By Claim~\ref{FC-334~43}, at most one of $v_1$ and $v_3$ has a $3$-neighbor. 
		\begin{itemize}
			\item Assume that neither $v_1$ nor $v_3$ has a $3$-neighbor. By Rule~\ref{R2a}, $T_{v_3}(f)= \frac{1}{2}$.
			If $v_2$ has a $3$-neighbor distinct from $v_1$, then by Rule~\ref{R2d} or \ref{R2e}, $T_{v_2}(f)= \frac{1}{3}$. Since $f$ is a $(d(v_2),3_0,4_2,4_0)$-face, it is of type 3. Thus by Rule~\ref{R3}, $T_{v_4}(f) = \frac{7}{6}$. Hence $c^*(f) \geq -2 + \frac{1}{2} + \frac{1}{3}+ \frac{7}{6}=0$. If $v_2$ has no $3$-neighbors other than $v_1$, then by Rule~\ref{R2b}, $T_{v_2}(f)= \frac{1}{2}$. Note that either $d(v_4)\geq 6$ or $\Lambda_{v_4}(f)=(5,3_0,4_1,4_0)\in \F_{1}$, in both cases,  $T_{v_4}(f) = 1$ by Rules~\ref{R4} and \ref{R5}. Therefore $c^*(f) \geq -2 + \frac{1}{2}\times 2 + 1=0$.
			
			\item Assume that either $v_1$ or $v_3$ has a $3$-neighbor.
			If $v_3$ has a $3$-neighbor, then $v_2$ is a $4_1$-vertex by Claim~\ref{FC-5_3/4_24_1}, and $v_1$ is a $3_0$-vertex by Claim~\ref{FC-334~43}. So by Rule~\ref{R2b} or \ref{R2c}, $T_{v_3}(f)=\frac{1}{3}$, and by Rule~\ref{R2b}, $T_{v_2}(f) = \frac{1}{2}$. Since $f$ is a $(5,3_0,4_1,4_1)$-face, it is of type 3, and by Rule~\ref{R3}, $T_{v_4}(f) = \frac{7}{6}$. Hence $c^*(f) \geq -2 + \frac{1}{3} + \frac{1}{2} + \frac{7}{6}=0$.    
			If $v_1$ has a $3$-neighbor, then $v_3$ is a $4_0$-vertex. By Rule~\ref{R2a}, $T_{v_3}(f)=\frac{1}{2}$, and by Rule~\ref{R2c}, $T_{v_2}(f) =  \frac{1}{3}$. Since $f$ is a $(5,3_1,4_1,4_0)$-face, it is of type 3, and by Rule~\ref{R3}, $T_{v_4}(f) = \frac{7}{6}$. Hence $c^*(f) \geq -2 + \frac{1}{3} + \frac{1}{2} + \frac{7}{6}=0$. 
		\end{itemize}
		
		\medskip
		\noindent 
		{\bf Subcase 4.2.}  Each of $v_1, v_2, v_3$ has degree $4$.
		\medskip 
		
		If one of these three vertices has two $3$-neighbors, then by Claims~\ref{FC-5_3/4_24_1} and~\ref{FC-4_2-4/5_1-4_1}, the other two have no $3$-neighbors. Hence by Rule~\ref{R2},  $\sum_{i=1}^{3}T_{v_i}(f) \geq \frac{1}{6} + \frac{1}{2} + \frac{1}{2} = \frac{7}{6}$. Moreover, either $d(v_4)\geq 6$, or $\Lambda_{v_4}(f) \in \{(5,4_0,4_2,4_0),(5,4_0,4_0,4_2)\}\subset \F_{5/6}$, so  by Rule~\ref{R5}, $T_{v_4}(f) = \frac{5}{6}$. Therefore $c^*(f) \geq -2 +\frac{7}{6} + \frac{5}{6} = 0$. We may assume $v_i$ has at most one $3$-neighbor for each $i \in [3]$. Nevertheless, by Claim~\ref{FC-4_14_14_1}, at least one of $v_1$, $v_2$, and $v_3$ has no $3$-neighbors.  
		\begin{itemize}
			\item If all of $v_1$, $v_2$, and $v_3$ have no $3$-neighbor, then by Rules~\ref{R2a} and \ref{R5}, $T_{v_j}(f) \geq \frac{1}{2}$ for each  $j \in [4]$, hence $c^*(f) \geq -2+\frac{1}{2} \times 4 =0$. 
			
			\item If exactly two of $v_1$, $v_2$, and $v_3$ have no $3$-neighbor, then by Rules~\ref{R2a}--\ref{R2c}, we have 
			$\sum_{i=1}^{3}T_{v_i}(f)  \geq \frac{1}{3} + \frac{1}{2} + \frac{1}{2} = \frac{4}{3}$. Since $\Lambda_{v_4}(f) \in \{(5,4_0,4_0,4_1),(5,4_0,4_1,4_0)\}\subset \F_{2/3}$, by Rule~\ref{R5} we have $T_{v_4}(f) = \frac{2}{3}$.
			Thus $c^*(f) \geq -2 +\frac{4}{3} + \frac{2}{3} = 0$. 
			
			\item If exactly one of $v_1$, $v_2$, and $v_3$ has no $3$-neighbors, then  by Rules~\ref{R2a}--\ref{R2c}, we have
			$\sum_{i=1}^{3}T_{v_i}(f)  \geq \frac{1}{3} + \frac{1}{3} + \frac{1}{2} = \frac{7}{6}$. Since $\Lambda_{v_4}(f) \in \{(5,4_1,4_0,4_1),(5,4_1,4_1,4_0)\}\subset \F_{5/6}$, by Rule~\ref{R5} we have $T_{v_4}(f) \geq \frac{5}{6}$. Therefore, $c^*(f) \geq -2 +\frac{7}{6} + \frac{5}{6} = 0$.  
		\end{itemize}
		Therefore, we complete the proof of Lemma \ref{lem-4-face}.
	\end{proof}
	
	\subsection{Final charges of the vertices}
	
	\begin{lemma}
		\label{lem-3-vertex}
		If $u$ is $3$-vertex, then $c^*(u) \geq 0$.
	\end{lemma}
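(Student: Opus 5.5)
A $3$-vertex $u$ starts with charge $c(u)=3\cdot 3-10=-1$. Among the rules, only \ref{R1} moves charge into or out of a $3$-vertex: \ref{R2}--\ref{R5} are applied only by $4^+$-vertices. So the plan is a case analysis on the number of $3$-neighbors of $u$, showing that \ref{R1} brings in at least $1$.

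By Claim~\ref{FC-star}, applied with $k=3$, the vertex $u$ has at most one $3$-neighbor. This leaves two cases.

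\emph{$u$ has exactly one $3$-neighbor.} Then $u$ has exactly two $4^+$-neighbors, because $\delta(G)\ge 3$ by Claim~\ref{FC-min-degree}. By \ref{R1}, each of them sends $\frac12$ to $u$. The $3$-neighbor sends nothing: \ref{R1} only prescribes transfers from $4^+$-neighbors in this situation, and no other rule transfers from a $3$-vertex. Hence
\[
c^*(u)=-1+2\cdot\tfrac12=0 .
\]

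\emph{$u$ has no $3$-neighbor.} All three neighbors are $4^+$-vertices, and each sends $\frac13$ by \ref{R1}. Hence
\[
c^*(u)=-1+3\cdot\tfrac13=0 .
\]

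There is one bookkeeping point to state explicitly. \ref{R1} is phrased from the recipient's side, so I will note that the giving $4^+$-vertex sends exactly this amount to $u$, with no other conditions attached. Also, $u$ itself sends nothing to its incident faces, since \ref{R2}--\ref{R5} apply only to $4^+$-vertices. Under the rules as written, the proof has no real obstacle; it is immediate once Claims~\ref{FC-min-degree} and~\ref{FC-star} exclude $3$-vertices of degree less than $3$ or with two or more $3$-neighbors.
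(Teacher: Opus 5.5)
Your proof is correct and matches the paper's argument. Claim~\ref{FC-star} limits $u$ to at most one $3$-neighbor, and Rule~\ref{R1} then brings in either $2\cdot\frac12$ or $3\cdot\frac13$, which offsets $c(u)=-1$. Your added remark that a $3$-vertex never sends charge is something the paper leaves implicit; it holds because \ref{R1} only draws from $4^+$-vertices and \ref{R2}--\ref{R5} are applied only by $4^+$-vertices.
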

	\begin{proof}
		In this case, $c(u)=3\times 3-10=-1$.
		By Claim~\ref{FC-star}, $u$ has at most one $3$-neighbor. If $u$ has exactly one $3$-neighbor, then $u$ gets $\frac{1}{2}$ charge from each $4^+$-neighbor by Rule~\ref{R1}, hence $c^*(u) = -1 + \frac{1}{2} \times 2=0$. If $u$ has no $3$-neighbors, then $u$ gets $\frac{1}{3}$ charge from each neighbor by Rule~\ref{R1} again, hence $c^*(u) \geq -1+ \frac{1}{3} \times 3=0$. 
	\end{proof}
	
	\begin{lemma}
		\label{lem-4-vertex}
		If $u$ is a $4$-vertex, then $c^*(u) \geq 0$.
	\end{lemma}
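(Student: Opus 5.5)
The plan is a direct accounting of what a $4$-vertex $u$ gives away, split by $t$, the number of $3$-neighbors of $u$. The initial charge is $c(u)=3\cdot4-10=2$. Under the rules, $u$ only sends charge to $3$-neighbors via Rule~\ref{R1} and to incident $4$-faces via Rule~\ref{R2}. Rules~\ref{R3}--\ref{R5} concern $5^+$-vertices and do not affect $u$, and $u$ sends nothing to faces of size at least $5$. So it suffices to show that the total sent by $u$ is at most $2$. Since $u$ has only four incident faces, the face transfers are at most $4\cdot\frac12=2$ in the worst case; the work is to show that charge sent to $3$-neighbors is compensated by smaller face transfers. By Claim~\ref{FC-star}, $t\le 2$.

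\emph{Case $t=0$.} Then $u$ sends nothing by Rule~\ref{R1}, and by Rule~\ref{R2a} it sends at most $\frac12$ to each of at most four $4$-faces. Hence $c^*(u)\ge 2-4\cdot\frac12=0$.

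\emph{Case $t=1$, with $3$-neighbor $v$.}
\begin{itemize}
\item If $v$ is a $3_0$-vertex, $u$ sends $\frac13$ to $v$ by Rule~\ref{R1}. By Rule~\ref{R2b}, the two faces incident with $v$ receive $\frac12$ each and the other two receive $\frac13$ each. The total is $\frac13+1+\frac23=2$.
\item If $v$ is a $3_1$-vertex with $3$-neighbor $w$, $u$ sends $\frac12$ to $v$. By Rule~\ref{R2c}, only a face containing both $v$ and $w$ gets $\frac12$, and others get $\frac13$. Since $vw$ lies on at most two faces around $u$, I must use planarity: a face containing $u,v,w$ consecutively is a $4$-face $u v w x$. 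The two faces at $u$ on either side of the edge $uv$ can both contain $w$ only if $v$ has degree $2$ or the faces coincide, which is impossible since $d(v)=3$. So at most one face gets $\frac12$. The total is then at most $\frac12+\frac12+3\cdot\frac13=2$.
\end{itemize}

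\emph{Case $t=2$.} By Claim~\ref{FC-k_{k-2}-3_1}, both $3$-neighbors are $3_0$-vertices, so $u$ sends $\frac13+\frac13=\frac23$ by Rule~\ref{R1}.
\begin{itemize}
\item If the two are non-consecutive, Rule~\ref{R2d} gives $4\cdot\frac13=\frac43$, for a total of $2$.
\item If they are consecutive, say $v,w$, then the four faces around $u$ are: the face between $v$ and $w$ (receiving $\frac12$), two faces containing exactly one of $v,w$ (receiving $\frac13$ each), and one face containing neither (receiving $\frac16$). The total is $\frac23+\frac12+\frac23+\frac16=2$.
\end{itemize}

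The main obstacle I expect is the careful verification in the $3_1$ subcase, together with degenerate situations where a face around $u$ is not a $4$-face or is counted twice. I would handle these by noting that non-$4$-faces receive nothing, which only helps, and that $G$ is $2$-connected enough (minimum degree $3$, triangle-free) that the faces around $u$ in the rotation are distinct. In each case the bound is an upper bound, with equality only when all four faces are $4$-faces.
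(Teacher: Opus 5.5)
Your proof is correct and follows essentially the same route as the paper. You split by the number of $3$-neighbors of $u$ (at most two by Claim~\ref{FC-star}, and both are $3_0$-vertices when there are two, by Claim~\ref{FC-k_{k-2}-3_1}), then add the Rule~\ref{R1} transfer to the Rule~\ref{R2} face transfers to bound the total outflow by $2$. Your extra check in the $3_1$ subcase, that at most one face at $u$ contains both $v$ and $w$ (using $d(v)=3$ and triangle-freeness), is a detail the paper leaves implicit, and your argument for it is sound.
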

	\begin{proof}
		The initial charge for a $4$-vertex $u$ is $c(u) = 3\times 4-10 =2$. By Claim~\ref{FC-star}, $u$ has at most two $3$-neighbors. 
		
		If $u$ has no $3$-neighbors, then by Rule~\ref{R2a}, $u$ sends $\frac{1}{2}$ charge to each incident face, hence $c^*(u)= 2-\frac{1}{2}\times 4=0$. 
		
		Assume that $u$ has exactly one $3$-neighbor $x$. If $x$ has no $3$-neighbors, then by Rule~\ref{R1}, $T_u(x) = \frac{1}{3}$, and by Rule~\ref{R2b}, $u$ sends $\frac{1}{2} \times 2 + \frac{1}{3} \times 2 = \frac{5}{3}$ charge to its incident faces.  So 
		$c^*(u) \geq 2- \frac{1}{3} - \frac{5}{3}= 0$. If $x$ has a $3$-neighbor $y$, then by Rule~\ref{R1}, $T_u(x) = \frac{1}{2}$ and by Rule~\ref{R2c}, $u$ sends $\frac{1}{2}  + \frac{1}{3} \times 3 = \frac{3}{2}$ charge to its incident faces, hence $c^*(u) \geq 2- \frac{1}{2} - \frac{3}{2}= 0$. 
		
		Then assume that $u$ has two $3$-neighbors. By Claim~\ref{FC-k_{k-2}-3_1}, each of these two $3$-vertices has no $3$-neighbors, thus $u$ sends $\frac{2}{3}$ charge to its neighbors in total. By Rule~\ref{R2d}, if the two $3$-neighbors are not consecutive around $u$, then $u$ sends $\frac{1}{3} \times 4=\frac{4}{3}$ charge to its incident faces. If the two $3$-neighbors are consecutive around $u$, then by Rule~\ref{R2e} $u$ sends $\frac{1}{2} + \frac{1}{3} \times 2 + \frac{1}{6}=\frac{4}{3}$ charge to its incident faces. Thus in either case, $c^*(u) = 0$.
	\end{proof}

	In the remainder of this paper, for a $k$-vertex $u$, let $v_0, v_1, \ldots, v_{k-1}$
	be the neighbors of $u$ listed in cyclic order. 
	For each $i \in \{0,1, \ldots, k-1\}$, let $f_i$ be the face incident with $u$
	that is bounded by the edges $uv_i$ and $uv_{i+1}$, where the indices are taken modulo $k$.
	Whenever $f_i$ is a $4$-face, denote its fourth vertex by $w_i$ so that $f_i = u v_i w_i v_{i+1} u$. Recall that $f_i$ is called a $(k,d(v_i),d(w_i),d(v_{i+1}))$-face. Note that $f_i$ and $f_{i+1}$ can only share exactly the edge $uv_{i+1}$; otherwise, $G$ would contain a vertex of degree $2$, contradicting Claim~\ref{FC-min-degree}.
	Let $r_0$ be the number of $(k,3,4,3)$-faces and $r_1$ be the number of  $(k,3,3,4)$-faces incident with $u$,
	and for each $i \in \{2,3\}$, let $r_i$ be the number of type $i$ faces incident with $u$. Furthermore, let $\alpha$ and $\beta$ be the numbers of $3_1$-neighbors and $3_0$-neighbors of $u$,
	respectively.
	
	\begin{observation}
		\label{obs-adjacent type 2-}
		Suppose $u$ is a $5^+$-vertex of $G$ and $f_i$ is of type $2^-$ for some $0 \leq i \leq d(u)-1$. Then neither $f_{i-1}$ nor $f_{i+1}$ is a type $2^-$ face. As a corollary, $r_0+r_1+r_2 \leq \lfloor d(u)/2  \rfloor$. 
	\end{observation}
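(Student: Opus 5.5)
The plan is a short local case analysis around the shared vertex $v_{i+1}$, using Claims~\ref{FC-star}, \ref{FC-k_{k-2}-3_1} and~\ref{FC-3434~43}. Suppose, for a contradiction, that $f_i=uv_iw_iv_{i+1}u$ and $f_{i+1}=uv_{i+1}w_{i+1}v_{i+2}u$ are both of type $2^-$; the case of $f_{i-1}$ is symmetric.

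Recall that $f_i$ and $f_{i+1}$ share only the edge $uv_{i+1}$. Hence $w_i\neq w_{i+1}$, and $v_i,w_i,v_{i+1},w_{i+1},v_{i+2}$ are distinct. Since each face contains exactly one $5^+$-vertex, namely $u$, the other three vertices of each face have degree $3$ or $4$. A type $1$ face has two $3$-vertices and one $4$-vertex among them. A type $2$ face is a $(d(u),4,3_1,4)$-face, so its middle vertex $w$ is a $3_1$-vertex and its two vertices adjacent to $u$ are $4$-vertices. I split into cases on $d(v_{i+1})\in\{3,4\}$.

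\emph{Case $d(v_{i+1})=4$.} In $f_i$, either $f_i$ is of type $1$, in which case $v_i$ and $w_i$ are both $3$-vertices, or $f_i$ is of type $2$, in which case $w_i$ is a $3_1$-vertex. In both cases $w_i$ is a $3$-neighbor of $v_{i+1}$ which itself has a $3$-neighbor: $v_i$ in the first case, and by definition in the second. The same holds for $w_{i+1}$ in $f_{i+1}$. Thus $v_{i+1}$ is a $4$-vertex with the two $3$-neighbors $w_i,w_{i+1}$, one of which has a $3$-neighbor. This contradicts Claim~\ref{FC-k_{k-2}-3_1} with $k=4$.

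\emph{Case $d(v_{i+1})=3$.} Both faces must then be of type $1$, since in a type $2$ face the vertices adjacent to $u$ have degree $4$. So exactly one of $v_i,w_i$ has degree $3$, and exactly one of $w_{i+1},v_{i+2}$ has degree $3$. There are four subcases.
\begin{itemize}
\item If $d(w_i)=d(w_{i+1})=3$, then the $3$-vertex $v_{i+1}$ has two $3$-neighbors, so $w_iv_{i+1}w_{i+1}$ is a $(3,3,3)$-path, contradicting Claim~\ref{FC-star}.
\item If $d(w_i)=3$ and $d(v_{i+2})=3$, so that $d(w_{i+1})=4$, then $w_iv_{i+1}w_{i+1}v_{i+2}$ is a $(3,3,4,3)$-path, contradicting Claim~\ref{FC-k_{k-2}-3_1}.
\item The subcase $d(v_i)=3$, $d(w_{i+1})=3$ is symmetric to the previous one.
\item If $d(v_i)=d(v_{i+2})=3$ and $d(w_i)=d(w_{i+1})=4$, then $v_iw_iv_{i+1}w_{i+1}v_{i+2}$ is a $(3,4,3,4,3)$-path, contradicting Claim~\ref{FC-3434~43}.
\end{itemize}
The only point needing care is that these are genuine paths, i.e. the vertices are distinct. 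This follows from the distinctness noted above; no inducedness is required, since the claims forbid paths, not induced paths.

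For the corollary, first note which faces are counted. The $(k,3,4,3)$- and $(k,3,3,4)$-faces are exactly the type $1$ faces at $u$ (this includes $(k,4,3,3)$-faces up to orientation). The type $2$ faces are disjoint from these. Hence $r_0+r_1+r_2$ is the number of type $2^-$ faces among $f_0,\ldots,f_{k-1}$. By the first part, no two cyclically consecutive faces $f_i,f_{i+1}$ are both of type $2^-$. Therefore at most $\lfloor k/2\rfloor$ of the $k$ cyclically arranged faces are of type $2^-$. I expect no real obstacle here; the main risk is overlooking a subcase in the degree-$3$ case, which the enumeration above handles.
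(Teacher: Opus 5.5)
Your proof is correct and is essentially the paper's argument. The paper splits according to whether one of the two faces is of type~2 (giving a $(3,3,4,3)$-path through $w_iv_{i+1}w_{i+1}$) or both are of type~1 (giving a $(3,4,3,4,3)$-, $(3,3,3)$- or $(3,3,4,3)$-path), whereas you split on $d(v_{i+1})$; the same Claims~\ref{FC-star}, \ref{FC-k_{k-2}-3_1} and~\ref{FC-3434~43} are applied to the same configurations, and your explicit derivation of the corollary fills in what the paper leaves as immediate.
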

	\begin{proof}
		Without loss of generality, suppose to the contrary that 
		$f_1$ and $f_2$ are two type $2^-$ faces incident with $u$ 
		that share the edge $uv_2$.  
		If one of $f_1$ and $f_2$ is of type~2, say $f_2$, 
		then $w_1v_2w_2$ is contained in a $(3,3,4,3)$-path, 
		contradicting Claim~\ref{FC-k_{k-2}-3_1}.   
		Thus assume that both $f_1$ and $f_2$ are of type~1. 
		Then, by the definition of type~1 faces, 
		either $v_1w_1v_2w_2v_3$ is a $(3,4,3,4,3)$-path, 
		contradicting Claim~\ref{FC-3434~43}, 
		or $w_1v_2w_2$ is a $(3,3,3)$-path, contradicting Claim~\ref{FC-star}, 
		or $w_1v_2w_2$ is contained in a $(3,3,4,3)$-path, 
		contradicting Claim~\ref{FC-k_{k-2}-3_1}. 
	\end{proof}

	\begin{observation}
		\label{obs-5343 or type 2}
		Suppose $u$ is a $5^+$-vertex of $G$ and $f_i$ is a $(d(u),3,4,3)$-face or a type $2$ face for some $0 \leq i \leq d(u)-1$. Then neither $f_{i-1}$ nor $f_{i+1}$ is a type $3^-$ face. 
	\end{observation}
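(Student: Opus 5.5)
The plan is a direct local case analysis; no discharging is needed. By symmetry (reflect the rotation around $u$) it suffices to treat $f_{i+1}=uv_{i+1}w_{i+1}v_{i+2}u$, which shares exactly the edge $uv_{i+1}$ with $f_i$. I will use two facts repeatedly. First, $w_i\neq w_{i+1}$, as noted before Observation~\ref{obs-adjacent type 2-}. Second, since $G$ is triangle-free, two neighbors of $u$ are never adjacent, so the vertices appearing in the paths below are distinct. Type $1$, $2$ and $3$ faces are unordered objects, so for each of them I must allow $v_{i+1}$ to play the role of either neighbor of $u$ on the face.

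\emph{Case A: $f_i$ is a $(d(u),3,4,3)$-face.} Then $d(v_i)=d(v_{i+1})=3$ and $w_i$ is a $4$-vertex with the two $3$-neighbors $v_i,v_{i+1}$, so $w_i$ is a $4_2$-vertex. By Claim~\ref{FC-k_{k-2}-3_1}, $v_{i+1}$ has no $3$-neighbor, so $v_{i+1}$ is a $3_0$-vertex and $d(w_{i+1})\ge4$.
\begin{itemize}
\item Type~1 for $f_{i+1}$ forces $d(v_{i+2})=3$ and $d(w_{i+1})=4$. Then $v_iw_iv_{i+1}w_{i+1}v_{i+2}$ is a $(3,4,3,4,3)$-path, which Claim~\ref{FC-3434~43} forbids.
\item Type~2 is impossible, because its neighbors of $u$ on the face must be $4$-vertices.
\item For type~3, $v_{i+1}$ must be the $3$-entry adjacent to $u$, so the face is $(5^+,3_0,4_1,4_1)$, $(5^+,3_0,4_2,4_0)$ or $(5^+,3_1,4_1,4_0)$. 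The last is excluded since $v_{i+1}$ is $3_0$.
\item In the $(5^+,3_0,4_2,4_0)$ case, $w_{i+1}$ has a second $3$-neighbor $y$. Then $v_iw_iv_{i+1}w_{i+1}y$ is a $(3,4,3,4,3)$-path.
\item In the $(5^+,3_0,4_1,4_1)$ case, $v_{i+2}$ has a $3$-neighbor $x$, and $x\neq w_{i+1}$ because $w_{i+1}$ is a $4$-vertex. Then $v_iw_iv_{i+1}w_{i+1}v_{i+2}x$ is a $(3,4,3,4,4,3)$-path.
\end{itemize}
Both of the last two paths contradict Claim~\ref{FC-3434~43}.

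\emph{Case B: $f_i$ is of type~$2$.} Here $d(v_i)=d(v_{i+1})=4$ and $w_i$ is a $3_1$-vertex with a $3$-neighbor $z$. Because $v_{i+1}$ has the $3$-neighbor $w_i$, which itself has a $3$-neighbor, Claim~\ref{FC-k_{k-2}-3_1} (with $k=4$) shows $v_{i+1}$ is a $4_1$-vertex whose only $3$-neighbor is $w_i$. In particular $d(w_{i+1})\ge4$.
\begin{itemize}
\item Type~1 for $f_{i+1}$ would need two $3$-vertices among $v_{i+1},w_{i+1},v_{i+2}$, hence $d(w_{i+1})=3$. This is impossible.
\item Type~2 would make $w_{i+1}$ a $3_1$-vertex, which is likewise impossible.
\item For type~3, $v_{i+1}$ is a $4_1$-vertex adjacent to $u$. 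Any orientation in which $v_{i+1}$ is $4_0$, $4_2$, or a $3$-vertex is immediately excluded.
\item In the $(5^+,4_2,3_0,4_1)$ pattern read with $v_{i+1}$ as the $4_1$ entry, $w_{i+1}$ would be a $3$-vertex. This is again impossible.
\item The only surviving pattern is $(5^+,3_0,4_1,4_1)$ read as $v_{i+2}=3_0$, $w_{i+1}=4_1$, $v_{i+1}=4_1$. Then $zw_iv_{i+1}w_{i+1}v_{i+2}$ is a $(3,3,4,4,3)$-path, contradicting Claim~\ref{FC-334~43}. Here $z\neq v_{i+2}$ since $v_{i+2}$ is $3_0$.
\end{itemize}

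The main obstacle is bookkeeping rather than depth. Each type-3 pattern has to be enumerated in both orientations, and in each forbidden path the endpoints must be checked to be distinct from the other vertices. These checks rely on triangle-freeness, on $w_i\neq w_{i+1}$, and on the exact $3$-neighbor counts ($4_1$ versus $4_2$, $3_0$ versus $3_1$) established at the start of each case.
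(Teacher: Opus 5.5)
Your proof is correct and follows essentially the paper's route. You use the same two cases on $f_i$ and the same forbidden $(3,4,3,4,3)$-, $(3,4,3,4,4,3)$- and $(3,3,4,4,3)$-paths; the only difference is that you rule out type $2^-$ neighbors by hand instead of quoting Observation~\ref{obs-adjacent type 2-}.

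One distinctness check is not covered by the tools you list (triangle-freeness, $w_i\neq w_{i+1}$, $3$-neighbor counts). In the $(5^+,3_0,4_2,4_0)$ subcase you need $y\neq v_i$. This follows from Claim~\ref{FC-(4-,4-,4-,3-)}, since $y=v_i$ would make $v_iw_iv_{i+1}w_{i+1}v_i$ a $(3,4,3,4)$-cycle. The paper's own proof skips this check too.
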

	\begin{proof}
		By Observation~\ref{obs-adjacent type 2-}, neither $f_{i-1}$ nor $f_{i+1}$ 
		is a type $2^-$ face. 
		Without loss of generality, suppose to the contrary that $i=1$ and 
		$f_2$ is a type~3 face. 
		First consider the case that $f_1$ is of type~2. 
		Then $d(v_2)=4$ and $w_1$ is a $3_1$-vertex. 
		Either $d(w_2)=3$ and $d(v_3)=4$, in which case 
		$w_1v_2w_2$ is contained in a $(3,3,4,3)$-path, 
		contradicting Claim~\ref{FC-k_{k-2}-3_1}, 
		or $d(w_2)=4$ and $d(v_3)=3$, in which case 
		$v_3w_2v_2w_1$ is contained in a $(3,3,4,4,3)$-path, 
		contradicting Claim~\ref{FC-3434~43}. 
		Now assume that $f_1$ is a $(d(u),3,4,3)$-face; that is,
		$d(v_1)=d(v_2)=3$ and $d(w_1)=4$.
		Since $f_2$ is of type~3, we have $d(w_2)=d(v_3)=4$, and at least one of $w_2$ and $v_3$ has a $3$-neighbor. If $w_2$ has a $3$-neighbor, then $v_1w_1v_2w_2$ is contained in a $(3,4,3,4,3)$-path. If $v_3$ has a $3$-neighbor, then $v_1w_1v_2w_2v_3$ is contained in a $(3,4,3,4,4,3)$-path. Both possibilities contradict Claim~\ref{FC-3434~43}.
	\end{proof}

	\subsubsection{The $6^+$-vertices}
	
	In this section, let $u$ be a $6^+$-vertex, we will use the following observation.
	
	\begin{observation}
		\label{obs-number of $3$-neighbors} 
		$r_1+2r_2+r_3+ \alpha + \beta \leq d(u)$.
	\end{observation}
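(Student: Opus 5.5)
The plan is a charging argument. Each quantity on the left-hand side is assigned to a neighbor of $u$, and each neighbor of $u$ receives total weight at most $1$; since $u$ has exactly $d(u)$ neighbors, the inequality follows. The term $\alpha+\beta$ is the number of $3$-neighbors of $u$, so each $3$-neighbor of $u$ takes weight $1$ and nothing else. All remaining weight must therefore go to $4^+$-neighbors of $u$, and in fact I will send it to $4$-neighbors.

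First I list the faces. Write $f_i=uv_iw_iv_{i+1}u$.
\begin{itemize}
\item A face counted by $r_1$ is a $(d(u),3,3,4)$-face (up to reflection), so exactly one of $v_i,v_{i+1}$ is a $4$-vertex. Note that $(d(u),3,4,3)$-faces are counted by $r_0$, not $r_1$.
\item A type $2$ face has $d(v_i)=d(v_{i+1})=4$ and $w_i$ a $3_1$-vertex.
\item Checking the four patterns in the definition of type $3$, every type $3$ face has at least one of $v_i,v_{i+1}$ a $4$-vertex. The pattern $(5^+,4_2,3_0,4_1)$ has both of them $4$-vertices.
\end{itemize}
Each type $2$ face sends weight $1$ to each of its two $4$-neighbors of $u$, which accounts for $2r_2$. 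Each face counted in $r_1$ or $r_3$ sends weight $1$ to one chosen $4$-neighbor of $u$ on it.

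The type $2$ faces are handled by the earlier observations. By Observation~\ref{obs-adjacent type 2-} and Observation~\ref{obs-5343 or type 2}, neither face adjacent to a type $2$ face is of type $3^-$. So a $4$-neighbor $v_{i+1}$ lying on a type $2$ face $f_i$ receives nothing from $f_{i+1}$, and its total weight is $1$.

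The remaining task is to choose targets for the faces counted in $r_1$ and $r_3$ so that no $4$-neighbor is chosen twice. The faces around $u$ form a cycle, and consecutive faces share exactly one neighbor $v_{i+1}$. Consider a maximal run of consecutive counted faces $f_i,\dots,f_j$. I orient the choices greedily along the run: each face picks its "forward" $4$-neighbor $v_{i+1}$ whenever possible. Conflicts can occur only in one situation: a face whose unique $4$-neighbor is its backward vertex sits right after a face that has already used that vertex. In particular, two consecutive counted faces $f_{i-1},f_i$ whose only $4$-neighbor of $u$ is the shared vertex $v_i$ must be excluded. In that case $v_{i-1}$ and $v_{i+1}$ are both $3$-vertices.

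I plan to rule out each such pair using the reducible configurations.
\begin{itemize}
\item Two faces from $r_1$ cannot be adjacent, by Observation~\ref{obs-adjacent type 2-}.
\item Suppose an $r_1$-face is adjacent to a type $3$ face. Say $f_{i-1}$ is $(d(u),3,3,4)$ with $v_i$ the $4$-vertex, and $f_i$ is type $3$ with $v_{i+1}$ a $3$-vertex. Then $v_i$ is a $4$-vertex whose neighbor $w_{i-1}$ is a $3$-vertex with a $3$-neighbor. This forces a $(3,3,4,3)$-path, a $(3,3,4,4,3)$-path, or a $(3,3,4,4,4,3)$-path. These are excluded by Claim~\ref{FC-k_{k-2}-3_1} and Claim~\ref{FC-334~43}.
\item Suppose two type $3$ faces are adjacent at $v_i$, for instance $(d(u),3_0,4_1,4_1)$ followed by its mirror image. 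Then $w_{i-1}v_iw_i$ is a $(4_1,4_1,4_1)$-path, contradicting Claim~\ref{FC-4_14_14_1}. The other combinations (involving $4_2$ or $3_1$ patterns) give configurations excluded by Claims~\ref{FC-5_3/4_24_1}, \ref{FC-4_2-4/5_1-4_1}, \ref{FC-3434~43} and~\ref{FC-334~43}.
\end{itemize}
Once these local conflicts are excluded, a Hall-type argument along each run (which is a path, or possibly the whole cycle) gives the required injective choice.

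The main obstacle is the finite but tedious case analysis of adjacent pairs of type $3$ faces and of type $1$/type $3$ pairs. This is where the exact degree patterns in the definition of type $3$ have to be matched against the claims of Section~\ref{sec-RC}. I would tabulate the possible pairs and, for each, name the forbidden path or cycle it produces.
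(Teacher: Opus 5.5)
Your route is the paper's. The $\alpha+\beta$ neighbors of degree $3$ count themselves, and you then show that $u$ has at least $r_1+2r_2+r_3$ neighbors of degree $4$. Type~2 faces are isolated by Observations~\ref{obs-adjacent type 2-} and~\ref{obs-5343 or type 2} and take two $4$-neighbors each. Two adjacent counted faces whose only $4$-neighbor of $u$ is the shared vertex (the pattern $d(v_{i-1}),d(v_i),d(v_{i+1})=3,4,3$) are excluded by the same claims the paper uses.

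\textbf{The gap is your final step.} Excluding that pattern does not yield Hall's condition on longer runs. Take three consecutive counted faces $f_i,f_{i+1},f_{i+2}$ with $d(v_i),d(v_{i+1}),d(v_{i+2}),d(v_{i+3})=3,4,4,3$. Every adjacent pair has two distinct $4$-neighbors, so your local test passes. Yet the three faces need three distinct $4$-neighbors and only two exist. Your forward greedy does get stuck at $f_{i+2}$: the conflict situation you describe is broader than the pairs you actually exclude. In general, a run of counted faces admits the injection only if its vertices $v_a,\dots,v_{b+1}$ contain at most one $3$-vertex, whereas the pairwise check only rules out two $3$-vertices at distance $2$. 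The paper's written argument makes the same pairwise-to-global jump, so the extra step below is needed there as well.

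\textbf{How to close it.} The vertices $v_j$ of a run all have degree $3$ or $4$. A counted face in a run with both $v_j,v_{j+1}$ of degree $4$ must be a type~3 face $(5^+,4_2,3_0,4_1)$, since type~2 faces are not in runs. Such a face cannot be adjacent, within a run, to a counted face containing a $3$-neighbor of $u$:
\begin{itemize}
\item Suppose the shared vertex is the $4_2$-vertex. It is then the only $4$-neighbor of $u$ on the other face. The one-$4$-neighbor type~3 patterns make that vertex $4_0$ or $4_1$, so the other face is a $(d(u),4,3,3)$-face. This gives a $(3,3,4,3)$-path, contrary to Claim~\ref{FC-k_{k-2}-3_1}.
\item Suppose the shared vertex is the $4_1$-vertex. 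Its only $3$-neighbor is the middle $3_0$-vertex of the first face, so the other face's middle vertex is not a $3$-vertex. Hence the other face is a one-$4$-neighbor type~3 face whose middle vertex has degree $4$. Prepending the second $3$-neighbor of the $4_2$-vertex gives a $(3,4,3,4,4,3)$-path, contrary to Claim~\ref{FC-3434~43}.
\end{itemize}
Also, two consecutive faces of the pattern $(5^+,4_2,3_0,4_1)$ force their shared vertex to be the $4_2$-vertex of both, so three such faces cannot occur consecutively.

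Combined with your exclusion of $3,4,3$, every run has at most two faces: either two faces each with two $4$-neighbors, or two faces arranged as $4,3,4$. Since $d(u)\ge 6$, no run is the whole cycle. Distinct runs have disjoint vertex sets, so the injection into $4$-neighbors is then immediate.
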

	\begin{proof}
		Since $u$ has exactly $\alpha+\beta$ neighbors of degree $3$, 
		it suffices to show that the number of $4$-neighbors of $u$ 
		is at least $r_1+2r_2+r_3$. 
		Each $(d(u),3,3,4)$-face, as well as each type~2 or type~3 face incident with $u$, 
		contains at least one $4$-neighbor of $u$; moreover, each type~2 face 
		contributes two $4$-neighbors of $u$. 
		By Observation~\ref{obs-adjacent type 2-}, if $f_i$ is of type $2^-$, 
		then neither $f_{i-1}$ nor $f_{i+1}$ is of type~$2^-$.  
		For two adjacent faces $f_i$ and $f_{i+1}$ that are either 
		$(d(u),3,3,4)$-faces or type~3 faces, 
		they together contribute at least two distinct $4$-neighbors of $u$. 
		Indeed, otherwise we would have $d(v_{i+1})=4$ and 
		$d(v_i)=d(v_{i+2})=3$, which implies 
		$d(w_i)=d(w_{i+1})=4$ by Claims~\ref{FC-k_{k-2}-3_1} 
		and~\ref{FC-334~43}.  
		Since $v_i$ has no $3$-neighbors by Claim~\ref{FC-334~43}, 
		it follows from the assumption on $f_i$ that either 
		$w_i$ is a $4_2$-vertex or $v_{i+1}$ is a $4_1$-vertex. 
		In either case, $w_iv_{i+1}w_{i+1}$ forms a $(4_2,4,4_1)$-path 
		or a $(4_1,4_1,4_1)$-path, contradicting 
		Claim~\ref{FC-4_2-4/5_1-4_1} or Claim~\ref{FC-4_14_14_1}.  
		Thus, the number of $4$-neighbors of $u$ is at least 
		$r_1+2r_2+r_3$. 
	\end{proof}
	
	By Observation~\ref{obs-number of $3$-neighbors}, 
	$\alpha+\beta \le d(u)-(r_1+2r_2+r_3)$. 
	By Claim~\ref{FC-k_{k-2}-3_1}, $G$ contains no $(3,3,4,3)$-path, 
	and thus $\beta \ge 2r_0$. 
	Combining these inequalities, we obtain 
	\begin{align}
		c^*(u) & \notag \geq 3d(u)-10-\Big[ \frac{3}{2}(r_0+r_1)+\frac{4}{3}r_2+ \frac{7}{6}r_3 
		+ (d(u)-r_0-r_1-r_2-r_3)+\frac{1}{2}\alpha + \frac{1}{3} \beta \Big] \\  
		& \notag = 2d(u)- 10 -\frac{1}{2}r_0 - \frac{1}{2}r_1-\frac{1}{3}r_2 - \frac{1}{6}r_3 
		- \frac{1}{2} (\alpha + \beta) + \frac{\beta}{6} \\ 
		& \notag \geq 2d(u)- 10 -\frac{1}{2}r_0 - \frac{1}{2}r_1-\frac{1}{3}r_2 - \frac{1}{6}r_3 
		- \frac{1}{2}\big(d(u)- (r_1+2r_2+r_3)\big) + \frac{2r_0}{6} \\ 
		& = \frac{3}{2}d(u) -10 -  \frac{1}{6}r_0  + \frac{2}{3}r_2 + \frac{1}{3}r_3.
		\label{eq-7+}
	\end{align}

	\begin{lemma} \label{lem-7-vertex}
		If $d(u) \geq 7$, then $c^*(u) \geq 0$.
	\end{lemma}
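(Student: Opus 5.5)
The plan is to start from the lower bound \eqref{eq-7+}, which already accounts for all charge that $u$ sends. The term $\frac{2}{3}r_2+\frac{1}{3}r_3$ is nonnegative, so we simply discard it. This leaves
\[
c^*(u)\ \ge\ \frac{3}{2}d(u)-10-\frac{1}{6}r_0 .
\]
It remains to bound $r_0$.

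By Observation~\ref{obs-adjacent type 2-}, two consecutive faces $f_i,f_{i+1}$ around $u$ cannot both be $(d(u),3,4,3)$-faces. In fact, a $(d(u),3,4,3)$-face together with its neighbouring face sharing a $3$-vertex $v_{i+1}$ of $u$ would produce a $(3,4,3,4,3)$-path, contradicting Claim~\ref{FC-3434~43}. The same observation gives directly $r_0\le r_0+r_1+r_2\le \lfloor d(u)/2\rfloor$. Hence
\[
c^*(u)\ \ge\ \frac{3}{2}d(u)-10-\frac{1}{12}d(u)=\frac{17}{12}d(u)-10 .
\]

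For $d(u)\ge 8$ this gives $c^*(u)\ge \frac{136}{12}-10>0$. For $d(u)=7$ we have $r_0\le 3$, so
\[
c^*(u)\ge \frac{21}{2}-10-\frac{1}{2}=0 .
\]
So the inequality closes in every case with $d(u)\ge 7$.

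The only point needing care is that \eqref{eq-7+} was derived using Rules~\ref{R3}--\ref{R4} and the bounds $T_u(f)\le 1$ for faces that are not of type $3^-$. One also has to check that non-$4$-faces receive nothing from $u$: since $d(f)\ge 5$ gives $c(f)\ge 0$, no rule sends charge to such faces. That justifies the bound used for the $d(u)-r_0-r_1-r_2-r_3$ remaining faces, each charged at most $1$. I expect no real obstacle here; the main step is just confirming the bound $r_0\le\lfloor d(u)/2\rfloor$, and the tight case $d(u)=7$, $r_0=3$ is where the estimate is exact.
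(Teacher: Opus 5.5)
Your proposal is correct and is essentially the paper's proof: discard the nonnegative $\frac{2}{3}r_2+\frac{1}{3}r_3$ terms in \eqref{eq-7+}, then substitute $r_0\le\lfloor d(u)/2\rfloor$ from Observation~\ref{obs-adjacent type 2-}. Your arithmetic is also right, including the point that the floor is genuinely needed at $d(u)=7$, where the bound is exactly $0$.
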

	\begin{proof} 
		By Observation~\ref{obs-adjacent type 2-}, $r_0 \leq \lfloor \frac{d(u)}{2} \rfloor$. Substituting this into~\eqref{eq-7+} yields $c^*(u) \geq \frac{3}{2}d(u) -10 -  \frac{1}{6} \lfloor \frac{d(u)}{2} \rfloor \geq 0$ for $d(u) \geq 7$. 
	\end{proof}
	
	\begin{lemma} \label{lem-6-vertex}
		If $d(u)= 6$, then $c^*(u) \geq 0$.
	\end{lemma}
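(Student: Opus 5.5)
The plan is to redo the estimate leading to \eqref{eq-7+} for $d(u)=6$ more carefully, because \eqref{eq-7+} alone only gives $c^*(u)\ge -1-\frac16 r_0+\frac23 r_2+\frac13 r_3$, which is not enough. Start from $c(u)=8$. By Rules~\ref{R3} and~\ref{R4}, $u$ sends $1$ to each incident $4$-face that is not of type $3^-$, and $\frac32,\frac43,\frac76$ to faces of types $1,2,3$. Each $(6,3,4,3)$- and $(6,3,3,4)$-face is of type~1. By Rule~\ref{R1}, $u$ sends $\frac12$ to each $3_1$-neighbor and $\frac13$ to each $3_0$-neighbor. This gives
\[
c^*(u)\ \ge\ 2-\tfrac12(r_0+r_1)-\tfrac13 r_2-\tfrac16 r_3-\tfrac12\alpha-\tfrac13\beta .
\]
Let $t=\alpha+\beta$; by Claim~\ref{FC-star}, $t\le 4$. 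I will split into cases according to $t$.

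\emph{Case $t=4$.} Here $u$ is a $6_4$-vertex, so by Claim~\ref{FC-k_{k-2}-3_1} all four $3$-neighbors are $3_0$-vertices. Hence $\alpha=0$ and $\beta=4$, costing $\frac43$. In a $(6,3,3,4)$-face the $3$-neighbor of $u$ is adjacent to the $3$-vertex $w_i$, so it would not be a $3_0$-vertex; hence $r_1=0$. Claim~\ref{FC-(5_3/6_4,3,4,3)} gives $r_0=0$. Only two neighbors of $u$ have degree $\ge4$, and every type $2$ or $3$ face contains a $4$-neighbor of $u$. A type~2 face uses both of them, so it is unique and its two neighboring faces are not of type $3^-$ by Observation~\ref{obs-5343 or type 2}. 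I would then check directly that $\frac13 r_2+\frac16 r_3\le\frac23$: at most four faces contain one of the two $4$-neighbors, and the type definitions restrict which of them can be of type~3.

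\emph{Case $t=3$.} Claim~\ref{FC-(5_2/6_3,3,3,4)} gives $r_1=0$. Claim~\ref{FC-6-(6_3,4,3,4)} rules out type~2 faces, since these are $(6,4,3_1,4)$-faces, so $r_2=0$. The vertex cost is $\frac32-\frac16\beta$. Each $(6,3,4,3)$-face forces two $3_0$-neighbors (no $(3,3,4,3)$-path), so $\beta\ge 2r_0$. There are only three $4^+$-neighbors, and Observation~\ref{obs-5343 or type 2} forbids type~3 faces adjacent to $(6,3,4,3)$-faces. Combining these should bound $\frac12 r_0+\frac16 r_3$ by $\frac12+\frac16\beta$.

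\emph{Case $t\le 2$.} Here I will combine the displayed bound with Observation~\ref{obs-number of $3$-neighbors}, with $\beta\ge 2r_0$, and with Observation~\ref{obs-adjacent type 2-}, which gives $r_0+r_1+r_2\le 3$. Type~$1$ faces need two $3$-vertices, so they use up $3$-neighbors of $u$ quickly. The only dangerous situation is two $(6,3,3,4)$-faces: with $t\le2$ they would have to be disjoint on the $3$-neighbor side, and Claim~\ref{FC-6-nonadjacent (6,3,3,4)} forbids two nonadjacent such faces.

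The main obstacle is the bookkeeping in Cases $t=4$ and $t=3$, which is where the crude bound fails. There I would enumerate the cyclic positions of the $4^+$-neighbors around $u$ and use the adjacency restrictions of Observations~\ref{obs-adjacent type 2-} and~\ref{obs-5343 or type 2}.
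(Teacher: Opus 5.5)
Your displayed bound is correct, and Cases $t=4$ and $t\le 2$ go through as you describe. In the latter, pairing the displayed bound with \eqref{eq-7+} and using $r_1\le 1$ (from Observation~\ref{obs-adjacent type 2-} and Claim~\ref{FC-6-nonadjacent (6,3,3,4)}) closes every subcase.

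The gap is in Case $t=3$, which is where the real work of this lemma sits. When $r_0=1$, your target $\frac12 r_0+\frac16 r_3\le\frac12+\frac16\beta$ reads $r_3\le\beta$. So if the third $3$-neighbour is a $3_1$-vertex, you need $r_3\le 2$. The facts you cite do not give this:
\begin{itemize}
\item Observation~\ref{obs-5343 or type 2} only removes the two faces adjacent to the $(6,3,4,3)$-face, leaving $f_2,f_3,f_4$ as candidates.
\item Neither adjacency observation restricts two adjacent type-3 faces.
\end{itemize}
So ``three $4^+$-neighbours'' plus adjacency restrictions yield only $r_3\le 3$.

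The missing ingredient is that Claim~\ref{FC-6-(6_3,4,3,4)} excludes not only type-2 faces but also the type-3 faces $(6,4_2,3_0,4_1)$. Since $u$ is a $6_3$-vertex, no face $f_i$ whose two neighbours $v_i,v_{i+1}$ of $u$ both have degree at least $4$ can be of type $3^-$. Take $f_0$ to be the $(6,3,4,3)$-face and $v_k$ the third $3$-neighbour, with $k\in\{2,3\}$ by symmetry. This rules out $f_3,f_4$ if $k=2$, or $f_4$ if $k=3$, so $r_3\le 2$. This is exactly the paper's Case~1.

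There is a similar shortfall when $r_0=0$ and $\beta\le 2$: your listed facts do not give the needed $r_3\le 3+\beta$. For example, with $\beta=0$ you need $r_3\le 3$, yet all six faces can touch a $4$-neighbour. You must invoke \eqref{eq-7+} there, which disposes of $r_3\ge 3$.

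Structurally, splitting by $t$ puts the difficulty in the wrong place. Whenever $r_0=0$, the displayed bound plus \eqref{eq-7+} finishes at once for every $t$:
\begin{itemize}
\item Either $2r_2+r_3\ge 3$, and \eqref{eq-7+} gives $c^*(u)\ge 0$;
\item or $\frac13 r_2+\frac16 r_3\le\frac13$. Then the vertex cost is at most $\frac32$, or at most $1$ if $r_1=1$ by Claim~\ref{FC-(5_2/6_3,3,3,4)}, leaving at least $\frac16$.
\end{itemize}
This is the paper's Case~2, and it makes your enumeration for $t=4$ unnecessary. The only configuration needing local bookkeeping is a $(6,3,4,3)$-face together with a third $3$-neighbour. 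The paper isolates it by splitting on whether $r_0\ge 1$ rather than on $t$.
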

	\begin{proof} 
		In this case, we have $c(u)=3d(u) -10=8$.
		
		\medskip
		\noindent
		{\bf Case 1.} $G$ contains a $(6,3,4,3)$-face.
		\medskip   
		
		Without loss of generality, assume that $f_0$ is a $(6,3,4,3)$-face. Then $d(v_0)=d(v_1)=3$ and $d(w_0)=4$. By Claim~\ref{FC-k_{k-2}-3_1}, neither $v_0$ nor $v_1$ has a $3$-neighbor. Thus $T_u(f_0)=\frac{3}{2}$ by Rule~\ref{R3} and $T_u(v_0)=T_u(v_1)=\frac{1}{3}$ by Rule~\ref{R1}. By Observation~\ref{obs-5343 or type 2}, neither $f_1$ nor $f_5$ is a face of type $3^-$, so by Rule~\ref{R4} $T_u(f_i)\le 1$ for $i\in\{1,5\}$. Thus $T_u(f_0,f_1,f_5,v_0,v_1)\le \frac{3}{2}+1+1+\frac{1}{3}+\frac{1}{3}=\frac{25}{6}$. By Claim~\ref{FC-(5_3/6_4,3,4,3)}, $u$ has at most one additional $3$-neighbor besides $v_0$ and $v_1$. Hence for each $j\in\{2,3,4\}$, $f_j$ is not a $(6,3,4,3)$-face. Moreover, by Claim~\ref{FC-(5_2/6_3,3,3,4)}, $f_j$ is also not a $(6,3,3,4)$-face. Thus by Rule~\ref{R3} $T_u(f_j)\le \frac{4}{3}$. By Observation~\ref{obs-adjacent type 2-}, at most two of the faces $f_2,f_3,f_4$ are type $2$ faces. If $u$ has no additional $3$-neighbor, then $c^*(u)\ge 8-\frac{25}{6}-(\frac{4}{3}\times2+\frac{7}{6})=0$. Thus we may assume that $u$ has an additional $3$-neighbor $v_k$. By Rule~\ref{R1}, $T_u(v_k)\le \frac{1}{2}$. By symmetry, suppose $k\in\{2,3\}$. If $k=2$, then for $i\in\{3,4\}$, $f_i$ is not a $(6,4,3,4)$-face by Claim~\ref{FC-6-(6_3,4,3,4)}, and thus $T_u(f_i)\le 1$ by Rule~\ref{R4}. Consequently, $c^*(u)\ge 8-\frac{25}{6}-(1\times2+\frac{4}{3}+\frac{1}{2})=0$. If $k=3$, symmetrically $T_u(f_4)\le 1$ by Rule~\ref{R4}. Note that neither $f_2$ nor $f_3$ is a $(6,4,3,4)$-face, so by Rules~\ref{R3} and~\ref{R4} $T_u(f_2,f_3)\le \frac{7}{6}\times2=\frac{7}{3}$. Therefore, $c^*(u)\ge 8-\frac{25}{6}-(1+\frac{7}{3}+\frac{1}{2})=0$. 
		
		\medskip
		\noindent
		{\bf Case 2.} $G$ contains no $(6,3,4,3)$-face.
		\medskip 
		
		In this case, we have $r_0=0$. By Claim \ref{FC-6-nonadjacent (6,3,3,4)} and Observation \ref{obs-adjacent type 2-}, $r_1\leq 1$. Then $$c^*(u) \geq 8-\Big(\frac{3}{2}r_1+\frac{4}{3}r_2+ \frac{7}{6}r_3 
		+ (6-r_1-r_2-r_3)+\frac{1}{2}\alpha + \frac{1}{3}\beta\Big)=2-\big(\frac{1}{2}r_1+\frac{1}{3}r_2+\frac{1}{6}r_3\big)-\big(\frac{1}{2}\alpha +\frac{1}{3}\beta\big).$$ We may assume that $(r_2,r_3) \in \{(1,0), (0,0),(0,1), (0,2)\}$; otherwise Inequality~\eqref{eq-7+} already yields $c^*(u) \geq 0$. For these possibilities,  $\frac{1}{3}r_2+\frac{1}{6}r_3\leq \frac{1}{3}$.    
		If $r_1=1$, we have $\alpha+\beta\leq 2$ by Claim \ref{FC-(5_2/6_3,3,3,4)}. Then $\frac{1}{2}\alpha +\frac{1}{3}\beta\leq 1$. Hence  $c^*(u)\geq 2-(\frac{1}{2}+\frac{1}{3})-1=\frac{1}{6}>0$. If $r_1=0$, by Claim \ref{FC-star}, $\alpha+\beta\leq 4$. If $\alpha+\beta=4$, then Claim \ref{FC-k_{k-2}-3_1} implies $\alpha=0$, so $\frac{1}{2}\alpha +\frac{1}{3}\beta=\frac{1}{3}\beta=\frac{4}{3}$. If $\alpha+\beta\leq 3$, then $\frac{1}{2}\alpha +\frac{1}{3}\beta\leq \frac{3}{2}$. 
		Therefore the maximum possible value of $\frac{1}{2}\alpha +\frac{1}{3}\beta$ is $\frac{3}{2}$.  
		Consequently, $c^*(u)\geq 2-\frac{1}{3}-\frac{3}{2}=\frac{1}{6}>0$.
	\end{proof}	
	
	\subsubsection{The vertices of degree $5$}
	
	In this section, we focus on $5$-vertices. Let $u$ be a $5$-vertex, so $c(u)=3d(u) -10=5$.
	
	\begin{observation}
		\label{obs- almost 1/2}
		Suppose that $f_i=uv_iw_iv_{i+1}u$ is a $4$-face incident with $u$, where $d(v_i)\ge 4$ and $d(v_{i+1})\ge 4$, and, if $d(v_i)=4$ (resp.\ $d(v_{i+1})=4$), then $v_i$ (resp.\ $v_{i+1}$) has no $3$-neighbors (hence $d(w_i)\ge 4$). 
		If $f_i$ is a $(5,4_0,4_2,4_0)$-face, then $T_u(f_i)=\frac{5}{6}$; 
		if $f_i$ is a $(5,4_0,4_1,4_0)$-face, then $T_u(f_i)=\frac{2}{3}$; 
		otherwise, $T_u(f_i)\le \frac{1}{2}$. 
	\end{observation}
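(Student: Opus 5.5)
The plan is a direct check against the discharging rules. Since $d(u)=5$, the charge $T_u(f_i)$ comes from Rule~\ref{R3} if $f_i$ is of type $3^-$, and from Rule~\ref{R5} otherwise. So I will first rule out types $1$--$3$, then go through the families $\mathcal{F}_t$ with $t>\frac12$ and find which ones can match $\Lambda_u(f_i)$.

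\emph{Step 1: $f_i$ is not of type $3^-$.} Every type $3^-$ face has exactly one $5^+$-vertex, and that vertex must be $u$. Then $d(v_i)=d(v_{i+1})=4$, and by hypothesis both are $4_0$-vertices. Hence $w_i$ is not a $3$-vertex, so $f_i$ carries no $3$-vertex at all. Every face of type $1$, $2$ or $3$ contains a $3$-vertex, so $f_i$ is of none of these types. Therefore Rule~\ref{R5} applies, and it remains to locate $\Lambda_u(f_i)$ among the families $\mathcal{F}_t$.

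\emph{Step 2: scan the families with $t>\frac12$.} I read $\Lambda_u(f_i)$ in both orientations around $f_i$, as the families implicitly allow. The hypothesis says each of the second and fourth entries is either $5^+$ or $4_0$. Moreover, if either of these entries is a $4$, then the third entry is $4^+$.
\begin{itemize}
\item Most members of $\mathcal{F}_1,\dots,\mathcal{F}_{7/12}$ have a $3$, a $4_1$ or a $4_2$ in the second or fourth position, in both readings, and are therefore excluded.
\item Some tuples have an unspecified $4$ in those positions, such as $(5_0,4,3,5)$, $(5,4,3,6^+)$ and $(5,3,4,6^+)$. Each of these also contains a $3$-vertex adjacent to that $4$-vertex, or a $3$ at the position of $v_i$ or $v_{i+1}$. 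Either way this contradicts the $4_0$ hypothesis or $d(v_i),d(v_{i+1})\ge 4$.
\item The tuples $(5,3,5,5)$, $(5,3,5,4_0)$, $(5,3,5^+,3)$, etc.\ have a $3$ in a $v$-position and are excluded.
\item For $(5,4_2,4_0,5)$ and $(5,4_0,4_0,4_2)$, the reversed reading still places a $4_2$ in a $v$-position, so they are excluded as well.
\end{itemize}
After this scan the only survivors are $(5,4_0,4_2,4_0)\in\mathcal{F}_{5/6}$ and $(5,4_0,4_1,4_0)\in\mathcal{F}_{2/3}$. These give $T_u(f_i)=\frac56$ and $\frac23$ respectively.

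\emph{Step 3: the remaining case.} Every other configuration lies in $\mathcal{F}_{1/2}$, which gives $T_u(f_i)=\frac12$. This includes the case $d(v_i),d(v_{i+1})\ge5$ with $d(w_i)=3$, i.e.\ the tuple $(5,5^+,3,5^+)$, which appears in no listed family.

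The only real obstacle is the bookkeeping in Step 2. I have to scan every tuple in $\mathcal{F}_1,\dots,\mathcal{F}_{7/12}$ in both orientations, without overlooking any entry where an unspecified ``$4$'' or ``$5^+$'' could be compatible with the hypotheses. For these tuples the exclusion must come from the adjacent $3$-vertex forced in the tuple.
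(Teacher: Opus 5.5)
Your proposal is correct and is the direct verification the paper has in mind; the paper states this observation without proof, as a reading of Rules R3 and R5. You first exclude type $3^-$ faces: any such face would have $v_i,v_{i+1}$ both $4_0$, hence no $3$-vertex, while every type $3^-$ face contains one. Scanning $\mathcal{F}_1,\dots,\mathcal{F}_{7/12}$ in both orientations under the ``$5^+$ or $4_0$'' constraint on the $v$-positions (and $d(w_i)\ge 4$ whenever a $v$-position is a $4$) then leaves only $(5,4_0,4_2,4_0)$ and $(5,4_0,4_1,4_0)$; everything else, including $(5,5^+,3,5^+)$, falls into $\mathcal{F}_{1/2}$.
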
 
	
	\begin{observation}
		\label{obs- almost 3/4}
		Suppose $f_i=uv_iw_iv_{i+1}u$ is a $4$-face incident with $u$, where $d(v_i)=3$, $d(w_i) \geq 4$, $d(v_{i+1}) \geq 4$, and if $d(v_{i+1}) = 4$, then $v_{i+1}$ has no $3$-neighbors. If $u$ has another $3$-neighbor $x$ distinct from $v_i$, then $T_u(f_i) \leq \frac{3}{4}$.
	\end{observation}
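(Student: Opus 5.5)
The plan is to verify directly from the discharging rules that $f_i$ is not of type $3^-$, so that $T_u(f_i)$ is governed by Rule~\ref{R5}. Then I check that $\Lambda_u(f_i)$ lies in no family $\mathcal{F}_t$ with $t>3/4$, reading $\Lambda_u(f_i)$ in both cyclic orientations, $(5,d(v_i),d(w_i),d(v_{i+1}))$ and $(5,d(v_{i+1}),d(w_i),d(v_i))$. Throughout, the key extra hypothesis is that $u$ has a second $3$-neighbor $x\neq v_i$. Hence $u$ is a $5_{\ge 2}$-vertex, and it has a $3$-neighbor other than $v_i$.

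\emph{Step 1: reduce to $d(w_i)\ge 5$ or $d(v_{i+1})\ge 5$.} Suppose $d(w_i)=d(v_{i+1})=4$. Then $uv_iw_iv_{i+1}u$ is a $(5_{\ge 2},3,4,4)$-cycle, which Claim~\ref{FC-(5_2,3,4,4)} forbids. So at least one of $w_i,v_{i+1}$ is a $5^+$-vertex.

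\emph{Step 2: $f_i$ is not of type $3^-$.} Type~1 needs two $3$-vertices, but $f_i$ has only $v_i$. A type~2 face has its two $4$-vertices nonadjacent on the face, so they would be $v_i$ and $v_{i+1}$ with the $3_1$-vertex at $w_i$; here $d(v_i)=3$, so this is impossible. Every type~3 pattern other than $(5^+,4_2,3_0,4_1)$ has both non-$u$ vertices beside the $3$-vertex of degree $4$, which Step~1 excludes. The remaining pattern $(5^+,4_2,3_0,4_1)$ puts the $3$-vertex at $w_i$, which is not our situation. Hence Rule~\ref{R5} applies.

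\emph{Step 3: scan the families with $t>3/4$.} These are $\mathcal{F}_1$ and $\mathcal{F}_{5/6}$. I discard every tuple with no $3$-entry, with its $3$ in the third (opposite) position, or with two $3$'s. What survives are tuples of the form $(5,3,\ast,\ast)$ or $(5,\ast,\ast,3)$ with exactly one $3$, adjacent to $u$.

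The survivors $(5,3_0,4_1,4_0)$, $(5,3_0,4_2,5_{\ge 1})$ and $(5,3_1,4_1,5_{\ge 1})$ need $d(w_i)=4$. The first is killed by Step~1, since it has $d(w_i)=d(v_{i+1})=4$. The last two have $d(w_i)=4$ and $v_{i+1}$ a $5_{\ge 1}$-vertex, so $uv_iw_iv_{i+1}u$ is a $(5,3,4,5_{\ge 1})$-cycle. Claim~\ref{FC-(5_2,3_1,4,5_1)} then says $u$ has no $3$-neighbor besides $v_i$, contradicting the existence of $x$.

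The survivors $(5,3,5^+,4_2)$ and $(5,3,5^+,4_1)$ require $v_{i+1}$ to be a $4$-vertex with a $3$-neighbor, which the hypothesis forbids (if $d(v_{i+1})=4$ then $v_{i+1}$ is $4_0$).

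In the reversed orientation, every listed tuple beginning $(5,4_\ast,\dots)$ either has no $3$ or has its $3$ in the middle. So it cannot match $(5,d(v_{i+1}),d(w_i),3)$.

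Therefore $\Lambda_u(f_i)\in\mathcal{F}_t$ with $t\le 3/4$, and $T_u(f_i)\le\frac34$. The only delicate point is Step~3: the bookkeeping of both orientations and the subscript conditions must be exhaustive. I expect the two $(5,3,4,5_{\ge 1})$ patterns to be the real obstacle, since they are exactly where the hypothesis on $x$ is needed, through Claim~\ref{FC-(5_2,3_1,4,5_1)}.
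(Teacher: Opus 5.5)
Your proposal is correct and follows essentially the same route as the paper. Claim~\ref{FC-(5_2,3,4,4)} rules out $d(w_i)=d(v_{i+1})=4$, so $f_i$ is not of type $3^-$. Claim~\ref{FC-(5_2,3_1,4,5_1)}, combined with the existence of $x$, then kills the $(5,3,4,5_{\ge 1})$ patterns. The only difference is bookkeeping: the paper splits on $d(v_{i+1})\in\{4,5,6^+\}$ and concludes that $v_{i+1}$ is a $5_0$-vertex when $d(w_i)=4$, whereas you scan the members of $\mathcal{F}_1\cup\mathcal{F}_{5/6}$ directly in both orientations.
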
 
	
	\begin{proof}
		By Claim \ref{FC-(5_2,3,4,4)}, $(d(w_i),d(v_{i+1}))\neq (4,4)$, hence $f_i$ is not of type $3^-$. First we assume $d(v_{i+1})=4$. Then $v_{i+1}$ has no $3$-neighbors and $d(w_i)\geq 5$, and hence $\Lambda_u(f_i)\notin \F_{1}\cup \F_{5/6}$, it follows that $T_u(f_i) \leq \frac{3}{4}$. Then we assume $d(v_{i+1})=5$. If $d(w_i)=4$, then by Claim \ref{FC-(5_2,3_1,4,5_1)}, $v_{i+1}$ has no $3$-neighbors, so $\Lambda_u(f_i) \neq (5,3_1,4_1,5_{\ge 1})$ and $\Lambda_u(f_i)\notin \F_{1}\cup \F_{5/6}$, and again $T_u(f_i) \leq \frac{3}{4}$. If $d(w_i)\geq 5$,  then $\Lambda_u(f_i)\notin \F_{1}\cup \F_{5/6}$, which implies $T_u(f_i) \leq \frac{3}{4}$. Now we assume $d(v_{i+1})\geq 6$, then $\Lambda_u(f_i)\notin \F_{1}\cup \F_{5/6}\cup \F_{3/4}$, so $T_u(f_i) \leq \frac{2}{3}$.
		In all cases, we obtain $T_u(f_i) \leq \frac{3}{4}$.
	\end{proof}  
	
	\begin{lemma}
		\label{lem-exist (5,3,4,3)-face}
		If $u$ is incident with a $(5,3,4,3)$-face, then $c^*(u) \geq 0$.
	\end{lemma}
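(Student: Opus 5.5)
Let $f_0=uv_0w_0v_1u$ be the $(5,3,4,3)$-face, with $d(v_0)=d(v_1)=3$ and $d(w_0)=4$. The plan is to bound every transfer out of $u$ and show that the total is at most $c(u)=5$.

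\textbf{Step 1: the face $f_0$ and the $3$-neighbors.} Since $w_0$ is a $4$-vertex with the two $3$-neighbors $v_0,v_1$, Claim~\ref{FC-k_{k-2}-3_1} shows that $v_0$ and $v_1$ are $3_0$-vertices. Hence $f_0$ is of type~1, so $T_u(f_0)=\frac32$ by Rule~\ref{R3}, and $T_u(v_0)=T_u(v_1)=\frac13$ by Rule~\ref{R1}. This costs $\frac{13}{6}$, leaving a budget of $\frac{17}{6}$ for $f_1,\dots,f_4$ and $v_2,v_3,v_4$.

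By Claim~\ref{FC-(5_3/6_4,3,4,3)}, $u$ is not a $5_3$-vertex, so $v_2,v_3,v_4$ are $4^+$-vertices. Rule~\ref{R1} only pays $3$-vertices, so $u$ sends nothing to $v_2,v_3,v_4$. By Claim~\ref{FC-(5,3,4,3)-4_1}, $u$ has no $4_{\ge1}$-neighbor, so each of $v_2,v_3,v_4$ is a $4_0$-vertex or a $5^+$-vertex. Faces other than $4$-faces receive nothing from $u$.

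\textbf{Step 2: the faces $f_1$ and $f_4$.} By Observation~\ref{obs-5343 or type 2}, neither $f_1$ nor $f_4$ is of type $3^-$. For $f_1=uv_1w_1v_2u$ we have:
\begin{itemize}
\item $d(v_1)=3$;
\item $d(w_1)\ge 4$, because $v_1$ is a $3_0$-vertex;
\item $v_2$ is a $4_0$-vertex or a $5^+$-vertex;
\item $u$ has the further $3$-neighbor $v_0$.
\end{itemize}
So Observation~\ref{obs- almost 3/4} gives $T_u(f_1)\le\frac34$, and symmetrically $T_u(f_4)\le\frac34$.

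\textbf{Step 3: the faces $f_2$ and $f_3$.} Both faces satisfy the hypotheses of Observation~\ref{obs- almost 1/2}, since $v_2,v_3,v_4$ are $4_0$- or $5^+$-vertices. So each of $T_u(f_2),T_u(f_3)$ is at most $\frac12$, except that $f_i$ gets $\frac56$ when it is a $(5,4_0,4_2,4_0)$-face and $\frac23$ when it is a $(5,4_0,4_1,4_0)$-face.

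The crude bound $\frac56+\frac56$ is too large, so the key step is to show $T_u(f_2)+T_u(f_3)\le\frac43$. Suppose both $f_2$ and $f_3$ are exceptional. Then $v_3$ is a $4$-vertex, and $w_2,w_3$ are $4$-vertices with at least one $3$-neighbor each. If one of $w_2,w_3$ is a $4_2$-vertex, then $w_2v_3w_3$ is a $(4_2,4,4_{\ge1})$-path, contradicting Claim~\ref{FC-4_2-4/5_1-4_1}. Therefore, if one face receives $\frac56$, the other receives at most $\frac12$, for a total of at most $\frac43$. Otherwise each receives at most $\frac23$, again a total of at most $\frac43$.

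\textbf{Step 4: conclusion.} Combining the bounds,
\[c^*(u)\ge 5-\tfrac{13}{6}-2\cdot\tfrac34-\tfrac43=0.\]

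The bound has no slack, so the main care point is Step~3. In particular, one must check that in the exceptional cases $w_2,w_3$ really are distinct $4$-vertices on a path through $v_3$. This holds because consecutive faces $f_2$ and $f_3$ share only the edge $uv_3$.
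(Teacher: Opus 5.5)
Your proof is correct. It uses the same overall accounting as the paper: $\frac{13}{6}$ for $f_0,v_0,v_1$, then separate bounds on $f_1,f_4$ and on $f_2,f_3$. The difference is in how you split the estimates between these two pairs of faces.

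\textbf{The paper's split.} It bounds $T_u(f_1),T_u(f_4)\le\frac56$ only, via Claim~\ref{FC-(5_2,3,4,4)} to exclude $\mathcal F_1$. For $f_2,f_3$ it notes that $u$ is itself a $5_{\ge2}$-vertex, because of $v_0$ and $v_1$. Claim~\ref{FC-(5_2,4,4_1,4)} (no $(5_{\ge2},4,4_{\ge1},4)$-cycle) therefore excludes both exceptional patterns $(5,4_0,4_1,4_0)$ and $(5,4_0,4_2,4_0)$ outright. Observation~\ref{obs- almost 1/2} then gives $T_u(f_2),T_u(f_3)\le\frac12$ each. The total outflow is at most $\frac{29}{6}$, leaving slack $\frac16$.

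\textbf{Your split.} You did not use Claim~\ref{FC-(5_2,4,4_1,4)}, so you had to allow the exceptional faces. You controlled their sum via Claim~\ref{FC-4_2-4/5_1-4_1} applied to the path $w_2v_3w_3$; this is valid, including your distinctness check. It only yields $\frac43$ for the pair. You compensated with the sharper Observation~\ref{obs- almost 3/4} on $f_1,f_4$, which rests on Claims~\ref{FC-(5_2,3,4,4)} and~\ref{FC-(5_2,3_1,4,5_1)}. This lands exactly at $c^*(u)\ge 0$.

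Both arguments are sound. The paper's is shorter and leaves slack; yours is tight and uses a heavier observation. The one-line shortcut is to notice that $u$ is $5_{\ge2}$, which makes Claim~\ref{FC-(5_2,4,4_1,4)} apply directly.
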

	\begin{proof}
		Without loss of generality, assume that $f_0$ is a $(5,3,4,3)$-face; that is,
		$d(v_0)=d(v_1)=3$ and $d(w_0)=4$. 
		By Claim~\ref{FC-k_{k-2}-3_1}, neither $v_0$ nor $v_1$ has a $3$-neighbor.
		Hence $T_u(f_0,v_0,v_1)=\frac{3}{2}+\frac{1}{3}+\frac{1}{3}=\frac{13}{6}$.    
		By Observation~\ref{obs-5343 or type 2}, neither $f_1$ nor $f_4$ is of type $3^-$. 
		Moreover, Claim~\ref{FC-(5_2,3,4,4)} implies that $\Lambda_u(f_1)\neq(5,3,4,4)$, and hence $\Lambda_u(f_1)\notin\mathcal F_1$. 
		Thus $T_u(f_1)\le\frac{5}{6}$. By Claim~\ref{FC-(5_3/6_4,3,4,3)}, we have $d(v_i)\ge4$ for each $2\le i\le4$; 
		moreover, if $d(v_i)=4$, then $v_i$ has no $3$-neighbors by Claim~\ref{FC-(5,3,4,3)-4_1}. 
		By Claim~\ref{FC-(5_2,4,4_1,4)}, $\Lambda_u(f_2)\notin\{(5,4_0,4_1,4_0),(5,4_0,4_2,4_0)\}$;
		together with Observation~\ref{obs- almost 1/2}, this yields $T_u(f_2)\le\frac{1}{2}$. 
		By symmetry, $T_u(f_3)+T_u(f_4)\le \frac{1}{2}+\frac{5}{6}=\frac{4}{3}$. 
		Therefore, $c^*(u)\ge 5-\frac{13}{6}-2\times \frac{4}{3}=\frac{1}{6}>0$.  
	\end{proof}

	\begin{lemma}
		\label{lem-exist (5,3,3,4)-face}
		If $u$ is incident with a $(5,3,3,4)$-face, then $c^*(u) \geq 0$.
	\end{lemma}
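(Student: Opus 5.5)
Assume $f_0$ is a $(5,3,3,4)$-face, and by symmetry take $d(v_0)=3$, $w_0$ the second $3$-vertex, and $d(v_1)=4$; that is, $f_0=uv_0w_0v_1u$ with $d(v_0)=d(w_0)=3$ and $d(v_1)=4$. The plan is to bound the charge sent by $u$ to $v_0,\dots,v_4$ and $f_0,\dots,f_4$ so that the total is at most $c(u)=5$.

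\textbf{Charge on $f_0$, $f_1$, $f_4$, and the constraints.} Since $f_0$ is of type 1, Rule~\ref{R3} gives $T_u(f_0)=\frac32$. Since $v_0$ is a $3_1$-vertex (its $3$-neighbor is $w_0$), Rule~\ref{R1} gives $T_u(v_0)=\frac12$. By Observation~\ref{obs-adjacent type 2-}, neither $f_1$ nor $f_4$ is of type $2^-$. The key structural constraints are:
\begin{itemize}
\item By Claim~\ref{FC-(5_2/6_3,3,3,4)}, $u$ is a $5_1$-vertex, so $v_1,\dots,v_4$ all have degree at least $4$.
\item By Claim~\ref{FC-(5,3,3,4)-4_1}, every $4$-neighbor of $u$ other than $v_1$ is a $4_0$-vertex.
\item By Claim~\ref{FC-adjcent-(5,3,3,4)&(5,3,4,4)}, $f_4$ (which contains the edge $uv_0$) is not a $(5,3,4,4)$-face.
\item By Claim~\ref{FC-334~43}, $v_1$ has no $3$-neighbor other than $w_0$, so $v_1$ is a $4_1$-vertex. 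The same claim together with Claim~\ref{FC-(4-,4-,4-,3-)} controls $w_1$.
\end{itemize}
Since $u$ has no other $3$-neighbors, it sends nothing to $v_1,\dots,v_4$.

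\textbf{Charge on $f_2$ and $f_3$.} Both faces have their two $u$-neighbors of degree at least $4$, and any $4$-vertex among them is $4_0$. Observation~\ref{obs- almost 1/2} then gives $T_u(f_j)\le\frac12$, except when $f_j$ is a $(5,4_0,4_2,4_0)$-face, where the bound rises to $\frac56$, or a $(5,4_0,4_1,4_0)$-face, where it rises to $\frac23$. The main constraint to check is whether these exceptions can occur. Claim~\ref{FC-4_2-4/5_1-4_1} forbids a $(4_2,5_{\ge1},4_{\ge1})$-path, and $u$ is a $5_1$-vertex. So if $v_1$ is $4_1$ and adjacent to $u$, then no $4_2$-vertex may be adjacent to $u$. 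This does not directly constrain $w_2$, so I will accept the crude bound $T_u(f_2),T_u(f_3)\le\frac56$.

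\textbf{Case analysis on $f_1$ and $f_4$, and the obstacle.} The face $f_1$ contains $u$, $v_1$ ($4_1$), $w_1$ and $v_2$ (degree at least $4$). Since $f_1$ is not of type $2^-$, I expect $T_u(f_1)\le\frac56$, via families such as $(5,4_1,4_0,4_1)$ or $(5,4_1,5,\ast)$; the value $\frac76$ arises only if $f_1$ is type 3. The face $f_4$ contains $v_0$ ($3_1$), $w_4$ and $v_4$ (degree at least $4$). Since $v_0$ is $3_1$ and not $(5,3,4,4)$, the listed families give at most $\frac56$. The naive total is
\[
\tfrac32+\tfrac12+\tfrac76+\tfrac56+2\cdot\tfrac56,
\]
which is too large, so the main work is a case split on $f_1$. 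I will split according to whether $f_1$ is type 3 (then $w_1$ and $v_2$ are constrained, forcing $f_2$ down to $\frac12$) or not. In each case I will aim to show that the sum over $f_1,\dots,f_4$ is at most $3$, which gives $c^*(u)\ge 5-\frac32-\frac12-3=0$. I expect the hardest step to be excluding the simultaneous large values on $f_2$ and $f_3$. For this I would first try Claim~\ref{FC-(4,4,4,4_1/5_2)} and Claim~\ref{FC-4_14_14_1}, applied to $w_2$ and $w_3$ via the shared vertex $v_3$.
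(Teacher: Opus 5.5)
Your set-up is sound: $T_u(f_0)=\frac32$, $T_u(v_0)=\frac12$, nothing is sent to $v_1,\dots,v_4$, Observation~\ref{obs- almost 1/2} applies to $f_2,f_3$, and $T_u(f_4)\le\frac56$ are all correct. But the proposal stops before the actual argument. With the bounds you state, $T_u(f_1)\le\frac56$ and $T_u(f_2),T_u(f_3)\le\frac56$, the total is $\frac32+\frac12+4\cdot\frac56=\frac{16}{3}>5$, and the case analysis you announce does not close this gap.

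The split on whether $f_1$ is of type~3 is vacuous. If $d(w_1)=3$, then $v_0w_0v_1w_1$ is a $(3,3,4,3)$-path, contrary to Claim~\ref{FC-k_{k-2}-3_1}; and $d(v_2)\ge 4$. So $f_1$ contains no $3$-vertex and is never of type $3^-$. The claims you propose for ruling out $T_u(f_2)=T_u(f_3)=\frac56$ also do not apply. That situation forces $v_3$ to be a $4_0$-vertex, so Claim~\ref{FC-4_14_14_1} says nothing about $w_2v_3w_3$. And there is no $4$-cycle of $4$-vertices for Claim~\ref{FC-(4,4,4,4_1/5_2)} to act on.

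Two bounds are missing.

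For $f_2,f_3$: if $T_u(f_2)=T_u(f_3)=\frac56$, both are $(5,4_0,4_2,4_0)$-faces, so $w_2v_3w_3$ is a $(4_2,4,4_2)$-path. This is excluded by the first half of Claim~\ref{FC-4_2-4/5_1-4_1}, the half you set aside. Hence $T_u(f_2)+T_u(f_3)\le\frac56+\frac23=\frac32$.

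For $f_1$: $v_1$ is a $4_1$-vertex. This comes from Claim~\ref{FC-k_{k-2}-3_1}, not Claim~\ref{FC-334~43}: a second $3$-neighbor $x$ of $v_1$ would give the $(3,3,4,3)$-path $v_0w_0v_1x$. Next, $w_1$ is a $4_0$- or $5^+$-vertex by Claims~\ref{FC-k_{k-2}-3_1} and~\ref{FC-334~43}, and $v_2$ is a $4_0$- or $5^+$-vertex by Claim~\ref{FC-(5,3,3,4)-4_1}. Running $\Lambda_u(f_1)=(5,4_1,d(w_1),d(v_2))$ through Rule~\ref{R5} gives $T_u(f_1)\le\frac23$, with equality for example at $(5,4_1,4,5)\in\F_{2/3}$. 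Then $\frac32+\frac12+\frac23+\frac32+\frac56=5$, as needed.

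This is not quite the paper's bookkeeping. The paper uses only $T_u(f_4)\le 1$ and claims $T_u(f_1)\le\frac12$ via Observation~\ref{obs- almost 1/2}. That observation requires every $4$-vertex among $v_i,v_{i+1}$ to have no $3$-neighbors, which fails for $v_1$ since it is adjacent to $w_0$. None of the reducible configurations excludes $\Lambda_u(f_1)=(5,4_1,4,5)$, which receives $\frac23$. Your sharper bound on $f_4$ is exactly what compensates, since $\frac23+\frac56=\frac12+1$. So keep it, and do not try to reproduce $\frac12$ for $f_1$.
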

	\begin{proof}
		Without loss of generality, assume that $f_0$ is a $(5,3,3,4)$-face, where $d(v_0)=d(w_0)=3$ and $d(v_1)=4$. Then $T_u(f_0,v_0,v_1) = \frac{3}{2} + \frac{1}{2} = 2$. 
		By Observation~\ref{obs-adjacent type 2-} and Claim~\ref{FC-adjcent-(5,3,3,4)&(5,3,4,4)}, $f_4$ is not a face of type $3^-$. Hence $T_u(f_4) \leq  1$.      
		By Claim~\ref{FC-(5_2/6_3,3,3,4)}, for each $2\leq i\leq 4$, we have $d(v_i)\geq 4$; moreover, if $d(v_i)=4$, then $v_i$ has no $3$-neighbors by Claim~\ref{FC-(5,3,3,4)-4_1}. 
		
		First we show $T_u(f_1) \leq  \frac{1}{2}$. We may assume $f_1$ is a 4-face; otherwise $T_u(f_1)=0$.
		By Claim \ref{FC-k_{k-2}-3_1}, $d(w_1)\geq 4$. Moreover, if $d(w_1)=4$, then $w_1$ also has no $3$-neighbors by Claim \ref{FC-334~43}. Therefore $T_u(f_1) \leq  \frac{1}{2}$ by Observation \ref{obs- almost 1/2}.
		Next we show that $T_u(f_2,f_3) \leq  \frac{3}{2}$. Suppose, for a contradiction, that by Observation \ref{obs- almost 1/2}, we have $T_u(f_2) = T_u(f_3) =\frac{5}{6}$. It then follows that $d(v_2)=d(v_3)=d(v_4)=4$, both $w_2$ and $w_3$ are $4_2$-vertices, contradicting Claim \ref{FC-4_2-4/5_1-4_1}. 
		
		Therefore,  $c^*(u) \geq 5- 2-1-\frac{1}{2}-\frac{3}{2} =0$.  
	\end{proof}   
	
	\begin{lemma}
		\label{lem-exist (5,4,3,4)-face}
		If $u$ is incident with a $(5,4,3,4)$-face, then $c^*(u) \geq 0$.
	\end{lemma}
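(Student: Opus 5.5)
We will follow the pattern of Lemmas~\ref{lem-exist (5,3,4,3)-face} and~\ref{lem-exist (5,3,3,4)-face}. Assume $f_0$ is a $(5,4,3,4)$-face, so $d(v_0)=d(v_1)=4$ and $d(w_0)=3$. By the previous two lemmas we may assume $u$ is incident with no $(5,3,4,3)$- or $(5,3,3,4)$-face, so $r_0=r_1=0$. The face $f_0$ is then of type $2$ if $w_0$ is a $3_1$-vertex, and otherwise it is of type $3$ or not of type $3^-$. In every case $T_u(f_0)\le \frac{4}{3}$, since $\mathcal F_1$ only contains $(5_0,4,3,5)$ and that requires $d(v_1)=5$.

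The key structural facts come from the claims about $(5,4,3,4)$-cycles. By Claim~\ref{FC-(5_1,4,3,4)}, $u$ has no $3$-neighbors at all, so $u$ is a $5_0$-vertex and sends nothing through Rule~\ref{R1}. By Claim~\ref{FC-(5,4,3,4)-4_2}, $u$ has no $4_2$-neighbor. Thus every neighbor of $u$ is a $4_0$-, $4_1$- or $5^+$-vertex. This immediately excludes most entries of $\mathcal F_1,\mathcal F_{5/6},\mathcal F_{3/4}$ for the faces $f_1,\dots,f_4$: all entries with a $3$ in position $2$ or $4$, and all entries with $4_2$ in position $2$ or $4$.

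The budget is $5$. Each remaining face carries weight at most $\frac{5}{6}$ through $(5,4_0,4_1,4_1)$, $(5,4_1,4_0,4_1)$, $(5,4_0,4_2,4_0)$, $(5,4_0,4_0,4_2)$ or $(5,4_2,4_0,5)$. The last one is impossible, and $(5,4,3,4)$-type faces give at most $\frac{4}{3}$. Since $T_u(f_0)\le\frac{4}{3}$, it suffices to show $T_u(f_1,\dots,f_4)\le \frac{11}{3}$, which holds as soon as at least one of these four faces receives at most $\frac{2}{3}$. The main obstacle is to exclude configurations in which every face is large. The case to handle first is when several of $f_1,\dots,f_4$ are also $(5,4,3,4)$-faces of type $2$ or $3$. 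By Observation~\ref{obs-5343 or type 2}, faces of type $2$ cannot be adjacent to faces of type $3^-$, and Observation~\ref{obs-adjacent type 2-} bounds $r_2\le 2$. If $r_2=2$, say $f_0$ and $f_2$, then $f_1,f_3,f_4$ are not of type $3^-$. In that case their sum is at most $\frac{5}{6}\cdot 3$, giving $5-\frac{8}{3}-\frac{5}{2}<0$, which is not enough.

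To close this gap we refine the bound on the $\frac{5}{6}$ faces. A $\frac{5}{6}$ face requires a $4_1$- or $4_0$-neighbor pattern around $w_i$. Combined with Claims~\ref{FC-4_14_14_1}, \ref{FC-4_2-4/5_1-4_1} and~\ref{FC-(4,4,4,4_1/5_2)}, this should show that two consecutive faces cannot both take $\frac{5}{6}$. The required properties at the shared vertex $v_{i+1}$ ($4_1$ versus $4_0$ with $w_i$ being $4_2$ or $4_1$) would produce a $(4_1,4_1,4_1)$- or $(4_2,4,4_1)$-path. Also, $f_1$ adjacent to a type $2$ face $f_0$ forces $v_1$ to be a $4_1$-vertex whose $3$-neighbor is $w_0$, and then $f_1$ can only be $(5,4_1,4_0,4_1)$ or $(5,4_1,\cdot,\cdot)$. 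Using Claim~\ref{FC-3434~43} to forbid $3$-vertices near $w_1$, we aim to push $T_u(f_1)$ down to $\frac{2}{3}$ or less. We then finish by a short case analysis on $r_2+r_3\in\{1,2\}$ and on whether the faces between the $(5,4,3,4)$-faces exist as $4$-faces. In each case the total sent is at most $5$.
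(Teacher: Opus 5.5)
\textbf{There is a genuine gap: the proposal stops exactly where the proof has to be done.}

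In the case of two type-$2$ faces $f_0,f_2$ you correctly find the deficit $5-\frac{8}{3}-\frac{5}{2}<0$. After that you only announce that two consecutive faces ``should'' not both receive $\frac{5}{6}$, and that you ``aim to'' push $T_u(f_1)$ down to $\frac{2}{3}$. The first assertion is not justified by the claims you cite. Even if it were true, it would not suffice: it would bound $T_u(f_1)+T_u(f_3)+T_u(f_4)$ only by $\frac56+\frac56+\frac34$, leaving $\frac{61}{12}>5$.

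What is needed, and what the paper proves, is $T_u(f_1)\le\frac{2}{3}$ for the face sandwiched between the two type-$2$ faces.
\begin{itemize}
\item Since $w_0,w_2$ are $3_1$-vertices, Claim~\ref{FC-k_{k-2}-3_1} makes $v_1,v_2$ into $4_1$-vertices whose only $3$-neighbours are $w_0,w_2$. Hence $d(w_1)\neq 3$.
\item If $d(w_1)=4$, let $y$ be the $3$-neighbour of $w_0$. Then $y\,w_0\,v_1\,w_1\,v_2\,w_2$ is a $(3,3,4,4,4,3)$-path, contradicting Claim~\ref{FC-334~43}. (Claim~\ref{FC-3434~43}, which you cite, is not the relevant one.)
\item So $\Lambda_u(f_1)\in\{(5,4_1,5,4_1),(5,4_1,6^+,4_1)\}$, and $T_u(f_1)\le\frac{2}{3}$.
\item The faces $f_3,f_4$ contain no $3$-vertex, because $v_3,v_0$ already have their unique $3$-neighbour on $f_2,f_0$. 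So they are not of type $3^-$ and miss $\mathcal{F}_1$, giving $T_u(f_3),T_u(f_4)\le\frac{5}{6}$.
\item The total is then $2\cdot\frac{4}{3}+\frac{2}{3}+2\cdot\frac{5}{6}=5$.
\end{itemize}
With exactly one type-$2$ face $f_0$, the same observation gives $T_u(f_1),T_u(f_4)\le\frac{5}{6}$. By Claim~\ref{FC-(5,4,3,4)-4_2}, neither $f_2$ nor $f_3$ is of type $3$, so $T_u(f_2),T_u(f_3)\le 1$, and the total is $\frac{4}{3}+\frac{5}{3}+2=5$.

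Strictly, the path argument needs $y\neq w_2$. The paper passes over the case $w_0w_2\in E(G)$. That case can be disposed of by applying Lemma~\ref{key lemma} to $G[u,v_0,v_1,v_2,v_3,w_0,w_1,w_2]$, which is nice with an independent $F_H=\{v_0,v_3,w_1\}$ and admits a legal reduction scheme.

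\textbf{Two further errors.}

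First, your bound ``each remaining face carries at most $\frac{5}{6}$'' is false. Since $u$ is a $5_0$-vertex, every $(5,4,3,5)$-face is a $(5_0,4,3,5)$-face in $\mathcal{F}_1$, and $(5,4_1,3_0,4_1)\in\mathcal{F}_1$ as well. The correct generic bounds are:
\begin{itemize}
\item $T_u(f)\le 1$ for faces not of type $2$ or $3$;
\item $T_u(f)\le\frac{5}{6}$ only for faces containing no $3$-vertex.
\end{itemize}
Consequently, your criterion ``one face at most $\frac{2}{3}$ suffices'' is valid only when the other three faces take at most $1$.

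Second, you read Claim~\ref{FC-(5,4,3,4)-4_2} as forbidding every $4_2$-neighbour of $u$. Its proof only excludes a $4_2$-neighbour off the given $(5,4,3,4)$-cycle: for a $4_2$-neighbour on the cycle, the subgraph $H$ of its Case~2 is not nice. This restricted reading is how the paper uses the claim. Hence a $4$-vertex of $f_0$ may be a $4_2$-vertex, making $f_0$ a type-$3$ face $(5,4_2,3_0,4_1)$ with $T_u(f_0)=\frac{7}{6}$. This can happen together with $f_4$ being a $(5,4_2,3_0,4)$-face sharing the $4_2$-vertex $v_0$. The paper treats this as a separate case: using $T_u(f_1),T_u(f_3)\le\frac{5}{6}$ and $T_u(f_2)\le 1$, it gets $\frac{7}{3}+\frac{5}{3}+1=5$. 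Your proposal never considers type-$3$ faces at $u$.
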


	\begin{proof} 
		By Claim~\ref{FC-(5_1,4,3,4)}, we know that $d(v_i) \geq 4$ for each $i \in \{0,1,2,3,4\}$.
		
		If there are two $(5,4,3_1,4)$-faces incident with $u$, then they are not adjacent
		by Observation~\ref{obs-adjacent type 2-}.
		Assume these two faces are $f_0$ and $f_2$.
		We first show that $T_u(f_1) \leq \frac{2}{3}$. We may assume $f_1$ is a $4$-face; otherwise, $T_u(f_1)=0$. By Claim \ref{FC-k_{k-2}-3_1}, $d(w_1)\geq 4$, and moreover, by Claim \ref{FC-334~43}, we actually have $d(w_1)\geq 5$. If  $d(w_1)\geq 6$, then $\Lambda_u(f_1) = (5,4_1,6^+,4_1)\in \F_{1/2}$. If  $d(w_1)=5$, $\Lambda_u(f_1) = (5,4_1,5,4_1)\in \F_{2/3}$. In both cases, by Rule~\ref{R5}, we have  
		$T_u(f_1) \leq \frac{2}{3}$.
		By Claim \ref{FC-k_{k-2}-3_1}, $v_3$ has no $3$-neighbors distinct from $w_2$. Then $\Lambda_u(f_3) \notin \F_1$ and hence $T_u(f_3) \leq \frac{5}{6}$.          
		By symmetry, $T_u(f_4) \leq \frac{5}{6}$. Consequently,
		$c^*(u) \geq 5 - \frac{4}{3} \times 2 - \frac{2}{3} - \frac{5}{6} \times 2 = 0$.
		
		If exactly one $(5,4,3_1,4)$-face is incident with $u$, say $f_0$, 
		then, as in the previous paragraph, 
		$T_u(f_i)\le \frac{5}{6}$ for $i\in\{1,4\}$. 
		By Claim~\ref{FC-(5,4,3,4)-4_2}, neither $f_2$ nor $f_3$ is a 
		$(5,4_2,3_0,4)$-face. 
		Hence $T_u(f_j)\le 1$ for $j\in\{2,3\}$, and therefore
		$c^*(u)\ge 5-\frac{4}{3}-2\cdot\frac{5}{6}-2=0$.
		
		Hence we may assume that $u$ is not incident with a $(5,4,3_1,4)$-face. 
		If $u$ is not incident with a $(5,4_2,3_0,4)$-face, then 
		$T_u(f_i)\le 1$ for each $i\in\{0,1,2,3,4\}$, and hence 
		$c^*(u)\ge 5-5=0$. 
		Thus, without loss of generality, assume that $f_0$ is a 
		$(5,4_2,3_0,4)$-face such that $v_0$ has a $3$-neighbor distinct from $w_0$ 
		and $v_1$ has no other $3$-neighbors. 
		It follows that $\Lambda_u(f_1) \notin \F_1$ and hence $T_u(f_1) \leq \frac{5}{6}$.
		On the other hand, by Claim~\ref{FC-(5,4,3,4)-4_2}, neither $f_2$ nor $f_3$
		is a $(5,4_2,3_0,4)$-face.
		Thus $T_u(f_j) \leq 1$ for $j \in \{2,3\}$. Now consider $f_4$.
		If $f_4$ is not a $(5,4_2,3_0,4)$-face, then $T_u(f_4) \leq 1$ by Rule~\ref{R5}, and hence $c^*(u) \geq 5 - \frac{7}{6} - \frac{5}{6} - 1 \times 3 = 0$. If  $f_4$ is  a $(5,4_2,3_0,4)$-face, then $T_u(f_4) = \frac{7}{6}$, and symmetrically $T_u(f_3) \leq \frac{5}{6}$. Hence,
		$c^*(u) \geq 5 - \frac{7}{6} \times 2 - \frac{5}{6} \times 2 - 1 = 0$.
	\end{proof} 
	
	\begin{lemma}
		\label{lem-exist (5,3,4,4)-face}
		If $u$ is incident with a $(5,3,4,4)$-face, then $c^*(u) \geq 0$.
	\end{lemma}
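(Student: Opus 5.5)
Set $f_0=uv_0w_0v_1u$ to be the $(5,3,4,4)$-face, with $d(v_0)=3$ and $d(w_0)=d(v_1)=4$. By the previous lemmas we may assume that $u$ is incident with no $(5,3,4,3)$-, $(5,3,3,4)$- or $(5,4,3,4)$-face. The face $f_0$ is then of type $3^-$, or else $T_u(f_0)\le 1$.

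By Claim~\ref{FC-(5_2,3,4,4)}, $v_0$ is the only $3$-neighbor of $u$. Hence $T_u(v_0)\le \frac12$, and $u$ sends no charge to other neighbors. It remains to show
\[
\sum_{i=0}^{4} T_u(f_i)\le \frac{9}{2},
\]
or $\le 5-\frac13$ when $v_0$ is a $3_0$-vertex.

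Since $v_0$ is the unique $3$-neighbor, every face $f_i$ with $i\in\{1,2,3\}$ has both $v_i$ and $v_{i+1}$ of degree at least $4$. Such faces are not type $1$ or type $2$, and they are type $3$ only in the forms $(5,4_1,3_0,4_1)$ or $(5,4_2,3_0,4_1)$ with $w_i$ of degree $3$. Otherwise Rule~\ref{R5} bounds $T_u(f_i)$ by the families $\mathcal F_t$.

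The plan is to bound $T_u(f_0)+T_u(f_4)$, the two faces at $v_0$, by roughly $\frac{7}{6}+1$. The three middle faces should then average at most about $\frac{5}{6}$. I will do this by case analysis on how many of $v_1,\dots,v_4$ are $4_{\ge1}$-vertices and on their $3$-neighbors, using Claims~\ref{FC-4_14_14_1}, \ref{FC-4_2-4/5_1-4_1}, \ref{FC-5_3/4_24_1} and \ref{FC-5_24_2} to forbid heavy configurations on consecutive faces.

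\textbf{Key steps.}
\begin{enumerate}
\item[(1)] Compute $T_u(f_0)$. It is $\frac76$ when $f_0$ is type $3$, i.e.\ $(5,3_0,4_1,4_1)$, $(5,3_0,4_2,4_0)$ or $(5,3_1,4_1,4_0)$. Otherwise it is $1$ for $(5,3_0,4_1,4_0)$, and at most $\frac56$ in the remaining cases.
\item[(2)] For $f_4$, where $d(v_4)\ge4$, use Observation~\ref{obs- almost 3/4} when $v_4$ has no $3$-neighbor, giving $T_u(f_4)\le\frac34$. Otherwise apply Claims~\ref{FC-5_3/4_24_1}, \ref{FC-(5_2,3,4,4)}, \ref{FC-(5_2,3_1,4,5_1)} and \ref{FC-3_15_24_1} to cap it at $1$.
\item[(3)] For the middle faces, apply Observation~\ref{obs- almost 1/2} when the $v_i$ are $4_0$ or $5^+$ vertices. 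When some $v_i$ is a $4_{\ge1}$-vertex, observe that its $3$-neighbor must lie off $u$. Then Claims~\ref{FC-4_14_14_1} and \ref{FC-4_2-4/5_1-4_1}, applied to paths $v_i u v_j$ and $v_i w_i v_{i+1}$, prevent two consecutive middle faces from both receiving $1$ or $\frac76$.
\end{enumerate}

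\textbf{Main obstacle.} The main obstacle is the worst case of step (1). Here $f_0$ takes $\frac76$, $v_1$ is a $4_1$-vertex, and $v_0$ is a $3_1$-vertex taking $\frac12$. This uses $\frac53$ at $v_0$ and $f_0$, leaving $\frac{10}{3}$ for $f_1,\dots,f_4$.

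I will first try to show directly that $T_u(f_1)\le\frac23$ in this case. The $3$-neighbor of $v_1$, together with $w_0$, creates $(4_1,4_1,\cdot)$ paths; Claims~\ref{FC-4_14_14_1} and \ref{FC-334~43} then force $w_1$ to be $5^+$ or $4_0$, placing $\Lambda_u(f_1)$ in $\mathcal F_{2/3}$ or lower.

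If that bound fails, I will pair $f_1$ with $f_2$ via Claim~\ref{FC-4_2-4/5_1-4_1} to get $T_u(f_1,f_2)\le\frac32$. The remaining budget is then met with $T_u(f_3)\le1$ and $T_u(f_4)\le\frac34$.
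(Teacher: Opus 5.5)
Your reduction to the case where $v_0$ is the unique $3$-neighbor of $u$, with no face at $u$ of type $2^-$ and no $(5,4,3,4)$-face, is fine, and so is step (1). There are two genuine gaps: step (2), and the analysis of the main obstacle.

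\textbf{Step (2).} You cannot invoke Observation~\ref{obs- almost 3/4} here. Its hypothesis is that $u$ has a $3$-neighbor other than $v_i$, and you have just shown that $v_0$ is the only one. Without that hypothesis the bound is false:
\begin{itemize}
\item With $v_4$ a $4_0$-vertex, $f_4$ can be a $(5,3_0,4_1,4_0)$-face, which lies in $\mathcal F_1$ and receives $1$.
\item With $v_4$ a $4_0$-vertex, $f_4$ can also be a type~3 $(5,3_0,4_2,4_0)$-face, receiving $\frac76$.
\item When $v_4$ has a $3$-neighbor, the ``cap at $1$'' also fails: $f_4$ can be a type~3 $(5,3_0,4_1,4_1)$-face, and none of the claims you cite excludes it.
\end{itemize}
What is true is $T_u(v_0,f_4)\le\frac32$. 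If $v_0$ is a $3_1$-vertex and $f_4$ is a $4$-face, the $3$-neighbor of $v_0$ must be $w_4$ (since $d(w_0)=4$). So $f_4$ is a $(5,3,3,5^+)$-face and $T_u(f_4)\le 1$. If $v_0$ is a $3_0$-vertex, just use $T_u(f_4)\le\frac76$.

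For a $3_0$-vertex $v_0$, your target $T_u(f_0)+T_u(f_4)\le\frac76+1$ therefore requires ruling out two type~3 faces at $v_0$. This is where the paper uses the forbidden $(3,4,4,3,4,4,3)$-path of Claim~\ref{FC-3443443}, and your plan never invokes that claim. With that target in hand, the $3_0$ case does close, since $\frac13+\frac{13}{6}+3\cdot\frac56=5$.

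\textbf{The main obstacle.} The configuration you single out does not occur. If $v_0$ is a $3_1$-vertex and $f_0$ is of type~3, then $f_0$ is a $(5,3_1,4_1,4_0)$-face, so $v_1$ is a $4_0$-vertex, not a $4_1$-vertex.

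The real tight case is $f_0=(5,3_1,4_1,4_0)$ together with $f_4=(5,3,3,5^+)$. Then $T_u(v_0,f_0,f_4)=\frac83$, so $f_1,f_2,f_3$ must share at most $\frac73$, while the per-face bound $\frac56$ only gives $\frac52$. Neither of your fallbacks works here:
\begin{itemize}
\item $f_1$ can be a $(5,4_0,4_0,4_2)$-face and then $f_2$ a $(5_{\ge1},4_2,3_0,5)$-face, both in $\mathcal F_{5/6}$. So both $T_u(f_1)\le\frac23$ and $T_u(f_1,f_2)\le\frac32$ fail.
\item $T_u(f_4)=1$, not $\le\frac34$.
\end{itemize}

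What is missing is a global look at all three middle faces at once. Unless one of them receives at most $\frac23$, each lies in a short explicit sublist of $\mathcal F_{5/6}\cup\mathcal F_{3/4}$. Then:
\begin{itemize}
\item Claim~\ref{FC-334~43} makes $w_1$ neither a $3$-vertex nor a $4_{\ge1}$-vertex. This forces $f_1=(5,4_0,4_0,4_2)$, so $v_2$ is a $4_2$-vertex.
\item $d(v_4)\ge 5$ forces $v_3$ to be a $4_{\ge1}$-vertex.
\item Hence $v_2uv_3$ is a $(4_2,5_{\ge1},4_{\ge1})$-path, contradicting Claim~\ref{FC-4_2-4/5_1-4_1}.
\end{itemize}
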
 
	\begin{proof}
		Without loss of generality, we assume that $f_0$ is a $(5,3,4,4)$-face, where $d(v_0)=3$ and $d(w_0)=d(v_1)=4$. For each $i \in \{2,3,4\}$, $d(v_i) \geq 4$ by Claim~\ref{FC-(5_2,3,4,4)}. By Lemmas~\ref{lem-exist (5,3,4,3)-face},~\ref{lem-exist (5,3,3,4)-face} and~\ref{lem-exist (5,4,3,4)-face}, we may assume that there is no type $2^-$ faces incident with $u$. Thus we observe that $T_u(v_0, f_4) \leq \frac{3}{2}$. Indeed, if $v_0$ is a $3_1$-vertex, then either $\Lambda_u(f_4)=(5,3,3,5^+)$ or $d(f_4)\geq 5$, hence $T_u(v_0, f_4) \leq  \frac{1}{2} + 1 =\frac{3}{2}$. If $v_0$ is a $3_0$-vertex, as $T_u(f_4) \leq \frac{7}{6}$, $T_u(v_0, f_4) \leq  \frac{1}{3} + \frac{7}{6} =\frac{3}{2}$ too.
		
		By Lemma~\ref{lem-exist (5,4,3,4)-face}, we may assume that $f_i$ is not a $(5,4,3,4)$-face for $i \in [3]$. Since $d(v_j) \geq 4$ for each $j \in [4]$, we have that $f_i$ is not of type $3^-$  for $i \in [3]$; moreover, $\Lambda_u(f_i) \notin \F_1$ for each $i \in [3]$ as $u$ is a $5_1$-vertex. By Claim~\ref{FC-4_2-4/5_1-4_1}, $\Lambda_u(f_i) \neq (5,4_{\geq 1},5,4_2)$. Thus we have for each $i \in [3]$, $$\Lambda_u(f_i)\in \{(5,4_0,4_1,4_1), (5,4_1,4_0,4_1),(5,4,3,5),(5,4_0,4_2,4_0),(5,4_2,4_0,5), (5,4_0,4_0,4_2)\}.$$ Indeed, for otherwise, there exists some $\ell \in [3]$ such that $\Lambda_u(f_\ell) \notin \F_1 \cup \F_{5/6} \cup \F_{3/4}$, hence $T_u(f_\ell) \leq \frac{2}{3}$ by Rule~\ref{R5}. This would imply that $c^*(u) \geq 5- \frac{7}{6}-\frac{3}{2} - \frac{5}{6} \times 2 - \frac{2}{3} = 0$. 
		
		We may assume that $T_u(v_0, f_4) > \frac{4}{3}$, for otherwise $c^*(u) \geq 5- \frac{7}{6}-\frac{4}{3} - \frac{5}{6} \times 3 = 0$. Thus $\Lambda_u(f_4) \in \{(5,3,3,5^+), (5,3,4,4_1)\}$. 
		
		If $v_0$ has a $3$-neighbor, then $\Lambda_u(f_4) = (5,3,3,5^+)$, where $d(v_4) \geq 5$. Moreover, by Claim~\ref{FC-334~43}, $v_1$ must be a $4_0$-vertex, $d(w_1)\geq 4$ and $w_1$ is not a $4_{\geq 1}$-vertex, thus $\Lambda_u(f_1) = (5,4_0,4_0,4_2)$, that is $v_2$ is a $4_2$-vertex. Also we have that $\Lambda_u(f_3) \in \{(5,4,3,5), (5,4_2,4_0,5)\}$, which implies that $v_3$ is a $4_{\geq 1}$-vertex. But then $v_2uv_3$ is a $(4_2, 4, 4_{\geq 1})$-path, which contradicts Claim~\ref{FC-4_2-4/5_1-4_1}.
		
		If $v_0$ has no $3$-neighbors, then $\Lambda_u(f_4) = (5,3,4,4_1)$. If $T_u(f_0) \leq 1$, then $c^*(u) \geq 5- 1-\frac{3}{2} - \frac{5}{6} \times 3 = 0$. Thus we have that $T_u(f_0)= \frac{ 7}{6}$, which implies that $v_1$ is a $4_1$-vertex. But then $v_4w_4v_0w_0v_1$ is contained in a $(3,4,4,3,4,4,3)$-path, contradicting Claim~\ref{FC-3443443}.
	\end{proof}
	
	\begin{lemma}
		\label{lem-5-vertex}
		If $d(u)=5$, then $c^*(u) \geq 0$.
	\end{lemma}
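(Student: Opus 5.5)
By Lemmas~\ref{lem-exist (5,3,4,3)-face}, \ref{lem-exist (5,3,3,4)-face}, \ref{lem-exist (5,4,3,4)-face} and~\ref{lem-exist (5,3,4,4)-face}, we may assume that $u$ is incident with no $(5,3,4,3)$-, $(5,3,3,4)$-, $(5,4,3,4)$- or $(5,3,4,4)$-face. Consequently $u$ is incident with no face of type $3^-$, because every type $1$, $2$ or $3$ face at a $5$-vertex has one of these four patterns. So every face incident with $u$ is handled by Rule~\ref{R5}, and $T_u(f_i)\le 1$ for each $i$. Since $c(u)=5$, the whole problem is to absorb what $u$ sends to its $3$-neighbors through Rule~\ref{R1}. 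I will split on $\alpha+\beta$. By Claim~\ref{FC-star}, $\alpha+\beta\le 3$, and if $\alpha+\beta=3$ then $\alpha=0$ by Claim~\ref{FC-k_{k-2}-3_1}.

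\textbf{Case $\alpha+\beta=0$.} Then $c^*(u)\ge 5-5\cdot 1=0$ immediately.

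\textbf{Case $\alpha+\beta\ge 1$.} Here $u$ is a $5_{\ge1}$-vertex, so every family member in $\F_1$ that requires $u$ to be $5_0$, namely $(5_0,4,3,5)$, is excluded. I then bound each $T_u(f_i)$ according to the degrees of $v_i,v_{i+1}$.
\begin{itemize}
\item If both $v_i$ and $v_{i+1}$ are $3$-vertices, the face is $(5,3,3,5^+)$ or $(5,3,5^+,3)$, or it is not a $4$-face. It can give $1$, but it then uses up two $3$-neighbors.
\item If exactly one of $v_i,v_{i+1}$ is a $3$-vertex, Observation~\ref{obs- almost 3/4} gives at most $\tfrac34$ whenever $u$ has another $3$-neighbor. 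The remaining $\F_1$ or $\F_{5/6}$ possibilities, $(5,3,5^+,4_2)$, $(5,3,5^+,4_1)$, $(5,3_1,4_1,5_{\ge1})$ and $(5,3_0,4_2,5_{\ge1})$, are restricted by Claims~\ref{FC-5_3/4_24_1}, \ref{FC-5_24_2}, \ref{FC-3_15_24_1} and~\ref{FC-(5_2,3_1,4,5_1)}.
\item If neither is a $3$-vertex, I use Observation~\ref{obs- almost 1/2} when the $4$-neighbors are $4_0$; otherwise I read the value off $\F_{5/6}$ and $\F_{2/3}$.
\end{itemize}

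For $\alpha+\beta=1$, $u$ sends at most $\tfrac12$ to its $3$-neighbor $v_0$, so I need $\sum_i T_u(f_i)\le \tfrac92$. The two faces at $v_0$ should each be at most $\tfrac56$ unless the other endpoint is a $5^+$-vertex. The three faces avoiding $v_0$ contain only $4^+$-neighbors, and $(5_{\ge1},4,3,5)$-type patterns are absent because we excluded $(5,4,3,4)$-faces. I expect the total to come to at most $1+\tfrac56+3\cdot 1$ or better. Where that bound is not good enough, I exploit the fact that a $\tfrac56$-face in $\{(5,4_0,4_1,4_1),(5,4_1,4_0,4_1),\dots\}$ forces $4_{\ge1}$-neighbors of $u$, and Claims~\ref{FC-4_14_14_1} and~\ref{FC-4_2-4/5_1-4_1} forbid two adjacent such faces.

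For $\alpha+\beta=2$, Claim~\ref{FC-5_3/4_24_1} and Claim~\ref{FC-5_24_2} say $u$ has no $4_2$-neighbors, and a $4_{\ge1}$-neighbor is also ruled out when $u$ is $5_3$. This kills most of $\F_{5/6}$, leaving mostly $\F_{3/4}$ and below, which balances a loss of at most $1$ to the $3$-neighbors. For $\alpha+\beta=3$ the loss is $1$, $u$ has no $4_{\ge1}$-neighbors, and the faces should average at most $\tfrac45$.

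The main obstacle is the bookkeeping in the case $\alpha+\beta=2$ with $\alpha\ge1$, where $u$ may send $\tfrac12+\tfrac12$. In that case I would first apply Claim~\ref{FC-3_15_24_1}, which says that $u$ has no $4_{\ge1}$-neighbors, so all $4$-neighbors are $4_0$. Then only faces of the forms $(5,3,3,5^+)$, $(5,3,5^+,3)$ and $(5,3,5,\cdot)$ can exceed $\tfrac23$, and pairing each large face with the $3$-neighbors it uses should close the count.
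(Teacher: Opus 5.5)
Your overall plan is the paper's: reduce to the situation with no face of type $3^-$, so that $T_u(f_i)\le 1$ for all $i$, then split on the number of $3$-neighbors; the case $\alpha+\beta=0$ is also handled as in the paper. For $\alpha+\beta\ge 1$, however, the proposal only says what ``should'' happen, and the counts you actually state do not close.

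\textbf{The case $\alpha+\beta=1$.} Your own estimate gives $\frac12+1+\frac56+3\cdot 1=\frac{16}{3}>5$. The missing step is that each of the three faces $f_1,f_2,f_3$ avoiding $v_0$ satisfies $\Lambda_u(f_i)\notin\F_1$. The only members of $\F_1$ whose two $u$-neighbors both have degree at least $4$ are $(5,4_1,3_0,4_1)$ and $(5_0,4,3,5)$. The first is a $(5,4,3,4)$-face, already excluded via Lemma~\ref{lem-exist (5,4,3,4)-face}. The second is impossible since $u$ is a $5_{\ge1}$-vertex. Hence $T_u(f_i)\le\frac56$ for $i\in[3]$, and $c^*(u)\ge 5-\frac12-2\cdot 1-3\cdot\frac56=0$.

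Two of your side remarks here are wrong. First, $(5_{\ge1},4,3,5)$-faces are not absent: $(5_{\ge1},4_2,3_0,5)\in\F_{5/6}$ can occur, and it is harmless only because it receives at most $\frac56$. Second, a face at $v_0$ can receive $1$ even when its other $u$-neighbor is a $4$-vertex: $(5,3,5^+,4_2)\in\F_1$ is not excluded when $u$ is a $5_1$-vertex.

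\textbf{The case $\alpha+\beta=2$, $\alpha=0$.} The mechanism is not that ``mostly $\F_{3/4}$'' survives; faces such as $(5,3,5^+,4_1)$ or $(5,4_1,4_0,4_1)$ still receive $\frac56$. What works is that at most one face lies in $\F_1$, giving $\frac23+1+4\cdot\frac56=5$.

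\textbf{The genuine gap: a $3_1$-neighbor, or three $3$-neighbors.} You never invoke Claim~\ref{FC-(5_2,4,4_1,4)} (no $(5_{\ge2},4,4_{\ge1},4)$-cycle), and without it the inequality fails. You assert that, once all $4$-neighbors of $u$ are $4_0$, only faces of the forms $(5,3,3,5^+)$, $(5,3,5^+,3)$, $(5,3,5,\cdot)$ can exceed $\frac23$. This overlooks faces between two non-$3$-neighbors: by Observation~\ref{obs- almost 1/2}, a $(5,4_0,4_2,4_0)$-face receives $\frac56$ and a $(5,4_0,4_1,4_0)$-face receives $\frac23$.

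For a concrete failure, let:
\begin{itemize}
\item $v_0,v_2$ be $3_1$-vertices whose $3$-neighbors are $w_0$ and $w_1$;
\item $d(v_1)=d(w_2)=d(w_4)=5$;
\item $v_3,v_4$ be $4_0$-vertices and $w_3$ a $4_2$-vertex.
\end{itemize}
Then $T_u(v_0,v_2)=1$, $T_u(f_0)=T_u(f_1)=1$, $T_u(f_2)=T_u(f_4)=\frac34$ and $T_u(f_3)=\frac56$, a total of $\frac{16}{3}>5$. Nothing you invoke rules this out, so no pairing argument can close the count.

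Claim~\ref{FC-(5_2,4,4_1,4)} together with Observation~\ref{obs- almost 1/2} forces $T_u(f_i)\le\frac12$ for every face between two non-$3$-neighbors of a $5_{\ge2}$-vertex. The paper uses exactly this bound in two places:
\begin{itemize}
\item when $u$ has two $3$-neighbors, one of them a $3_1$-vertex, for the faces touching no $3$-neighbor;
\item when $u$ has three consecutive $3$-neighbors $v_0,v_1,v_2$, for $f_3$.
\end{itemize}
In the second case the face budget $1+1+\frac34+\frac34+\frac12=4$ is exactly tight. So your target ``average at most $\frac45$'' is also unreachable without the claim: a $(5,4_0,4_2,4_0)$-face as $f_3$ again gives $\frac{16}{3}$.

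To turn the sketch into a proof, combine this bound with Observation~\ref{obs- almost 3/4}, which rests on Claims~\ref{FC-(5_2,3,4,4)} and~\ref{FC-(5_2,3_1,4,5_1)}. You also need the groupings $T_u(v_2,f_1,f_2)\le\frac94$ and $T_u(v_0,f_4,f_0)\le\frac94$.
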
 
	\begin{proof}  
		By Lemmas \ref{lem-exist (5,3,4,3)-face}--\ref{lem-exist (5,3,4,4)-face}, we assume that there is no face of type $3^-$ incident with $u$. Thus $T_u(f_i) \leq 1$ for each $i \in \{0,1,2,3,4\}$. 
		If $u$ has no $3$-neighbors, then $c^*(u) \geq 5- 1 \times 5= 0$.
		If $u$ has exactly one $3$-neighbor, without loss of generality, say $v_0$. Then for each $i \in [3]$, $\Lambda_u(f_i) \notin \F_1$, and hence $T_u(f_i) \leq \frac{5}{6}$. Therefore, $c^*(u) \geq 5 -\frac{1}{2}-1 \times 2- \frac{5}{6} \times 3  =  0$. 
		So assume that $u$ has at least two $3$-neighbors. By Claim \ref{FC-star}, $u$ has at most three $3$-neighbors.
		
		\medskip
		\noindent
		{\bf Case 1.} $u$ has two $3$-neighbors.
		\medskip

		Without loss of generality, assume that $u$ has exactly two $3$-neighbors
		$v_0$ and $v_{\ell}$ for some $\ell \in \{1,2\}$.
		So for each $i \in [4] \setminus \{\ell\}$, $d(v_i) \geq 4$ 
		and $v_i$ is not a $4_2$-vertex by Claim~\ref{FC-5_24_2}.
		
		If neither $v_0$ nor $v_{\ell}$ has a $3$-neighbor, then
		$T_u(v_0,v_\ell) = \frac{2}{3}$.
		Moreover, for each $i \in [4]$, $\Lambda_u(f_i) \notin \F_1$, so $T_u(f_i) \leq \frac{5}{6}$.
		Thus $c^*(u) \geq 5- \frac{2}{3} - 1 - \frac{5}{6} \times 4 = 0$. 
		So without loss of generality, $v_0$ has a $3$-neighbor.
		By Claim~\ref{FC-3_15_24_1}, for each $j \in [4] \setminus \{\ell\}$,
		$v_j$ is not a $4_{\geq 1}$-vertex.
		
		If $\ell=1$, then by Observation~\ref{obs- almost 1/2} and 
		Claim~\ref{FC-(5_2,4,4_1,4)}, $T_u(f_2)\leq\frac{1}{2}$ and $T_u(f_3)\leq\frac{1}{2}$. Note that $T_u(v_0,v_1)\leq \frac{1}{2}\times 2=1$. Thus $c^*(u) \geq 5-1-\frac{1}{2}\times 2-1 \times 3=0$.

		Thus assume that $\ell=2$.
		By Observation~\ref{obs- almost 1/2} and Claim~\ref{FC-(5_2,4,4_1,4)}, $T_u(f_3)\leq\frac{1}{2}$. 	Note that at least one of $\Lambda_u(f_0) \notin \F_1$ and $\Lambda_u(f_4) \notin \F_1$ holds,
		so $T_u(f_0,f_4,v_0) \leq 1 + \frac{5}{6} + \frac{1}{2} = \frac{7}{3}$. By Claim \ref{FC-(5_2,3,4,4)}, 
		$f_i$ is not a $(5,3,4,4)$-face. 
		If $v_2$ has no $3$-neighbors,
		then for $i \in \{1,2\}$, we have $\Lambda_u(f_i) \notin \F_1$, and hence $T_u(f_i) \leq \frac{5}{6}$ by Rule~\ref{R5}. Thus $T_u(f_1,f_2,v_2) \leq \frac{5}{6} \times 2 + \frac{1}{3} = 2$,
		which implies that $c^*(u) \geq 5- \frac{7}{3} - 2 -\frac{1}{2} = \frac{1}{6} > 0$. Therefore we may assume that $v_2$ also has a $3$-neighbor. Without loss of generality, assume that $d(w_1)=3$ (implying that $d(v_1) \geq 5$),
		then $T_u(v_2,f_1) \leq 1+\frac{1}{2}=\frac{3}{2}$. 
		By Observation \ref{obs- almost 3/4}, we have $T_u(f_2) \leq \frac{3}{4}$.
		Then  $T_u(v_2,f_1,f_2) \leq \frac{9}{4}$.
		Similarly, we obtain $T_u(v_0,f_4,f_0) \leq \frac{9}{4}$.
		Therefore,
		$c^*(u) \geq 5 - \frac{9}{4} \times 2 - \frac{1}{2} = 0$. 
		
		\medskip
		\noindent
		{\bf Case 2.}  $u$ has three $3$-neighbors.
		\medskip   
		
		Without loss of generality, let $v_0$, $v_1$ and $v_{\ell}$ be the three $3$-neighbors of $u$, where $\ell \in \{2,3\}$. By Claim~\ref{FC-k_{k-2}-3_1}, none of $v_0$, $v_1$ and $v_{\ell}$ has a $3$-neighbor. Thus $T_u(v_0, v_1, v_{\ell})=1$ by Rule~\ref{R1}. By Lemma~\ref{lem-exist (5,3,4,3)-face}, we may assume that no $(5,3,4,3)$-face incident with $u$, so $T_u(f_0) \leq 1$ by Rule~\ref{R5}. By Claim~\ref{FC-5_3/4_24_1}, $v_i$ is not a $4_1$-vertex for any $i\in \{2,3,4\}\setminus \{\ell\}$. 
		
		If $\ell = 3$,  by Observation~\ref{obs- almost 3/4}, we have $T_u(f_i) \leq \frac{3}{4}$ for each $i \in [4]$. Thus $c^*(u) \geq 5- 1- 1 - \frac{3}{4} \times 4 = 0$. 
		If $\ell = 2$,  then $T_u(f_1) \leq 1$ by Rule~\ref{R5}.  Observation~\ref{obs- almost 3/4} implies $T_u(f_i) \leq \frac{3}{4}$ for any $i\in \{2,4\}$.
		Since $\Lambda_u(f_3)\notin\{(5,4_0,4_1,4_0),(5,4_0,4_2,4_0)\}$ by Claim~\ref{FC-(5_2,4,4_1,4)},
		$T_u(f_3) \leq \frac{1}{2}$ by Observation~\ref{obs- almost 1/2}. Thus $c^*(u) \geq 5- \frac{1}{3}\times 3 - 1 \times 2 - \frac{3}{4} \times 2 - \frac{1}{2} = 0$. 
	\end{proof}
	
	\subsection{Proof of Theorem~\ref{main-thm}}
	
	Since $G$ is a counterexample to Theorem~\ref{main-thm}, 
	every face $f$ satisfies $d(f)\ge 4$. 
	If $d(f)=4$, then $c^*(f)\ge 0$ by Lemma~\ref{lem-4-face}; 
	if $d(f)\ge 5$, then $c^*(f)=c(f)=2d(f)-10\ge 0$.
	
	By Claim~\ref{FC-min-degree}, $d(v)\ge 3$ for all $v\in V(G)$.
	If $d(v)=3$, then $c^*(v)\ge 0$ by Lemma~\ref{lem-3-vertex}.
	If $d(v)=4$, then $c^*(v)\ge 0$ by Lemma~\ref{lem-4-vertex}.
	If $d(v)=5$, then $c^*(v)\ge 0$ by Lemma~\ref{lem-5-vertex}.
	If $d(v)=6$, then $c^*(v)\ge 0$ by Lemma~\ref{lem-6-vertex}.
	If $d(v)\ge 7$, then $c^*(v)\ge 0$ by Lemma~\ref{lem-7-vertex}.
	
	Thus the final charge $c^*(z)$ is nonnegative for every 
	$z\in V(G)\cup F(G)$. 
	However, the total charge is preserved:
	\[
	-20=\sum_{z\in V(G)\cup F(G)} c(z)
	=\sum_{z\in V(G)\cup F(G)} c^*(z)\geq 0,
	\]
	a contradiction. 
	This completes the proof of Theorem~\ref{main-thm}.

\section{Acknowledgments}
	We thank Xinbo Xu for carefully reading our first draft and for pointing out an error in Claim~9. In this version, we have addressed the issue caused by this error. This work was initiated during the Workshop on Graph Theory and Combinatorics held in Kunming, China, in October 2025, which was supported by the 2025 National Tianyuan Program. We also thank Professors Xujin Chen, Jie Ma, and Guanghui Wang for organizing the workshop and for their kind invitation. The first author is supported by the National Natural Science Foundation of China (No.~12471325). The second author is supported by the National Natural Science Foundation for Young Scientists of China (Grant No.~12401472), and the Zhejiang Provincial Natural Science Foundation of China (Grant No.~LQN25A010011).

\end{document}